\newtheorem{theorem}{Theorem}[section]
\newtheorem{corollary}[theorem]{Corollary}
\newtheorem{lemma}[theorem]{Lemma}
\newtheorem{proposition}[theorem]{Proposition}
\newtheorem{assumption}{Assumption}
\theoremstyle{definition} \newtheorem{definition}[theorem]{Definition}
\theoremstyle{remark} \newtheorem{remark}[theorem]{Remark}
\numberwithin{equation}{section}
\newcommand{\g}{\geqslant} \newcommand{\RR}{\mathbb{R}}
\newcommand{\ZZ}{\mathbb{Z}} \newcommand{\CC}{\mathbb{C}}
\newcommand{\s}{\mathcal{S}} 
\newcommand{\sph}{\mathbb{S}} \newcommand{\NN}{\mathbb{N}}
\newcommand{\p}{\partial} \newcommand{\q}{\varphi}
\newcommand{\les}{\leqslant} \newcommand{\lesa}{\lesssim}
\newcommand{\mc}[1]{\mathcal{#1}} \newcommand{\mb}[1]{\mathbf{#1}}
 \newcommand{\lr}[1]{ \langle #1
  \rangle} \newcommand{\ind}{\mathbbold{1}}
\DeclareSymbolFont{bbold}{U}{bbold}{m}{n}
\DeclareSymbolFontAlphabet{\mathbbold}{bbold}
 \DeclareMathOperator*{\supp}{supp}
\DeclareMathOperator*{\diam}{diam} \DeclareMathOperator*{\diag}{diag}
\DeclareMathOperator*{\dist}{dist}
\begin{document}

\title[Transference of Bilinear Restriction Estimates and the
DKG-System]{Transference of Bilinear Restriction Estimates to
  Quadratic Variation Norms and the Dirac-Klein-Gordon
  System}%

\author[T.~Candy]{Timothy Candy}%
\address[T.~Candy]{Universit\"at Bielefeld, Fakult\"at f\"ur Mathematik,
  Postfach 100131, 33501 Bielefeld, Germany}%
\email{tcandy@math.uni-bielefeld.de}%

\author[S.~Herr]{Sebastian Herr}%
\address[S.~Herr]{Universit\"at Bielefeld, Fakult\"at f\"ur Mathematik,
  Postfach 100131, 33501 Bielefeld, Germany}%
\email{herr@math.uni-bielefeld.de}%
\thanks{The authors acknowledge support from the German Research
  Foundation via Collaborative Research Center 701.}

\subjclass[2010]{Primary: 42B37, 35Q41; Secondary: 42B20, 42B10, 81Q05}%
\keywords{Bilinear Fourier Restriction, Adapted Function Spaces,
  Quadratic Variation, Atomic Space, Dirac-Klein-Gordon System,
  Resonance, Global Well-Posedness, Scattering}%

\begin{abstract}
  Firstly, bilinear Fourier Restriction estimates --which are
  well-known for free waves-- are extended to adapted spaces of
  functions of bounded quadratic variation, under quantitative
  assumptions on the phase functions. This has applications to
  nonlinear dispersive equations, in particular in the presence of
  resonances. Secondly, critical global well-posedness and
  scattering results for massive Dirac-Klein-Gordon systems in
  dimension three are obtained, in resonant as well as in
  non-resonant regimes. The results apply to small initial data in
  scale-invariant Sobolev spaces exhibiting a small amount of
  angular regularity.
\end{abstract}

\maketitle

\section{Introduction}\label{sect:intro}

The Fourier restriction conjecture was shaped in the 1970s by work of
Stein, among others, and has generated significant advances
in the field of harmonic analysis and dispersive partial differential
equations since then, see e.g. \cite{Stein1993,Tao2004} for a survey
and references.

As an example, let $n\g 2$ and $C$ be a compact subset of the cone,
say $C=\{(|\xi|,\xi) \mid \frac{1}{2}\les |\xi|\les 2\}\subset \RR^{n+1}$, and $g$ be
a Schwartz function on $\RR^{n+1}$.  Equivalently to the Fourier
restriction operator $\mathcal{R}: g \mapsto \widehat{g}|_C$, consider
its adjoint, the Fourier extension operator
\[
\mathcal{E} f (t,x)=\int_{\RR^n}e^{-i(t,x)\cdot( |\xi|,\xi)}f
(\xi)d\xi,
\]
for smooth $f$ with $\supp(f)$ contained in the unit annulus. The function
$\mathcal{E}f$ can be viewed as the inverse Fourier transform of a
surface-measure supported on the cone $C$, and  defines a
function on $\RR^{n+1}$ which solves the wave equation. The Fourier
restriction conjecture for the cone is equivalent to establishing the
corresponding Fourier extension estimate
$$ \| \mathcal{E}f \|_{L^p_{t, x}(\RR^{n+1})}\lesa\| f\|_{L^q(\RR^n)}$$
within the optimal range of $p,q$. In the special case $q=2$ this holds
iff $p\geq \frac{2n+2}{n-1}$, and in the literature on dispersive
equations this is stated as
$$ \| e^{-it|\nabla|}f \|_{L^p_{t, x}(\RR^{n+1})}\lesa\| f\|_{L^2_x}$$
and called a Strichartz estimate for the wave equation
\cite{Strichartz1977}, see also \cite{Keel1998} for more information.

In the course of proving Fourier extension estimates for the cone, it became apparent that a key role was played by bilinear estimates. Indeed, a major breakthrough was achieved by Wolff
\cite{Wolff2001}, when he proved that for every $p>\frac{n+3}{n+1}$, $n\g 2$, we
have
$$ \big\| e^{ - it |\nabla|} f e^{ - i t |\nabla|} g \big\|_{L^p_{t, x}(\RR^{n+1})} \lesa \| f \|_{L^2_x} \| g \|_{L^2_x},$$
provided the supports of $\widehat{f}$ and $\widehat{g}$ are angularly
separated and contained in the unit annulus. As a
result Wolff was able to prove the linear restriction conjecture for $C$
in dimensions $n=3$. It is important to note that, in the presence of angular
separation, a larger set of $p$ can be covered in the bilinear
estimate than would follow from a simple application of H\"older's
inequality together with the linear estimates.

In parallel to these developments, bilinear estimates proved useful in
the context of nonlinear dispersive equations, see
e.g. \cite{Klainerman1993,Bourgain1998,Foschi2000}. The perturbative
approach to dispersive equations is based on constructing adapted
function spaces in which nonlinear terms can be
effectively estimated. Bilinear estimates for solutions to the
homogeneous equation, which go beyond simple almost orthogonality
considerations, give precise control over dynamic interactions of products of
linear solutions. However, to apply these homogeneous estimates to the nonlinear problem, necessitates
the transfer of such genuinely bilinear estimates to adapted function
spaces.

Such a \emph{Transference Principle} was implemented first in
$X^{s,b}$ spaces, see \cite[Lemma 2.3]{Ginibre1997} and
\cite[Proposition 3.7]{Klainerman2002}. Let us briefly illustrate it
by looking at the following example. Suppose that $u,v \in L^\infty_t
L^2_x$ are superpositions of modulated solutions of the homogeneous
equation, i.e.
\[
u(t)=\int_{\RR} e^{it\lambda} e^{it|\nabla|} F_\lambda d\lambda,\quad
v(t)=\int_{\RR} e^{it\lambda'} e^{it|\nabla|} G_{\lambda'} d\lambda'.
\]
which is true for $u,v\in X^{0,b}$ if $b>\frac12$. Suppose in
addition, that the spatial Fourier supports of $u,v$ are angularly
separated. Then, for any $p>\frac{n+3}{n+1}$, Wolff's estimate
transfers to
\[
\big\| u v \big\|_{L^p_{t, x}(\RR^{n+1})} \leq \int_{\RR}\int_{\RR}
\|e^{it|\nabla|} F_\lambda e^{it|\nabla|}G_{\lambda'}\|_{L^p_{t,
    x}(\RR^{n+1})} d\lambda d\lambda' \lesa \Big(\int_\RR
\|F_\lambda\|_{L^2_x}d\lambda\Big)\Big(\int_\RR
\|G_{\lambda'}\|_{L^2_x}d\lambda'\Big)
\]
which is equivalent to the bilinear estimate holding for functions in $X^{0, b}$.
Another strategy involves certain atomic function spaces introduced in
\cite{Koch2005}. Suppose that
\[
u(t)=\sum_{J\in \mc{I}} \ind_J(t) e^{it|\nabla|} f_J, \quad
v(t)=\sum_{J'\in \mc{I'}} \ind_{J'}(t) e^{it|\nabla|} g_{J'}.
\]
for finite partitions $\mc{I},\mc{I'}$ of $\RR$ and $f_J,g_{J'}\in
L^2_x$. Then, under the above angular separation assumption, Wolff's
bound implies
\[
\big\| u v \big\|_{L^p_{t, x}(\RR^{n+1})} \leq \Big(\sum_{J\in
  \mc{I}}\sum_{J'\in \mc{I'}} \|e^{it|\nabla|} f_J
e^{it|\nabla|}g_{J'}\|_{L^p_{t, x}(\RR^{n+1})}^p \Big)^{\frac1p}\lesa
\Big(\sum_{J\in \mc{I}}
\|f_J\|_{L^2_x}^p\Big)^{\frac1p}\Big(\sum_{J'\in \mc{I'}}
\|g_{J'}\|_{L^2_x}^p\Big)^{\frac1p}.
\]
As a consequence, we deduce that Wolff's bilinear estimate holds for angularly separated  functions in the atomic space $U^p$, see Definition \ref{def:up} below. This is one instance of the transference principle in $U^p$, which has been formalised in
\cite[Proposition 2.19]{Hadac2009}.

For many applications, the above superposition requirements are too
strong, partly due to the duality theory for the spaces $X^{0,b}$ for
$b>\frac12$ and $U^p$ for $p\leq 2$.  Nevertheless, variations of the
above strategies have been successfully employed in numerous
applications to nonlinear global-in-time problems in the case $p\geq
2$. In the case $p<2$, the only result we are aware of is \cite[Lemma
5.7 and its proof]{Sterbenz2010}, where this approach is used in conjunction
with an interpolation argument to give a partial result only, see
Remark \ref{rem:inter} for further details.

It turned out that one of the most powerful function spaces in the
context of adapted function spaces, is the space of functions of bounded quadratic variation
$V^2$, which is slightly bigger than $U^2$. Our first main result
of this paper is the corresponding transference principle in $V^2$ for a quite
general class of surfaces in Theorem \ref{thm - bilinear Lp estimate}
below.

We start with some definitions. Define $\mc{Z} = \{ (t_j)_{j \in \ZZ}
\mid t_j \in \RR \text{ and } t_j<t_{j+1} \}$ to be the set of
increasing sequences of real numbers and $1\leq p<\infty$. Given a
function $\rho: \RR \rightarrow L^2_x$, we define the $p$-variation of
$\rho$ to be
 $$ | \rho |_{V^p} = \sup_{(t_j) \in \mc{Z}}
 \Big( \sum_{j \in \ZZ} \| \rho(t_j) - \rho(t_{j-1}) \|_{L^2_x}^p
 \Big)^\frac{1}{p}.$$ The Banach space $V^p$ is then defined to be all
 right continuous functions $\rho: \RR \rightarrow L^2_x$ such that
 $$ \| \rho \|_{V^p} = \| \rho \|_{L^\infty_t L^2_x} + | \rho |_{V^p} < \infty.$$
 Given a phase $\Phi: \RR^n \rightarrow \RR$ we let $V^p_{\Phi}$
 denote the space of all functions $u$ such that $e^{ - i t \Phi(- i
   \nabla)} u \in V^p$ equipped with the
 obvious norm $\| u \|_{V^p_\Phi} = \big\| e^{ - i t \Phi(- i \nabla)}
 u \big\|_{V^p}$. In other words, the space $V^p_{\Phi}$ contains all functions $u\in
 L^\infty_t L^2_x$ such that the pull-back along the linear flow has
 bounded $p$-variation, in particular we have
 \[
 \| e^{ i t \Phi(- i \nabla)} f \|_{V^p_\Phi}=\|f\|_{L^2_x}.
 \]

 Before stating Theorem \ref{thm - bilinear Lp estimate}, we need to
 introduce the assumptions that we impose on our phases, which are
 motivated by \cite{Lee2010,Bejenaru2016}. Examples will be discussed
 in Section \ref{sect:assu}.  Let $\Phi_j: \RR^n \rightarrow \RR$ and
 $\Lambda_j$ be a convex subset of $\{\frac{1}{16} \les |\xi| \les 16
 \}$. Given $\mathfrak{h} = (a, h) \in \RR^{1+n}$ and $\{j, k\} = \{1,
 2\}$ we define the hypersurfaces
 \[\Sigma_j(\mathfrak{h}) = \{ \xi \in \Lambda_j \cap (
 \Lambda_k + h) \mid \Phi_j(\xi) = \Phi_k(\xi-h) + a \}.\] With this
 notation, we are ready to state the main assumption, cp. \cite{Bejenaru2016,Lee2010}.
 \begin{assumption}[Transversality/Curvature/Regularity] \label{assump on phase} There exist $\mb{D}_1, \mb{D}_2>0$ and $N \in
   \NN$ such that for $\Phi_1,\Phi_2:\RR^n \to \RR$ the following
   holds true:
   \begin{enumerate}
   \item\label{it:ass1} for every $\{j, k\} = \{1, 2\}$, $\mathfrak{h} \in
     \RR^{1+n}$, $\xi, \xi' \in \Sigma_j(\mathfrak{h})$, and $\eta \in
     \Lambda_k$ we have the estimate $$ \big|\big(\nabla \Phi_j(\xi) -
     \nabla \Phi_j(\xi')\big)\wedge \big( \nabla \Phi_j(\xi) - \nabla
     \Phi_k(\eta)\big)| \g \mb{D}_1 |\xi - \xi'|,$$
   \item\label{it:ass2} we have $\Phi_j\in C^N(\Lambda_j)$ with the derivative
     bound $$ \sup_{1\les |\kappa| \les N} \| \p^\kappa \Phi_j
     \|_{L^\infty(\Lambda_j)} \les \mb{D}_2.$$
   \end{enumerate}
 \end{assumption}

The condition \eqref{it:ass1} in Assumption \ref{assump on phase} is somewhat
difficult to interpret, but one immediate consequence is the bound
\begin{equation}\label{eqn - seperation of velocities in Sigma}
  |\nabla \Phi_j(\xi) - \nabla \Phi_j(\xi') | \g \frac{ \mb{D}_1|\xi -
    \xi'|}{\| \nabla \Phi_1\|_{L^\infty} + \| \nabla
    \Phi_2\|_{L^\infty}} .
\end{equation}
which holds for every $\xi, \xi' \in \Sigma_j(\mathfrak{h})$. To some
extent, this is a \emph{curvature} condition, as it shows that the
normal direction varies on $\Sigma_j(\mathfrak{h})$. Another consequence of \eqref{it:ass1} is that for every $\xi\in \Lambda_1$, $\eta \in \Lambda_2$ we have the \emph{transversality} bound
\begin{equation}\label{eq:trans-main}
  |\nabla \Phi_1(\xi) - \nabla \Phi_2(\eta) | \g \frac{ \mb{D}_1}{\min\{\| \nabla^2 \Phi_1\|_{L^\infty}, \| \nabla^2
    \Phi_2\|_{L^\infty}\}}. 
\end{equation}
This follows by simply observing that for every $\xi \in \Lambda_1$ there is $\mathfrak{h}\in \RR^{1+n}$ such that $\xi \in \Sigma_1(\mathfrak{h})$.
 Our first main result can now be stated as follows.
 \begin{theorem}\label{thm - bilinear Lp estimate}
   Let $n\g 2$, $p>\frac{n+3}{n+1}$, and $\mb{D}_1,
   \mb{D}_2, \mb{R}_0 >0$. For $j=1, 2$, let $\Lambda_j, \Lambda_j^*
   \subset \{ \frac{1}{16} \les |\xi| \les 16\}$ with $\Lambda_j$
   convex and $\Lambda^*_j + \frac{1}{\mb{R}_0} \subset
   \Lambda_j$. There exists $N \in \NN$ and a constant $C>0$ such
   that, for any phases $\Phi_1$ and $\Phi_2$ satisfying Assumption
   \ref{assump on phase}, and any $u \in V^2_{\Phi_1}$, $v \in
   V^2_{\Phi_2}$ with $\supp \widehat{u}(t) \subset \Lambda^*_1$,
   $\supp \widehat{v}(t) \subset \Lambda^*_2$, we have
   \[\|u v \|_{L^p_{t, x}(\RR^{1+n})} \les C \| u \|_{V^2_{\Phi_1}} \|
   v \|_{V^2_{\Phi_2}}.\]
 \end{theorem}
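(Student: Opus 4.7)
The plan is to reduce the theorem to a free-wave bilinear restriction estimate under Assumption~\ref{assump on phase}, transfer this estimate to the atomic spaces $U^{p}_{\Phi_j}$ via the standard decomposition argument, and finally upgrade it to the larger space $V^2$ at the target exponent $p$. The main difficulty lies in the last step, since for $p \in (\tfrac{n+3}{n+1}, 2]$ the standard embedding $V^2 \hookrightarrow U^p$ is unavailable.

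\emph{Step 1 (free-wave bilinear restriction).} Fix some $p_0 \in (\tfrac{n+3}{n+1}, p)$. Under Assumption~\ref{assump on phase}, after a suitable rescaling of the surfaces parametrised by $\Phi_j$, one obtains the uniform bilinear estimate
\[
\big\| e^{it\Phi_1(-i\nabla)} f \cdot e^{it\Phi_2(-i\nabla)} g \big\|_{L^{p_0}_{t,x}(\RR^{1+n})} \les C_0 \, \|f\|_{L^2_x} \|g\|_{L^2_x}
\]
for $f,g$ with Fourier support in $\Lambda_1^*$ and $\Lambda_2^*$ respectively, with $C_0$ depending only on $\mb{D}_1,\mb{D}_2,\mb{R}_0,n,p_0$. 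The curvature bound~\eqref{eqn - seperation of velocities in Sigma} and the transversality estimate~\eqref{eq:trans-main} built into Assumption~\ref{assump on phase} are precisely the hypotheses required by the bilinear restriction theorems of Lee~\cite{Lee2010} and Bejenaru~\cite{Bejenaru2016}.

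\emph{Step 2 (transference to $U^{p_0}$).} The estimate of Step~1 lifts, as in~\cite[Proposition~2.19]{Hadac2009} and the $U^p$-heuristic in the introduction, to
\[
\|uv\|_{L^{p_0}_{t,x}} \lesa \|u\|_{U^{p_0}_{\Phi_1}} \|v\|_{U^{p_0}_{\Phi_2}}.
\]
The proof is routine: it suffices to test against single atoms on each side, expand the $L^{p_0}$-norm of $uv$ as an $\ell^{p_0}$-sum over disjoint time intervals, and control each summand by the free-wave bound of Step~1.

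\emph{Step 3 (upgrade to $V^2$).} For $p > 2$ the theorem is immediate from Step~2 and the standard embedding $V^2 \hookrightarrow U^p$. For $p \in (\tfrac{n+3}{n+1}, 2]$ this direct route fails, and here lies the main obstacle. The plan is to interpolate the $U^{p_0}$-bound of Step~2, now invoked at a slightly smaller exponent $p_0 < p$, against the trivial energy estimate
\[
\|uv\|_{L^\infty_t L^1_x} \les \|u\|_{L^\infty_t L^2_x}\|v\|_{L^\infty_t L^2_x} \les \|u\|_{V^2_{\Phi_1}}\|v\|_{V^2_{\Phi_2}}
\]
in a way that respects the $V^2$-structure. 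To this end one exploits the canonical partition $(t_j)\in\mc{Z}$ witnessing the quadratic variation of $u$: truncating to a subpartition of cardinality $2^k$ approximates $u$ by a step-function-like element of $U^2_{\Phi_1}\hookrightarrow U^{p_0}_{\Phi_1}$ with a residual of size $\lesa 2^{-k/2}\|u\|_{V^2_{\Phi_1}}$ in $L^\infty_t L^2_x$. Splitting $uv$ accordingly, applying Step~2 to the main bilinear interaction and the energy estimate to the residual, one sums geometrically in $k$, and the sum closes precisely because $p_0 < p$. Throughout the argument the constants must be tracked carefully so as to depend only on $\mb{D}_1, \mb{D}_2, \mb{R}_0, n, p$, independently of the specific choice of phases; this uniformity is what makes the resulting transference principle applicable to the nonlinear problems treated later in the paper.
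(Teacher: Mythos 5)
There is a genuine gap, and it sits exactly where the main difficulty of the theorem lies: your Step 2 produces only the estimate $\|uv\|_{L^{p_0}_{t,x}} \lesa \|u\|_{U^{p_0}_{\Phi_1}}\|v\|_{U^{p_0}_{\Phi_2}}$ with $p_0<2$, and this is too weak an input for any interpolation scheme to reach $V^2$ at an exponent $p\les 2$. What is actually needed is the bound with $U^2$ norms on the right-hand side, $\|uv\|_{L^{q}_{t,x}} \lesa \|u\|_{U^{2}_{\Phi_1}}\|v\|_{U^{2}_{\Phi_2}}$ for some $q\in(\tfrac{n+3}{n+1},p)$, and this does \emph{not} follow from the free-wave estimate by expanding atoms: for a $U^2$ atom the pieces are only $\ell^2$-summable, while the $L^{q}$ norm of the product expands as an $\ell^{q}$ sum with $q<2$, so H\"older goes the wrong way. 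Establishing precisely this $U^2$ bound is the content of Theorem \ref{thm - loc bilinear Lp estimate} and the globalisation argument (wave packets, induction on scales, the combinatorial Kakeya bound, then finite speed of propagation, duality, the dispersive estimate of Lemma \ref{lem - dispersion} and a Lorentz-space interpolation), i.e.\ the bulk of the paper; it cannot be dismissed as routine transference.

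Concretely, your Step 3 fails on the exponents. First, the embedding you invoke is backwards: $U^{p_0}\subset U^2$ for $p_0<2$, not $U^2\hookrightarrow U^{p_0}$. Second, the $2^k$-step approximant built from a truncated partition has $U^{p_0}$ norm growing like $2^{k(1/p_0-1/2)}\|u\|_{V^2_{\Phi_1}}$ (an $\ell^{p_0}$ sum of $\sim 2^k$ jumps each of size $\lesa 2^{-k/2}$), while the residual is only $\lesa 2^{-k/2}$ in $L^\infty_t L^2_x$. Balancing the main term (estimated via your Step 2) against the residual — equivalently, running the standard decomposition $u=\sum_k u_k$ with $\|u_k\|_{U^r}\lesa 2^{k(2/r-1)}\|u\|_{V^2}$ and interpolating $\|u_kv_{k'}\|_{L^p}\les \|u_kv_{k'}\|_{L^{p_0}}^{\theta}\|u_kv_{k'}\|_{L^\infty}^{1-\theta}$ with $\theta=p_0/p$ — yields a factor $2^{k(\frac2p-1)}$ per piece, which diverges whenever $p<2$. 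The convergence criterion is $p>2$, not $p_0<p$. This is exactly the obstruction recorded in Remark \ref{rem:inter}: starting from the homogeneous (or $U^{p}$, $p<2$) bilinear estimate and interpolating only recovers the restricted range \eqref{eqn:cond bilinear weak} (essentially the earlier partial result of Sterbenz--Tataru), not the full range $p>\tfrac{n+3}{n+1}$ claimed in the theorem. A secondary, more minor issue is Step 1: the theorem requires the constant to depend on the phases only through $\mb{D}_1,\mb{D}_2,\mb{R}_0$, which is why the paper re-derives the restriction machinery quantitatively from Assumption \ref{assump on phase} rather than citing the homogeneous results as a black box; but even granting Step 1, the argument does not close.
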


 Note that the constants $N$ and $C$ depend on the parameters
 $p>\frac{n+3}{n+1}$, $n\g 2$, and $ \mb{D}_1, \mb{D}_2,
 \mb{R}_0 >0$, but are otherwise independent of the phase $\Phi_j$,
 the sets $\Lambda_j$, $\Lambda_j^*$, and the functions $u$ and
 $v$. Moreover, as the conditions in Assumption \ref{assump on phase}
 are invariant under translations, the condition that $\Lambda_j
 \subset \{ \frac{1}{16}\les |\xi| \les 16\}$ can be replaced with the
 condition that the sets $\Lambda_j$ are simply contained in balls of
 radius $16$. In other words, the \emph{location} of the sets
 $\Lambda_j$ plays no role. We refer the reader to Corollary
 \ref{cor:mixed} for a generalisation of Theorem \ref{thm -
   bilinear Lp estimate} to mixed norms. Further, we refer to Corollary \ref{cor - small scale
   bilinear estimate} for a generalisation to more general frequency scales in the case
 of hyperboloids, which is also shown to be sharp.

 Let us summarize the developments for solutions to the homogeneous
 equation, i.e.
 \[u=e^{it\Phi_1(-i\nabla)}f, \quad v=e^{it\Phi_2(-i\nabla)}g.\] First
 estimates of this type for nontrivial $p<2$ are due to Bourgain
 \cite{Bourgain1991,Bourgain1995} in the case of the cone,
 i.e. $\Phi_1(\xi)=\Phi_2(\xi)=|\xi|$. Subsequently, these have been
 improved by Tao--Vargas--Vega \cite{Tao1998}, Moyua--Vargas--Vega
 \cite{Moyua1999}, Tao--Vargas \cite{Tao2000a}, before finally Tao
 \cite{Tao2001} proved the endpoint case $p=\frac{n+3}{n+1}$, see also
 Remark \ref{rem:proof-endpoint}. Actually, we observe that the vector-valued inequality in \cite{Tao2001}  is strong enough to deduce the estimate in $U^2$ in the case of the wave equation, see Remark \ref{rem:vec}.
 Related estimates for null-forms
 have been proved by Tao--Vargas \cite{Tao2000b},
 Klainerman--Rodnianski--Tao \cite{Klainerman2002b}, Lee-Vargas \cite{Lee2008}, and Lee-Rogers-Vargas \cite{Lee2008b}.
 In the case of
 the paraboloid, i.e. $\Phi_1(\xi)=\Phi_2(\xi)=|\xi|^2$, the result
 for homogeneous solutions is due to Tao \cite{Tao2003a}, with
 generalisations by Lee \cite{Lee2006,Lee2006a}, Lee--Vargas
 \cite{Lee2010}, and Bejenaru \cite{Bejenaru2016} under more general
 curvature and transversality conditions, as well as by
 Buschenhenke--M\"uller--Vargas \cite{Buschen2015} for surfaces of
 finite type. For our approach, the most important references are \cite{Tao2003a}
 concerning notation and general line of proof and
 \cite{Lee2010,Bejenaru2016}, concerning the assumptions on the phases
 and its consequences. Throughout the paper, we shall point out
 similarities and differences in more detail.

 We would like to highlight the fact that we explicitely
 track the dependence of the constants on the phases in Theorem \ref{thm - bilinear
   Lp estimate} based on the global, quantitative Assumption \ref{assump on phase},
 in particular we avoid abstract localisation arguments.
 This is helpful for applications to dispersive
 equations, as we will see below. The main novelty of this result, however, lies in
 the fact that it holds for $V^2_{\Phi_j}$-functions in the range $p\leq 2$.

 Now, we turn to the application of Theorem \ref{thm - bilinear Lp
   estimate} to nonlinear dispersive equations with a quadratic
 nonlinearity which exhibit resonances. Roughly speaking, by a
 resonance we mean the scenario that a product of two solutions to the
 homogeneous equations creates another solution of the homogeneous
 equation, see Section 8 for details. This leads to the lack of
 oscillations in the Duhamel integral and hence to strong nonlinear
 effects. In many instances, one finds that the Fourier supports
 intersect transversally in the resonant sets. As an example, we
 mention the local well-posedness theory for the Zakharov system
 \cite{Bejenaru2009,Bejenaru2011}, where this is exploited in terms of
 a nonlinear Loomis-Whitney inequality
 \cite{Bennett2005,Bejenaru2010,Bennett2010,Koch2015}. This is a
 special case of the multilinear restriction theory
 \cite{Bennett2006,Bennett2010}. Here, we will exploit transversality
 in resonant sets via Theorem \ref{thm - bilinear Lp estimate} and
 prove global-in-time estimates which go beyond the range of linear
 Strichartz estimates.

 With this approach, we address the Dirac-Klein-Gordon system
 \begin{equation}\label{eq:dkg}
   \begin{split}
     -i\gamma^\mu \partial_\mu \psi+M\psi=&\phi \psi\\
     \Box\phi +m^2\phi=&\psi^\dagger \gamma^0\psi
   \end{split}
 \end{equation}
 Here, $\psi:\RR^{1+3}\to \CC^4$ is a spinor field,
 $\psi^\dagger=\overline{\psi}^t$, $\phi: \RR^{1+3}\to \RR$ is a
 scalar field, $\Box:=\partial_t^2-\Delta_x$ is the d'Alembertian
 operator, and $M,m\g 0$. We use the summation convention with respect
 to $\mu=0,\ldots, 4$ and the Dirac matrices $\gamma^\mu\in
 \CC^{4\times 4}$ are given by
 \[
 \gamma^0=\diag(1,1,-1,-1), \; \gamma^j=\begin{pmatrix} 0 & \sigma^j\\
   -\sigma^j & 0
 \end{pmatrix},
 \]
 with the Pauli matrices
 \[
 \sigma^1=\begin{pmatrix}0&1\\1&0
 \end{pmatrix}, \; \sigma^2=\begin{pmatrix}0&-i\\i&0
 \end{pmatrix}, \; \sigma^3=\begin{pmatrix}1&0\\0&-1
 \end{pmatrix}.
 \]
 We are interested in the system \eqref{eq:dkg} with the initial
 condition
 \begin{equation}\label{eq:init}
 \psi(0)=\psi_0:\RR^3\to \CC^4 \text{ and } (\phi(0),\partial_t
 \phi(0))=(\phi_0,\phi_1):\RR^3\to \RR\times \RR.
\end{equation}
In the massless case \eqref{eq:dkg} can be rescaled and
 the scale invariant Sobolev space for $(\psi_0,\phi_0,\phi_1)$ is
 \[L^2(\RR^3;\CC^4)\times \dot{H}^{\frac12}(\RR^3;\RR)\times
 \dot{H}^{-\frac12}(\RR^3;\RR).\]
 Let $\langle \Omega\rangle^\sigma$ denote $\sigma$ angular derivatives, see
 Subsection \ref{subsect:anasphere} for precise definitions. Our second main
 result is the following.
 \begin{theorem}\label{thm:dkg}
 Suppose that either $2M\g m>0$ and $\sigma>0$, or that $m>2M>0$ and
 $\sigma >\frac{7}{30}$. Then, for initial data satisfying
\[
\big\|\langle\Omega\rangle^{\sigma}\psi_0\big\|_{L^2(\RR^3)}+\big\|\langle
   \Omega\rangle^{\sigma}\phi_0\big\|_{H^{\frac12}(\RR^3)}+\big\|\langle
   \Omega\rangle^{\sigma}\phi_1\|_{H^{-\frac12}(\RR^3)}\ll 1,
\]
the system \eqref{eq:dkg}--\eqref{eq:init} is globally well-posed and
solutions $(\psi,\phi)$
   scatter to free solutions as $t\to \pm \infty$.
 \end{theorem}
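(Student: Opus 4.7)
The plan is to diagonalise \eqref{eq:dkg} into four first-order half-wave equations with phases $\Phi^{D}_\pm(\xi)=\pm\sqrt{|\xi|^2+M^2}$ and $\Phi^{KG}_\pm(\xi)=\pm\sqrt{|\xi|^2+m^2}$, and then to close a contraction-mapping argument in $V^2$-based adapted function spaces carrying a small angular-regularity weight $\langle\Omega\rangle^\sigma$. Concretely, introducing the spin projectors $\Pi_\pm(-i\nabla)$ onto the $\pm\sqrt{|{-i\nabla}|^2+M^2}$ eigenspaces of the free Dirac operator and the standard half-wave decomposition $\phi_\pm=\tfrac12\bigl(\phi\mp i(\sqrt{-\Delta+m^2})^{-1}\partial_t\phi\bigr)$, the system \eqref{eq:dkg}--\eqref{eq:init} recasts as a set of Duhamel integral equations for $(\psi_+,\psi_-,\phi_+,\phi_-)$ whose nonlinearities are bilinear in these half-wave components.

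The function spaces I would use are dyadic $\ell^2$-sums of $\langle\Omega\rangle^\sigma \psi_\pm\in V^2_{\Phi^{D}_\pm}$ and $\langle\nabla\rangle^{1/2}\langle\Omega\rangle^\sigma \phi_\pm\in V^2_{\Phi^{KG}_\pm}$, tuned to the scale-invariant Sobolev regularity. By the $V^2$/$U^2$ duality the fixed point reduces to trilinear estimates of the schematic form
\[
\Big|\iint_{\RR^{1+3}}(\phi\,\psi)\cdot w\,dt\,dx\Big|+\Big|\iint_{\RR^{1+3}}(\psi^\dagger\gamma^0\psi)\, w'\,dt\,dx\Big|\lesa \|\psi\|_{F_\psi}\|\phi\|_{F_\phi}\bigl(\|w\|_{V^2_{\Phi^D_\pm}}+\|w'\|_{V^2_{\Phi^{KG}_\pm}}\bigr),
\]
with $w,w'$ ranging over the dual test classes. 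After a Littlewood--Paley and angular-cap decomposition of all factors, each such form is estimated by pairing the two factors with the sharpest relative transversality and invoking Theorem~\ref{thm - bilinear Lp estimate} to obtain the bilinear $L^p_{t,x}$ bound with some $p<2$, which is crucial for closing the argument at the critical scaling.

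To apply Theorem~\ref{thm - bilinear Lp estimate}, I would verify Assumption~\ref{assump on phase} for every relevant pair in $\{\Phi^{D}_\pm,\Phi^{KG}_\pm\}$ restricted to angularly separated dyadic caps and rescaled to the unit frequency scale. The positive principal curvatures of the hyperboloids together with the angular separation supply a uniform lower bound on $\mb{D}_1$, while $\mb{D}_2$ is controlled by direct derivative estimates on $\sqrt{|\xi|^2+\mu^2}$ at the unit scale. The explicit tracking of constants in Theorem~\ref{thm - bilinear Lp estimate} is what enables reassembling the dyadic and angular pieces with only an arbitrarily small amount of spare angular regularity.

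The principal obstacle is the treatment of resonances, i.e.\ points where a combination such as $\pm\langle\xi\rangle_M\pm\langle\eta\rangle_M\mp\langle\xi+\eta\rangle_m$ vanishes. A direct computation shows that such resonances exist precisely when $m\g 2M$, matching the physical pair-production threshold. In the non-resonant regime $2M\g m$ one has a quantitative lower bound on the associated modulation, so the standard high/low-modulation dichotomy suffices and the scheme closes for any $\sigma>0$. In the resonant regime $m>2M$, near the resonant hypersurface the phases degenerate and Theorem~\ref{thm - bilinear Lp estimate} alone is no longer strong enough; I would decompose into thin shells around the resonant set and handle the near-resonant interactions by exploiting the null structure of $\psi^\dagger\gamma^0\psi$ (and the dual null structure in $\Pi_\pm\gamma^0(\phi\psi)$) together with the gain supplied by $\langle\Omega\rangle^\sigma$. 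Balancing the loss from proximity to the resonance against the gain from the null structure and the angular weight is what pins down the threshold $\sigma>\tfrac{7}{30}$, and this balance is the technical heart of the argument.
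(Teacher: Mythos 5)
Your skeleton matches the paper's: reduction to a first-order half-wave system via $\Pi_\pm$, iteration in $V^2$-based spaces carrying $\langle\Omega\rangle^\sigma$, duality to trilinear forms, Littlewood--Paley and cap decompositions, and Theorem \ref{thm - bilinear Lp estimate} as the new bilinear input. The gaps are in the resonance analysis. First, your claim that in the regime $2M\g m$ one has a quantitative lower bound on the modulation is false at the borderline $2M=m$: after rescaling $m=1$, Lemma \ref{lem - modulation bound} shows $\mathfrak{M}_{+,-}(\xi,\eta)$ vanishes when $\eta$ is antiparallel to $\xi$ with $|\xi|=|\eta|$, so resonant high-high interactions with arbitrarily small modulation do occur even though the theorem asserts $\sigma>0$ suffices there. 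The paper closes this case by the separate observation that at exactly $M=m/2$ the null form still produces cancellation at \emph{all} modulation scales (the ``weakly resonant'' case); without that, your high/low-modulation dichotomy does not cover $2M=m$.

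Second, in the strongly resonant regime $m>2M$ your proposed mechanism (thin shells around the resonant set, null structure plus angular weight) would not close, and you have the role of the bilinear restriction estimate inverted. On the resonant set the spinor frequencies are nearly antiparallel with $|\xi|\approx|\eta|\approx\mu$, so the null-form symbol bound \eqref{eqn:nullsymbolbound} saturates at $\approx\mu^{-1}$ and gives no further gain as the modulation $d\to 0$; a shell-by-shell estimate using the modulation-$d$ factor in $L^2_{t,x}$ (cost $d^{-1/2}$) then fails to sum over $d\ll\mu^{-1}$. The paper instead sums all small modulations at once and exploits precisely the \emph{transversality} (radial or angular separation of the interacting pieces) available on the resonant set, applying the rescaled small-scale $V^2$ bilinear restriction estimate, Corollary \ref{cor - small scale bilinear estimate} via \eqref{eqn:sqrsum bilinear}, to two factors in $L^q_{t,x}$ with $q>3/2$, while the third factor is handled by the angularly improved Klein--Gordon Strichartz estimate of Lemma \ref{lem - KG strichartz}; it is this $L^{3+}_{t,x}$ estimate, costing $N^{7/30+}$ angular derivatives, that produces the threshold $\sigma>\tfrac{7}{30}$ --- not a balance of null structure against proximity to the resonance, and not a failure of Theorem \ref{thm - bilinear Lp estimate} near the resonance. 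A further structural point: the paper's iteration space is not a pure $\ell^2$-sum of $V^2$ spaces but $F=V^2+Y$ with an auxiliary high-modulation $L^a_tL^2_x$ component, which is genuinely needed in the unbalanced-frequency, high-modulation interactions (see \eqref{eqn - high low II main bound weak II} and the high-modulation cases of Theorem \ref{thm - trilinear freq loc integral}); with $V^2$ alone those blocks do not close.
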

 As the proof relies on contraction arguments in adapted function
 spaces, the notion of global well-posedness in Theorem \ref{thm:dkg}
 includes persistence of regularity and the local Lipschitz continuity
 of the flow map and it provides a certain uniqueness class.
 Note that the angular regularity does not affect the
 scaling of the spaces. In summary, Theorem \ref{thm:dkg}
 establishes global well-posedness and scattering in the critical Sobolev space
 for small initial data with a bit of angular regularity.

 In the case $2M>m>0$, which we call \emph{non-resonant regime} due to
 Lemma \ref{lem - modulation bound}, this theorem improves Wang's
 result \cite{Wang2015} by both relaxing the angular regularity
 hypothesis and replacing Besov spaces by Sobolev spaces. We also
 mention the previous subcritical result \cite{Bejenaru2015} without
 additional angular regularity, where the possibility of a Besov endpoint result with an $\epsilon>0$ of angular regularity was discussed \cite[Remark 4.2]{Bejenaru2015}. In the case $m>2M>0$, which we call
 the \emph{resonant regime} due to Lemma \ref{lem - modulation bound},
 this appears to be the first global well-posedness and scattering
 result in critical spaces for \eqref{eq:dkg}. A similar comment applies to the case $2M=m>0$, which
 we call the \emph{weakly resonant regime}.  It is the resonant regime
 where we employ Theorem \ref{thm - bilinear Lp
   estimate}, see also Remark \ref{rmk:trivial-int}. Concerning further comments on the number of angular derivatives required in
 the resonant case, we refer to Remark
 \ref{rmk:ang-reg}.

 We shall only mention a few selected results
 on this well-studied system \eqref{eq:dkg}.  We refer the reader to
 \cite{D'Ancona2007b} for previous local results and to
 \cite{Chadam1974,Bachelot1988,Bejenaru2015,Wang2015} for previous
 global results on this system, also to the references therein.
 Concerning its relevance in physics we refer the reader to
 \cite{Bjorken1964}.

 The organisation of the paper is as follows: In Section
 \ref{sect:assu}, we discuss a sufficient condition on the phases,
 verify Assumption \ref{assump on phase} in the case of the Schr\"{o}dinger, the wave,
 and the Klein-Gordon equation, and derive important consequences, in
 particular the dispersive inequality, and a bilinear estimate for
 homogeneous solutions in $L^2_{t, x}$. In Section
 \ref{sect:wave-packets-atomic-tubes}, we study wave packets, atomic
 spaces and tubes. In Section \ref{sect:loc}, we state and prove a crucial
 localised version of Theorem \ref{thm - bilinear Lp estimate}. The proof proceeds by performing an induction on scales argument, and reducing the problem to obtaining a crucial $L^2$-bound which in turn follows from a combinatorial estimate. Section \ref{sect:glob} is devoted to the
 globalisation lemma, which removes the localisation assumption used in Section \ref{sect:loc}, and hence concludes the proof of Theorem \ref{thm - bilinear Lp estimate}.
 In Section \ref{sect:small}, we generalise
 Theorem \ref{thm - bilinear Lp estimate} to mixed norms and, in the case of hyperboloids,
 give an extension to general scales and discuss counterexamples. In Section
 \ref{sect:prep} we prepare the analysis of the Dirac-Klein-Gordon
 System and prove Theorem \ref{thm:dkg} under the hypothesis that
 certain bilinear estimates hold true. In Section \ref{sect:mult} we
 discuss some auxilliary estimates and finally provide proofs of the
 bilinear estimates used in Section \ref{sect:prep}.

 \section{On Assumption \ref{assump on phase}: Examples and Consequences}\label{sect:assu}
 In this section we discuss examples, and consider in detail a number of key consequences of
 Assumption \ref{assump on phase}.  All of this is known to
 experts at least in the specific cases we are interested in. The main
 objective is to verify that Assumption \ref{assump on phase} allows
 for a unified treatment which allows to track the dependence of
 constants on the phases.

\subsection{A Sufficient Condition}
Let $\diam(\Lambda_j)= \sup_{\xi, \xi' \in \Lambda_j} |\xi - \xi'|$.
The condition \eqref{it:ass1} in Assumption \ref{assump on phase} is somewhat
difficult to check (essentially since we insist on a \emph{global}
condition rather than just a local condition using the Hessian of
$\Phi_j$). In practise it is easier to check the following marginally
stronger conditions.
\begin{lemma}\label{lem - simplification of cond ii} Assume that the following three conditions hold:
\begin{enumerate}
\item For all $\xi \in \Lambda_1$ and $\eta \in \Lambda_2$
\begin{equation}
|\nabla \Phi_1(\xi) - \nabla \Phi_2(\eta) | \g \mb{A}_1.
\label{eq:trans}
\end{equation}
\item For
  $j=1,2$, and every $\mathfrak{h} \in \RR^{1+n}$ and $\xi, \xi' \in
  \Sigma_j(\mathfrak{h})$
\begin{equation}
\label{eqn - cond on phase II}
    \Big| \big(\nabla \Phi_j(\xi) - \nabla \Phi_j(\xi') \big) \cdot \frac{\xi - \xi'}{|\xi - \xi'|} \Big|  \g \mb{A}_2 |\xi - \xi'|.
  \end{equation}
\item The sets $\Lambda_1$ and $\Lambda_2$ satisfy
\begin{equation}\label{eqn - cond on set II}
\diam(\Lambda_1) + \diam(\Lambda_2)   \les \frac{ \mb{A}_1 \mb{A}_2}{ 2 \big( \| \nabla^2 \Phi_1 \|_{L^\infty(\Lambda_1)} + \| \nabla^2 \Phi_2 \|_{L^\infty(\Lambda_2)}\big)^2}.
\end{equation}
\end{enumerate}
Then, condition \eqref{it:ass1} in Assumption \ref{assump on phase} holds
  with $\mb{D}_1 = \frac{1}{2}\mb{A}_1 \mb{A}_2 $.
\end{lemma}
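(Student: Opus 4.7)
The plan is to exploit the fact that both $\xi$ and $\xi'$ lie on $\Sigma_j(\mathfrak{h})$ to produce a ``reference'' velocity difference perpendicular to $\xi - \xi'$, and then treat $\nabla \Phi_j(\xi) - \nabla \Phi_k(\eta)$ as a small perturbation of this reference. Writing $\mathfrak{h} = (a, h)$ and $\xi_s := \xi' + s(\xi - \xi')$, the convexity of $\Lambda_j$ and $\Lambda_k$ guarantees $\xi_s \in \Lambda_j$ and $\xi_s - h \in \Lambda_k$ for all $s \in [0,1]$, so $H(s) := \Phi_j(\xi_s) - \Phi_k(\xi_s - h)$ is a well defined $C^1$ function on $[0,1]$ and the defining relation of $\Sigma_j(\mathfrak{h})$ forces $H(0) = H(1) = a$. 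Rolle's theorem produces $s_* \in (0,1)$ at which $0 = H'(s_*) = (\xi - \xi') \cdot \bigl(\nabla \Phi_j(\xi_*) - \nabla \Phi_k(\xi_* - h)\bigr)$, where $\xi_* := \xi_{s_*}$. Setting
\[
P := \nabla \Phi_j(\xi_*) - \nabla \Phi_k(\xi_* - h),
\]
this gives $P \perp v$ for $v := (\xi - \xi')/|\xi - \xi'|$, while \eqref{eq:trans} applied at $\xi_* \in \Lambda_j$ and $\xi_* - h \in \Lambda_k$ gives $|P| \g \mb{A}_1$.

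Next I would write the second wedge factor as a perturbation:
\[
\nabla \Phi_j(\xi) - \nabla \Phi_k(\eta) = P + E, \qquad E := \bigl(\nabla \Phi_j(\xi) - \nabla \Phi_j(\xi_*)\bigr) + \bigl(\nabla \Phi_k(\xi_* - h) - \nabla \Phi_k(\eta)\bigr).
\]
Since $|\xi - \xi_*| \les \diam(\Lambda_j)$ and $|(\xi_* - h) - \eta| \les \diam(\Lambda_k)$, the mean value theorem combined with the diameter bound \eqref{eqn - cond on set II} yields
\[
|E| \les \bigl(\| \nabla^2 \Phi_1 \|_{L^\infty(\Lambda_1)} + \| \nabla^2 \Phi_2 \|_{L^\infty(\Lambda_2)}\bigr)\bigl(\diam(\Lambda_1) + \diam(\Lambda_2)\bigr) \les \frac{\mb{A}_1 \mb{A}_2}{2\bigl(\| \nabla^2 \Phi_1 \|_{L^\infty(\Lambda_1)} + \| \nabla^2 \Phi_2 \|_{L^\infty(\Lambda_2)}\bigr)}.
\]

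I would then split the wedge as $A \wedge (P + E) = A \wedge P + A \wedge E$, where $A := \nabla \Phi_j(\xi) - \nabla \Phi_j(\xi')$. Decomposing $A = \alpha v + A_\perp$ with $\alpha := A \cdot v$ and $A_\perp \perp v$, the curvature hypothesis \eqref{eqn - cond on phase II} gives $|\alpha| \g \mb{A}_2 |\xi - \xi'|$. Since $P \perp v$, one has $A \cdot P = A_\perp \cdot P$, and Cauchy--Schwarz produces
\[
|A \wedge P|^2 = |A|^2|P|^2 - (A_\perp \cdot P)^2 \g \bigl(|A|^2 - |A_\perp|^2\bigr)|P|^2 = \alpha^2 |P|^2,
\]
so $|A \wedge P| \g \mb{A}_1 \mb{A}_2 |\xi - \xi'|$. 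For the error, $|A \wedge E| \les |A||E|$ together with the mean value bound $|A| \les \| \nabla^2 \Phi_j \|_{L^\infty(\Lambda_j)} |\xi - \xi'|$ and the estimate on $|E|$ above gives $|A \wedge E| \les \tfrac{1}{2} \mb{A}_1 \mb{A}_2 |\xi - \xi'|$. The triangle inequality then closes the argument with $\mb{D}_1 = \tfrac{1}{2} \mb{A}_1 \mb{A}_2$.

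The conceptual heart, and the only step requiring any real insight, is the use of Rolle's theorem inside $\Sigma_j(\mathfrak{h})$: the coincidence $H(0) = H(1)$ is what forces the existence of an interior point at which $\nabla \Phi_j - \nabla \Phi_k(\cdot - h)$ is perpendicular to $\xi - \xi'$, and this orthogonality is precisely the ingredient that lets the transversality \eqref{eq:trans} and the curvature \eqref{eqn - cond on phase II} combine multiplicatively. The diameter condition \eqref{eqn - cond on set II} then plays a purely perturbative role, absorbing the discrepancy between the mid-point reference $P$ and the true $\nabla \Phi_j(\xi) - \nabla \Phi_k(\eta)$ at a cost of at most half of the main term.
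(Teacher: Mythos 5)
Your proposal is correct, and it reaches the paper's constant $\mb{D}_1 = \tfrac12 \mb{A}_1\mb{A}_2$ by a genuinely different mechanism for the key step. The paper never introduces an intermediate point: it applies the wedge inequality $|x\wedge y| \g |y|\,|x\cdot\omega| - |x|\,|y\cdot\omega|$ with $x = \nabla\Phi_j(\xi)-\nabla\Phi_j(\xi')$, $y = \nabla\Phi_j(\xi)-\nabla\Phi_k(\eta)$, $\omega = \frac{\xi-\xi'}{|\xi-\xi'|}$, and then proves directly that $|y\cdot\omega|$ is small via an exact algebraic identity expressing $\big(\nabla\Phi_j(\xi)-\nabla\Phi_k(\eta)\big)\cdot(\xi-\xi')$ as a combination of second-order Taylor remainders $\sigma_j(x,z)=\Phi_j(x)-\Phi_j(z)-\nabla\Phi_j(z)\cdot(x-z)$, using the two constraints $\Phi_j(\xi)=\Phi_k(\xi-h)+a$ and $\Phi_j(\xi')=\Phi_k(\xi'-h)+a$; the remainders are then bounded by $\diam\cdot\|\nabla^2\Phi\|$ and absorbed via \eqref{eqn - cond on set II}. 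You instead use those same two constraints through Rolle's theorem on $H(s)=\Phi_j(\xi_s)-\Phi_k(\xi_s-h)$ to produce an interior point where the relative velocity $P$ is \emph{exactly} orthogonal to $\xi-\xi'$, and then perturb, bounding $|A\wedge E|\les |A|\,|E|$ with the full norm of $E$ rather than just its component along $\omega$; with the stated diameter hypothesis this cruder bound costs nothing, and the numerology closes identically. Both routes use the convexity of $\Lambda_1$ and $\Lambda_2$ (yours to keep $\xi_s$ and $\xi_s-h$, and the segment from $\xi_*-h$ to $\eta$, inside the sets; the paper's to justify the mean-value bounds on the remainders), so no hypotheses beyond those of the lemma are invoked. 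Your Rolle-based version is arguably more transparent geometrically, while the paper's identity is an exact computation that isolates precisely the component $|y\cdot\omega|$ that enters the wedge inequality; quantitatively they are equivalent here.
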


\begin{proof}
  The first step is to observe that for vectors $x, y \in \RR^n$, and
  $\omega \in \sph^{n-1}$ we have
  \begin{equation}\label{eqn - wedge product identity}
    |x\wedge y| \g |y||x \cdot \omega| - |x| |y \cdot \omega| .
  \end{equation}
  Indeed, this follows from $|x\wedge y|^2 = |x|^2|y|^2 - (x \cdot
  y)^2 = |y|^2 \big| x - \frac{x\cdot y}{|y|^2} y\big|^2$, which
  implies
  $$ |x \wedge y| = |y| \left| x - \frac{x \cdot y}{|y|^2} y
  \right| \g |y| \left| x \cdot \omega - \frac{x \cdot y}{|y|^2} y
    \cdot \omega \right| \g |y| \left( |x\cdot \omega| -
    \frac{|x|}{|y|} |y \cdot \omega|\right).$$ In particular, if we
  let $x = \nabla \Phi_j(\xi) - \nabla \Phi_j(\xi') $, $y = \nabla
  \Phi_j(\xi) - \nabla \Phi_k(\eta)$, and $\omega = \frac{\xi -
    \xi'}{|\xi - \xi'|}$, then since $|x|\les \| \nabla^2
  \Phi_j\|_{L^\infty(\Lambda_j)} |\xi - \xi'|$ (using the convexity of
  $\Lambda_j$) the lower bound \eqref{it:ass1} in Assumption \ref{assump on
    phase} would follow from \eqref{eqn - cond on phase II},
  \eqref{eqn - wedge product identity}, and the transversality
  condition \eqref{eq:trans}, provided that
  \begin{equation}\label{eqn - dot product small} \left|
      \big(\nabla
      \Phi_j(\xi) - \nabla \Phi_k(\eta)\big) \cdot \frac{\xi -
        \xi'}{|\xi - \xi'|} \right| \les \frac{ \mb{A}_1 \mb{A}_2}{2 \|
      \nabla^2 \Phi_j\|_{L^\infty(\Lambda_j)}}.
  \end{equation}
  The proof of \eqref{eqn - dot product small} requires the condition
  $\xi, \xi' \in \Sigma_j(\mathfrak{h})$ together with the assumption
  \eqref{eqn - cond on set II} on the size of the sets
  $\Lambda_j$. Let
$$ \sigma_j(x, z)= \Phi_j(x) - \Phi_j(z) - \nabla \Phi_j(z) \cdot (x-z).$$
A computation gives
\begin{align*}  \nabla\Phi_j(z) \cdot (x - y) &= \big( \Phi_j(x) - \sigma_j(x, z) - \Phi_j(z) - \nabla \Phi_j(z) \cdot z\big) - \big( \Phi_j(y) - \sigma_j(y, z) - \Phi_j(z) - \nabla \Phi_j(z) \cdot z\big) \\
  &= \Phi_j(x) - \Phi_j(y) + \sigma_j(y, z) - \sigma_j(x, z)
\end{align*}
and hence, using the assumption $\xi, \xi' \in
\Sigma_j(\mathfrak{h})$, we see that
\begin{align*}
  \big( \nabla \Phi_j(\xi) - \nabla &\Phi_k(\eta) \big) \cdot ( \xi - \xi') \\
  &= \Phi_j(\xi) - \Phi_j(\xi') + \sigma_j(\xi', \xi)  - \Big( \Phi_j(\xi - h) - \Phi_k(\xi' - h) + \sigma_k(\xi' - h, \eta) - \sigma_k(\xi - h, \eta) \Big) \\
  &= \sigma_j(\xi', \xi) + \sigma_k(\xi-h, \eta) - \sigma_k(\xi'-h,
  \eta).
\end{align*}
If we now observe that
$$\sigma_j(x, z) - \sigma_j(y,z) = \int_0^1 \big[ \nabla \Phi_j\big( y + t(x-y)\big)  - \nabla \Phi_j(z) \big]\cdot (x-y) dt \les \| \nabla^2 \Phi_j \|_{L^\infty(\Lambda_j)} \text{ diam}(\Lambda_j) |x-y|$$ we then deduce the bound
$$\left| \big(\nabla \Phi_j(\xi) - \nabla \Phi_k(\eta)\big) \cdot \frac{\xi - \xi'}{|\xi - \xi'|} \right| \les \text{ diam}(\Lambda_1) \| \nabla^2 \Phi_1 \|_{L^\infty(\Lambda_1)} + \text{ diam}(\Lambda_2) \| \nabla^2 \Phi_2 \|_{L^\infty(\Lambda_2)}. $$
Consequently \eqref{eqn - dot product small} follows from \eqref{eqn -
  cond on set II}.
\end{proof}

\subsection{The Schr\"{o}dinger, the Wave and the Klein-Gordon Equation}\label{subsect:swkg} We now consider some examples of phases satisfying Assumption \ref{assump on phase}. It is enough to check the conditions in Lemma \ref{lem - simplification of cond ii}. In particular, by making the sets $\Lambda_j$ slightly smaller if necessary, it suffices to ensure that the transversality condition (\ref{eq:trans}) and curvature condition (\ref{eqn - cond on phase II}) hold.

Firstly, consider the Schr\"{o}dinger case
$$ \Phi_j(\xi) = \frac{1}{2} |\xi|^2.$$
Then the condition \eqref{eq:trans} in Lemma \ref{lem - simplification of cond ii} becomes
$$ |\nabla \Phi_1(\xi) - \nabla\Phi_2(\eta)| = |\xi - \eta|,$$
thus we simply require that the sets $\Lambda_j$ have some
separation. Assuming that the diameters of the sets $\Lambda_j$ are
sufficiently small, we just need to ensure that \eqref{eqn - cond on
  phase II} holds. However \eqref{eqn - cond on phase II} is just
$$ \Big| \big(\nabla \Phi_j(\xi) - \nabla \Phi_j(\xi') \big) \cdot \frac{\xi - \xi'}{|\xi - \xi'|} \Big| = |\xi - \xi'|$$
and so \eqref{eqn - cond on phase II} clearly holds (with constant
$\mb{A}_2=1$).

Secondly, consider the case
$$ \Phi_j(\xi) = \lr{\xi}_{m_j} =  \big( m_j^2 + |\xi|^2 \big)^\frac{1}{2}$$
where the mass satisfies $m_j \g 0$.  To simplify notation, we assume
that for $\xi \in \Lambda_j$ we there is a constant $A>0$ such that
$$ \frac{1}{A} \les \lr{\xi}_{m_j} \les A. $$
To check the transversality condition \eqref{eq:trans} we note that
\begin{align} \big| \nabla\Phi_1(\xi) - \nabla \Phi_2(\eta) \big|^2 &= \left| \frac{\xi}{\lr{\xi}_{m_1}} - \frac{\eta}{\lr{\eta}_{m_2}} \right|^2 \notag \\
  &= \left( \frac{|\xi|}{\lr{\xi}_{m_1}} - \frac{|\eta|}{\lr{\eta}_{m_2}} \right)^2 + \frac{ 2 |\xi| |\eta|}{\lr{\xi}_{m_1} \lr{\eta}_{m_2}} \left( 1 - \frac{\xi \cdot \eta}{|\xi| |\eta| } \right) \notag\\
  &= \left( \frac{ (m_2 |\xi| + m_1 |\eta|)(m_2 |\xi| - m_1 |\eta|)}{\lr{\xi}_{m_1}
      \lr{\eta}_{m_2} ( |\xi| \lr{\eta}_{m_2} + |\eta|
      \lr{\xi}_{m_1})} \right)^2 + \frac{ 2
    |\xi| |\eta|}{\lr{\xi}_{m_1} \lr{\eta}_{m_2}} \left( 1 - \frac{\xi
      \cdot \eta}{|\xi| |\eta| } \right) \label{eqn:KG transverse}
\end{align}
(in particular, we \emph{always} have transversality if $|\xi| \approx
|\eta| \approx 1$ and $m_1 \ll m_2$).

On the other hand, to check the condition \eqref{eqn - cond on phase II},  we use the
following elementary bound.

\begin{lemma}\label{lem - simplifying curvature computation}
  Let $\ell \g 2$ and $(a, h) \in \RR^{1+\ell}$. If $x, y \in \{ z\in \RR^\ell
  \mid |z|=|z-h| +a\}$ we have the inequality
$$\Big| \frac{x}{|x|} - \frac{y}{|y|} \Big|^2 \g  |x-y|^2 \Big| \frac{x}{|x|} - \frac{x-h}{|x-h|}\Big|^4 \frac{ |x-h|^2}{16|x| |y| |x-h|^2 + 4 (|x-h|+|x|)^2 |y|^2}. $$
\end{lemma}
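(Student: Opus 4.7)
The plan is to combine a Cauchy--Schwarz estimate in the direction of $h$ (capturing the ``radial'' contribution) with the elementary identity
\[
\Big|\frac{x}{|x|}-\frac{y}{|y|}\Big|^2 = \frac{|x-y|^2-(|x|-|y|)^2}{|x||y|}
\]
(capturing the ``tangential'' contribution) through a case-split optimisation in a parameter $\tau \in [0,1]$. Throughout I write $\omega = \frac{x}{|x|}-\frac{x-h}{|x-h|}$.

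First I would exploit the surface constraints $|x-h|=|x|-a$ and $|y-h|=|y|-a$. Expanding $|x-h|^2 = (|x|-a)^2 = |x|^2 - 2 x\cdot h + |h|^2$ gives $x\cdot h = \frac{|h|^2-a^2}{2} + a|x|$, and similarly $y\cdot h = \frac{|h|^2-a^2}{2} + a|y|$. Dividing by $|x|$ and $|y|$ and subtracting, the $a$-terms cancel and produce the identity
\[
\Big(\frac{x}{|x|}-\frac{y}{|y|}\Big)\cdot h = \frac{(|h|^2-a^2)(|y|-|x|)}{2|x||y|}.
\]
A similar direct computation gives $|\omega|^2 = (|h|^2-a^2)/(|x||x-h|)$, so $(|h|^2-a^2)^2 = |x|^2|x-h|^2|\omega|^4$. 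Applying $(f\cdot h)^2 \le |f|^2|h|^2$ with $f = \frac{x}{|x|} - \frac{y}{|y|}$ then yields the radial lower bound
\[
\Big|\frac{x}{|x|}-\frac{y}{|y|}\Big|^2 \ge \frac{(|x|-|y|)^2|x-h|^2|\omega|^4}{4|y|^2|h|^2}.
\]

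Second, I would combine this estimate with the tangential identity via a case-split in $\tau\in[0,1]$. Either $(|x|-|y|)^2 \ge (1-\tau)|x-y|^2$, in which case the radial bound yields a lower bound of $(1-\tau)|x-y|^2|x-h|^2|\omega|^4/(4|y|^2|h|^2)$, or $(|x|-|y|)^2 < (1-\tau)|x-y|^2$, in which case the tangential identity gives a lower bound of $\tau|x-y|^2/(|x||y|)$. Setting the two candidate expressions equal and solving for $\tau$ produces
\[
\Big|\frac{x}{|x|}-\frac{y}{|y|}\Big|^2 \ge \frac{|x-y|^2|x-h|^2|\omega|^4}{4|y|^2|h|^2 + |x||y||x-h|^2|\omega|^4}.
\]

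Third, I would estimate the denominator from above using the two elementary bounds $|\omega|^4 \le 16$ (since $|\omega|$ is the difference of two unit vectors, hence $\le 2$) and $|h|^2 \le (|x|+|x-h|)^2$ (the triangle inequality applied to $h = x - (x-h)$). These estimates replace $|x||y||x-h|^2|\omega|^4$ by $16|x||y||x-h|^2$ and $4|y|^2|h|^2$ by $4(|x-h|+|x|)^2|y|^2$, producing exactly the denominator $16|x||y||x-h|^2 + 4(|x-h|+|x|)^2|y|^2$ of the statement. The main technical point is the optimisation in the second step: one must verify that the two terms in the resulting denominator align precisely with the two terms in the claim after the triangle-type estimates of the third step, so that the constants $16$ and $4$ drop out naturally; everything else is routine algebra.
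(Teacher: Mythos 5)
Your proof is correct and follows essentially the same route as the paper: the surface constraint gives $\big(\tfrac{x}{|x|}-\tfrac{y}{|y|}\big)\cdot h$ explicitly, Cauchy--Schwarz in the $h$-direction together with the identity $|h|^2-a^2=|x||x-h|\,\big|\tfrac{x}{|x|}-\tfrac{x-h}{|x-h|}\big|^2$ yields the radial bound, and this is combined with $|x-y|^2=|x||y|\big|\tfrac{x}{|x|}-\tfrac{y}{|y|}\big|^2+(|x|-|y|)^2$; your $\tau$-optimisation is just a repackaging of the paper's direct substitution of the radial bound into that identity, and the final crude bounds $|\omega|^4\les 16$, $|h|\les |x|+|x-h|$ match the stated constants.
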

\begin{proof}
  The condition $x \in \{ z\in \RR^\ell \mid |z|=|z-h| +a\}$ implies that
  $|x-h|^2 = ( |x| -a)^2$ and hence $\frac{x}{|x|} \cdot h =
  \frac{|h|^2 -a^2}{2|x|} +a$. Therefore
 $$ \Big| \frac{x}{|x|} - \frac{y}{|y|} \Big| \g \frac{|h|^2 - a^2}{2|h|} \Big| \frac{1}{|x|} - \frac{1}{|y|} \Big| = \frac{ |x-h|}{2|h| |y| } \Big| \frac{x}{|x|} - \frac{x-h}{|x-h|} \Big|^2 \big| |x|-|y| \big|$$
 where we used the identities $h=x - (x-h)$ and $a= |x| -
 |x-h|$. Lemma now follows by noting that $|x-y|^2 = |x| |y|\big|
 \frac{x}{|x|} - \frac{y}{|y|} \big|^2 + \big| |x|-|y| \big|^2$.
\end{proof}

We now show that \eqref{eqn - cond on phase II} holds. A computation
gives
\begin{align*}
  \big|\big(\nabla \Phi_j(\xi) - \nabla \Phi_j(\xi') \big) \cdot (\xi - \xi') \big| &= \left| \frac{ |\xi|^2}{\lr{\xi}_{m_j}} + \frac{|\xi'|^2}{\lr{\xi'}_{m_j}}  - \frac{ \xi \cdot \xi'}{\lr{\xi}_{m_j}}  - \frac{\xi \cdot \xi'}{\lr{\xi}_{m_j}} \right| \\
  &= \left| \lr{\xi}_{m_j} + \lr{\xi'}_{m_j} - \frac{\xi \cdot \xi' + m_j^2}{\lr{\xi}_{m_j}} - \frac{\xi \cdot \xi' + m_j^2}{\lr{\xi'}_{m_j}} \right| \\
  &= \frac{\lr{\xi}_{m_j} + \lr{\xi'}_{m_j}}{2} \Big| \frac{x}{|x|} -
  \frac{y}{|y|} \Big|^2
\end{align*}
were we let $x = (m_j, \xi)$ and $y=(m_j, \xi')$. If we now note that
the surface $\Phi_j(\xi) = \Phi_k(\xi - h) +a$ can be written as $|x|
= |y-h'| + a$ with $h'=(m_k-m_j, h)$, then an application of Lemma
\ref{lem - simplifying curvature computation} gives
$$ \big|\big(\nabla \Phi_j(\xi) - \nabla \Phi_j(\xi') \big) \cdot (\xi - \xi') \big| \g \frac{\mb{A}_1^4}{32 A^6} |\xi - \xi'|^2.$$
Therefore, by Lemma \ref{lem - simplification of cond ii}, we see that
\eqref{it:ass1} in Assumption \ref{assump on phase} holds with $\mb{D}_1 = \frac{
  \mb{A}_1^5}{64 A^6}$. Note that the above argument also applies in the case of the wave equation $m_1=m_2=0$.

\subsection{The Dispersive Inequality}
To simplify the statements to follow, we fix constants $\mb{R}_0\g 1$,
$ \mb{D}_1, \mb{D}_2>0$ and $N > n+1$, and assume that we have
phases $\Phi_1$, $\Phi_2$ satisfying Assumption \ref{assump on phase}
and sets $\Lambda_j$, $\Lambda_j^*$ with $\Lambda_j$ convex and
$\Lambda_j^* + \frac{1}{\mb{R}_0} \subset \Lambda_j \subset \{
\frac{1}{16}\les |\xi| \les 16\}$.

As a consequence of the curvature type bound \eqref{eqn - seperation of velocities in Sigma} relative to the ($n-1$)-dimensional surface  $\Sigma_j(\mathfrak{h})$, we
expect that we should have the dispersive inequality
\begin{equation}\label{eqn - dispersive est}
  \|e^{it \Phi_j(-i\nabla)} f \|_{L^\infty_x} \lesa t^{-\frac{n-1}{2}} \| f \|_{L^1_x}
\end{equation}
for $f \in L^1$ with $\supp \widehat{f} \subset \Lambda_j$. To prove
this decay in practise, the standard approach would involve a
stationary phase argument. However, as we only have curvature
information on the surfaces $\Sigma_j(\mathfrak{h})$, and these
surfaces are somewhat involved to work with, the standard approach via
stationary phase arguments, keeping track of the constants, seems
difficult to implement. Consequently, we instead present a different
argument, using an approach via wave packets.  Roughly speaking,
fixing some large time $t \approx R$, the idea is to cover $\Lambda_j$
with balls of size $R^{-\frac{1}{2}}$ and decompose
        $$ e^{it \Phi_j(-i\nabla)} f = \sum_{\xi_0 \in R^{-\frac{1}{2} } \ZZ^n \cap \supp \widehat{f}} K_{\xi_0} * f$$
        for some smooth kernels $K_{\xi_0}(t, x)$ with $\|
        K_{\xi_0}(t) \|_{L^\infty_x} \les R^{-\frac{n}{2}}$. Then
        since $\Sigma_j(\mathfrak{h})$ is a hypersurface, by
        restricting to points close to $\Sigma_j(\mathfrak{h})$ we
        should have
        \begin{align*}
          \| e^{ i t \Phi_j(-i\nabla)} f\|_{L^\infty_x}& \les \| f \|_{L^1_x} \bigg\| \sum_{\xi_0 \in R^{-\frac{1}{2} } \ZZ^n \cap \supp \widehat{f}} K_{\xi_0}(t, x) \bigg\|_{L^\infty_x} \\
          &\lesa \| f \|_{L^1_x} R^\frac{1}{2} \sup_{\mathfrak{h}}
          \bigg\| \sum_{\xi_0 \in R^{-\frac{1}{2} } \ZZ^n \cap
            (\Sigma_j(\mathfrak{h}) + R^{-\frac{1}{2}})} K_{\xi_0}(t,
          x) \bigg\|_{L^\infty_x}.
        \end{align*}
        The condition \eqref{it:ass1} in Assumption \ref{assump on phase} then
        shows that, for times $t \approx R^{-\frac{1}{2}}$, the
        spatial supports of the kernels $K_{\xi_0}(t, x)$ are
        essentially disjoint, and hence
    $$ \bigg\| \sum_{\xi_0 \in R^{-\frac{1}{2} } \ZZ^n \cap (\Sigma_j(\mathfrak{h}) + R^{-\frac{1}{2}})} K_{\xi_0}(t, x) \bigg\|_{L^\infty_x} \approx \sup_{\xi_0 \in R^{-\frac{1}{2} } \ZZ^n \cap (\Sigma_j(\mathfrak{h}) + R^{-\frac{1}{2}})} \| K_{\xi_0}(t) \|_{L^\infty_x} \lesa R^{-\frac{n}{2}} \approx t^{-\frac{n}{2}}$$
    which would then give the desired dispersive estimate \eqref{eqn -
      dispersive est}.

    In the remainder of this subsection, we fill in the details of the
    argument sketched above.  We first require a technical lemma
    involving the surfaces $\Sigma_j(\mathfrak{h})$.

\begin{lemma}\label{lem - properties of surface Sigma}
  Let $\{j, k\} = \{ 1, 2\}$, $\mathfrak{h} = (a, h) \in \RR^{1+n}$,
  and $r \g 2\frac{\mb{D}_2}{\mb{D}_1} \mb{R}_0$. Assume $\xi_0 \in
  (\Lambda_j^* + \frac{1}{2 \mb{R}_0}) \cap (\Lambda_k^* + h +
  \frac{1}{2 \mb{R}_0})$ and
        $$ \big| \Phi_j(\xi_0) - \Phi_k(\xi_0-h) - a \big| \les \frac{1}{r}.$$
        Then $|\xi_0 - \Sigma_j(\mathfrak{h})| \les \frac{\mb{D}_2}{\mb{D}_1 r} $.
      \end{lemma}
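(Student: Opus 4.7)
Define $F : \RR^n \to \RR$ by $F(\xi) = \Phi_j(\xi) - \Phi_k(\xi - h) - a$, so that $\Sigma_j(\mathfrak{h})$ is the zero set of $F$ inside $\Lambda_j \cap (\Lambda_k + h)$ and the hypothesis reads $|F(\xi_0)| \les 1/r$. The plan is to construct a nearby zero of $F$ by following a normalised gradient flow. The two ingredients are: first, the transversality consequence \eqref{eq:trans-main} of Assumption \ref{assump on phase}, which (upon bounding the minimum in its denominator by $\mb{D}_2$) gives the uniform bound $|\nabla F(\xi)| = |\nabla \Phi_j(\xi) - \nabla \Phi_k(\xi - h)| \g \mb{D}_1/\mb{D}_2$ on the common domain; and second, the smoothness $\Phi_j \in C^N$ from \eqref{it:ass2}, which makes $\nabla F/|\nabla F|^2$ locally Lipschitz so an ODE is well-defined.

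Concretely, set $c = \sgn F(\xi_0)$ and consider the ODE
\[
\dot{\gamma}(t) = -c\,\frac{\nabla F(\gamma(t))}{|\nabla F(\gamma(t))|^2}, \qquad \gamma(0) = \xi_0,
\]
maintained as long as $\gamma(t) \in \Lambda_j \cap (\Lambda_k + h)$. Along the flow one computes $\tfrac{d}{dt} F(\gamma(t)) = \nabla F(\gamma) \cdot \dot{\gamma} = -c$, so $F(\gamma(t)) = F(\xi_0) - c t$, which vanishes precisely at
\[
t^* = |F(\xi_0)| \les \frac{1}{r}.
\]
Moreover, the lower bound $|\nabla F(\gamma)| \g \mb{D}_1/\mb{D}_2$ gives $|\dot{\gamma}(t)| \les \mb{D}_2/\mb{D}_1$, and hence
\[
|\gamma(t) - \xi_0| \les \int_0^t |\dot{\gamma}(\tau)|\, d\tau \les \frac{\mb{D}_2}{\mb{D}_1}\, t.
\]
Evaluated at $t = t^*$ this yields $|\gamma(t^*) - \xi_0| \les \mb{D}_2/(\mb{D}_1 r)$, which is the claimed bound; setting $\xi^* = \gamma(t^*)$ then gives the desired point of $\Sigma_j(\mathfrak{h})$.

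The main (and really only) obstacle is verifying that the flow does not exit $\Lambda_j \cap (\Lambda_k + h)$ before time $t^*$, since the lower bound on $|\nabla F|$ is only asserted on that set. This is where the quantitative separation between $\Lambda_j^*$ and $\Lambda_j$ enters. From the displayed estimate above, for any $t \in [0, t^*]$ the travelled distance is at most
\[
\frac{\mb{D}_2}{\mb{D}_1}\,t \les \frac{\mb{D}_2}{\mb{D}_1\, r} \les \frac{1}{2\mb{R}_0},
\]
where the last inequality uses precisely the hypothesis $r \g 2\mb{D}_2 \mb{R}_0/\mb{D}_1$. Since by assumption $\xi_0 \in (\Lambda_j^* + \tfrac{1}{2\mb{R}_0}) \cap (\Lambda_k^* + h + \tfrac{1}{2\mb{R}_0})$ and $\Lambda_j^* + \tfrac{1}{\mb{R}_0} \subset \Lambda_j$ (similarly for $k$), the ball of radius $\tfrac{1}{2\mb{R}_0}$ around $\xi_0$ lies in $\Lambda_j \cap (\Lambda_k + h)$. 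A standard continuation argument therefore shows that $\gamma$ is defined on all of $[0, t^*]$ and stays inside this ball, completing the proof.
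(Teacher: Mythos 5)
Your proof is correct and follows essentially the same route as the paper: both run a gradient-descent flow for $F(\xi)=\Phi_j(\xi)-\Phi_k(\xi-h)-a$, use the transversality consequence \eqref{eq:trans-main} together with $\|\nabla^2\Phi_k\|\les\mb{D}_2$ to get $|\nabla F|\g \mb{D}_1/\mb{D}_2$, and conclude that a zero of $F$ is reached within distance $\mb{D}_2/(\mb{D}_1 r)$ of $\xi_0$; your normalisation $-\nabla F/|\nabla F|^2$ versus the paper's unit-speed $-\nabla F/|\nabla F|$ is just a reparametrisation. You additionally spell out the containment check (that the flow stays in $\Lambda_j\cap(\Lambda_k+h)$, via $r\g 2\tfrac{\mb{D}_2}{\mb{D}_1}\mb{R}_0$ and the $\tfrac{1}{2\mb{R}_0}$-neighbourhood hypothesis), which the paper leaves implicit — a welcome bit of extra care, not a deviation.
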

      \begin{proof}
        Define $F(\xi) = \Phi_1(\xi) - \Phi_2(\xi - h) -a$, by replacing $F$ with $-F$ if necessary, we may assume that $F(\xi_0)\g 0$. We need to
        show there exists $|\xi - \xi_0| \les \frac{\mb{D}_2}{\mb{D}_1 r}$ such that $F(\xi)=0$. To this end,
        let $\xi(s)$ be the solution to
        \begin{align*}
          \p_s \xi(s) &= - \frac{\nabla F\big( \xi(s) \big)}{|\nabla F\big( \xi(s) \big)|}\\
          \xi(0) &= \xi_0.
        \end{align*}
        Note that, for times $s \in [0, \frac{\mb{D_2}}{r \mb{D}_1}]$, we
        have $|\xi(s) - \xi_0| \les  s $. On the other hand, since
        $|F(\xi_0)| \les \frac{1}{r}$ by assumption, the
        transversality property \eqref{eq:trans-main} implies
	$$ F\big(\xi(s) \big) = F(\xi_0) - \int_0^s |\nabla F\big(\xi(s')\big) | ds' \les \frac{1}{r} - s \frac{\mb{D}_1}{\mb{D}_2}. $$
        Consequently $F(\xi(s))$ must be zero for some $s \in [0,
        \frac{\mb{D_2}}{r \mb{D}_1}]$ and hence result follows.
      \end{proof}

      We now come to the proof of the dispersive inequality.

      \begin{lemma}[Dispersion] \label{lem - dispersion} Let
        $j=1,2$. For any $f \in L^1_x$ with $\supp \widehat{f} \subset
        \Lambda_j^* + \frac{1}{2\mb{R}_0}$ and any $t \g1 $ we have
	$$ \big\| e^{it \Phi_j(-i\nabla)} f \big\|_{L^\infty_x} \lesa t^{-\frac{n-1}{2}}  \| f \|_{L^1_x}$$
        where the implied constant depends only $\mb{R}_0,
        \mb{D}_1, \mb{D}_2$, and $n\geq 2$.
      \end{lemma}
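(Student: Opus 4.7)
The plan is to implement in detail the wave-packet sketch given just before the lemma: decompose the propagator at the scale $R^{-1/2}$ with $R=t$, prove a physical-space kernel bound for each packet, and foliate $\Lambda_j$ by the hypersurfaces $\Sigma_j(\mathfrak{h})$ so that the bi-Lipschitz consequence \eqref{eqn - seperation of velocities in Sigma} can drive a final lattice-point count.

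Fix $\chi \in C^\infty_c(\RR^n)$ with $\chi \equiv 1$ on $\Lambda_j^* + \tfrac{1}{2\mb{R}_0}$ and $\supp \chi \subset \Lambda_j$, so that $e^{it\Phi_j(-i\nabla)}f = K_t \ast f$ for $K_t(x) = \int e^{i(x\cdot\xi + t\Phi_j(\xi))}\chi(\xi)\, d\xi$; by Young's inequality it suffices to show $\|K_t\|_{L^\infty_x}\lesa t^{-(n-1)/2}$. Set $R = t \g 1$ and take a smooth partition of unity $\{\psi_{\xi_0}\}_{\xi_0 \in R^{-1/2}\ZZ^n}$ at scale $R^{-1/2}$, giving $K_t = \sum_{\xi_0} K_{\xi_0}$ with
\[
K_{\xi_0}(t,x) = \int e^{i(x\cdot\xi + t\Phi_j(\xi))}\chi(\xi)\psi_{\xi_0}(\xi)\,d\xi.
\]
Rescaling $\xi = \xi_0 + R^{-1/2}\eta$ leaves only the quadratic-and-higher part of the phase in $\eta$, with derivatives bounded uniformly in $\xi_0$ by Assumption \ref{assump on phase}(ii); integration by parts in $\eta$ yields, for any $N$ bounded above by the available order of smoothness,
\[
|K_{\xi_0}(t,x)| \lesa R^{-n/2}\bigl(1 + R^{-1/2}|x + t\nabla \Phi_j(\xi_0)|\bigr)^{-N}.
\]

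Let $k \in \{1,2\}\setminus\{j\}$ and fix $h \in \RR^n$ with $|h|\sim \mb{R}_0^{-1}$ so that every lattice point entering the sum lies in $(\Lambda_j^* + \tfrac{1}{2\mb{R}_0}) \cap (\Lambda_k^* + h + \tfrac{1}{2\mb{R}_0})$; the existence of such $h$ is implicit in the derivation of \eqref{eq:trans-main}, and a slight shrinking of $\chi$ absorbs any stragglers. For these $\xi_0$ define $a_{\xi_0} = \Phi_j(\xi_0) - \Phi_k(\xi_0 - h)$, a quantity of size $O(\mb{D}_2 \mb{R}_0^{-1})$, and group the lattice points according to the $R^{-1/2}$-bin into which $a_{\xi_0}$ falls; this produces $O(R^{1/2})$ occupied bins, and by Lemma \ref{lem - properties of surface Sigma} every $\xi_0$ in bin $\alpha$ lies within $O(R^{-1/2})$ of the hypersurface $\Sigma_j(\mathfrak{h}_\alpha)$ with $\mathfrak{h}_\alpha = (\alpha R^{-1/2}, h)$. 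Fix $x$ and set $v = -x/t$. Projecting each $\xi_0$ in bin $\alpha$ orthogonally onto $\Sigma_j(\mathfrak{h}_\alpha)$, the bi-Lipschitz bound \eqref{eqn - seperation of velocities in Sigma} shows that at most $O(1)$ lattice points in the bin can satisfy $|\nabla\Phi_j(\xi_0) - v| \lesa R^{-1/2}$, while any further lattice point has $|\nabla\Phi_j(\xi_0) - v| \gtrsim R^{-1/2}$ times its lattice distance from this cluster. Since the $(n-1)$-dimensional shell sum $\sum_{m\g 0} 2^{m(n-1)}(1+2^m)^{-N}$ converges for $N > n-1$, the wave-packet estimate gives $\sum_{\xi_0 \in \mathrm{bin}\,\alpha}|K_{\xi_0}(t,x)| \lesa R^{-n/2}$, and summing over the $O(R^{1/2})$ bins yields $\|K_t\|_{L^\infty_x} \lesa R^{-(n-1)/2} = t^{-(n-1)/2}$.

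The principal obstacle is the per-bin count: the bi-Lipschitz property \eqref{eqn - seperation of velocities in Sigma} holds only on the surface $\Sigma_j(\mathfrak{h})$ itself, so its transfer to an $R^{-1/2}$-neighborhood has to be paid for using Assumption \ref{assump on phase}(ii), and the lattice points must be enumerated along the surface so that the geometric-type tail summation is uniform in the bin. A secondary technical issue is choosing a single vector $h$ compatible with the sets $\Lambda_j^*$, $\Lambda_k^*$ throughout $\supp \chi$; this may force a minor shrinking of $\Lambda_j^*$, with any loss absorbed into the implicit constants.
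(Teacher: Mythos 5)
Your proposal follows essentially the same route as the paper's proof: decompose the propagator into packets at frequency scale $R^{-1/2}$ with $R\approx t$, get the kernel decay $|K_{\xi_0}(t,x)|\lesa R^{-n/2}(1+R^{-1/2}|x+t\nabla\Phi_j(\xi_0)|)^{-N}$ by non-stationary phase using \eqref{it:ass2} of Assumption \ref{assump on phase}, bin the centres $\xi_0$ by the value of $\Phi_j(\xi_0)-\Phi_k(\xi_0-h)$ into $O(R^{1/2})$ level sets and place each bin in an $O(R^{-1/2})$-neighbourhood of $\Sigma_j(\mathfrak{h})$ via Lemma \ref{lem - properties of surface Sigma}, then use the expansion property \eqref{eqn - seperation of velocities in Sigma} to sum within a bin. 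Your per-bin count (an $O(1)$ cluster where $|\nabla\Phi_j(\xi_0)+x/t|\lesa R^{-1/2}$, then shells of $O(2^{m(n-1)})$ lattice points along the surface) is a harmless variant of the paper's, which simply takes the lattice point minimising $|x+t\nabla\Phi_j(\cdot)|$ and dominates the bin sum by a full $n$-dimensional lattice sum with decay exponent $n+1$; your refinement is legitimate but unnecessary. The one step that does not work as written is fixing a single translation $h$: since $\Lambda_k^*$ may be much smaller than $\Lambda_j^*$ (it can be a ball of radius $\sim\mb{R}_0^{-1}$), no single $h$ makes all of $\supp\chi$ lie in $\Lambda_k^*+h+\tfrac{1}{2\mb{R}_0}$, and shrinking $\chi$ cannot repair this because $\chi$ must equal $1$ on $\supp\widehat f$. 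The paper's fix, which you should adopt, is to cover $\Lambda_j$ by $O(\mb{R}_0^n)$ translates of $\Lambda_2^*+\tfrac{1}{2\mb{R}_0}$ and split $\widehat f$ accordingly, each piece carrying its own $h$; this only costs a constant depending on $\mb{R}_0$, which is allowed. Similarly, the number of occupied bins is $O(R^{1/2})$ because the oscillation of $\Phi_j-\Phi_k(\cdot-h)$ over the (bounded) support is $O(\mb{D}_2)$, not because $|a_{\xi_0}|=O(\mb{D}_2\mb{R}_0^{-1})$ (the phases themselves are not bounded by Assumption \ref{assump on phase}); again only constants are affected.
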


\begin{proof}
  It is enough to consider the case $j=1$ and $R\les t \les 2R$ with
  $R \g (10 \mb{R}_0)^2$. Since $\Lambda_2^* + \frac{1}{2 \mb{R}_0}$
  contains a ball of size $(2\mb{R}_0)^{-1}$, we can find a finite set
  $H \subset \RR^n$ such that $\# H \lesa \mb{R}_0^n$ and $ \Lambda_1
  = \cup_{h \in H} \Lambda_1 \cap (\Lambda_2^* + \frac{1}{2\mb{R}_0}
  h)$. In particular, by decomposing $\supp \widehat{f}$ into
  $\mc{O}(\mb{R}_0^n)$ sets, is is enough to consider the case $\supp
  \widehat{f} \subset (\Lambda^*_1 + \frac{1}{2\mb{R}_0}) \cap
  (\Lambda_2^* + \frac{1}{2\mb{R}_0} + h)$. Let $\rho \in
  C^\infty_0(|\xi| \les 1)$ such that
		$$ \sum_{k \in \ZZ^n} \rho(\xi - k) = 1.$$
                The support assumption on $\widehat{f}$, together with
                the fact that $R\g (10\mb{R}_0)^2$, implies that
	$$ \big(e^{it \Phi_1(-i\nabla)} f\big)(x) = \sum_{\xi_0 \in R^{-\frac{1}{2} }\ZZ^n \cap (\supp \widehat{f} + \frac{1}{10\mb{R}_0})} K_{\xi_0}(t) * f(x)$$
        where $K_{\xi_0}(t,x) = \int_{\RR^n} \rho( R^{\frac{1}{2}}(\xi
        - \xi_0)) e^{ i t \Phi_1(\xi)} e^{ i x \cdot \xi} d\xi$. Since
        $R\les t \les 2R$, our goal is to show that
	$$\Big\| \sum_{\xi_0 \in R^{-\frac{1}{2} }\ZZ^n \cap (\supp \widehat{f} + \frac{1}{10\mb{R}_0})} | K_{\xi_0}(t,x)|\Big\|_{L^\infty_x} \lesa R^{-\frac{n-1}{2}}.$$
        We would like to write this sum in a way which involves the
        hypersurfaces $\Sigma_1(\mathfrak{h})$. Fix $0<\delta \ll
        \frac{\mb{D}_1}{\mb{D}_1 + \mb{D}_2}$ and let $\delta^* =
        \frac{\mb{D}_1}{\mb{D}_2}\delta$.  Given $\xi_0 \in
        R^{-\frac{1}{2}} \ZZ \cap (\supp \widehat{f} + \frac{1}{10
          \mb{R}_0})$, we can find $a \in \delta^* R^{-\frac{1}{2}}
        \ZZ$ with $|a|\les 2 \mb{D}_2$ such that
        $$ | \Phi_1(\xi_0) - \Phi_2(\xi_0 - h) - a | \les \delta^* R^{-\frac{1}{2}}. $$
        Therefore, an application of Lemma \ref{lem - properties of
          surface Sigma} with $r= R^{\frac{1}{2}}/\delta^*$, implies that $\xi_0 \in \Sigma_1(a, h) +
        \delta R^{-\frac{1}{2}}$ and hence we have
	\begin{align*} \sum_{\xi_0 \in R^{-\frac{1}{2} }\ZZ^n \cap (\supp \widehat{f} + \frac{1}{10\mb{R}_0})} | K_{\xi_0}(t, x)| &\les \sum_{\substack{ a \in \delta^* R^{-\frac{1}{2}} \ZZ \\ |a| \les 2 \mb{D}_2}}\,\, \sum_{\xi_0 \in R^{-\frac{1}{2} }\ZZ^n \cap (\Sigma_1(a, h) + \delta R^{-\frac{1}{2}})} | K_{\xi_0}(t, x)| \\
          &\lesa R^\frac{1}{2} \sup_{\mathfrak{h}} \sum_{\xi_0 \in
            R^{-\frac{1}{2} }\ZZ^n \cap (\Sigma_1(\mathfrak{h}) +
            \delta R^{-\frac{1}{2}})} | K_{\xi_0}(t, x)|.
	\end{align*}
        We now exploit the localisation of the kernel, together with
        the partial curvature condition \eqref{eqn - seperation of velocities in Sigma}. Write
	\begin{align*} K_{\xi_0}(t,x) &= R^{-\frac{n}{2}} \int_{\RR^n}
          \rho(\xi) e^{ it [ \Phi_1( R^{-\frac{1}{2}} \xi + \xi_0) -
            R^{-\frac{1}{2}} \nabla \Phi_1(\xi_0) \cdot \xi ]} \,\,e^{
            i R^{-\frac{1}{2}} (x + t \nabla\Phi_1(\xi_0))\cdot \xi}
          d\xi.
	\end{align*}
        Integrating by parts $n+1$ times gives
        \begin{equation}\label{eqn - lem dispersion - decay of kernel}
          |K_{\xi_0}(t, x)| \lesa R^{ - \frac{n}{2}} \Big( 1 +
          R^{-\frac{1}{2}} \big| x + t \nabla\Phi_1(\xi_0)\big|
          \Big)^{-n-1}.
        \end{equation}
        Let $\xi_0' \in R^{-\frac{1}{2} }\ZZ^n \cap (\Sigma_1(a, h) +
        R^{-\frac{1}{2}})$ denote the minimum of $|x +t\nabla
        \Phi_1(\xi_0)|$. We claim that for every $\xi_0 \in
        R^{-\frac{1}{2} }\ZZ^n \cap (\Sigma_1(a, h) +
        R^{-\frac{1}{2}})$ we have
        \begin{equation}\label{eqn - lem dispersion - lower bound on
            localised terms}
          |x +t\nabla \Phi_1(\xi_0)| \g \frac{ \mb{D}_1}{4} R |\xi_0 - \xi_0'|.
        \end{equation}
        Assuming this holds for the moment, we would then obtain
	\begin{align*} \sum_{\xi_0 \in R^{-\frac{1}{2} }\ZZ^n \cap (\supp \widehat{f} + \frac{1}{10\mb{R}_0})} |K_{\xi_0}(t,x)| &\lesa R^\frac{1}{2} \sup_{\mathfrak{h}} \sum_{\xi_0 \in R^{-\frac{1}{2} }\ZZ^n \cap (\Sigma_1(\mathfrak{h}) + R^{-\frac{1}{2}})} | K_{\xi_0}(t, x)|  \\
          &\lesa R^{-\frac{n-1}{2}} \sum_{\xi_0 \in R^{-\frac{1}{2}} \ZZ^n} ( 1 + R^\frac{1}{2} |\xi_0 - \xi_0'|)^{-n-1} \\
          &\lesa R^{-\frac{n-1}{2}}
	\end{align*}
        as required. Thus it only remains to verify \eqref{eqn - lem
          dispersion - lower bound on localised terms}. This is immediate if $ R \mb{D}_1 |\xi_0 - \xi_0'| \les 2 |x
        + t \nabla \Phi_1(\xi_0')|$. Thus we may assume that $R
        \mb{D}_1 |\xi_0 - \xi_0'| \g 2 |x + t \nabla
        \Phi_1(\xi_0')|$. Note that this implies that $|\xi - \xi_0|\g
        R^{-\frac{1}{2}}$. By construction, there exists $\xi, \xi'
        \in \Sigma_1(\mathfrak{h})$ such that $|\xi - \xi_0| \les
        \delta R^{-\frac{1}{2}}$, $|\xi' - \xi_0'|\les \delta
        R^{-\frac{1}{2}}$. Therefore, applying the lower bound
        \eqref{eqn - seperation of velocities in Sigma}, we deduce
        that
	\begin{align*}
          |x + t \nabla \Phi_1(\xi_0)| &\g t |\nabla\Phi(\xi) - \nabla \Phi(\xi')| - |x + t \nabla \Phi_1(\xi_0')| - t |\nabla \Phi_1(\xi_0) - \nabla \Phi_1(\xi)| -  t |\nabla \Phi_1(\xi_0') - \nabla \Phi_1(\xi')| \\
          &\g R \mb{D}_1 |\xi - \xi'|  - |x + t \nabla \Phi_1(\xi_0')| - 4 \mb{D}_2 \delta R^\frac{1}{2} \\
          &\g R \frac{\mb{D}_1}{2}  |\xi_0 - \xi'_0|  - 4(\mb{D}_1 + \mb{D}_2) \delta R^\frac{1}{2} \\
          &\g R \frac{\mb{D}_1}{4} |\xi_0 - \xi'_0|
	\end{align*}
        provided that we choose $\delta \ll \frac{\mb{D}_1}{\mb{D}_1 +
          \mb{D}_2}$. Hence we obtain \eqref{eqn - lem dispersion -
          lower bound on localised terms} and thus result follows.
      \end{proof}
      \begin{remark}\label{rem:strichartz}
        By the standard $TT^*$-argument, this implies the linear
        Strichartz type estimates for wave admissible pairs. We omit
        the details and refer to \cite{Keel1998}.
      \end{remark}

      \subsection{Classical Bilinear Estimate in $L^2_{t, x}$} The
      main use of the transversality property \eqref{eq:trans-main} contained in Assumption
      \ref{assump on phase} is to deduce the following well-known
      bilinear estimate, which dates back at least to Bourgain
      \cite[Lemma 111]{Bourgain1998} in the case of the Schr\"odinger
      equation and $n=2$.

\begin{lemma}\label{lem - key bilinear L2 bound I}
  Let $0<r<1$ and $f, g \in L^2_x$. Assume that the supports of
  $\widehat{f}$ and $\widehat{g}$ are contained in balls of radius $r$
  intersected with $\Lambda_1$ and $\Lambda_2$ respectively,
and for all $\xi \in\Lambda_1$ and $\eta \in\Lambda_2$
\begin{equation}\label{eq:trans-c0}
|\nabla\Phi_1(\xi)-\nabla\Phi_2(\eta)|\g \mb{C_0}.
\end{equation}
Then,
        $$ \| e^{ it \Phi_1(-i\nabla)} f e^{ i t \Phi_2(-i\nabla)} g \|_{L^2_{t, x}(\RR^{1+n})} \lesa \Big(\frac{ r^{n-1}}{\mb{C}_0} \Big)^{\frac{1}{2}} \| f \|_{L^2_x} \| g \|_{L^2_x}.$$
      \end{lemma}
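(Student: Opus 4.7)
The plan is to prove this classical bilinear $L^2$ estimate by the Plancherel + Cauchy--Schwarz argument going back to Bourgain. Set $u = e^{it\Phi_1(-i\nabla)}f$ and $v = e^{it\Phi_2(-i\nabla)}g$; by Plancherel in spacetime it is enough to bound $\|\widehat{uv}\|_{L^2_{\tau,\zeta}}$, and a direct computation yields the pointwise representation
\[
\widehat{uv}(\tau,\zeta) = c_n \int_{\RR^n} \widehat f(\xi)\, \widehat g(\zeta-\xi)\, \delta\!\big(\tau - \Psi_\zeta(\xi)\big)\, d\xi, \qquad \Psi_\zeta(\xi) := \Phi_1(\xi) + \Phi_2(\zeta - \xi).
\]
The structural observation driving everything is $\nabla_\xi\Psi_\zeta(\xi) = \nabla\Phi_1(\xi) - \nabla\Phi_2(\zeta-\xi)$, so the hypothesis \eqref{eq:trans-c0} forces $|\nabla_\xi\Psi_\zeta(\xi)| \g \mb{C}_0$ throughout the region where the integrand is supported.

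I would next Cauchy--Schwarz in $\xi$ with respect to the singular measure $\delta(\tau - \Psi_\zeta)\, d\xi$, producing $|\widehat{uv}(\tau,\zeta)|^2 \les A(\tau,\zeta)\cdot B(\tau,\zeta)$ where, writing $\Sigma_{\tau,\zeta}=\Psi_\zeta^{-1}(\tau)$ and $S = \supp \widehat f$,
\[
A(\tau,\zeta) = \int_{\Sigma_{\tau,\zeta}\cap S} \frac{d\mathcal{H}^{n-1}}{|\nabla\Psi_\zeta|}, \qquad B(\tau,\zeta) = \int_{\Sigma_{\tau,\zeta}} \frac{|\widehat f(\xi)|^2\, |\widehat g(\zeta-\xi)|^2}{|\nabla\Psi_\zeta(\xi)|}\, d\mathcal{H}^{n-1}(\xi).
\]
The $B$-factor is the soft one: integrating first in $\tau$ inverts the coarea weight, and Fubini combined with the change of variables $\eta = \zeta - \xi$ then gives the clean identity $\int B\, d\tau\, d\zeta = \|\widehat f\|_{L^2}^2\, \|\widehat g\|_{L^2}^2$, which after another application of Plancherel in space translates into $\|f\|_{L^2}^2\|g\|_{L^2}^2$.

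All the gain comes from the $A$-factor: the lower bound immediately produces $A(\tau,\zeta)\les \mb{C}_0^{-1}\,\mathcal{H}^{n-1}(\Sigma_{\tau,\zeta}\cap S)$, and since $S$ sits inside a ball of radius $r$, the level set there is a Lipschitz graph with surface area $\lesa r^{n-1}$. Combining the two pieces yields the claimed estimate. The only delicate step is making sure the implicit constant in the level-set area bound is genuinely universal in $\tau$, $\zeta$ and $r$: this is provided by the implicit function theorem applied with the quantitative transversality $|\nabla\Psi_\zeta|\g\mb{C}_0$ together with the ambient $C^2$-control $\|\nabla^2\Phi_j\|_{L^\infty(\Lambda_j)}\les \mb{D}_2$ from Assumption \ref{assump on phase} (ii), which bound the Lipschitz constant of the graph by a multiple of $\mb{D}_2/\mb{C}_0$. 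Everything else reduces to routine Plancherel and coarea manipulations.
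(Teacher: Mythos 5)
Your proposal follows the same classical Plancherel--Cauchy--Schwarz scheme as the paper, just phrased through the coarea/delta-measure formulation rather than a coordinate-wise change of variables; the treatment of the $B$-factor (coarea in $\tau$, Fubini, Plancherel) is correct. The genuine gap is in the $A$-factor: you need $\mathcal{H}^{n-1}\big(\Sigma_{\tau,\zeta}\cap S\big)\lesa r^{n-1}$ with a constant independent of $\mb{C}_0$, $\mb{D}_2$ and $r$, and this is false in general. Your own justification already shows the problem: the Lipschitz constant of the level-set graph is only $O(\mb{D}_2/\mb{C}_0)$ (and it is controlled by the first-derivative bound $\|\nabla\Phi_j\|_{L^\infty}\les\mb{D}_2$, not by the Hessian), which is not $O(1)$. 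Worse, once $\mb{D}_2 r\gtrsim\mb{C}_0$ the unit normal of $\Sigma_{\tau,\zeta}$ can rotate substantially inside the $r$-ball, so the level set need not be a single graph over any fixed hyperplane and its area genuinely exceeds $C_n r^{n-1}$: for $n=2$ take $\Psi(\xi_1,\xi_2)=\xi_2+A\sin(\xi_1/\delta)$ with $A\approx r$, $\delta\approx (r/\mb{D}_2)^{1/2}$, which has $|\nabla\Psi|\g 1$ and second derivatives $\lesa\mb{D}_2$, yet the level curve inside a ball of radius $\approx r$ has length $\approx r\,(\mb{D}_2 r)^{1/2}$. Consequently your argument only yields a constant of the shape $\big(\tfrac{r^{n-1}}{\mb{C}_0}\big(1+\tfrac{\mb{D}_2 r}{\mb{C}_0}\big)\big)^{1/2}$, i.e.\ a strictly worse dependence on $\mb{C}_0$ than the stated $\big(\tfrac{r^{n-1}}{\mb{C}_0}\big)^{1/2}$. (This weaker bound would still suffice for the application in Section 4, where $r\approx R^{-1/2}$ is tiny and $\mb{C}_0\approx\mb{D}_1/\mb{D}_2$ is fixed, but it does not prove the lemma as stated for all $0<r<1$.)

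The repair is exactly the paper's device, which avoids surface measure altogether: split $\Lambda_1\times\Lambda_2$ into the $n$ sets $\Omega_m=\{|\p_m\Phi_1(\xi)-\p_m\Phi_2(\eta)|\g\mb{C}_0/(2n)\}$, guaranteed to cover by \eqref{eq:trans-c0}, and on the $\Omega_m$-piece change variables only in the single coordinate $\eta_m\mapsto\tau=\Phi_1(\xi-\eta)+\Phi_2(\eta)$. Then Cauchy--Schwarz is performed in the remaining $n-1$ coordinates over the projection of the $r$-ball onto the coordinate hyperplane, whose Lebesgue measure is $\les(2r)^{n-1}$ with no Lipschitz or multiplicity factor, while undoing the change of variables leaves exactly one power of the Jacobian $|\p_m\Phi_1-\p_m\Phi_2|^{-1}\les 2n/\mb{C}_0$. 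In other words, the correct ``transverse measure'' to use is the projected Lebesgue measure with weight $|\p_m\Psi_\zeta|^{-1}$, not $d\mathcal{H}^{n-1}/|\nabla\Psi_\zeta|$ together with an area bound; with that substitution your argument goes through and gives the stated constant.
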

      \begin{proof}
        For $m=1,\ldots,n$ let $\Omega_m = \big\{ (\xi, \eta) \in
        \Lambda_1 \times \Lambda_2 \,\,\big| \,\,| \p_m \Phi_1(\xi) -
        \p_m \Phi_2(\eta)| \g \tfrac{\mb{C}_0}{2n} \big\}.$ Condition \eqref{eq:trans-c0} and the
        support assumptions on $\widehat{f}$ and $\widehat{g}$ implies
        that we can decompose
$$ \widehat{\big( e^{ it \Phi_1(-i \nabla)} f e^{ it \Phi_2(-i\nabla)}g\big)}(\xi) = \sum_{m=1}^n \int_{\RR^n} \widehat{f}(\xi-\eta) \widehat{g}(\eta) \ind_{\Omega_m}(\xi-\eta, \eta) \,e^{ it (\Phi_1(\xi-\eta) + \Phi_2(\eta))} d\eta.$$
Consider the $m=1$ term and write $\eta=(\eta_1, \eta') \in \RR \times
\RR^{n-1}$. The change of variables $(\eta_1, \eta') \mapsto (\tau,
\eta')$ where $\tau = \Phi_1(\xi-\eta) + \Phi_2(\eta)$ gives
  \begin{align*}
    &\int_{\RR^n} \widehat{f}(\xi-\eta) \widehat{g}(\eta) \ind_{\Omega_1}(\xi-\eta, \eta) \,e^{ it (\Phi_1(\xi-\eta) + \Phi_2(\eta))} d\eta \\
={}& \int_{\RR} \int_{\RR^{n-1}} \frac{\widehat{f}\big(\xi-\eta^*\big)\widehat{g}\big(\eta^*\big)}{\p_1 \Phi_1(\xi-\eta^*) - \p_1 \Phi_2(\eta^*)}  \ind_{\Omega_1}\big(\xi-\eta^*, \eta^*\big) \,d\eta' e^{ it\tau} d\tau,
  \end{align*}
  where $\eta^* = \big(\eta_1[\tau, \xi, \eta'], \eta')$. Thus an
  application of Plancherel, followed by H\"older in $\eta'$, shows
  that
  \begin{align*}
   & \bigg\| \int_{\RR^n} \widehat{f}(\xi-\eta) \widehat{g}(\eta) \ind_{\Omega_m}(\xi-\eta, \eta) \,e^{ it (\Phi_1(\xi-\eta) + \Phi_2(\eta))} d\eta \bigg\|_{L^2_{t, \xi}} \\
={}& \bigg\| \int_{\RR^{n-1}} \frac{\widehat{f}\big(\xi-\eta\big)\widehat{g}\big(\eta\big)}{\p_1 \Phi_1(\xi-\eta) - \p_1 \Phi_2(\eta)}  \ind_{\Omega_1}\big(\xi-\eta, \eta\big) d\eta' \bigg\|_{L^2_{\tau, \xi}} \\
    \les{} & (2r)^{\frac{n-1}{2}} \frac{2n}{\mb{C}_0}   \bigg\| \widehat{f}\big(\xi-\eta^*\big)\widehat{g}\big(\eta^*\big) \bigg\|_{L^2_{\tau, \xi, \eta'}} \\
    \les{} & 2n \Big(\frac{(2r)^{n-1}}{\mb{C}_0} \Big)^{\frac{1}{2}} \|
    f\|_{L^2_x} \| g \|_{L^2_x}
  \end{align*}
  where the last inequality follows by undoing the change of
  variables. Since the terms with $1<m\les n$ are identical the lemma
  follows.
\end{proof}

\subsection{Geometric Consequences} The last step in the proof of
Theorem \ref{thm - bilinear Lp estimate} requires a combinatorial
Kakeya type bound. This bound relies on the fact that certain tubes
intersect transversally, and is the main reason for introducing the
condition \eqref{it:ass1} in Assumption \ref{assump on phase}.  The following is
motivated by \cite{Lee2010,Bejenaru2016}, see also Section 9 of \cite{Tao2003a}.

 Let $\mathfrak{h} \in
\RR^{1+n}$ and define the conic hypersurface
    $$ \mc{C}_j(\mathfrak{h}) = \{ (r, - r \nabla \Phi_j(\xi) ) \mid   r \in \RR, \,\, \xi \in \Sigma_j(\mathfrak{h}) \}. $$
A computation shows that the tangent plane to
    $\mc{C}_j(\mathfrak{h})$ is spanned by the vectors
		$$ \big(1, - \nabla \Phi_j(\xi) \big) \text{ and }  H\Phi_j(\xi)v \text{ for }v \in T_\xi \Sigma_j(\mathfrak{h}).$$
On the other hand, as we will see in the proof the Lemma \ref{lem - surface C transverse} below, the condition \eqref{it:ass1} in Assumption \ref{assump on phase} implies that
	$$ \big| \big( 1 , - \nabla \Phi_j(\xi) \big) \wedge \big( 1, - \nabla \Phi_k(\eta) \big) \wedge \big( 0, \nabla\Phi_j(\xi) - \nabla\Phi_j(\xi') \big) \big|  \gtrsim |\xi - \xi'|$$
        for every $\xi, \xi' \in \Sigma_j(\mathfrak{h})$.
Hence, letting $\xi' \rightarrow \xi$ in $\Sigma_j(\mathfrak{h})$, we can interpret \eqref{it:ass1} in Assumption \ref{assump on phase} as saying that, for every $v \in T_\xi \Sigma_j(\mathfrak{h})$ we have
		$$\big| \big( 1 , - \nabla \Phi_j(\xi) \big) \wedge \big( 1, - \nabla \Phi_k(\eta) \big) \wedge \big( 0, H\Phi(\xi) v \big) \big| \gtrsim |v| $$
where $H\Phi_j(\xi)$ denotes the Hessian of $\Phi_j$
at $\xi$. In particular, the vector $(1, - \nabla \Phi_k(\eta) )$ must be transversal to the surface $\mc{C}_j(\mathfrak{h})$ for every $\eta \in \Lambda_k$. A more quantitative version of this statement -- and the one we make use of in
                practise -- is given by the following.

\begin{lemma}\label{lem - surface C transverse}
  Let $\mathfrak{h} \in \RR^{1+n}$ and $\{j, k\} = \{1, 2\}$. For
  every $\eta \in \Lambda_j$ and $p, q \in \mc{C}_k(\mathfrak{h})$ we have
	$$  \big| (p-q) \wedge \big( 1 , - \nabla \Phi_j(\eta)\big) \big|\ \g  \frac{\mb{D}_1 |p-q|}{ (1 + \| \nabla \Phi_k\|_{L^\infty(\Lambda_k)}) \| \nabla^2 \Phi_k \|_{L^\infty(\Lambda_k)}} .$$
      \end{lemma}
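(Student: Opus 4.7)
\emph{Proof plan.} The key observation is that, writing $p = r_1 \alpha_1$ and $q = r_2 \alpha_2$ with $\alpha_i = (1, -\nabla \Phi_k(\xi_i))$ and $\xi_1,\xi_2 \in \Sigma_k(\mathfrak{h})$, the secant admits the decomposition $p - q = (r_1 - r_2) \alpha_1 - r_2 \beta$, where $\beta := (0, w)$ and $w := \nabla \Phi_k(\xi_1) - \nabla \Phi_k(\xi_2)$. Thus $p - q$ lies in the $2$-plane $S := \mathrm{span}(\alpha_1, \beta)$. Consequently, writing $w_3 := (1, -\nabla \Phi_j(\eta))$,
\[
\frac{|(p-q) \wedge w_3|}{|p-q|} = \dist(w_3, \RR(p-q)) \g \dist(w_3, S) = \frac{|\alpha_1 \wedge \beta \wedge w_3|}{|\alpha_1 \wedge \beta|},
\]
by the standard Gram-determinant identity, so the task is to estimate this ratio.

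For the denominator, the direct computation $|(1, V) \wedge (0, W)|^2 = |W|^2 + |V \wedge W|^2$ gives $|\alpha_1 \wedge \beta|^2 \leq |w|^2 (1 + \|\nabla\Phi_k\|_{L^\infty(\Lambda_k)})^2$, while convexity of $\Lambda_k$ supplies $|w| \leq \|\nabla^2 \Phi_k\|_{L^\infty(\Lambda_k)} |\xi_1 - \xi_2|$. For the numerator I would rewrite $w_3 = \alpha_1 + (0, v_1)$ with $v_1 := \nabla \Phi_k(\xi_1) - \nabla \Phi_j(\eta)$, so that $\alpha_1 \wedge \beta \wedge w_3 = \alpha_1 \wedge (0, w) \wedge (0, v_1)$; splitting $\alpha_1 = e_0 + (0, -\nabla\Phi_k(\xi_1))$ and using that the resulting $3$-form components (one involving the index $0$, the other purely spatial) are orthogonal in $\Lambda^3 \RR^{1+n}$ yields
\[
|\alpha_1 \wedge \beta \wedge w_3|^2 = |w \wedge v_1|^2 + |\nabla \Phi_k(\xi_1) \wedge w \wedge v_1|^2 \g |w \wedge v_1|^2.
\]
The crucial input is Assumption \ref{assump on phase}\eqref{it:ass1} applied with the roles of $j$ and $k$ interchanged (legitimate since $\xi_1, \xi_2 \in \Sigma_k(\mathfrak{h})$ and $\eta \in \Lambda_j$), delivering $|w \wedge v_1| \g \mb{D}_1 |\xi_1 - \xi_2|$. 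The $|\xi_1 - \xi_2|$ factors now cancel and produce the claimed bound.

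The degenerate case $\xi_1 = \xi_2$, in which $\beta = 0$ and $S$ collapses to $\RR \alpha_1$, must be handled separately: here $p - q$ is a multiple of $\alpha_1$ and the ratio reduces to $|\alpha_1 \wedge w_3|/|\alpha_1| \g |v_1|/(1 + \|\nabla\Phi_k\|_{L^\infty(\Lambda_k)})$, with the transversality consequence \eqref{eq:trans-main} of Assumption \ref{assump on phase} providing $|v_1| \g \mb{D}_1 / \|\nabla^2 \Phi_k\|_{L^\infty(\Lambda_k)}$. The main obstacle, as I see it, is securing a bound whose right-hand side depends only on $\Phi_k$: a direct coordinate expansion of $(p-q) \wedge w_3$ unavoidably picks up $\|\nabla\Phi_j\|$-factors through $v_1$, and it is precisely by routing the argument through the intrinsically $\Phi_k$-defined plane $S$ that these unwanted $\Phi_j$-dependencies are eliminated.
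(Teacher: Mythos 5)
Your proof is correct and takes essentially the same route as the paper: both decompose $p-q$ in the plane spanned by $\alpha_1=(1,-\nabla\Phi_k(\xi_1))$ and $\beta=(0,\nabla\Phi_k(\xi_1)-\nabla\Phi_k(\xi_2))$, reduce the ratio $|(p-q)\wedge(1,-\nabla\Phi_j(\eta))|/|p-q|$ to $|\alpha_1\wedge\beta\wedge w_3|/|\alpha_1\wedge\beta|$, and conclude via Assumption \ref{assump on phase}\eqref{it:ass1} with the roles of $j$ and $k$ interchanged together with the gradient and Hessian bounds on $\Phi_k$. The only differences are cosmetic (your orthogonal splitting of the $3$-form versus the paper's infimum identity) plus your explicit treatment of the degenerate case $\xi_1=\xi_2$ via \eqref{eq:trans-main}, which the paper leaves implicit.
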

      \begin{proof}
        Let $w, w', w'' \in \RR^n$. The identity $ |x\wedge y \wedge
        z| = \inf_{ v \in \text{span}\{x, y\}} \frac{ |v \wedge
          z|}{|v|} |x \wedge y|$ implies that
        \begin{align*}
          \big| \big( 1, w''\big) \wedge \big( 1 , w \big) \wedge
          \big( 0, w - w' \big) \big|
          &= \big| \big( 1, w''\big) \wedge \big(0 , w-w''\big) \wedge \big( 0,w- w'\big) \big| \\
          &= \inf_{v \in W} \frac{ \big|v \wedge  \big( 1, w''\big)\big|}{|v|} \big| \big(0 , w - w''\big) \wedge \big( 0, w - w' \big) \big| \\
          &\g |( w - w'') \wedge (w - w') |,
        \end{align*}
        where $W=\text{span}\{(0, w - w''), (0, w - w') \}$.
        Consequently, applying the wedge product identity once more,
        we deduce that for every $ {v \in \text{span}\{ (1, w), (0,
          w-w') \}}$
	\begin{equation}\label{eqn - lem surface C transverse - wedge
            product identity}
          |v \wedge (1, w'')| \g \frac{ |(w - w'')\wedge (w - w')|}{ (1 +|w|) |w-w'|} |v|.
        \end{equation}	
        Fix $\eta \in \Lambda_j$ and $p, q \in
        \mc{C}_k(\mathfrak{h})$. By definition, this implies that we
        have $\xi, \xi' \in \Sigma_j(\mathfrak{h})$ and $r, r' >0$
        such that $p = (r, - r\nabla \Phi_k(\xi))$ and $q= (r',
        -r'\nabla \Phi_k(\xi'))$.  Clearly, due to the convexity of
        $\Lambda_k$ we have $|\nabla \Phi_k(\xi) - \nabla
        \Phi_k(\xi')| \les \| \nabla^2 \Phi_k \|_{L^\infty(\Lambda_k)}
        |\xi - \xi'|$.  If we now let $w = - \nabla \Phi_k(\xi)$, $w'
        = - \nabla \Phi_k(\xi')$, and $w'' =- \nabla \Phi_j(\eta)$ in
        (\ref{eqn - lem surface C transverse - wedge product
          identity}), then we deduce from \eqref{it:ass1} in Assumption
        \ref{assump on phase} that
	$$ \big| v \wedge \big( 1, - \nabla \Phi_j(\eta) \big) \big| \g  \frac{\mb{D}_1 |v|}{ (1 + \| \nabla \Phi_k\|_{L^\infty(\Lambda_k)}) \| \nabla^2 \Phi_k \|_{L^\infty(\Lambda_k)}} $$
        for every $v \in \text{span}\{ (1, - \nabla \Phi_k(\xi)), (0,
        \nabla \Phi_k(\xi) - \nabla \Phi_k(\xi'))\}$. Taking $v=p-q$
        and observing that we can write
	$$(p - q) = (r-r') (1, - \nabla \Phi_k(\xi)) + r' (0, \nabla \Phi_k(\xi) - \nabla \Phi_k(\xi') ),$$
        the required bound now follows.
      \end{proof}

      \section{Wave Packets, Atomic Spaces, and
        Tubes}\label{sect:wave-packets-atomic-tubes}
      In this section we discuss the  wave packet decomposition. To
      some extent, we follow the arguments in \cite{Tao2003a}, but use
      a slightly different notation by using projections labelled by
      phase space points as in \cite{Lee2010}.  Again, this helps us
      to carefully track constants. In addition, we consider
      certain atomic decompositions.  Concerning the phases $\Phi_j$,
      it turns out that the only property we require in the
      construction of wave packets below, is \eqref{it:ass2} in Assumption
      \ref{assump on phase}. Consequently, throughout this section, we
      fix constants $\mb{R}_0\g 1$, $\mb{D}_2>0$ and $N > n+1$, and
      assume that for $j=1, 2$ we have sets $\Lambda_j$, $\Lambda_j^*$
      with $\Lambda_j$ convex and $\Lambda_j^* + \frac{1}{\mb{R}_0}
      \subset \Lambda_j \subset \{ \frac{1}{16}\les |\xi| \les 16\}$,
      and phases $\Phi_j: \Lambda_j \rightarrow \RR$ such that
        $$ \sup_{1\les |\kappa|\les N} \| \p^\kappa \Phi_j \|_{L^\infty(\Lambda_j)} \les \mb{D}_2.$$

\subsection{Wave Packets}\label{subsect:wave-packets}  Let $R \g 1$ and  define the cylinder
	$$ Q_R = \big\{ (t, x) \in \RR^{1+n}\,\big|\,  \tfrac{R}{2} < t < R, \, |x|< R \},$$
 and $\mc{X} = R^{\frac{1}{2}} \ZZ^n \times R^{-\frac{1}{2}} \ZZ^n$.
Define
        $$\mc{X}_j = \{ (x_0, \xi_0) \in \mc{X} \mid \xi_0 \in \Lambda^*_j + 3R^{-\frac{1}{2}} \}$$
        to be the set of phase points which are within
        $3R^{-\frac{1}{2}}$ of $\Lambda^*_j$. Note that provided $R \g
        (3 R_0)^2$, if $\gamma = (x_0, \xi_0) \in \mc{X}_j$, then
        $\xi_0 \in \Lambda_j$. Given a point $\gamma = (x_0, \xi_0)
        \in \mc{X}$ in phase-space, we let $x(\gamma) = x_0$ and
        $\xi(\gamma)= \xi_0$ denote the projections onto the first and
        second components respectively. Fix $\eta, \rho \in \s(\RR^n)$
        such that $\supp \widehat{\eta} \subset \{ |\xi| \les 1\}$,
        $\supp \rho \subset \{ |\xi| \les 1\}$, and for all $x, \xi
        \in \RR^n$
    $$ \sum_{k \in \ZZ^n} \eta( x - k) = \sum_{k \in \ZZ^n} \rho(\xi- k ) = 1.  $$
    Given $\gamma \in \mc{X}$ and $f \in L^2_x(\RR^n)$, define the
    phase-space localisation operator
	$$ \big(L_\gamma f\big)(x) = \eta\Big( \frac{x - x(\gamma)}{R^\frac{1}{2}} \Big) \Big[\rho\Big( \frac{  -i \nabla - \xi(\gamma)}{R^{-\frac{1}{2}}} \Big) f\Big](x). $$
        Note that by definition we have
	$$ f = \sum_{\gamma \in \mc{X}} L_\gamma f, \qquad \supp \widehat{L_\gamma f} \subset \{ \xi\in \RR^n \mid |\xi - \xi(\gamma)| \les  2R^{-\frac{1}{2}} \}.$$
        Moreover, letting $w_\gamma(x) = ( 1 + \frac{ |x -
          x(\gamma)|}{R^{\frac{1}{2}}} )^{N-1 + \frac{n+1}{2}}$, for any
        $\Gamma \subset \mc{X}$ we have the orthogonality bounds
	\begin{equation}\label{eqn -orthog of phase space loc
            operator}
          \Big\| \sum_{\gamma \in \Gamma} L_\gamma f \Big\|_{L^2_x} \lesa \bigg( \sum_{\gamma \in \Gamma} \| w_\gamma(x) L_\gamma f(x) \|_{L^2_x}^2 \bigg)^\frac{1}{2} \lesa \| f\|_{L^2_x}.
	\end{equation}
        To simplify notation slightly, we define the slightly larger
        phase-space localisation operators $L^\sharp_\gamma =
        \omega_\gamma(x) L_\gamma$. It is worth noting that
        $L^\sharp_\gamma f $ no longer has compact Fourier support,
        this does not pose any problems in the arguments to follow, as
        the only properties that we require are the trivial bound $ \|
        L_\gamma f \|_{L^2_x} \les \| L_\gamma^\sharp f\|_{L^2_x}$ and
        the orthogonality bound in (\ref{eqn -orthog of phase space
          loc operator}).

        To define wave packets, we conjugate the phase-space
        localisation operator $L_\gamma$ with the flow $e^{ i t
          \Phi_j(- i \nabla)}$.

        \begin{definition}[Wave Packets]\label{def:wave-packets} Let
          $j=1, 2$, $R\g (3R_0)^2$, and $u \in L^\infty_t
          L^2_x(\RR^{1+n})$. Given a point $\gamma_j \in \mc{X}_j$, we
          define
	$$ \big(\mc{P}_{\gamma_j} u\big)(t) = e^{i t \Phi_j(-i \nabla)} L_{\gamma_j} \Big( e^{  - i t \Phi_j(-i\nabla)} u(t) \Big).$$
        Similarly, we define
        $$ \big(\mc{P}_{\gamma_j}^\sharp u\big)(t) = e^{i t \Phi_j(-i \nabla)} L_{\gamma_j}^\sharp \Big( e^{  - i t \Phi_j(-i\nabla)} u(t) \Big).$$
      \end{definition}

      We also require the associated tubes $T_\gamma$.

\begin{definition}[Tubes]\label{def:tubes}
  Let $j=1, 2$ and $\gamma_j \in \mc{X}_j$. Then we define the tube
  $T_{\gamma_j} \subset \RR^{1+n}$ as
        $$ T_{\gamma_j} = \left\{ (t,x) \in \RR^{1+n} \middle| \tfrac{R}{2} \les t \les R, \,\, \big| x -  x(\gamma) + t \nabla\Phi_j\big(\xi(\gamma)\big)\big|\les R^{\frac{1}{2}} \right\}.$$
      \end{definition}

      The most important properties of the wave packets $P_{\gamma_j}
      u$ are summarised in the following.

\begin{proposition}[Properties of Wave Packets]\label{prop - properties of wave packets}
  Let $j=1, 2$. For any $R \g (3\mb{R}_0)^2$, $f \in L^2_x$ with
  $\supp \widehat{f} \subset \Lambda^*_j$, and $u = e^{ it
    \Phi_j(-i\nabla)}f$, we have $u =\sum_{\gamma_j \in \mc{X}_j}
  \mc{P}_{\gamma_j} u$, $\supp \widehat{\mc{P}_{\gamma_j}u} \subset \{
  |\xi - \xi(\gamma)| \les 2 R^{-\frac{1}{2}}\}$, and given any
  $\Gamma_j \subset \mc{X}_j$ we have the orthogonality bound
  \begin{equation}\label{eqn -orthog of wave packets}
    \Big\| \sum_{\gamma_j \in \Gamma_j} \mc{P}_{\gamma_j} u  \Big\|_{L^\infty_t L^2_x} \lesa \bigg( \sum_{\gamma_j \in \Gamma_j} \|  L_{\gamma_j}^\sharp f \|_{L^2_x}^2 \bigg)^\frac{1}{2} \lesa \| f \|_{L^2_x}.
  \end{equation}
  Moreover, the wave packets $\mc{P}_{\gamma_j}u$ are concentrated on
  the tubes $T_{\gamma_j}$ in the sense that for every $r \g R^\frac{1}{2}$, and
  any ball $B \subset \RR^{1+n}$, we have the bound
  \begin{equation}\label{eqn - conc of wave packets} \bigg\|
    \sum_{\substack{\gamma_j \in \Gamma_j \\ \dist(T_{\gamma_j},
        B)>r}} \mc{P}_{\gamma_j} u \bigg\|_{L^\infty_{t, x}(B \cap
      Q_R)} \lesa \Big( \frac{r}{R^\frac{1}{2}}\Big)^{\frac{n+3}{2} -
      N} \bigg( \sum_{\gamma_j \in \Gamma_j}  \| L^\sharp_{\gamma_j} f \|_{L^2_x}^2\bigg)^\frac{1}{2}
  \end{equation}
  Here, the implied constants depend only on $\mb{R}_0, \mb{D}_2,
  N$ and $n\g 2$.
\end{proposition}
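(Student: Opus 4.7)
The plan is to prove the four assertions of the proposition in sequence: the first three are formal consequences of the construction, while the tube concentration requires quantitative oscillatory-integral analysis.

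Since $u = e^{it\Phi_j(-i\nabla)}f$, we have $\mc{P}_{\gamma_j}u(t) = e^{it\Phi_j(-i\nabla)}L_{\gamma_j}f$ for all $t$. The decomposition $u = \sum_{\gamma_j \in \mc{X}_j}\mc{P}_{\gamma_j}u$ follows by applying the partition-of-unity identity $\sum_{\gamma \in \mc{X}}L_\gamma = \mathrm{Id}$ to $f$ and noting that $\supp \widehat{f} \subset \Lambda_j^*$ together with $R \g (3\mb{R}_0)^2$ forces $L_\gamma f \equiv 0$ unless $\xi(\gamma) \in \Lambda_j^* + R^{-1/2}$, i.e.\ $\gamma \in \mc{X}_j$. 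The Fourier support bound $\supp \widehat{\mc{P}_{\gamma_j}u} \subset \{|\xi - \xi(\gamma)| \les 2R^{-1/2}\}$ follows because $\rho\bigl((\cdot - \xi(\gamma))/R^{-1/2}\bigr)$ localizes $\widehat{f}$ to a ball of radius $R^{-1/2}$ around $\xi(\gamma)$, while multiplication by $\eta\bigl((\cdot - x(\gamma))/R^{1/2}\bigr)$ convolves in frequency by a function supported in $\{|\xi| \les R^{-1/2}\}$. The orthogonality \eqref{eqn -orthog of wave packets} then reduces to the stationary $L^2_x$-orthogonality \eqref{eqn -orthog of phase space loc operator} via the unitarity of $e^{it\Phi_j(-i\nabla)}$ on $L^2_x$.

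The substantive claim is the tube concentration \eqref{eqn - conc of wave packets}. Choose a cut-off $\tilde\rho \in C^\infty_0(\{|\omega| \les 2\})$ with $\tilde\rho \equiv 1$ on $\supp \rho$, so that $\mc{P}_{\gamma_j}u(t) = \mc{K}_{\gamma_j}(t, \cdot)\ast L_{\gamma_j}f$ with
\[
\mc{K}_{\gamma_j}(t, z) = \int_{\RR^n} e^{iz\cdot\xi + it\Phi_j(\xi)}\,\tilde\rho\Bigl(\tfrac{\xi - \xi(\gamma_j)}{R^{-1/2}}\Bigr)\,d\xi.
\]
Substituting $\xi = \xi(\gamma_j) + R^{-1/2}\omega$ and Taylor expanding $\Phi_j$ around $\xi(\gamma_j)$ brings the phase into the form $R^{-1/2}(z + t\nabla\Phi_j(\xi(\gamma_j)))\cdot\omega + tR^{-1}\Psi_{\gamma_j}(\omega)$. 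Condition \eqref{it:ass2} in Assumption \ref{assump on phase}, combined with the constraint $t \les R$ from $(t, x) \in Q_R$, ensures that $\Psi_{\gamma_j}$ has $C^N$-norm on $\supp \tilde\rho$ bounded by a constant depending only on $\mb{D}_2$. Integration by parts $N$ times in $\omega$ then yields
\[
|\mc{K}_{\gamma_j}(t, z)| \lesa R^{-n/2}\bigl(1 + R^{-1/2}|z + t\nabla\Phi_j(\xi(\gamma_j))|\bigr)^{-N}
\]
uniformly for $\tfrac{R}{2} \les t \les R$.

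For $(t, x) \in Q_R \cap B$ with $\dist(T_{\gamma_j}, B) > r$, the central-axis point $(t, x(\gamma_j) - t\nabla\Phi_j(\xi(\gamma_j)))$ lies in $T_{\gamma_j}$ (because $t \in (R/2, R)$), which forces $|x - x(\gamma_j) + t\nabla\Phi_j(\xi(\gamma_j))| > r + R^{1/2}$. Writing $L_{\gamma_j}f = w_{\gamma_j}^{-1} L^\sharp_{\gamma_j}f$ and applying Cauchy--Schwarz in $y$ to
\[
\mc{P}_{\gamma_j}u(t, x) = \int_{\RR^n} \mc{K}_{\gamma_j}(t, x - y)\, w_{\gamma_j}(y)^{-1}\, L^\sharp_{\gamma_j}f(y)\, dy,
\]
together with a standard Peetre-type bound on $\int |\mc{K}_{\gamma_j}(t, x-y)|^2 w_{\gamma_j}(y)^{-2}\,dy$, yields a pointwise estimate of the form
\[
|\mc{P}_{\gamma_j}u(t, x)| \lesa R^{-n/4}\bigl(1 + R^{-1/2}|x - x(\gamma_j) + t\nabla\Phi_j(\xi(\gamma_j))|\bigr)^{-N}\|L^\sharp_{\gamma_j}f\|_{L^2_x}.
\]
A final Cauchy--Schwarz over $\gamma_j$, combined with a lattice-vs-integral comparison that exploits the $\mathcal{O}(R^{n/2})$ frequency centers of $\mc{X}_j$ and the $R^{1/2}\ZZ^n$-spaced physical centers constrained by $|x - x(\gamma_j) + t\nabla\Phi_j(\xi(\gamma_j))| > r + R^{1/2}$, then delivers \eqref{eqn - conc of wave packets}. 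The main obstacle is precisely this two-layer bookkeeping: the oscillatory kernel decay, the Schwartz weight $w_{\gamma_j}$, and the lattice sum must combine to produce the claimed power $(r/R^{1/2})^{(n+3)/2 - N}$ with a constant depending only on $n, N, \mb{R}_0, \mb{D}_2$ (and otherwise independent of $\Phi_j$, $\Lambda_j$, and $f$).
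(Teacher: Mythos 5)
Your proposal is essentially the paper's own proof: the first three properties are read off from the definition of $L_{\gamma_j}$, the Fourier support of $\eta$ and $\rho$, and the stationary orthogonality bound (\ref{eqn -orthog of phase space loc operator}), while the concentration estimate is obtained exactly as in the paper by a non-stationary phase bound on the kernel (using \eqref{it:ass2} and $|t| \les R$), Cauchy--Schwarz against the weight $w_{\gamma_j}$ to produce $\| L^\sharp_{\gamma_j} f\|_{L^2_x}$, and a final Cauchy--Schwarz plus lattice-sum comparison over the $\mc{O}(R^{n/2})$ frequency centres. Two cosmetic points only: your $\tilde\rho$ must equal $1$ on a ball of radius $2$ (not merely on $\supp \rho$) to cover the full Fourier support of $L_{\gamma_j} f$, and with $w_{\gamma_j}=(1+|x-x(\gamma_j)|/R^{1/2})^{N-1+\frac{n+1}{2}}$ the Peetre/Cauchy--Schwarz step yields the pointwise packet decay with exponent $1-N$ rather than $-N$ --- which is exactly what the paper uses and still gives the claimed power $\big(\tfrac{r}{R^{1/2}}\big)^{\frac{n+3}{2}-N}$.
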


      \begin{proof}
    This result is somewhat standard, see for instance  \cite[Lemma
        4.1]{Tao2003a} and \cite[Lemma 2.2]{Lee2006} for related estimates. We  only prove the localisation property \eqref{eqn - conc of wave
          packets}, as the remaining properties follow directly from
        the definition of $\mc{P}_{\gamma}$, together with the
        analogous properties of the phase-space localisation operator
        $L_{\gamma}$. Let $\gamma_j = (x_0, \xi_0)$ and write
	\begin{align*}
          \mc{P}_{\gamma_j} u(t, x) &= \int_{\RR^n} \widehat{(L_{\gamma_j} f)}(\xi) e^{ i t \Phi_j(\xi)} e^{ i x \cdot \xi} d\xi \\
          &= \int_{\RR^n} K_{\xi_0}(t, x-y) (L_{\gamma_j} f)(y) dy
	\end{align*}
        where, as in the proof of Lemma \ref{lem - dispersion}, the
        kernel is given by $K_{\xi_0}(t, x) = \int_{\RR^n} \rho(
        R^{\frac{1}{2}}(\xi - \xi_0)) e^{ i t \Phi_j(\xi)} e^{ i x
          \cdot \xi} d\xi$. Note that, as in \eqref{eqn - lem
          dispersion - decay of kernel}, integrating by parts $N-1$
        times, and using the fact that $|t| \les R$, $R \gg1$, we
        deduce that
	$$K_{\xi_0}(t, x) \lesa  R^{-\frac{n}{2}} \Big( 1 + \frac{| x + t \nabla\Phi_j(\xi_0) |}{R^\frac{1}{2}} \Big)^{1-N}. $$
        Plugging this bound into the identity for $\mc{P}_{\gamma_j}
        u(t, x)$, we deduce that
	\begin{align*} | \mc{P}_{\gamma_j} u(t, x) | &\lesa  R^{-\frac{n}{2}} \Big( 1 + \frac{| x -x_0 + t \nabla\Phi_j(\xi_0) |}{R^\frac{1}{2}} \Big)^{1-N} \int_{\RR^n}  \Big( 1 + \frac{| y-x_0 |}{R^\frac{1}{2}} \Big)^{N-1} | L_{\gamma_j} f(y)|  dy\\
          &\lesa R^{-\frac{n}{4}}  \Big( 1 + \frac{| x -x_0 + t \nabla\Phi_j(\xi_0)
            |}{R^\frac{1}{2}} \Big)^{1-N} \| L^\sharp_{\gamma_j} f \|_{L^2_x}
        \end{align*}
Since there are $\mc{O}(R^{\frac{n}{2}})$ choices of $\xi_0$, and
    $$|x - x_0 + t \nabla \Phi_j(\xi_0)| = \big|(t, x) - \big( t, x_0 - t\nabla \Phi_j(\xi_0)\big)| \g \text{dist}\big((t,x), T_{\gamma_j}\big), $$
an application of H\"older's inequality gives for any $(t, x) \in B$
    \begin{align*}
      \sum_{\substack{\gamma_j \in \Gamma_j \\ \text{dist}(T_{\gamma_j}, B) \g r }}  |\mc{P}_{\gamma_j} u(t, x)| &\lesa R^{-\frac{n}{4}} \bigg( \sum_{\substack{\gamma_j \in \mc{X}_j \\ \text{dist}(T_{\gamma_j}, B) \g r }}  \Big( 1 + \frac{| x -x_0 + t \nabla\Phi_j(\xi_0) |}{R^\frac{1}{2}} \Big)^{2-2N}\bigg)^\frac{1}{2} \bigg( \sum_{\gamma_j \in \Gamma_j }   \big\| L^\sharp_{\gamma_j} f \big\|_{L^2_x}^2 \bigg)^\frac{1}{2} \\
      &\lesa  \Big( \frac{r}{R^\frac{1}{2}} \Big)^{\frac{n+3}{2} - N}  \sup_{\xi_0}  \bigg( \sum_{x_0 \in R^\frac{1}{2} \ZZ^n} \Big( 1 + \frac{| x -x_0 + t \nabla\Phi_j(\xi_0) |}{R^\frac{1}{2}} \Big)^{-n-1} \bigg)^\frac{1}{2} \bigg( \sum_{\gamma_j \in \Gamma_j }   \big\| L^\sharp_{\gamma_j} f \big\|_{L^2_x}^2 \bigg)^\frac{1}{2}\\
      &\lesa \Big( \frac{r}{R^\frac{1}{2}} \Big)^{\frac{n+3}{2} - N}\bigg( \sum_{\gamma_j \in \Gamma_j }   \big\| L^\sharp_{\gamma_j} f \big\|_{L^2_x}^2 \bigg)^\frac{1}{2}
    \end{align*}
    as required.
  \end{proof}

\subsection{Atomic Spaces and Wave Packets}\label{subsect:atomic-spaces}
Closely related to the $V^p$ spaces, are the slightly smaller $U^p$
spaces, see \cite{Koch2005,Hadac2009,Koch2014}.

\begin{definition}\label{def:up} Let $1\leq p<\infty$.
  A function $\rho: \RR \rightarrow L^2_x$ is called a $U^p$
  \emph{atom} if there exists a decomposition $ \rho = \sum_{J \in
    \mc{I}} \ind_J(t) f_J $ subordinate to a finite partition $\mc{I}
  = \{ (-\infty, t_1), [t_2, t_3),\ldots, [t_N, \infty) \}$ of $\RR$,
  such that
        $$ \|f_J\|_{\ell^p_JL^2_x}:=\Big(\sum_{J \in \mc{I}} \| f_J \|_{L^2_x}^p \Big)^{\frac1p}\les 1. $$
        The atomic Banach space $U^p$ is then defined as
    $$ U^p = \Big\{ \sum_{j} c_j \rho_j \, \Big| \, (c_j) \in \ell^1(\NN), \, \text{$\rho_j$ a $U^p$ atom} \Big\}$$
    with the induced norm
    $$ \| \rho \|_{U^p} = \inf_{ \substack{ \rho = \sum_k c_k \phi_k \\ \phi_k \text{ $U^p$ atom}}  } \sum_k |c_k|. $$
    The space $U^p_\Phi$ is the set of all $u: \RR \rightarrow L^2_x$
    such that $e^{ - i t \Phi(-i\nabla)} u \in U^p$ with the obvious
    norm.
  \end{definition}

Let $u = \sum_J \ind_J(t) e^{ i t \Phi_j(-i\nabla)} f_J$ be a
$U^2_{\Phi_j}$ atom. Since $\ind_J(t)$ commutes with spatial Fourier
multipliers, we have
    $$ \mc{P}_{\gamma_j} u = \sum_J \ind_J(t) e^{ i t \Phi_j(-i\nabla)} L_{\gamma_j} f_J,\text{ and } \mc{P}^\sharp_{\gamma_j} u = \sum_J \ind_J(t) e^{ i t \Phi_j(-i\nabla)} L_{\gamma_j}^\sharp f_J.$$
    Proposition \ref{prop - properties of wave packets} gives the
    following.

\begin{corollary}[Wave Packets for $U^2_{\Phi_j}$ atoms]\label{cor - wave packets on U2}
  Let $j=1, 2$. For any $R \g (3\mb{R}_0)^2$ and $U^2_{\Phi_j}$
  atom $u = \sum_J \ind_J(t) e^{ it \Phi_j(-i\nabla)} f_J$ with $\supp
  \widehat{u} \subset \Lambda^*_j$, we have $u =\sum_{\gamma_j \in
    \mc{X}_j} \mc{P}_{\gamma_j} u$, $\supp
  \widehat{\mc{P}_{\gamma_j}u} \subset \{ |\xi - \xi(\gamma)| \les
  2R^{-\frac{1}{2}}\}$, and given any $\Gamma_j \subset \mc{X}_j$ we
  have the orthogonality bound
  \begin{equation}\label{eqn -orthog of wave packets atoms}
    \Big\| \sum_{\gamma_j \in \Gamma_j} \mc{P}_{\gamma_j} u  \Big\|_{L^\infty_t L^2_x} \lesa \bigg( \sum_{\gamma_j \in \Gamma_j} \|  L_{\gamma_j}^\sharp f_J \|_{\ell^2_J L^2_x}^2 \bigg)^\frac{1}{2} \lesa \| f_J \|_{\ell^2_JL^2_x}.
  \end{equation}
  Moreover, the wave packets $\mc{P}_{\gamma_j}u$ are concentrated on
  the tubes $T_{\gamma_j}$ in the sense that for every $r \g R^\frac{1}{2}$, and
  any ball $B \subset \RR^{1+n}$, we have the bound
  \begin{equation}\label{eqn - conc of wave packets atoms} \bigg\|
    \sum_{\substack{\gamma_j \in \Gamma_j \\ \dist(T_{\gamma_j},
        B)>r}} \mc{P}_{\gamma_j} u \bigg\|_{L^\infty_{t, x}(B \cap
      Q_R)} \lesa \Big( \frac{r}{R^\frac{1}{2}}\Big)^{\frac{n+3}{2} -
      N}\bigg( \sum_{\gamma_j \in \Gamma_j} \|  L_{\gamma_j}^\sharp f_J \|_{\ell^2_J L^2_x}^2 \bigg)^\frac{1}{2}.
  \end{equation}
  Here, the implied constants depend only on $\mb{R}_0, \mb{D}_2,
  N$ and $n\g 2$.
\end{corollary}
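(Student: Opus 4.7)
The plan is to reduce everything to a piecewise application of Proposition \ref{prop - properties of wave packets}. As recorded immediately before the statement, the fact that $\ind_J(t)$ commutes with spatial Fourier multipliers yields
\[
\mc{P}_{\gamma_j} u = \sum_{J \in \mc{I}} \ind_J(t) \, e^{it\Phi_j(-i\nabla)} L_{\gamma_j} f_J,
\]
and an analogous identity for $\mc{P}^\sharp_{\gamma_j} u$. Hence, on each interval $J \in \mc{I}$, the function $u$ coincides with the free solution $e^{it\Phi_j(-i\nabla)} f_J$, whose spatial Fourier support is contained in $\supp \widehat{u} \subset \Lambda_j^*$. The decomposition $u = \sum_{\gamma_j \in \mc{X}_j} \mc{P}_{\gamma_j} u$ and the Fourier localisation $\supp \widehat{\mc{P}_{\gamma_j} u} \subset \{|\xi - \xi(\gamma_j)| \les 2R^{-1/2}\}$ then follow directly from the corresponding properties of the phase-space projection $L_{\gamma_j}$.

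For the orthogonality bound \eqref{eqn -orthog of wave packets atoms}, I would fix $t$ and let $J$ denote the unique interval of $\mc{I}$ containing $t$. Then
\[
\Big\| \sum_{\gamma_j \in \Gamma_j} \mc{P}_{\gamma_j} u(t) \Big\|_{L^2_x} = \Big\| \sum_{\gamma_j \in \Gamma_j} L_{\gamma_j} f_J \Big\|_{L^2_x} \lesa \bigg( \sum_{\gamma_j \in \Gamma_j} \| L_{\gamma_j}^\sharp f_J \|_{L^2_x}^2 \bigg)^{1/2}
\]
by the phase-space orthogonality \eqref{eqn -orthog of phase space loc operator}. Taking the supremum over $t \in \RR$ amounts to a supremum over $J \in \mc{I}$, which is trivially dominated by the $\ell^2_J$ norm, producing the first inequality in \eqref{eqn -orthog of wave packets atoms}. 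The second inequality follows from a further application of \eqref{eqn -orthog of phase space loc operator} together with the $U^2$-atom bound $\| f_J \|_{\ell^2_J L^2_x} \les 1$.

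The tube concentration bound \eqref{eqn - conc of wave packets atoms} is treated in exactly the same spirit. For each $(t, x) \in B \cap Q_R$ there is a unique $J \in \mc{I}$ with $t \in J$, and applying \eqref{eqn - conc of wave packets} to the free solution $e^{it\Phi_j(-i\nabla)} f_J$ yields the pointwise estimate
\[
\Big| \sum_{\substack{\gamma_j \in \Gamma_j \\ \dist(T_{\gamma_j}, B) > r}} \mc{P}_{\gamma_j} u(t, x) \Big| \lesa \Big( \frac{r}{R^{1/2}} \Big)^{\frac{n+3}{2} - N} \bigg( \sum_{\gamma_j \in \Gamma_j} \| L_{\gamma_j}^\sharp f_J \|_{L^2_x}^2 \bigg)^{1/2}.
\]
Taking the supremum over $(t, x) \in B \cap Q_R$ and, as above, dominating $\sup_J$ by the $\ell^2_J$ norm produces the claimed bound. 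No step in this argument presents any genuine obstacle; it is essentially bookkeeping, the only subtlety being the crude (but sharp enough) passage from a supremum over $J$ to the $\ell^2_J$-summed right-hand side, which costs nothing because each individual summand is dominated by the full sum.
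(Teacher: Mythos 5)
Your argument is correct and is exactly the route the paper takes: the paper's entire proof consists of the observation that $\ind_J(t)$ commutes with spatial Fourier multipliers, so that $\mc{P}_{\gamma_j}u=\sum_J \ind_J(t)e^{it\Phi_j(-i\nabla)}L_{\gamma_j}f_J$, and then an invocation of Proposition \ref{prop - properties of wave packets} interval by interval, which is precisely what you spell out (your pointwise-in-$t$ reduction and the domination of $\sup_J$ by the $\ell^2_J$ sum being the obvious bookkeeping the paper leaves implicit). The only cosmetic slip is that the second inequality in \eqref{eqn -orthog of wave packets atoms} needs only \eqref{eqn -orthog of phase space loc operator} summed in $J$, not the atom normalisation $\|f_J\|_{\ell^2_JL^2_x}\les 1$.
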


\subsection{Sets and Relations of Tubes}\label{subsec - tube
  definitions}

We repeat the definitions and notation used by Tao \cite{Tao2003a},
but as above we adopt the point of view that the basic objects are the
phase space elements $\gamma \in \mc{X}_j$, rather than the associated
tubes $T_{\gamma_j}$.

For $\delta>0$, let $\mc{B}$ be a collection of (space-time) balls of
radius $R^{1-\delta}$ which form a finitely overlaping cover of
$Q_R$. Similarly let $\mb{q}$ denote a collection of finitely
overlapping cubes $q$ of radius $R^\frac{1}{2}$ which cover the
cylinder $Q_R$. Let $R^\delta q$ denote a cube of radius $R^{\delta +
  \frac{1}{2}}$ with the same centre as $q$. Given a collection
$\Gamma_j \subset \mc{X}_j$, and a cube $q\in \mb{q}$, we define
		$$ \Gamma_j(q) = \{ \gamma_j \in \Gamma_j \mid T_{\gamma_j} \cap R^\delta q \ne \varnothing \}$$
                so $\Gamma_j(q)$ is the subcollection of our
                phase-space decomposition, such that the associated
                tube $T_{\gamma_j}$ intersects a slight enlargement of
                the cube $q \in \mb{q}$. In the remainder of this
                subsection, the implied constants may depend on $n\g
                2$ only. Given $1 \les \mu_1,\mu_2 \lesa R^{100n}$,
                define
	$$ \mathbf{q}(\mu_1, \mu_2) = \{ q \in \mathbf{q} \mid \mu_j \les \# \Gamma_j(q) < 2 \mu_j, \, j=1, 2 \}.$$
        Thus, roughly, $\mb{q}(\mu_1, \mu_2)$ restricts to those
        elements of $\mb{q}$ which are intersected by $\mu_j$ tubes
        $T_{\gamma_j}$, $\gamma_j \in \Gamma_j$. Given $\gamma_j \in
        \Gamma_j$, we let
	$$\lambda(\gamma_j, \mu_1, \mu_2) = \#\{ q \in \mb{q}(\mu_1, \mu_2) \mid T_{\gamma_j} \cap R^\delta q \not = \varnothing \}$$
        and for every $1 \les \lambda_j \lesa R^{100n}$ we define
	$$ \Gamma_j[\lambda_j, \mu_1, \mu_2] = \{ \gamma_j \in \Gamma_j \mid \lambda_j \les \lambda(\gamma_j, \mu_1, \mu_2) < 2 \lambda_j \}.$$
        So $\Gamma_j[\lambda_j, \mu_1, \mu_2]$ essentially restricts
        to $\gamma_j \in \Gamma_j$, such that the associated tubes
        $T_{\gamma_j}$ intersect $\lambda_j$ cubes in $\mb{q}(\mu_1,
        \mu_2)$. Clearly
	$$ \bigcup_{1\les \lambda_j, \mu_1, \mu_2 \lesa R^{100n}} \Gamma_j(\lambda_j, \mu_1, \mu_2) = \Gamma_j.$$
        The following relation $\sim$ between balls in $\mc{B}$ and
        $\gamma_j \in \Gamma_j$ plays a key role in the arguments to
        follow.

\begin{definition}\label{defn - tubes and balls relation}
  Given $\gamma_j \in \Gamma_j[\lambda_j, \mu_1, \mu_2]$, we let
  $B(\gamma_j, \lambda_j, \mu_1, \mu_2) \in \mc{B}$ denote a ball
  which maximises
	$$ \# \{  q \in \mb{q}(\mu_1, \mu_2) \mid T_{\gamma_j} \cap R^\delta q \not =  \varnothing, \,\, q\cap B(\gamma_j, \lambda_j, \mu_1, \mu_2) \not= \varnothing \}.$$
        If $B \in \mc{B}$, and $\gamma_j \in \Gamma_j[\lambda_j,
        \mu_1, \mu_2]$, we then define $\gamma_j \sim_{\lambda_j,
          \mu_1, \mu_2} B$, if $B \subset 10 B(\gamma_j, \lambda_j,
        \mu_1, \mu_2)$. To extend this definition to general points
        $\gamma_j \in \Gamma_j$, we simply say that $\gamma_j \sim B$
        if there exists some $1 \les \lambda_j, \mu_1, \mu_2 \lesa
        R^{100n}$ such that $\gamma_j \sim_{\lambda_j, \mu_1, \mu_2}
        B$.
      \end{definition}

\begin{remark}\label{rem - consequences of tube balls relation}
  This definition has the following important consequences.
  \begin{enumerate}
  \item Let $\gamma_j \in \Gamma_j$ and consider the set $\{ B \in
    \mc{B} \mid \gamma_j \sim B\}$. Since there are at most $\mc{O}(
    R^\epsilon)$ dyadic $1\les \lambda_j, \mu_1, \mu_2 \les R^{100n}$
    such that $\gamma_j \in \Gamma_j[\lambda_j, \mu_1, \mu_2]$, and
    only $\mc{O}(1)$ balls $B$ such that $\gamma_j \sim_{\lambda_j,
      \mu_1, \mu_2} B$, we have
		$$   \# \{ B \in \mc{B} \mid \gamma_j \sim B \} \les \sum_{\substack{1 \les \lambda_j, \mu_1, \mu_2\les R^{100n} \\ \gamma_j \in \Gamma_j[\lambda_j, \mu_1, \mu_2] }} \# \{ B \in \mc{B} \mid  \gamma_j \sim_{\lambda_j, \mu_1, \mu_2} B \} \lesa \sum_{1 \les \lambda_j, \mu_1, \mu_2 \les R^{100n}} 1 \lesa R^\epsilon. $$
			
              \item Fix $1\les \lambda_1, \mu_1, \mu_2 \lesa R^{100n}$
                and let $\gamma_j \in \Gamma_j[\lambda_j, \mu_1,
                \mu_2]$. By definition, we have
		\begin{align*}
                  \lambda_j &\les \# \{ q \in \mb{q}(\mu_1, \mu_2) \mid T_{\gamma_j} \cap R^\delta q \not = \varnothing \}\\
                  &\les \sum_{B \in \mc{B}} \# \{ q \in \mb{q}(\mu_1, \mu_2) \mid T_{\gamma_j} \cap R^\delta q \not = \varnothing, \,\, q \cap B \not = \varnothing \}\\
                  &\les \#\mc{B}\, \# \{ q \in \mb{q}(\mu_1, \mu_2)
                  \mid T_{\gamma_j} \cap R^\delta q \not =
                  \varnothing, \,\, q \cap B(\gamma_j, \lambda_1,
                  \mu_1, \mu_2) \not = \varnothing \}
		\end{align*}
                where we used the maximal property of the ball
                $B(\gamma_j, \lambda_j, \mu_1, \mu_2)$. Therefore, as
                $\# \mc{B} \lesa R^{(n+1)\delta}$, we deduce the lower
                bound
	$$  \# \{ q \in \mb{q}(\mu_1, \mu_2) \mid T_{\gamma_j} \cap R^\delta q \not = \varnothing, \,\, q \cap B(\gamma_j, \lambda_j, \mu_1, \mu_2) \not = \varnothing \} \gtrsim R^{-(n+1)\delta} \lambda_j.$$
      \end{enumerate}
    \end{remark}

    \section{A Local Bilinear Restriction Estimate}\label{sect:loc}

    The main step in the proof of Theorem \ref{thm - bilinear Lp
      estimate} is prove the following spatially localised version in $U^2_\Phi$.

\begin{theorem}\label{thm - loc bilinear Lp estimate}
  Let $n\g 2$ and $\alpha>0$. Let $\mb{R}_0 \g 1$ and $
  \mb{D}_1, \mb{D}_2 >0$. For $j=1, 2$, let $\Lambda_j, \Lambda_j^*
  \subset \{ \frac{1}{16} \les |\xi| \les 16\}$ with $\Lambda_j$
  convex and $\Lambda^*_j + \frac{1}{\mb{R}_0} \subset
  \Lambda_j$. There exists $N \in \NN$ and a constant $C>0$ such that,
  for any phases $\Phi_1$ and $\Phi_2$ satisfying Assumption
  \ref{assump on phase}, any $u \in U^2_{\Phi_1}$, $v \in
  U^2_{\Phi_2}$ with $\supp \widehat{u}(t) \subset \Lambda^*_1$,
  $\supp \widehat{v}(t) \subset \Lambda^*_2$, and any $R \g 1$, we
  have
		$$  \|u v \|_{L^\frac{n+3}{n+1}_{t, x}(Q_R)} \les C R^\alpha  \| u \|_{U^2_{\Phi_1}} \| v \|_{U^2_{\Phi_2}}$$
              \end{theorem}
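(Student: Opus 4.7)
The plan is to follow the Tao 2003a bilinear restriction strategy adapted to the general-phase $U^2$ setting: perform an induction on the scale $R$, with wave packet decomposition providing the engine and a Kakeya-type combinatorial estimate providing the geometric input. More precisely, let $\mc{Q}(R)$ denote the best constant such that the claimed inequality holds for all admissible $u, v$ at scale $R$. By the very definition of $U^2_{\Phi_j}$ together with a density/triangle inequality argument, it suffices to bound $\|uv\|_{L^{(n+3)/(n+1)}(Q_R)}$ when $u$ and $v$ are single $U^2_{\Phi_j}$ atoms $u=\sum_J \ind_J(t) e^{it\Phi_1(-i\nabla)} f_J$ and $v=\sum_{J'} \ind_{J'}(t) e^{it\Phi_2(-i\nabla)} g_{J'}$ with $\|f_J\|_{\ell^2_J L^2_x}\les 1$, $\|g_{J'}\|_{\ell^2_{J'} L^2_x}\les 1$. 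For $R$ of order unity, H\"older combined with the linear Strichartz bounds of Remark \ref{rem:strichartz} gives the base case $\mc{Q}(R)\les C_0$. I will then show that for any $\alpha>0$, choosing $\delta=\delta(\alpha,n)$ sufficiently small and $N=N(\alpha,n)$ sufficiently large, one has the bootstrap inequality $\mc{Q}(R)\les \tfrac{1}{2}R^\alpha \sup_{R'\les R^{1-\delta}} \mc{Q}(R') + C_1$, which closes by induction.

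To set up the bootstrap, decompose $u=\sum_{\gamma_1\in\mc{X}_1}\mc{P}_{\gamma_1} u$ and $v=\sum_{\gamma_2\in\mc{X}_2}\mc{P}_{\gamma_2} v$ via Corollary \ref{cor - wave packets on U2}, cover $Q_R$ by a finitely overlapping family $\mc{B}$ of balls of radius $R^{1-\delta}$, and pigeonhole in the parameters $\lambda_j,\mu_1,\mu_2$ from Section \ref{subsec - tube definitions} at the cost of a factor $R^{O(\delta)}$. Fix a ball $B\in \mc{B}$ and split each sum into the \emph{attached} part $\sum_{\gamma_j\sim B}\mc{P}_{\gamma_j}$ (Definition \ref{defn - tubes and balls relation}) and the \emph{tangential} complement. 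The tangential contribution is controlled pointwise on $B$ by the concentration bound \eqref{eqn - conc of wave packets atoms}: since $\dist(T_{\gamma_j},B)\gtrsim R^{1-\delta}$ for tubes that are not attached, a gain of $(R^{1/2-\delta})^{(n+3)/2-N}$ is obtained, which can absorb any polynomial loss once $N$ is large compared to $\alpha^{-1}$, reducing the analysis to the attached pieces $u_B=\sum_{\gamma_1\sim B}\mc{P}_{\gamma_1}u$, $v_B=\sum_{\gamma_2\sim B}\mc{P}_{\gamma_2}v$.

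On each $B$ I apply the induction hypothesis at the smaller scale $R'=R^{1-\delta}$: after translation, $u_B$ and $v_B$ are $U^2_{\Phi_j}$ atoms whose Fourier supports still lie in $\Lambda_j^*$, so
\[
\|u_B v_B\|_{L^{(n+3)/(n+1)}(B)}\les \mc{Q}(R^{1-\delta}) \|u_B\|_{U^2_{\Phi_1}}\|v_B\|_{U^2_{\Phi_2}}.
\]
To assemble the contributions of the different $B\in\mc{B}$ one uses H\"older interpolation between this inductive $L^{(n+3)/(n+1)}$ bound and the classical $L^2_{t,x}$ bilinear estimate of Lemma \ref{lem - key bilinear L2 bound I} (applied to the individual wave-packet pairs, whose Fourier supports lie in balls of radius $R^{-1/2}$ and whose velocity sets satisfy the transversality bound \eqref{eq:trans-main}). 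The resulting ball-by-ball bound, raised to the power $(n+3)/(n+1)$ and summed, leads to a sum of the form $\sum_B \#\{(\gamma_1,\gamma_2):\gamma_j\sim B\}^{\beta}$ times orthogonal $\ell^2$ norms of $L^\sharp_{\gamma_j}f_J$ and $L^\sharp_{\gamma_j}g_{J'}$, and the key combinatorial claim is a Kakeya-type bound on this counting sum.

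The main obstacle is precisely that combinatorial estimate: one must show that for pigeonholed densities $\lambda_j,\mu_j$, the number of $(\gamma_1,\gamma_2,B)$ with $\gamma_j\sim B$ is controlled by $R^{O(\delta)}$ times the trivial product bound. This is where the transversality of the cones $\mc{C}_j(\mathfrak{h})$ encoded in Lemma \ref{lem - surface C transverse} enters: it guarantees that tubes $T_{\gamma_1}$ and $T_{\gamma_2}$ associated to different velocity surfaces meet in an essentially zero-dimensional pattern, which, combined with the counting identities of Remark \ref{rem - consequences of tube balls relation}, upgrades the naive orthogonality to the sharp Kakeya-type bound required to gain the $\tfrac{1}{2}R^\alpha$ factor. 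Collecting all of the above, the powers of $R$ combine as $R^{\alpha(1-\delta)}\cdot R^{O(\delta)}\cdot R^{-(N-O(1))\delta/2}$, which is bounded by $\tfrac{1}{2}R^\alpha$ once $\delta$ is chosen small in terms of $\alpha$ and $N$ is large in terms of $\delta$ and $\alpha$; this closes the induction and yields Theorem \ref{thm - loc bilinear Lp estimate}.
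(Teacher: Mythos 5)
Your outline inverts the architecture of the argument at the decisive point, and this is a genuine gap rather than a presentational difference. The relation $\gamma_j\sim B$ of Definition \ref{defn - tubes and balls relation} is a \emph{concentration} relation, not a distance relation: $\gamma_j\not\sim B$ only means that $B$ is not contained in $10B(\gamma_j,\lambda_j,\mu_1,\mu_2)$, i.e.\ that $B$ is not where the tube $T_{\gamma_j}$ picks up most of its cubes from $\mathbf{q}(\mu_1,\mu_2)$; such a tube will in general pass straight through $B$, so $\dist(T_{\gamma_j},B)\gtrsim R^{1-\delta}$ is simply false for the non-attached packets, and the concentration bound \eqref{eqn - conc of wave packets atoms} gives no gain $(R^{1/2-\delta})^{\frac{n+3}{2}-N}$ for them. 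Consequently your reduction ``to the attached pieces'' collapses. In the paper's proof the attached part is the \emph{easy} half (induction hypothesis at scale $R^{1-\delta}$ plus part (i) of Remark \ref{rem - consequences of tube balls relation}, each $\gamma_j$ being related to $\mc{O}(R^\epsilon)$ balls), while all the analytic and geometric work goes into the non-attached part: one interpolates a trivial $L^1_{t,x}$ bound with the $L^2_{t,x}$ estimate \eqref{eqn - sec induc on scales - nonconcentrated sum in L2}, and that $L^2$ bound is obtained by expanding the square, using the Fourier-support constraints together with Lemma \ref{lem - properties of surface Sigma} to confine $\xi(\gamma_1')$ to an $R^{-1/2}$-neighbourhood of $\Sigma_1(\mathfrak{h})$, applying Lemma \ref{lem - key bilinear L2 bound I} to individual packet pairs, and finally invoking the bush/Kakeya count, which uses Lemma \ref{lem - surface C transverse} \emph{and} the non-concentration hypothesis $\gamma_1\not\sim B$ through part (ii) of Remark \ref{rem - consequences of tube balls relation}. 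You do cite these lemmas, but you deploy them on the attached half (to ``assemble the balls''), where they are neither needed nor useful: interpolating the inductive $L^{\frac{n+3}{n+1}}$ bound against an $L^2$ bound there cannot close the induction, and no transversality input is required to sum the attached contributions.

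Two further points would need repair even after fixing the above. First, the quantity you induct on, the $U^2$-based constant $\mc{Q}(R)$, does not interface with the combinatorial estimate, which is a pure counting statement; this is exactly why the paper inducts on the statement \eqref{eqn - sec induc on scales - induction assump}, formulated for arbitrary sub-collections $\Gamma_j$ of packets with right-hand side $(\#\Gamma_1\#\Gamma_2)^{1/2}\sup_{\gamma_j}\|L^\sharp_{\gamma_1}f_J\|_{\ell^2_JL^2_x}\|L^\sharp_{\gamma_2}g_{J'}\|_{\ell^2_{J'}L^2_x}$, and recovers the atom bound only after an additional pigeonholing of the packet masses $\nu_j$, a step absent from your outline. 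Second, the bootstrap inequality as you state it, $\mc{Q}(R)\les\tfrac12 R^\alpha\sup_{R'\les R^{1-\delta}}\mc{Q}(R')+C_1$, does not close: with $\mc{Q}(R')\approx (R')^{\alpha}$ the right-hand side is of size $R^{(2-\delta)\alpha}$. The correct shape, as in the paper, is that validity of the packet-count estimate with exponent $\beta$ implies it with exponent essentially $\max\{(1-\delta)\beta, D\delta\}+2\epsilon$, which strictly decreases under iteration until it reaches $\alpha$; also the final bookkeeping should add, not multiply, the inductive term and the $R^{D\delta}$ term coming from the non-attached part.
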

In the remainder of this section we give the proof of Theorem \ref{thm - loc bilinear Lp estimate}. The proof is broken up into three key steps. The first step is use an induction on scales argument to reduce to proving an $L^2_{t, x}$ bound. We then use the localisation properties of the wave packet decomposition to show that the $L^2_{t, x}$ bound follows from a combinatorial Kakeya type bound. The final step is prove the combinatorial estimate using a ``bush'' argument.

\subsection{Induction on Scales}
Let $\alpha>0$ and fix the constants $\mb{R}_0 \g 1$, $
\mb{D}_1, \mb{D}_2 >0$.  Fix $N = \frac{\alpha + 1}{\alpha} (100n)^2$. For $j=1, 2$, let $\Lambda_j, \Lambda_j^* \subset \{
\frac{1}{16} \les |\xi| \les 16\}$ with $\Lambda_j$ convex and
$\Lambda^*_j + \frac{1}{\mb{R}_0} \subset \Lambda_j$. It is enough to
show that there exists a constant $C>0$ such that, for any phases $\Phi_1$
and $\Phi_2$ satisfying Assumption \ref{assump on phase}, any $R \g (3\mb{R}_0)^2$, and any $U^2_{\Phi_j}$ atoms $u = \sum_J
\ind_J(t) e^{ i t \Phi_1(-i\nabla)} f_J$,  $v = \sum_{J'}
\ind_{J'}(t) e^{ it \Phi_2(-i \nabla)} g_{J'}$ with $\supp
\widehat{f} \subset \Lambda^*_1$, $\supp \widehat{g}_{J'} \subset
\Lambda^*_2$, we have
\begin{equation}\label{eqn - sec induc on scales - loc bilinear est atom}
  \|u v \|_{L^\frac{n+3}{n+1}_{t, x}(Q_R)} \les C R^{2\alpha}. \end{equation}
To simplify the notation to follow, we now work under the assumption that any implicit constants may now depend on $\alpha$, $n\g 2$, and the constants $\mb{R}_0,\mb{D}_1, \mb{D}_2$, but will be independent of $R$ and the particular choice of phases $\Phi_j$ satisfying Assumption \ref{assump on phase}.

The proof of \eqref{eqn - sec induc on scales - loc bilinear est atom}
proceeds along the same lines as Tao's argument for the paraboloid
\cite{Tao2003a}. Namely, we use an induction on scales argument to
deduce the estimate at scale $R$, by applying a weaker estimate at a
smaller scale $R^{1-\delta}$.  We start by observing that it suffices to show that, for every $\Gamma_j \subset \mc{X}_j$ such that $\#\Gamma_j \les R^{10n}$, and any $\beta \g \alpha$, we have
\begin{equation}\label{eqn - sec induc on scales - induction assump} \begin{split}
         \bigg\| \sum_{\gamma_j \in \Gamma_j} \mc{P}_{\gamma_1} u
         \mc{P}_{\gamma_2} v \bigg\|_{L^{\frac{n+3}{n+1}}_{t,
             x}(Q_R)} \lesa R^\beta \big( \# \Gamma_1 \#
         \Gamma_2 \big)^\frac{1}{2} \sup_{\gamma_j \in \Gamma_j} \|
         L^\sharp_{\gamma_1} f_J \|_{\ell^2_J L^2_x} \|
         L^\sharp_{\gamma_2} g_{J'} \|_{\ell^2_{J'}
           L^2_x}. \end{split}
     \end{equation}
To deduce (\ref{eqn - sec induc on scales - loc bilinear est atom}) from (\ref{eqn - sec induc on scales - induction assump}), we let
	$$ \mc{X}_1(\nu_1) = \big\{ \gamma_1 \in \mc{X}_1 \mid \nu_1 \les \| L^\sharp_{\gamma_1} f_J \|_{\ell^2_J L^2_x} \les 2 \nu_1, \,\, T_{\gamma_1} \cap 2Q_R \not = \varnothing \big\}$$
        and $\mc{X}_2(\nu_2)$ similarly where $\nu_j \in 2^\ZZ$. An
        application of Corollary \ref{cor - wave packets on U2} gives the decomposition $u = \sum_{\gamma_j \in \mc{X}_j} \mc{P}_j u$ as well as the bounds
	$$ \bigg\| \sum_{\substack{\gamma_1 \in \mc{X}_1 \\  T_{\gamma_1} \cap 2Q_R = \varnothing} }  \mc{P}_{\gamma_1} u \bigg\|_{L^\infty_{t, x}(Q_R)} \lesa R^{-99n} $$
        and
 $$ \bigg( \sum_{\gamma_j \in \mc{X}_j} \| \mc{P}_{\gamma_1} u \|_{L^\infty_t L^2_x}^2 \bigg)^\frac{1}{2}\lesa \bigg( \sum_{\gamma_j \in \mc{X}_j} \| L^\sharp_{\gamma_1} f_J \|_{\ell^2_J L^2_x}^2 \bigg)^\frac{1}{2} \lesa 1 .$$
 The analogous bounds hold for $v$.  Moreover $\# \{ \gamma_j \in
 \mc{X}_j \mid T_{\gamma_j} \cap 2Q_R \not = \varnothing \} \lesa
 R^{n+1}$. Collecting these properties together, we deduce that
 $\mc{X}_1(\nu_1) = \varnothing$ for $\nu_1 \gg 1$ and
	$$ \bigg\| u - \sum_{R^{-100n} \les \nu_1 \lesa 1} \sum_{\gamma_1 \in \mc{X}_1(\nu_1)} \mc{P}_{\gamma_1} u \bigg\|_{L^\infty_{t, x}(Q_R)} \lesa R^{-90n}.$$
        A similar argument shows that
		$$ \bigg\| v - \sum_{R^{-100n} \les \nu_2 \lesa 1} \sum_{\gamma_2 \in \mc{X}_2(\nu_2)} \mc{P}_{\gamma_2} v \bigg\|_{L^\infty_{t, x}(Q_R)} \lesa R^{-90n}.$$
                Therefore, applying the bound (\ref{eqn - sec induc on
                  scales - induction assump}) with $\Gamma_j =
                \mc{X}_j(\nu_j)$ and $\beta = \alpha$,  we obtain
                \begin{align*}
                  \| u v \|_{L^{\frac{n+3}{n+1}}_{t, x}(Q_R)} &\les \bigg\|uv - \sum_{R^{-100n} \les \nu_j\lesa 1} \sum_{\gamma_j \in \mc{X}_j(\nu_j)} \mc{P}_{\gamma_1} u \mc{P}_{\gamma_2} v \bigg\|_{L^{\frac{n+3}{n+1}}_{t, x}(Q_R)} \\
                  &\qquad \qquad \qquad \qquad+ \sum_{R^{-100n} \les \nu_j \lesa 1} \bigg\| \sum_{\gamma_j \in \mc{X}_j(\nu_j)} \mc{P}_{\gamma_1} u \mc{P}_{\gamma_2} v \bigg\|_{L^{\frac{n+3}{n+1}}_{t, x}(Q_R)} \\
                  &\lesa  1   +  \log(R) R^\alpha \sup_{\nu_j} \Big(  \big( \# \mc{X}_1(\nu_1) \# \mc{X}_2(\nu_2) \big)^\frac{1}{2} \sup_{\gamma_j \in \mc{X}_j(\nu_j)} \| L^\sharp_{\gamma_1} f_J \|_{\ell^2_J L^2_x} \| L^\sharp_{\gamma_2} g_{J'} \|_{\ell^2_{J'} L^2_x}\Big) \\
                  &\lesa R^{2\alpha}
                \end{align*}
where the last line follows from the orthogonality properties of the phase space localisation operators (\ref{eqn -orthog of phase space loc
            operator}). Hence \eqref{eqn - sec induc on scales - loc bilinear est atom} follows.

The proof of (\ref{eqn - sec induc on scales - induction assump}) proceeds via an induction on scales argument. The first step is to note that we already have (\ref{eqn - sec induc on scales - induction assump})
provided we take $\beta>0$ sufficiently large. Indeed, a crude
argument by H\"older and Bernstein inequalities implies the bound with
$\beta=\frac{n+1}{n+3}$ (which could be improved by using linear
Strichartz estimates as indicated in Remark \ref{rem:strichartz}). Suppose we could show that, if \eqref{eqn - sec induc on scales - induction assump} holds for some $\beta> \alpha$, then for every
$\epsilon>0$ we have
\begin{equation}\label{eqn - sec induc on scales - loc induc
    conclusion}  \bigg\| \sum_{\gamma_j \in \Gamma_j} \mc{P}_{\gamma_1} u
         \mc{P}_{\gamma_2} v \bigg\|_{L^{\frac{n+3}{n+1}}_{t,
             x}(Q_R)} \lesa R^{2\epsilon}\big( R^{(1-\delta) \beta} + R^{D \delta}\big) \big( \# \Gamma_1 \#
         \Gamma_2 \big)^\frac{1}{2} \sup_{\gamma_j \in \Gamma_j} \|
         L^\sharp_{\gamma_1} f_J \|_{\ell^2_J L^2_x} \|
         L^\sharp_{\gamma_2} g_{J'} \|_{\ell^2_{J'}
           L^2_x}.\end{equation}
where $\delta =  \frac{\alpha}{D + \alpha}$ and $D\g 0$ is some constant which depends only on the dimension $n$. Then, since $D\delta<\alpha$, by taking $\epsilon>0$ sufficiently small, we deduce that we must have (\ref{eqn - sec induc on scales - induction assump}) for some $\beta'<\beta$. Iterating this argument then gives (\ref{eqn - sec induc on scales - induction assump}) for $\beta = \alpha$. Consequently, our aim is to prove \eqref{eqn - sec induc on scales - loc induc conclusion}, under the assumption that we already have (\ref{eqn - sec induc on scales - induction assump}) for some $\beta >\alpha$.

We now fix $\Gamma_j \subset \mc{X}_j$ such that $\#\Gamma_j \les R^{10n}$, and $\beta > \alpha$. Let $\mc{B}$ denote a collection of balls $B$ of radius $R^{1-\delta}$ which form a finitely overlapping cover of $Q_R$. Let $\sim$ denote the relation between points $\gamma_j \in \Gamma_j$ and balls $B \in
\mc{B}$ given by Definition \ref{defn - tubes and balls relation}. It is important to note that the relation $\sim$ depends only on the fixed sets $\Gamma_j$, and \emph{not} on $u$ and $v$. Decompose
	$$ \bigg\| \sum_{\gamma_j \in \Gamma_j} \mc{P}_{\gamma_1} u \mc{P}_{\gamma_2} v \bigg\|_{L^{\frac{n+3}{n+1}}_{t, x}(Q_R)}
        \les \sum_{B\in \mc{B}} \bigg\| \sum_{\substack{\gamma_j \in
            \Gamma_j \\ \gamma_j \sim B}} \mc{P}_{\gamma_1} u
        \mc{P}_{\gamma_2} v \bigg\|_{L^{\frac{n+3}{n+1}}_{t, x}(B)} +
        \sum_{B \in \mc{B}} \bigg\| \sum_{\substack{\gamma_j \in
            \Gamma_j\\ \gamma_1 \not \sim B \text{ or } \gamma_2 \not
            \sim B}} \mc{P}_{\gamma_1} u \mc{P}_{\gamma_2} v
        \bigg\|_{L^{\frac{n+3}{n+1}}_{t, x}(B)}. $$
For the first term, which contains the tubes which are concentrated on $B$,  we apply the induction assumption at scale $R^{1-\delta}$ to deduce that
	\begin{align*}
          \sum_{B\in \mc{B}}& \bigg\| \sum_{\substack{\gamma_j \in \Gamma_j \\ \gamma_j \sim B}} \mc{P}_{\gamma_1} u \mc{P}_{\gamma_2} v \bigg\|_{L^{\frac{n+3}{n+1}}_{t, x}(B)} \notag \\
          &\lesa R^{(1-\delta) \beta} \sum_{B\in \mc{B}}  \big( \# \{ \gamma_1 \in \Gamma_1\mid \gamma_1 \sim B\} \#\{ \gamma_2 \in \Gamma_2\mid \gamma_2 \sim B\} \big)^\frac{1}{2} \sup_{\gamma_j \in \Gamma_j}  \|
         L^\sharp_{\gamma_1} f_J \|_{\ell^2_J L^2_x} \|
         L^\sharp_{\gamma_2} g_{J'} \|_{\ell^2_{J'}
           L^2_x} \notag \\
           &\lesa R^{\epsilon} R^{ ( 1- \delta) \beta} (\#\Gamma_1 \#\Gamma_2)^\frac{1}{2} \sup_{\gamma_j \in \Gamma_j}  \|
         L^\sharp_{\gamma_1} f_J \|_{\ell^2_J L^2_x} \|
         L^\sharp_{\gamma_2} g_{J'} \|_{\ell^2_{J'}
           L^2_x}
        \end{align*}
   where the last line followed from (i) in Remark \ref{rem - consequences of tube balls relation}.  For the second term, as we can now safely lose factors of $R^\delta$, we may ignore the sum over the balls $B$ (as there are only $\mc{O}(R^{ \delta ( n+1)})$ balls). Thus, after replacing $D$ with $D-n-1$, we need to prove the bound
        \begin{equation}\label{eqn - sec induc on scales -
            nonconcentrated sum in Lp} \bigg\|
          \sum_{\substack{\gamma_j \in \Gamma_j\\ \gamma_1 \not \sim B
              \text{ or } \gamma_2 \not \sim B}} \mc{P}_{\gamma_1} u
          \mc{P}_{\gamma_2} v \bigg\|_{L^{\frac{n+3}{n+1}}_{t, x}(B)}
          \lesa R^{\epsilon + D \delta} \big( \# \Gamma_1 \#
          \Gamma_2 \big)^\frac{1}{2} \sup_{\gamma_j \in \Gamma_j} \|
          L^\sharp_{\gamma_1} f_J \|_{\ell^2_J L^2_x}\|
          L^\sharp_{\gamma_2} g_{J'} \|_{\ell^2_{J'}
            L^2_x}. \end{equation}
To this end, an application of H\"older together with the orthogonality property of the tube decomposition gives
        \begin{align*}
          \bigg\| \sum_{\substack{\gamma_j \in \Gamma_j \\ \gamma_1 \not \sim B \text{ or } \gamma_2 \not \sim B }} \mc{P}_{\gamma_1} u \mc{P}_{\gamma_2} v \bigg\|_{L^1_{t, x}(B)} &\lesa R \bigg( \sum_{\gamma_1 \in \Gamma_1}  \| L^\sharp_{\gamma_1} f_J \|_{\ell^2_J L^2_x}^2 \bigg)^\frac{1}{2} \bigg( \sum_{\gamma_2 \in \Gamma_2} \| L^\sharp_{\gamma_2} g_{J'} \|_{\ell^2_{J'} L^2_x}^2 \bigg)^\frac{1}{2}  \\
          &\lesa R \big( \# \Gamma_1 \# \Gamma_2 \big)^\frac{1}{2}
          \sup_{\gamma_j \in \Gamma_j }\| L^\sharp_{\gamma_1} f_J
          \|_{\ell^2_J L^2_x} \| L^\sharp_{\gamma_2} g_{J'}
          \|_{\ell^2_{J'} L^2_x}
        \end{align*}
        In particular, the convexity of the $L^p$ norms implies that
        (\ref{eqn - sec induc on scales - nonconcentrated sum in Lp})
        would follow from the $L^2_{t, x}$ bound
        \begin{equation}\label{eqn - sec induc on scales -
            nonconcentrated sum in L2}
          \begin{split} \bigg\| \sum_{\substack{\gamma_j \in \Gamma_j\\ \gamma_1 \not \sim B \text{ or } \gamma_2 \not \sim B}} & \mc{P}_{\gamma_1} u \mc{P}_{\gamma_2} v \bigg\|_{L^2_{t, x}(B)} \\
            &\lesa R^{\epsilon +  D\delta - \frac{n-1}{4}} \big( \# \Gamma_1 \# \Gamma_2 \big)^\frac{1}{2} \sup_{\gamma_j \in \Gamma_j} \| L^\sharp_{\gamma_1} f_J \|_{\ell^2_J L^2_x}\| L^\sharp_{\gamma_2} g_{J'} \|_{\ell^2_{J'} L^2_x}.
          \end{split}
        \end{equation}
        Thus we have reduced the problem of obtaining the
        $L^{\frac{n+3}{n+1}}_{t, x}$ estimate \eqref{eqn - sec induc
          on scales - loc induc conclusion}, to proving the
        $L^2_{t,x}$ bound \eqref{eqn - sec induc on scales -
          nonconcentrated sum in L2}.
\begin{remark}
  The fact that the above reduction can be done in $U^2_\Phi$, is the
  key reason why we can extend the homogeneous bilinear Fourier
  restriction estimates to $U^2_\Phi$.
\end{remark}

Our goal in the following two subsections is to prove the bound \eqref{eqn - sec induc
  on scales - nonconcentrated sum in L2}, and thus complete the proof of Theorem \ref{thm - loc bilinear Lp estimate}. As in the previous subsections,
we essentially follow the argument of Tao \cite{Tao2003a}, but apply
the results of Section \ref{sect:assu} in place of analogous results
for the paraboloid. The general strategy is to first use the
transversality  via
Lemma \ref{lem - key bilinear L2 bound I} to reduce to counting
intersections of tubes. The number of tubes is then controlled by
using \eqref{it:ass1} in Assumption \ref{assump on phase} via Lemma \ref{lem -
  surface C transverse} together with a ``bush'' argument. The
notation for various cubes and tubes introduced in Subsection
\ref{subsec - tube definitions} is used heavily in what follows.

\subsection{The $L^2$ Bound: Initial Reductions and Transversality}\label{subsect:l2bd-initial}

Recall that the ball $B \in \mc{B}$ is now fixed. Write
	 $$\sum_{\substack{ \gamma_j \in \Gamma_j, \\\gamma_1 \not \sim B \text{ or } \gamma_2 \not \sim B} } \mc{P}_{\gamma_1} u \mc{P}_{\gamma_2} v = \sum_{\substack{ \gamma_j \in \Gamma_j, \\ \gamma_1 \not \sim B} }\mc{P}_{\gamma_1} u \mc{P}_{\gamma_2} v     + \sum_{\substack{ \gamma_j \in \Gamma_j, \\ \gamma_1 \sim B \text{ and } \gamma_2 \not \sim B} } \mc{P}_{\gamma_1} u \mc{P}_{\gamma_2} v. $$
         We only prove the bound for the first term, as an identical
         argument can handle the second term (just replace $\Gamma_1$
         with $\{ \gamma_1 \in \Gamma_1 \mid \gamma_1 \sim B\}$ and
         reverse the roles of $u$ and $v$).  The first step is make a
         number of reductions exploiting the spatial localisation
         properties of the wave packets, together with a dyadic pigeon
         hole argument to fix various quantities. To this end,
         decompose into cubes $q \in \mb{q}$
		$$\bigg\| \sum_{\substack{ \gamma_j \in \Gamma_j, \\\gamma_1 \not \sim B} } \mc{P}_{\gamma_1} u \mc{P}_{\gamma_2} v      \bigg\|_{L^2_{t, x}(B)} \les \bigg( \sum_{q \in \mathbf{q}, q \subset 2B} \bigg\| \sum_{\substack{ \gamma_j \in \Gamma_j, \\ \gamma_1 \not \sim B } } \mc{P}_{\gamma_1} u \mc{P}_{\gamma_2} v      \bigg\|_{L^2_{t, x}(q)}^2 \bigg)^\frac{1}{2}.$$
                Note that the concentration property of the wave
                packet decomposition implies that
	$$ \bigg\| \sum_{\gamma_1 \in \Gamma_1, T_{\gamma_1} \cap R^\delta q = \varnothing} \mc{P}_{\gamma_1} u  \bigg\|_{L^\infty_{t, x}(q)}  \lesa R^{ - \delta (N- \frac{n+3}{2})} \big( \# \Gamma_1\big)^\frac{1}{2} \sup_{\gamma_1 \in \Gamma_1} \| L_{\gamma_1}^\sharp f_J \|_{\ell^2_J L^2_x}.$$
A similar bound holds for $v$. By our choice of $N$, we have $\delta (N - \frac{n+3}{2}) \g 100n$.  Therefore, as $\# \Gamma_j \lesa R^{10n}$ and $\# \mb{q}
        \lesa R^{2n}$, it suffices to prove
	\begin{equation}\label{eqn - sec L2 bound - sum over conc tubes localised to cubes} \begin{split} \bigg( \sum_{q \in \mathbf{q}, q \subset 2B} \bigg\| \sum_{\substack{ \gamma_j \in \Gamma_j(q), \\ \gamma_1 \not \sim B } } \mc{P}_{\gamma_1} u \mc{P}_{\gamma_2} v   &   \bigg\|_{L^2_{t, x}(q)}^2 \bigg)^\frac{1}{2}\\
            &\lesa R^{ \epsilon + D\delta - \frac{n-1}{4}} \big( \#
            \Gamma_1\big)^\frac{1}{2} \big(
            \#\Gamma_2\big)^\frac{1}{2} \sup_{\gamma_j \in \Gamma_j}\|
            L^\sharp_{\gamma_1} f_J \|_{\ell^2_J L^2_x}\|
            L^\sharp_{\gamma_2} g_{J'} \|_{\ell^2_{J'}
              L^2_x}. \end{split} \end{equation}
  Let $\Gamma_1^{ \not\sim B}(q) = \{ \gamma_1 \in \Gamma_1(q) \mid \gamma_1 \not \sim B \}$ and decompose into
	\begin{align*} \bigg( \sum_{q \in \mathbf{q}, q \subset 2B} \bigg\| \sum_{\substack{ \gamma_j \in \Gamma_j(q), \\ \gamma_1 \not \sim B } } &\mc{P}_{\gamma_1} u \mc{P}_{\gamma_2} v      \bigg\|_{L^2_{t, x}(q)}^2 \bigg)^\frac{1}{2}\\
          &\les \sum_{1\les \lambda_1, \mu_1, \mu_2 \lesa R^{100n} }
          \bigg( \sum_{q \in \mathbf{q}(\mu_1, \mu_2), q \subset 2B}
          \bigg\| \sum_{\substack{ \gamma_1 \in \Gamma_1^{\not \sim
                B}(q) \cap \Gamma_1[\lambda_1, \mu_1, \mu_2] \\
              \gamma_2 \in \Gamma_2(q) } } \mc{P}_{\gamma_1} u
          \mc{P}_{\gamma_2} v \bigg\|_{L^2_{t, x}(q)}^2
          \bigg)^\frac{1}{2}.
	\end{align*}
        Clearly, as we can freely lose $R^\epsilon$, (\ref{eqn - sec
          L2 bound - sum over conc tubes localised to cubes}) would
        follow from proving the estimate for fixed $\lambda_1, \mu_1,
        \mu_2$,
        \begin{equation}\label{eqn - sec L2 bound - sum over conc tubes localised to cubes dyadic} \begin{split} \bigg( \sum_{q \in \mathbf{q}(\mu_1, \mu_2), q \subset 2B} &\bigg\| \sum_{\substack{ \gamma_1 \in \Gamma_1^{\not \sim B}(q) \cap \Gamma_1[\lambda_1, \mu_1, \mu_2] \\ \gamma_2 \in \Gamma_2(q) } } \mc{P}_{\gamma_1} u \mc{P}_{\gamma_2} v      \bigg\|_{L^2_{t, x}(q)}^2 \bigg)^\frac{1}{2}\\
            &\lesa R^{ \epsilon + D\delta - \frac{n-1}{4}} \big( \#
            \Gamma_1\big)^\frac{1}{2} \big(
            \#\Gamma_2\big)^\frac{1}{2} \sup_{\gamma_j \in \Gamma_j}\|
            L^\sharp_{\gamma_1} f_J \|_{\ell^2_J L^2_x}\|
            L^\sharp_{\gamma_2} g_{J'} \|_{\ell^2_{J'}
              L^2_x}. \end{split} \end{equation} To make the notation
        slightly less cumbersome, we introduce the short hand
		$$ \Gamma^*_1(q) = \Gamma_1^{\not \sim B}(q) \cap \Gamma_1[\lambda_1, \mu_1, \mu_2].$$
                Given $q \in \mb{q}$ and $\mathfrak{h} \in \RR^{1+n}$,
                we define the set
	$$ \Gamma_1^{**}(q, \mathfrak{h}) = \Gamma_1^{**}[\lambda_1, \mu_1, \mu_2](q, \mathfrak{h}) = \big\{ \gamma_1 \in \Gamma_1^*(q) \,\,\big| \,\, \xi(\gamma_1) \in \Sigma_1(\mathfrak{h}) + \mc{O}(R^{-\frac{1}{2}}) \big\}. $$
        Thus $\Gamma_1^{**}(q, \mathfrak{h})$ consists of all
        $\gamma_1 \in \Gamma_1^*(q)$ such that $\xi(\gamma_1)$ lies
        within $C R^{-\frac{1}{2}}$ of the surface
        $\Sigma_1(\mathfrak{h})$. If we expand the square of the
        $L^2_{t, x}$ in (\ref{eqn - sec L2 bound - sum over conc tubes
          localised to cubes dyadic}) we get
	$$ \bigg\| \sum_{\substack{ \gamma_1 \in \Gamma_1^*(q) \\ \gamma_2 \in \Gamma_2(q) } } \mc{P}_{\gamma_1} u \mc{P}_{\gamma_2} v      \bigg\|_{L^2_{t, x}(q)}^2 \les \sum_{ \substack{\gamma_1 \in \Gamma_1^*(q) \\ \gamma_2' \in \Gamma_2(q) }} \sum_{\gamma_1' \in \Gamma^{*}_1(q)} \sum_{\substack{\gamma_2 \in \Gamma_2(q)}} \left| \left\lr{ \mc{P}_{\gamma_1} u \mc{P}_{\gamma_2} v, \mc{P}_{\gamma_1'} u \mc{P}_{\gamma_2'} v \right}_{L^2_{t, x}}\right|.$$
        We now exploit the Fourier localisation properties of the wave
        packets to deduce that the inner product vanishes unless
        \begin{equation}\label{eqn - lem L2 bound with fourier orthog
            - conditions on xi eta}
          \begin{split}
            \xi(\gamma_1) + \xi(\gamma_2) &= \xi(\gamma_1') + \xi(\gamma_2') + \mc{O}(R^{-\frac{1}{2}}) \\
            \Phi_1\big(\xi(\gamma_1)\big) +
            \Phi_2\big(\xi(\gamma_2)\big)
            &=\Phi_1\big(\xi(\gamma_1')\big)
            +\Phi_2\big(\xi(\gamma_2')\big) + \mc{O}(R^{-\frac{1}{2}})
          \end{split}
        \end{equation}
        In particular, if we take $\mathfrak{h}_{\gamma_1, \gamma_2'}=
        \big( \Phi_1\big(\xi(\gamma_1)\big) -
        \Phi_2\big(\xi(\gamma_2')\big), \xi(\gamma_1) -
        \xi(\gamma_2')\big)$, then an application of Lemma \ref{lem -
          properties of surface Sigma} implies that
        \begin{align*}
          &\bigg\| \sum_{\substack{ \gamma_1 \in \Gamma_1^*(q) \\ \gamma_2 \in \Gamma_2(q) } } \mc{P}_{\gamma_1} u \mc{P}_{\gamma_2} v      \bigg\|_{L^2_{t, x}(q)}^2\\ \les{}& \sum_{ \substack{\gamma_1 \in \Gamma_1^*(q) \\ \gamma_2' \in \Gamma_2(q) }} \sum_{\gamma_1' \in \Gamma^{**}_1(q, \mathfrak{h}_{\gamma_1, \gamma_2'})} \sum_{\substack{\substack{\gamma_2 \in \Gamma_2(q) \\ \xi(\gamma_2) = \xi(\gamma_1') + \xi(\gamma_2') - \xi(\gamma_1) + \mc{O}(R^{-\frac{1}{2}})}}} \left| \left\lr{ \mc{P}_{\gamma_1} u \mc{P}_{\gamma_2} v, \mc{P}_{\gamma_1'} u \mc{P}_{\gamma_2'} v \right}_{L^2_{t, x}}\right|.
        \end{align*}
        On the other hand, an application of Lemma \ref{lem - key
          bilinear L2 bound I} easily gives the $U^2_\Phi$ bound
        $$ \| \mc{P}_{\gamma_1} u \mc{P}_{\gamma_2} v \|_{L^2_{t, x}} \lesa R^{-\frac{n-1}{4}} \| L^\sharp_{\gamma_1} f_J \|_{\ell^2_J L^2_x}\| L^\sharp_{\gamma_2} g_{J'} \|_{\ell^2_{J'} L^2_x}.$$
        If we now note that, for fixed $\gamma_1$, $\gamma_2'$, and
        $\gamma_1'$, and any $q \in \mb{q}$ we have
	$$ \# \big\{ \gamma_2 \in \Gamma_2 \,\,\big| \,\, T_{\gamma_2} \cap R^\delta q \not = 0, \xi(\gamma_2) = \xi(\gamma_1') + \xi(\gamma_2') - \xi(\gamma_1) + \mc{O}(R^{-\frac{1}{2}})  \big\} \lesa R^{n \delta}$$
        then an application of Cauchy-Schwarz gives
      $$ \bigg\| \sum_{\substack{ \gamma_1 \in \Gamma_1^*(q) \\ \gamma_2 \in \Gamma_2(q) } } \mc{P}_{\gamma_1} u \mc{P}_{\gamma_2} v      \bigg\|_{L^2_{t, x}(q)}^2 \lesa R^{D\delta -\frac{n-1}{2}} \#\Gamma_1^*(q) \#\Gamma_2(q)  \sup_{\mathfrak{h}} \#\Gamma_1^{**}(q, \mathfrak{h}) \sup_{\gamma_j \in \Gamma_j}\| L^\sharp_{\gamma_1} f_J \|_{\ell^2_J L^2_x}^2\| L^\sharp_{\gamma_2} g_{J'} \|_{\ell^2_{J'} L^2_x}^2.$$
      Consequently the bound (\ref{eqn - sec L2 bound - sum over conc
        tubes localised to cubes dyadic}) would follow from the
      combinatorial estimate
      \begin{equation}\label{eqn - sec L2 bound - combinatorial bound
          I} \sum_{\substack{ q \in \mb{q}(\mu_1, \mu_2) \\ q \subset
            2B}} \# \Gamma_1^*(q) \# \Gamma_2(q) \sup_{\mathfrak{h}
          \in \RR^{1+n}} \# \Gamma^{**}_1(q, \mathfrak{h}) \lesa R^{D
          \delta} \# \Gamma_1 \# \Gamma_2. \end{equation}
      We now simplify this bound slightly by exploiting the dyadic localisations we preformed earlier. More precisely, by definition, for every  $ q \in \mb{q}(\mu_1, \mu_2)$, we have $ \# \Gamma_2(q) \les 2 \mu_2$. On the other hand, by exchanging the order of summation, recalling the short hand $ \Gamma^*_1(q) = \Gamma_1^{\not \sim B}(q) \cap \Gamma_1[\lambda_1, \mu_1, \mu_2]$, and using the definition of the set $\Gamma_1[\lambda_1, \mu_1, \mu_2]$, we deduce that
      \begin{align*}
        \sum_{ \substack{q \in \mb{q}(\mu_1, \mu_2) \\ q \subset 2B}} \# \Gamma^*_1(q) &\les \sum_{ q \in \mb{q}(\mu_1, \mu_2)} \# \big( \Gamma_1(q)\cap \Gamma[\lambda_1, \mu_1, \mu_2]\big) \\
        &= \sum_{\gamma_1 \in \Gamma[\lambda_1, \mu_1, \mu_2]} \# \{ q \in \mb{q}(\mu_1, \mu_2) \mid T_{\gamma_1} \cap R^\delta q \not = 0 \} \\
        &\les 2 \lambda_1 \# \Gamma_1
      \end{align*}
      Therefore, we have reduced the bound (\ref{eqn - sec L2 bound -
        combinatorial bound I}) to proving the combinatorial Kakeya
      type estimate
      \begin{equation} \label{eqn - sec L2 bound - combinatorial bound
          II} \sup_{\substack{ \mathfrak{h} \in \RR^{1+n} \\ q \in
            \mathbf{q}(\mu_1, \mu_2), q \subset 2B}}
        \Gamma_1^{**}[\lambda_1, \mu_1, \mu_2](q, \mathfrak{h}) \lesa
        R^{D\delta} \frac{\#\Gamma_2}{\lambda_1 \mu_2}.\end{equation}
      The proof of this bound is the focus of the next subsection.

      \subsection{The $L^2$ Bound: The Combinatorial Estimate}\label{subsect:l2bd-comb}
      We have reduced the proof
      of Theorem \ref{thm - loc bilinear Lp estimate} to obtaining the
      combinatorial bound \eqref{eqn - sec L2 bound - combinatorial
        bound II}, which is essentially well-known to experts as it does not see the difference between homogeneous solutions and $V^2_{\Phi_j}$-functions. For completeness, we include the proof here.
      We follow the ``bush'' argument used in \cite{Tao2003a}, making some minor adjustments only to relate it to Assumption \ref{assump
     on phase}.
      Recall that we have fixed a ball $B \in \mc{B}$. Fix any $\mathfrak{h} \in \RR^{1+n}$ and
      $q_0 \in \mb{q}(\mu_1, \mu_2)$ with $q_0 \subset 2B$. Our goal
      is to prove
	$$ \# \Gamma_1^{**}(q_0, \mathfrak{h}) \lesa R^{D\delta} \frac{\# \Gamma_2}{\lambda_1 \mu_2}.$$
        The first step is to exploit the fact that $\gamma_1$ is not
        concentrated on $B$. Recall from Subsection \ref{subsec - tube
          definitions} that for $\gamma_1 \in \Gamma_1$ we have
        defined the ball $B(\gamma_1, \lambda_1, \mu_1, \mu_2) \in
        \mc{B}$ to be (a) maximiser for the quantity
	$$ \# \{  q \in \mb{q}(\mu_1, \mu_2) \mid T_{\gamma_j} \cap R^\delta q \not =  \varnothing, \,\, q\cap B(\gamma_j, \lambda_j, \mu_1, \mu_2) \not =  \varnothing \}.$$
        Let $\gamma_1 \in \Gamma^{**}_1(q_0, \mathfrak{h})$. By
        construction this implies that $\gamma_1 \in \Gamma_1^{\not
          \sim B}(q_0)$, and hence by definition of the relation
        $\sim$, we have $B \not \subset 10B(\gamma_1, \lambda_1,
        \mu_1, \mu_2)$. Since $q_0 \subset 2B$ and the balls in
        $\mc{B}$ have radius $R^{1-\delta}$, we must have
        $\mathrm{dist}(q_0, B(\gamma_1, \lambda_1, \mu_1, \mu_2))
        \gtrsim R^{1-\delta}$. In particular, by (ii) in Remark
        \ref{rem - consequences of tube balls relation}, we have for
        every $\gamma_1 \in \Gamma_1^{**}(q_0, \mathfrak{h})$
	\begin{align*} \# \{ q \in \mb{q}(\mu_1, \mu_2) &\mid T_{\gamma_1} \cap R^\delta q \not = \varnothing, \mathrm{dist}(q, q_0) \gtrsim R^{1-\delta} \} \\
          &\gtrsim \# \{ q \in \mb{q}(\mu_1, \mu_2) \mid T_{\gamma_1} \cap R^\delta q \not = \varnothing, q\cap B(\gamma_1, \lambda_1, \mu_1, \mu_2) \not = \varnothing \}  \\
          &\gtrsim R^{-D\delta} \lambda_1.
	\end{align*}
        On the other hand, since for $ q \in \mb{q}(\mu_1, \mu_2)$ we
        have $\# \Gamma_2(q) \g \mu_2$, we deduce that
$$ \#\{ (q, \gamma_2) \in \mb{q}(\mu_1, \mu_2) \times \Gamma_2 \mid T_{\gamma_1} \cap R^\delta q \not = \varnothing, T_{\gamma_2} \cap R^\delta q \not = \varnothing,  \mathrm{dist}(q, q_0) \gtrsim R^{1-\delta} \} \gtrsim R^{-D\delta} \lambda_1 \mu_2. $$
Summing up over $\gamma_1 \in \Gamma_1^{**}(q_0, \mathfrak{h})$ and
then changing the order of summation gives
\begin{align*}
  \lambda_1 &\mu_2 \#\Gamma_1^{**}(q_0, \mathfrak{h}) \\
  &\lesa R^{D\delta}\sum_{\gamma_1 \in \Gamma_1^{**}(q_0, h)}\#\{ (q, \gamma_2) \in \mb{q}(\mu_1, \mu_2) \times \Gamma_2 \mid T_{\gamma_1} \cap R^\delta q \not = \varnothing, T_{\gamma_2} \cap R^\delta q \not = \varnothing,  \mathrm{dist}(q, q_0) \gtrsim R^{1-\delta} \} \\
  &= R^{D\delta}\sum_{\gamma_2 \in \Gamma_2}\#\{ (q, \gamma_1) \in
  \mb{q}(\mu_1, \mu_2) \times \Gamma_1^{**}(q_0, h) \mid T_{\gamma_1}
  \cap R^\delta q \not = \varnothing, T_{\gamma_2} \cap R^\delta q
  \not = \varnothing, \mathrm{dist}(q, q_0) \gtrsim R^{1-\delta} \}.
\end{align*}
Therefore the required bound (\ref{eqn - sec L2 bound - combinatorial
  bound II}) follows from the following lemma, cf. \cite[Lemma
8.1]{Tao2003a}.

\begin{lemma}\label{lem - conclusion of combinatorial argument}
  Let $q_0 \in \mb{q}$, $\mathfrak{h} \in \RR^{1+n}$, and $\gamma_2
  \in \Gamma_2$. Then
  $$ \#\{ (q, \gamma_1) \in \mb{q}(\mu_1, \mu_2) \times \Gamma_1^{**}(q_0, \mathfrak{h}) \mid T_{\gamma_1} \cap R^\delta q \not = \varnothing, T_{\gamma_2} \cap R^\delta q \not = \varnothing,  \mathrm{dist}(q, q_0) \gtrsim R^{1-\delta} \}\lesa R^{D\delta}. $$
  \begin{proof}Define the bush (or ``fan'') at $q_0$ by
		$$ \text{Bush}(q_0) =  \bigcup_{\gamma_1 \in \Gamma^{**}_1(q_0, \mathfrak{h})} T_{\gamma_1}. $$
                Thus $\text{Bush}(q_0) \subset \RR^{1+n}$ is the union
                of all tubes $T_{\gamma_1}$ (associated to phase space
                elements $\gamma_1 \in \Gamma_1^{**}(q_0,
                \mathfrak{h})$) passing through a neighbourhood of the
                cube $q_0$. Our goal is then to bound the sum
		\begin{equation}\label{eqn - lem conclusion of
                    combinatorial argument - sum over cubes}
                  \sum_{\substack{ q \in \mb{q}(\mu_1, \mu_2),\\ q
                      \subset \text{Bush}(q_0) \cap T_{\gamma_2} +
                      \mc{O}(R^{\frac{1}{2} + \delta}) \\
                      \mathrm{dist}(q, q_0) \gtrsim R^{1-\delta}}} \#\{
                  \gamma_1 \in \Gamma_1^{**}(q_0, \mathfrak{h}) \mid
                  T_{\gamma_1} \cap R^\delta q \not = \varnothing
                  \}.  \end{equation}
                We first count the number of possible cubes in the outer summation. The idea is to first show that
                \begin{equation}\label{eqn - lem conclusion of combinatorial argument - bush contained in conic}
                  \text{Bush}(q_0) \subset (t_0, x_0) + \mc{C}_1(\mathfrak{h}) + \mc{O}(R^{\frac{1}{2} + D\delta})
                \end{equation}
                where $(t_0, x_0)$ denotes the centre of the cube
                $q_0$, and the conic hypersurface
                $\mc{C}_1(\mathfrak{h})$ is given by
		$$ \mc{C}_1(\mathfrak{h}) = \big\{ \big(r, -r \nabla \Phi_1(\xi) \big) \, \big| \, r \in \RR, \xi \in \Sigma_1(\mathfrak{h}) \big\}.$$
                Since if we had (\ref{eqn - lem conclusion of
                  combinatorial argument - bush contained in conic}),
                an application of Lemma \ref{lem - surface C
                  transverse} would then show that $\text{Bush}(q_0)
                \cap T_{\gamma_2}$ is contained in a ball of radius
                $R^{\frac{1}{2} +D\delta}$, and hence the outer
                summation in (\ref{eqn - lem conclusion of
                  combinatorial argument - sum over cubes}) only
                contains $\mc{O}(R^{D\delta})$ terms. To show the
                inclusion (\ref{eqn - lem conclusion of combinatorial
                  argument - bush contained in conic}), suppose that
                $(t, x) \in \text{Bush}(q_0)$. Then $(t, x) \in
                T_{\gamma_1}$ for some $\gamma_1 \in
                \Gamma^{**}_1(q_0, \mathfrak{h})$. By construction, we
                have $\xi(\gamma) = \xi^* + \mc{O}(R^{-\frac{1}{2}})$
                for some $\xi^* \in \Sigma_1(\mathfrak{h})$. On the
                other hand, since $T_{\gamma_1} \cap R^\delta q_0 \not
                = 0$, we have
	$$ x-x_0 + (t - t_0) \nabla \Phi_1\big(\xi(\gamma_1)\big) = \big[x-x(\gamma) + t \nabla \Phi_1\big(\xi(\gamma_1)\big)\big] - \big[ x_0 - x(\gamma) + t_0 \nabla \Phi_1\big(\xi(\gamma_1)\big) \big] =  \mc{O}(R^{\frac{1}{2} + \delta}).$$
        Therefore, since $|t-t_0| \lesa R$, we can write
	\begin{align*} & (t, x)- (t_0, x_0) \\
          ={}& \big( t - t_0, - (t- t_0) \nabla \Phi_1(\xi^*) \big) + \big( 0, x-x_0 + (t-t_0) \nabla \Phi_1(\xi(\gamma_1)) \big) + \big(0, (t-t_0) [ \nabla \Phi_1(\xi^*) - \nabla \Phi_1(\gamma(\xi))]\big)\\
          ={}& \big( t - t_0, - (t- t_0) \nabla \Phi_1(\xi^*) \big) +
          \mc{O}(R^{\frac{1}{2} + \delta})
	\end{align*}
        and hence we have \eqref{eqn - lem conclusion of combinatorial
          argument - bush contained in conic}. Consequently, the
        outer sum in \eqref{eqn - lem conclusion of combinatorial
          argument - sum over cubes} is only over $\mc{O}(R^{C
          \delta})$ cubes.

        Fix $q \in \mb{q}(\mu_1, \mu_2)$ with $\mathrm{dist}(q, q_0)
        \gtrsim R^{1- \delta}$. As the outer sum in (\ref{eqn - lem
          conclusion of combinatorial argument - sum over cubes}) only
        adds $\mc{O}(R^{D\delta})$, the required bound would now
        follow from
        \begin{equation}\label{eqn - lem conclusion of combinatorial
            argument - counting number of tubes}
          \# \big\{ \gamma_1 \in \Gamma_1 \,\,\big| \,\, \xi(\gamma_1) \in \Sigma_1(\mathfrak{h}) + \mc{O}(R^{-\frac{1}{2}}), \,\,T_{\gamma_1} \cap R^\delta q \not = \varnothing,\,\, T_{\gamma_1} \cap R^\delta q_0 \not = \varnothing \big\} \lesa R^\delta.
        \end{equation}
        The point is that since the cubes $q$ and $q_0$ are at a
        distance $R^{1-\delta}$ apart, the condition that
        $T_{\gamma_1}$ must intersect \emph{both} cubes, essentially
        fixes the tube $T_{\gamma_1}$. Since $\xi(\gamma_1) \in
        \Sigma_1(\mathfrak{h}) + \mc{O}(R^{-\frac{1}{2}})$, the bound
        (\ref{eqn - seperation of velocities in Sigma}) implies that
        fixing the tube $T_{\gamma_1}$, also more or less fixes the
        phase space element $\gamma_1$ (note that without the bound
        (\ref{eqn - seperation of velocities in Sigma}), the set in
        (\ref{eqn - lem conclusion of combinatorial argument -
          counting number of tubes}) could potentially contain far
        more than $\mc{O}(R^\delta)$ points). In more detail, let
    $$ \gamma_1, \gamma'_1 \in \big\{ \gamma_1 \in \Gamma_1 \,\,\big| \,\, \xi(\gamma_1) \in \Sigma_1(\mathfrak{h}) + \mc{O}(R^{-\frac{1}{2}}), \,\,T_{\gamma_1} \cap R^\delta q \not = \varnothing,\,\, T_{\gamma_1} \cap R^\delta q_0 \not = \varnothing \big\}.$$
    In light of (\ref{eqn - seperation of velocities in Sigma}), the
    estimate (\ref{eqn - lem conclusion of combinatorial argument -
      counting number of tubes}) would follow from the bounds
    \begin{equation}\label{eqn - lem conclusion of combinatorial
        argument - initial position and velocity} | x(\gamma_1) -
      x(\gamma_1') | \lesa R^{\frac{1}{2} + \delta}, \qquad |
      v(\gamma_1) - v(\gamma_1') | \lesa R^{-\frac{1}{2} +
        \delta} \end{equation}
    where ease of notation we define the \emph{velocity} $v(\gamma_1) = \Phi_1\big( \xi(\gamma_1) \big)$. We now exploit the condition that the tubes $T_{\gamma_1}$ and $T_{\gamma_1'}$ intersect the cubes $q$ and $q_0$. Let  $(t_q, x_q)$ denote the centre of the cube $q$ and $(t_0, x_0)$ the centre of $q_0$. Since $|v(\gamma_1)| \les \mb{D}_2$ and
 	$$ x_0 - x_q + (t_0 - t_q) v(\gamma_1) = \big( x_0 - x(\gamma_1) + t_0 v(\gamma_1) \big) - \big( x_q - x(\gamma_1) + t_q v(\gamma_1) \big) = \mc{O}(R^{\frac{1}{2} + D\delta}),$$
        the separation of the cubes $q$ and $q_0$ implies that $R^{1-C
          \delta} \lesa |t_0 - t_q| \lesa R$. A computation shows that
	$$ (t_0 - t_q) \big(v(\gamma_1) - v(\gamma_1') \big) = \mc{O}(R^{\frac{1}{2} + D\delta}), \qquad \qquad x(\gamma_1) - x(\gamma_1') = t_0 \big( v(\gamma_1') - v(\gamma_1) \big) +  \mc{O}(R^{\frac{1}{2} + D\delta})$$
        and hence the bound on $|t_0 - t_q|$ gives (\ref{eqn - lem
          conclusion of combinatorial argument - initial position and
          velocity}).
      \end{proof}
    \end{lemma}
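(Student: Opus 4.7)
The plan is to run a ``bush'' argument organised around the conic hypersurface $\mc{C}_1(\mathfrak{h})$: I would first use the definition of $\Gamma_1^{**}(q_0,\mathfrak{h})$ to argue that the union of tubes $T_{\gamma_1}$ with $\gamma_1 \in \Gamma_1^{**}(q_0,\mathfrak{h})$ is approximately contained in a translate of $\mc{C}_1(\mathfrak{h})$, and then invoke Lemma \ref{lem - surface C transverse} to bound the intersection of this bush with $T_{\gamma_2}$. A fixed-pair count, exploiting the partial curvature bound \eqref{eqn - seperation of velocities in Sigma}, would then control the number of tubes through any specified pair of cubes.

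More concretely, let $(t_0,x_0)$ denote the centre of $q_0$ and set $\text{Bush}(q_0) = \bigcup_{\gamma_1 \in \Gamma_1^{**}(q_0,\mathfrak{h})} T_{\gamma_1}$. For each admissible $\gamma_1$ I would select $\xi^* \in \Sigma_1(\mathfrak{h})$ with $\xi(\gamma_1) = \xi^* + \mc{O}(R^{-1/2})$; subtracting the tube-incidence inequalities at a generic $(t,x) \in T_{\gamma_1}$ and at a point of $R^\delta q_0$ gives $|(x-x_0) + (t-t_0)\nabla\Phi_1(\xi(\gamma_1))| \lesa R^{1/2+\delta}$, and then replacing $\xi(\gamma_1)$ by $\xi^*$ produces only an $\mc{O}(R \cdot R^{-1/2}) = \mc{O}(R^{1/2})$ error, thanks to $|t-t_0| \lesa R$ and the derivative bound \eqref{it:ass2}. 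This establishes
\[\text{Bush}(q_0) \;\subset\; (t_0,x_0) + \mc{C}_1(\mathfrak{h}) + \mc{O}(R^{1/2 + D\delta}).\]
Since $T_{\gamma_2}$ is an $R^{1/2}$-thickening of a line in direction $(1,-\nabla\Phi_2(\xi(\gamma_2)))$ with $\xi(\gamma_2) \in \Lambda_2$, Lemma \ref{lem - surface C transverse} yields a lower bound on the wedge of any secant of $\mc{C}_1(\mathfrak{h})$ with this direction; hence $\text{Bush}(q_0) \cap T_{\gamma_2}$ (suitably thickened) lies in a ball of radius $R^{1/2+D\delta}$, and only $\mc{O}(R^{D\delta})$ cubes $q$ can contribute.

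Next, for each fixed cube $q$ with centre $(t_q,x_q)$ satisfying $\dist(q,q_0) \gtrsim R^{1-\delta}$, the task reduces to showing
\[\#\{\gamma_1 \in \Gamma_1^{**}(q_0,\mathfrak{h}) \mid T_{\gamma_1} \cap R^\delta q \neq \varnothing\} \lesa R^{D\delta}.\]
I would derive this by observing that, for any two admissible $\gamma_1, \gamma_1'$, subtracting the four tube-incidence relations produces $|(t_0 - t_q)(\nabla \Phi_1(\xi(\gamma_1)) - \nabla \Phi_1(\xi(\gamma_1')))| \lesa R^{1/2+\delta}$, and since $|t_0 - t_q| \gtrsim R^{1-\delta}$ the two velocities agree up to $\mc{O}(R^{-1/2+D\delta})$. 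Applying \eqref{eqn - seperation of velocities in Sigma} on $\Sigma_1(\mathfrak{h})$ then converts velocity separation into frequency separation $|\xi(\gamma_1) - \xi(\gamma_1')| \lesa R^{-1/2+D\delta}$, leaving $\mc{O}(R^{D\delta})$ choices for $\xi(\gamma_1)$ in the $R^{-1/2}$-lattice; substituting back into a single incidence pins $x(\gamma_1) \in R^{1/2}\ZZ^n$ down up to a similar factor, giving the claim.

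The main obstacle, and the reason the argument genuinely requires condition \eqref{it:ass1} in Assumption \ref{assump on phase} rather than only the transversality \eqref{eq:trans-main}, is the step converting coincident velocities to coincident frequencies via \eqref{eqn - seperation of velocities in Sigma}. Without this partial curvature bound, distinct points of $\Sigma_1(\mathfrak{h})$ could share a common velocity and the fixed-pair count would collapse; the remaining steps -- the cone containment, the transversality of the bush with $T_{\gamma_2}$, and the velocity computation itself -- are essentially geometric bookkeeping that follows from the regularity bound \eqref{it:ass2} alone.
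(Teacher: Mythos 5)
Your proposal follows essentially the same route as the paper's proof: the bush at $q_0$, its containment in a translate of $\mc{C}_1(\mathfrak{h})$, transversality via Lemma \ref{lem - surface C transverse} to reduce the outer sum to $\mc{O}(R^{D\delta})$ cubes, and the conversion of velocity coincidence into frequency coincidence through \eqref{eqn - seperation of velocities in Sigma} for the fixed-pair count. The one step you assert without justification is $|t_0-t_q|\gtrsim R^{1-\delta}$: this does not follow from $\dist(q,q_0)\gtrsim R^{1-\delta}$ alone, since the space-time separation could be purely spatial; as in the paper, it follows from the incidence relation $x_0-x_q+(t_0-t_q)\nabla\Phi_1\big(\xi(\gamma_1)\big)=\mc{O}(R^{\frac{1}{2}+D\delta})$ combined with the velocity bound $|\nabla\Phi_1(\xi(\gamma_1))|\les \mb{D}_2$, ingredients you already have on hand, so the gap is minor and easily closed.
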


\section{The Globalisation Lemma}\label{sect:glob}

In this section, we complete the proof of Theorem \ref{thm - bilinear Lp estimate} by showing that it follows from the localised bound in Theorem \ref{thm - loc bilinear Lp estimate}. The proof of Theorem \ref{thm - bilinear Lp estimate} proceeds by using a strategy sketched in Section 8 of \cite{Klainerman2002b}, together with interpolation argument to replace $U^2_{\Phi_j}$ with $V^2_{\Phi_j}$.

\begin{proof}[Proof of Theorem \ref{thm - bilinear Lp estimate}]
   The first step is to show that by, exploiting the (approximate) finite speed of propagation of frequency localised waves, the bilinear estimate on $Q_R$ implies the same estimate holds on $I_R \times \RR^n$ with $I_R = [0, R]$. The second step is to remove the remaining temporal localisation and $R^\alpha$ factor by using  duality, together with the dispersive decay in Lemma \ref{lem - dispersion}. Finally we use a simple interpolation argument to replace $U^2_{\Phi_j}$ with the larger $V^2_{\Phi_j}$ space.

\medskip

{\bf Step 1: From $Q_R$ to $I_R \times \RR^n$.}  Let $R \g (10 \mb{R}_0)^2$,  $u \in U^2_{\Phi_j}$, and $v \in U^2_{\Phi_j}$ with $\supp \widehat{u} \subset \Lambda^*_1$ and $\supp \widehat{v} \subset \Lambda^*_2$. Assuming Theorem \ref{thm - loc bilinear Lp estimate}, our goal is to prove that for every $\alpha >0$ we have
    \begin{equation}\label{eqn - thm bilinear Lp estimate - temporal localisation}
                \| u v \|_{L^{\frac{n+3}{n+1} }_{t, x}(I_R \times \RR^n)} \lesa  R^\alpha \| u \|_{U^2_{\Phi_j}} \| v \|_{U^2_{\Phi_j}}.
    \end{equation}
  It is enough to consider the case where $u$, and $v$ are atoms, thus we have a decomposition
        $$ u = \sum_J \ind_J(t) e^{ it \Phi_1(-i\nabla)} f_J, \qquad v = \sum_{J'} \ind_{J'}(t) e^{ i t \Phi_2(-i\nabla) } g_{J'}$$
  with
        $$ \sum_J \| f_J \|_{L^2}^2  + \sum_{J'} \| g_{J'} \|_{L^2}^2 \les 1$$
  and we may assume that $\supp \widehat{f}_J \subset \Lambda_1^*$ and $\supp \widehat{g}_{J'} \subset \Lambda_2^*$ (using sharp Fourier cutoffs). By translation invariance, the bound (\ref{eqn - thm bilinear Lp estimate - temporal localisation}) would then follow from
    \begin{equation}\label{eqn - thm bilinear Lp estimate - temp localisation with finite speed of propagation}
                \| u v \|_{L^{\frac{n+3}{n+1}}_{t, x}(Q_R)}  \lesa  R^\alpha \bigg( \sum_J \big\| ( 1 + R^{-1} |x|)^{-(n+1)} f_J \big\|_{L^2_x}^2 \bigg)^\frac{1}{2} \bigg( \sum_{J'} \big\| ( 1 + R^{-1} |x|)^{-(n+1)} g_{J'} \big\|_{L^2_x}^2 \bigg)^\frac{1}{2}
    \end{equation}
 since we can then sum up over the centres of balls (or cubes) of radius $R$ which cover $\RR^n$. The inequality (\ref{eqn - thm bilinear Lp estimate - temp localisation with finite speed of propagation}) is a reflection of the fact that, as $u$ and $v$ are localised to frequencies of size $\approx 1$, we expect that the waves $e^{it \Phi_j(-i\nabla)} f_J$ should travel with velocity $1$. In particular, $u$ and $v$ on $Q_R$, should only depend on the data in $\{ |x| \lesa R\}$. It turns out that this is true, modulo a rapidly decreasing tail.

Let $\rho \in \s$ with $\supp \widehat{\rho} \subset \{ |\xi| \les 1\}$ and $\rho \gtrsim 1$ on $|x| \les 1$. To prove (\ref{eqn - thm bilinear Lp estimate - temp localisation with finite speed of propagation}), we start by noting that since the left hand integral is only over $Q_R$, we may replace $uv$ with $\rho(R^{-1} x) u(t, x) \rho(R^{-1} x) v(y)$. Note that we can write
    \begin{align}
          \rho\Big( \frac{x}{R}\Big) \Big(e^{ i t \Phi_j(-i\nabla)} f\Big)(x) &= \int_{\RR^n} \int_{\RR^n} R^n \,\widehat{\rho}\big(\, R(\xi - \eta) \big) \, e^{ it \Phi_j(\eta)} \widehat{f}(\eta)\, d \eta \, e^{ i \xi \cdot x} d\xi \notag \\
                        &= \int_{\RR^n} \int_{\RR^n} R^n \,\widehat{\rho}\big( R(\xi - \eta) \big) \,  \widehat{f}(\eta) F\big(t, R(\xi - \eta), \eta\big)\, d \eta \, e^{ i \xi \cdot x} e^{i  \Phi_j(\xi)} d\xi
                        \label{eqn - thm bilinear Lp estimate - commuting cutoff and exp}
        \end{align}
where $F(t, \xi, \eta) = \chi(\xi, \eta) e^{ i t \big( \Phi_j (\frac{\xi}{R} + \eta) - \Phi_j(\eta)\big) }$ and $\chi \in C^\infty_0\big( \{|\xi|\les 2\} \times (\Lambda_j^* + \frac{1}{\mb{R}_0})\big)$ with $\chi =1$ on $\{|\xi|\les 2\} \times \Lambda_j^*$. The oscillating component of $F$ is essentially constant for $|t| \les R$. To exploit this, we expand $F$ using a Fourier series to get
        $$ F(t, \xi, \eta) = \sum_{k \in \ZZ^{2n}} c_k(t) e^{ i k \cdot (\xi, \eta)}, \qquad \qquad c_k(t) = \int_{\RR^{2n}} F(t, \xi, \eta) e^{ i k \cdot (\xi, \eta)} \,d\xi\,d\eta$$
and by \eqref{it:ass2} in Assumption \ref{assump on phase}, the coefficients satisfy $|c_k(t)| \lesa_{\mb{R}_0, \mb{D}_2} ( 1 + |k_1|)^{-2(n+1)} (1 + |k_2|)^{-2(n+1)}$ with $k=(k_1, k_2)$. Applying this expansion to $\rho(R^{-1} x) u$ and $\rho(R^{-1} x) v$ we obtain the decompositions
    \begin{equation}\label{eqn - thm bilinear Lp estimate - decomp} \rho\big( R^{-1}x\big) u = \sum_J \sum_k c_k(t) \ind_J(t) e^{ it \Phi_1(-i\nabla)} f_{k, J}, \qquad \rho\big( R^{-1}x\big) v = \sum_{J'} \sum_k c'_k(t) \ind_{J'}(t) e^{ it \Phi_2(-i\nabla)} g_{k, J'} \end{equation}
where the coefficients $c_k$, $c_k'$ are independent of $J$ and $J'$, and the functions $f_{k, J}$ and $g_{k, J'}$ are given by
        $$f_{k, J}(x) = \rho\Big( \frac{x}{R} + k_1\Big) f_J(x + k_2), \qquad g_{k, J'}(x) = \rho\Big( \frac{x}{R} + k_1 \Big) g_{J'}( x + k_2)$$
  with $k=(k_1, k_2)$. Note that $\supp \widehat{f}_{k, J} \subset \Lambda^*_1 + \frac{1}{ 2\mb{R}_0}$ since $R \g (10 \mb{R}_0)^2$, thus the $f_{k, J}$ satisfy the support conditions in Theorem \ref{thm - loc bilinear Lp estimate} (with $\Lambda_j^*$ replaced with $\Lambda^*_j+\frac{1}{\mb{R}_0}$, and $\mb{R}_0$ replaced with $2\mb{R}_0$). A similar comment applies to the $g_{k', J}$. Therefore, plugging the decomposition (\ref{eqn - thm bilinear Lp estimate - decomp}) into the left hand side of (\ref{eqn - thm bilinear Lp estimate - temp localisation with finite speed of propagation}), we deduce via an application of Theorem \ref{thm - loc bilinear Lp estimate} that
    \begin{align*}
    \|& u v \|_{L^\frac{n+3}{n+1}_{t, x}(Q_R)} \\
    &\lesa \sum_{k, k'\in \ZZ^n \times \ZZ^n} ( 1 + |k|)^{-2(n+1)} ( 1 + |k'|)^{-2(n+1)}\Big\| \sum_{J, J'} \ind_J(t) e^{ i t \Phi_1(-i\nabla)} f_{k, J}\, \ind_{J'}(t) e^{ i t \Phi_2(-i\nabla)} g_{k', J'} \Big\|_{L^\frac{n+3}{n+1}_{t, x}(Q_R)} \\
            &\lesa R^\alpha \sum_{k, k'} ( 1 + |k|)^{-2(n+1)} ( 1 + |k'|)^{-2(n+1)} \\
            & \qquad \times \Big( \sum_J \big\| ( 1 + R^{-1} | x - k_1 + R k_2|)^{-(n+1)} f_{J} \big\|_{L^2_x}^2 \Big)^\frac{1}{2} \Big( \sum_{J'} \big\| ( 1 + R^{-1}|x - k'_1 + R k'_2|)^{-(n+1)} g_{J'} \big\|_{L^2_x}^2 \Big)^\frac{1}{2} \\
            &\lesa R^\alpha \Big( \sum_J \big\| ( 1 + R^{-1} |x|)^{-(n+1)} f_{J} \big\|_{L^2_x}^2 \Big)^\frac{1}{2} \Big( \sum_{J'} \big\| ( 1 + R^{-1} |x|)^{-(n+1)} g_{J'} \big\|_{L^2_x}^2 \Big)^\frac{1}{2}
  \end{align*}
Thus we obtain \eqref{eqn - thm bilinear Lp estimate - temp localisation with finite speed of propagation} and hence \eqref{eqn - thm bilinear Lp estimate - temporal localisation}.

\medskip

{\bf Step 2: From $I_R \times \RR^n$ to $\RR^{1+n}$.} Let $u \in U^2_{\Phi_1}$ and $v \in U^2_{\Phi_2}$ with $\supp \widehat{u} \subset \Lambda^*_1$ and $\supp \widehat{v} \subset \Lambda^*_2$. Our goal is to show that for every $p>\frac{n+3}{n+1}$
        \begin{equation}\label{eqn - thm bilinear Lp estimate - global U2 bound}
                \| u v \|_{L^p_{t, x}} \lesa \| u \|_{U^2_{\Phi_1}} \| v \|_{U^2_{\Phi_2}}.
        \end{equation}
In fact the argument below gives the marginally stronger (though essentially equivalent) bound
		 \begin{equation}\label{eqn - thm bilinear Lp estimate - global U2 bound x endpoint}
                \| u v \|_{L^p_t L^\frac{n+3}{n+1}_x} \lesa \| u \|_{U^2_{\Phi_1}} \| v \|_{U^2_{\Phi_2}}.
        \end{equation}
To deduce (\ref{eqn - thm bilinear Lp estimate - global U2 bound}) from (\ref{eqn - thm bilinear Lp estimate - global U2 bound x endpoint}), note that dispersive estimate in Lemma \ref{lem - dispersion}, together with the abstract Strichartz estimates of Keel-Tao \cite[Theorem 1.2]{Keel1998}, implies there exists $1<a<b<\infty$ such that $\| u v \|_{L^a_t L^b_x} \lesa 1$. On the other hand, the Fourier support assumptions imply that we have the trivial bound  $ \| u v \|_{L^\infty_t L^p_x(\RR^{1+n})} \lesa 1$ for every $p \g 1$. Thus interpolation gives (\ref{eqn - thm bilinear Lp estimate - global U2 bound}) from (\ref{eqn - thm bilinear Lp estimate - global U2 bound x endpoint}).

We now turn to the proof of (\ref{eqn - thm bilinear Lp estimate - global U2 bound x endpoint}). As in step 1, we may assume that $u$ and $v$ are atoms with the decomposition
     $$ u = \sum_J \ind_J(t) e^{ it \Phi_1(-i\nabla)} f_J, \qquad v = \sum_{J'} \ind_{J'}(t) e^{ i t \Phi_2(-i\nabla) } g_{J'}$$
  with $\supp \widehat{f}_J \subset \Lambda_1^*$, $\supp \widehat{g}_{J'} \subset \Lambda_2^*$, and
        $$ \sum_J \| f_J \|_{L^2}^2  + \sum_{J'} \| g_{J'} \|_{L^2}^2 \les 1.$$
By real interpolation it is enough to show that for every $q>\frac{n+3}{n+1}$ we have
    $$ \| u v \|_{L^{q, \infty}_t L^{\frac{n+3}{n+1}}_x} \lesa 1$$
where $L^{q, \infty}_t$ is the Lorentz norm. Applying duality, this would follow from the estimate
            \begin{equation}\label{eqn - thm bilinear Lp estimate - global lorenz bound}  \int_{\Omega} \| u v \|_{L^{\frac{n+3}{n+1}}_x} dt \lesa |\Omega|^{\frac{1}{q'}} \end{equation}
for every measurable $\Omega \subset \RR$. Define the Fourier localised solution operator $\mc{U}_j(t)[h] = e^{ i t \Phi_j(-i\nabla)} P_{\Lambda^*_j} h$  where we let $\widehat{P_{\Lambda_j^*} h}(\xi)  = \rho_{\Lambda_j^*}(\xi) \widehat{h}(\xi)$ with $\rho \in C^\infty_0(\Lambda^*_j + \frac{1}{10 \mb{R}_0})$ and $\rho = 1$ on $\Lambda_j^*$. If we interpolate Lemma \ref{lem - dispersion} with the trivial $L^\infty_t L^2_x$ bound and apply duality, we deduce that for every $1\les a \les 2$
    \begin{align}\label{eqn - thm bilinear Lp estimate - dispersive estimate}
     \int_{\substack{(t, t') \in \Omega \times \Omega\\ |t-t'| \gtrsim R} } \Big\langle \mc{U}_j^*(t)[G(t)], \, \mc{U}_j^*(t')[G(t')] \Big\rangle_{L^2_x} \,dt\,dt'
           \lesa |\Omega|^2 R^{ - \frac{n-1}{2} ( \frac{2}{a}- 1)} \| G \|_{L^\infty_t L^a_x}^2
    \end{align}
where $\mc{U}_j^*$ denotes the $L^2_x$ adjoint of $\mc{U}_j$. The dispersive bound (\ref{eqn - thm bilinear Lp estimate - dispersive estimate}) together with the bilinear estimate (\ref{eqn - thm bilinear Lp estimate - temporal localisation}) are the key inequalities required in the proof of (\ref{eqn - thm bilinear Lp estimate - global lorenz bound}).

We now begin the proof of (\ref{eqn - thm bilinear Lp estimate - global lorenz bound}). If $|\Omega| \lesa 1$, then (\ref{eqn - thm bilinear Lp estimate - global lorenz bound}) follows by putting $uv \in L^\infty_t L^{\frac{n+3}{n+1}}_x$ and using Sobolev embedding. Thus we may assume that $|\Omega| \gg 1$. Let us set $J'_\Omega:=\Omega\cap J'$. An application of duality gives
    \begin{align*}
      \int_{\Omega} \| u v \|_{L^{\frac{n+3}{n+1}}_x} dt  &\les \sup_{\| F \|_{L^\infty_t L^{\frac{n+3}{2}}_x} \les 1} \bigg| \int_{\Omega} \lr{F, uv}_{L^2_x} \,dt\bigg| \\
      &= \sup_{\| F \|_{L^\infty_t L^{\frac{n+3}{2}}_x} \les 1} \bigg| \sum_{J'} \int_{J'_\Omega} \lr{ F, u \,\mc{U}_2(t)[g_{J'}]}_{L^2_x} \,dt\bigg|\\
      &\lesa \sup_{\| F \|_{L^\infty_t L^{\frac{n+3}{2}}_x} \les 1} \Bigg( \sum_{J'} \Big\| \int_{J'_\Omega} \mc{U}^*_2(t)[ F \overline{u}]\,dt \Big\|_{L^2_x}^2 \Bigg)^\frac{1}{2}
    \end{align*}
If we expand the square of the $L^2_x$ norm, we have via (\ref{eqn - thm bilinear Lp estimate - dispersive estimate}) with $\frac{1}{a} = \frac{2}{n+3} + \frac{1}{2}$
    \begin{align*}
      \sum_{J'} \Big\| \int_{J'_\Omega} \mc{U}^*_2(t)[ F \overline{u}]\,dt \Big\|_{L^2_x}^2  &= \sum_{J'} \int_{t, t' \in J'_\Omega}  \big\langle \mc{U}^*_2(t)[F\overline{u}], \, \mc{U}^*_2(t')[F\overline{u}] \, \big\rangle_{L^2_x} \,dt\,dt' \\
      &= \sum_{J'} \int_{\substack{t, t' \in J'_\Omega \\ |t-t'| \gtrsim R}} \big\langle \mc{U}^*_2(t)[F\overline{u}], \, \mc{U}_2^*(t')[F\overline{u}] \, \big\rangle_{L^2_x} \,dt\,dt' \\
       &\qquad + \sum_{J'} \sum_{|I-I'| \les R} \int_{J'_\Omega\cap I}\int_{  J'_\Omega \cap I'} \big\langle \mc{U}^*_2(t)[F\overline{u}], \, \mc{U}^*_2(t')[F\overline{u}] \, \big\rangle_{L^2_x} \,dt\,dt'\\
      &\lesa |\Omega|^2 R^{ - \frac{n-1}{2} ( \frac{2}{a} - 1)} \| F \overline{u} \|_{L^\infty_t L^a_x}^2  + \sum_{J', I} \Big\| \int_{J'_\Omega \cap I} \mc{U}^*_2(t) [F \overline{u}] \, dt \Big\|_{L^2_x}^2\\
      &\lesa |\Omega|^2 R^{ - \frac{2(n-1)}{n+3}} \| F\|_{L^\infty_t L^{\frac{n+3}{2}}_x}^2 \|  u \|_{L^\infty_t L^2_x}^2  + \sum_{J', I} \Big\| \int_{J'_\Omega \cap I} \mc{U}^*_2(t) [F \overline{u}] \, dt \Big\|_{L^2_x}^2
    \end{align*}
here we always take $I$ (and $I'$) to be a decomposition of $\RR$ into intervals of size $R$. We now essentially repeat the previous argument, but expand $u$ instead of $v$ to obtain
    \begin{align*}
      \sum_{J', I} \Big\| \int_{J'_\Omega \cap I} \mc{U}^*_2(t) [F \overline{u}] \, dt \Big\|_{L^2_x}^2 &\les \sup_{\sum_{J', I} \| g_{J', I} \|_{L^2_x}^2 \les 1 } \bigg| \sum_{ J', I} \int_{J'_\Omega \cap I} \big\langle F , \overline{u}\, \mc{U}_2(t) g_{J', I} \rangle_{L^2_x} dt \bigg|^2 \\
      &\lesa \sup_{\sum_{J', I} \| g_{J', I} \|_{L^2_x}^2 \les 1 } \bigg| \sum_{J, I} \int_{J_\Omega \cap I} \big\langle \mc{U}_1^*(t)[F \overline{v}_I] , f_J \rangle_{L^2_x}dt \bigg|^2  \\
      &\lesa \sup_{\sum_{J', I} \| g_{J', I} \|_{L^2_x}^2 \les 1 }  \sum_J  \Big\| \sum_I \int_{J_\Omega \cap I} \mc{U}^*_1(t)[F \overline{v}_I]  dt\Big\|_{L^2_x}^2
    \end{align*}
where we take $ v_I = \sum_{J'} \ind_{J'}(t)  \mc{U}_2(t) g_{J', I}$. Again expanding out the $L^2_x$ norm, and applying (\ref{eqn - thm bilinear Lp estimate - dispersive estimate}), we have
 \begin{align*}
      \sum_{J} \Big\| \sum_I \int_{J_\Omega \cap I} \mc{U}^*_1(t)[ F \overline{v}_I]\,dt \Big\|_{L^2_x}^2
      &= \sum_{J} \sum_{|I-I'| \gg R} \int_{J_\Omega \cap I} \int_{J_\Omega \cap I'} \big\langle \mc{U}^*_1(t)[F \overline{v}_I], \, \mc{U}_1(t')[F \overline{v}_{I'}] \, \big\rangle_{L^2_x} \,dt\,dt' \\
       &\qquad + \sum_{J} \sum_{|I-I'| \lesa R} \int_{J_\Omega\cap I} \int_{J_\Omega\cap I'} \big\langle \mc{U}^*_1(t)[F \overline{v}_I], \, \mc{U}^*_1(t')[F \overline{v}_{I'}] \, \big\rangle_{L^2_x}  \,dt\,dt' \\
      &\lesa |\Omega|^2 R^{- \frac{2(n-1)}{n+3}} \| F \|_{L^\infty_t L^{\frac{n+3}{2}}_x}^2 \sup_I \|v_I \|_{L^\infty_t L^2_x}^2  + \sum_{J, I} \Big\| \int_{J_\Omega \cap I} \mc{U}_1(t) [F v_I] \, dt \Big\|_{L^2_x}^2.
    \end{align*}
 Collection the above chain of estimates together, and using the fact that
    $$\| v_I \|_{L^\infty_t L^2_x}^2 \les \sum_{I, J'} \| g_{J', I} \|_{L^2_x}^2 \les 1$$
 together with another application of duality,  we see that
    \begin{align*}
      \int_{\Omega} \| u v \|_{L^{\frac{n+3}{n+1}}_x} dt
      &\lesa |\Omega|  R^{ - \frac{n-1}{n+3}}
       + \sup_{\substack{ \| F \|_{L^\infty_t L^{\frac{n+3}{2}}_x} \les 1 \\
       \sum_{I, J'} \| g_{I,J'} \|_{L^2_x}^2 \les 1}}  \bigg(\sum_{J, I} \Big\| \int_{J_\Omega \cap I} \mc{U}_1(t) [F \overline{v}_I] \, dt \Big\|_{L^2_x}^2 \bigg)^\frac{1}{2}\\
       &\les   |\Omega|  R^{ - \frac{n-1}{n+3}}
       + \sup_{\sum_{I, J'} \| g_{I,J'} \|_{L^2_x}^2, \sum_{I, J} \| f_{I,J} \|_{L^2_x}^2 \les 1} \sum_I \int_{\Omega \cap I} \| u_I v_I \|_{L^{\frac{n+3}{n+2}}_x} \, dt
    \end{align*}
 where we define $ u_I = \sum_{I, J} \ind_J(t) \mc{U}_1(t)[f_{I, J}]$.  Observe that $ \sum_I \| u_I \|_{U^2_{\Phi_1}}^2 \les \sum_{I, J} \| f_{I, J} \|_{L^2_x}^2 \les 1$, and that $u_I$ satisfies the support properties in Theorem \ref{thm - loc bilinear Lp estimate} (with $\Lambda^*_j$ replaced by $\Lambda^*_j + \frac{1}{10\mb{R}_0}$, and $\mb{R}_0$ replaced by $2 \mb{R}_0$). A similar comment applies to $v_I$. Consequently, an application of (\ref{eqn - thm bilinear Lp estimate - temporal localisation}) gives for any $\alpha >0$
   \begin{align*} \sum_I \int_{\Omega \cap I} \| u_I v_I \|_{L^{\frac{n+3}{n+1}}_x} \, dt  &\les |\Omega|^{\frac{2}{n+3}} \sum_I \| u_I v_I \|_{L^{\frac{n+3}{n+1}}_{t, x}(I\times \RR^n)} \\
   &\lesa |\Omega|^{\frac{2}{n+3}} R^\alpha \bigg(\sum_{I, J} \| f_{I, J}\|_{L^2_x}^2 \bigg)^\frac{1}{2} \bigg( \sum_{I, J'} \| g_{I, J'} \|_{L^2_x}^2 \bigg)^\frac{1}{2} \les |\Omega|^{\frac{2}{n+3}} R^\alpha \end{align*}
 and therefore
    $$ \int_{\Omega} \| u v \|_{L^{\frac{n+3}{n+1}}_x} dt  \lesa |\Omega| R^{- \frac{n-1}{n+3}} + |\Omega|^{\frac{2}{n+3}} R^\alpha.$$
 To complete the proof, we choose $R=|\Omega|^{C}$ with $C>0$ sufficiently large so that $|\Omega| R^{- \frac{n-1}{n+3}} \les |\Omega|^{\frac{1}{q'}}$. On the other hand, since $q>\frac{n+3}{n+1}$, we can take $\alpha = \frac{1}{2C}(\frac{n+1}{n+3} - \frac{1}{q})$ which implies that $|\Omega|^{\frac{2}{n+3}} R^\alpha = |\Omega|^{\frac{2}{n+3} + \alpha C} \les |\Omega|^{\frac{1}{q'}}$. Therefore we obtain (\ref{eqn - thm bilinear Lp estimate - global lorenz bound}) as required.

\medskip

{\bf Step 3: From $U^2_\Phi$ to $V^2_\Phi$.} Let $p>\frac{n+3}{n+1}$, $u \in V^2_{\Phi_1}$, $v \in V^2_{\Phi_2}$, and $\supp \widehat{u} \subset \Lambda^*_1$ and $\supp \widehat{v} \subset \Lambda^*_2$. An application of \cite[Lemma 6.4]{Koch2005}, see also \cite[Proposition 2.5 and Proposition 2.20]{Hadac2009}, gives a decomposition $u = \sum_{k \in \NN} u_k$ and $v = \sum_{k \in \NN} v_k$ such that $u_k$, $v_k$ retain the correct Fourier support properties (we can just use sharp Fourier cutoffs here) and for any $ r \g 2$  we have the bounds
    $$ \| u_k \|_{U^{r}_{\Phi_1}} \lesa 2^{ k ( \frac{2}{r} - 1)} \| u \|_{V^2_{\Phi_1}}, \qquad \| v_k \|_{U^{r}_{\Phi_2}} \lesa 2^{ k ( \frac{2}{r} -1 )} \| v \|_{V^2_{\Phi_2}}.$$
Let $\frac{n+3}{n+1}<q<p$, and take $\theta = \frac{ q}{p}<1$. Then an application of (\ref{eqn - thm bilinear Lp estimate - global U2 bound}) (with $p=q$) together with the convexity of $L^p$ norms, gives
\begin{align*}
  \big\| u v \big\|_{L^p_{t, x}} \les \sum_{ k, k'} \big\| u_k v_{k'} \big\|_{L^p_{t, x}} &\les \sum_{k, k'} \big\| u_k v_{k'} \big\|_{L^q_{t, x}}^{\theta} \big\| u_k v_{k'} \big\|_{L^\infty_{t, x}}^{1-\theta} \\
                                &\les \sum_{k', k} \Big( \| u_k \|_{U^{2}_{\Phi_1}} \| v_{k'} \|_{U^{2}_{\Phi_2}} \Big)^\theta \Big( \| u_k \|_{U^{\infty}_{\Phi_1}} \| v_{k'} \|_{U^{\infty}_{\Phi_2}} \Big)^{1-\theta} \\
                                &\lesa \| u \|_{V^2_{\Phi_1}} \| v \|_{V^2_{\Phi_2}} \sum_{k, k'} 2^{ -k (1-\theta)} 2^{ - k' (1-\theta)}\\
                                &\lesa \| u \|_{V^2_{\Phi_1}} \| v \|_{V^2_{\Phi_2}}
\end{align*}
where we used Sobolev embedding and the fact that the Fourier support of $u, v$ is contain in the unit ball to control the $L^\infty_{t, x}$ norm. Thus Theorem \ref{thm - bilinear Lp estimate} follows.
\end{proof}
\begin{remark}\label{rem:proof-endpoint}
The argument in Step 3 above, using \eqref{eqn - thm bilinear Lp estimate - global U2 bound x endpoint}, also implies the slightly stronger estimate
$$  \|u v \|_{L^p_{t}L_x^{\frac{n+3}{n+1}}(\RR^{1+n})} \les C \| u \|_{V^2_{\Phi_1}} \| v \|_{V^2_{\Phi_2}},$$
This is well known in the case of homogeneous solutions,
see e.g.\ \cite{Tao2003a}. However, the estimate in the endpoint
$p=q=\frac{n+3}{n+1}$ remains open.  For homogeneous solutions it is
known only in the case of the cone \cite{Tao2001}.
\end{remark}
\begin{remark}\label{rem:vec}
In fact, since Tao's endpoint result  \cite[Theorem 1.1]{Tao2001} holds for Hilbert space valued waves, we observe that one can deduce the $U^2$-estimate for the cone directly. This follows by noting that, given $U^2$-atoms $u=\sum_{I \in \mathcal{I}}\ind_I u_I$ and $v=\sum_{J \in \mathcal{J}}\ind_J v_J$, we have
\[
|uv|\leq \Big( \sum_{I \in \mathcal{I}} |u_I|^2\Big )^{\frac12}\Big( \sum_{J \in \mathcal{J}} |v_J|^2\Big )^{\frac12}=|U||V|
\]
with $\ell^2$-valued waves $U$ and $V$.
\end{remark}

\section{Mixed Norms and Generalisations to Small Scales}\label{sect:small}
In this section we give some consequences of the bilinear estimate in Theorem \ref{thm - bilinear Lp estimate}. Namely, we state an extension to mixed $L^q_t L^r_x$ spaces, and, in the case of the hyperboloid, we give a small scale version of Theorem \ref{thm - bilinear Lp estimate}. The small scale estimate will play a key role in the our application to the Dirac-Klein-Gordon system.

\subsection{Mixed Norms}\label{subsect:mixed}
Let $\Phi_1$ and $\Phi_2$ be phases satisfying Assumption \ref{assump on phase}. A standard $T T^*$ argument (see for instance \cite{Keel1998}), together Lemma \ref{lem - dispersion} implies that, provided $\frac{1}{q} + \frac{n-1}{2r} \les \frac{n-1}{4}$ and $q>2$ we have the Strichartz type bound
		\begin{equation}\label{eqn:strichartz} \| e^{ i t \Phi_j(-i\nabla)} f \|_{L^q_t L^r_x(\RR^{1+n})} \lesa \| f \|_{L^2_x}.\end{equation}
As in Step 3 of the proof of the globalisation lemma, by decomposing $V^2$ into $U^a$ atoms (see \cite[Lemma 6.4]{Koch2005} or \cite[Proposition 2.5 and Proposition 2.20]{Hadac2009}) we see that, for any $\frac{1}{a} + \frac{n-1}{2b} \les \frac{n-1}{2}$,
	$$ \| u v \|_{L^a_t L^b_x} \lesa \| u \|_{V^2_{\Phi_1}} \| v \|_{V^2_{\Phi_2}}.$$
Interpolating with Theorem \ref{thm - bilinear Lp estimate} then gives the following mixed norm version.

\begin{corollary}\label{cor:mixed}
Let $n \g 2$ and assume that $a>1$, $\frac{1}{a} + \frac{n+1}{2b} < \frac{n+1}{2}$, and
    \begin{equation}\label{eqn:cond on a b in cor} \frac{1}{a} + \frac{n-1}{4b}< \begin{cases} \frac{n+1}{4} \qquad &n\g 3 \\
    		 \frac{1}{2} + \frac{5}{12b} &n=2 \end{cases}.\end{equation}
Let $\Phi_1$, $\Phi_2$, and $u, v$ be as in the statement of Theorem \ref{thm - bilinear Lp estimate}. Then
	$$ \| u v \|_{L^a_t L^b_x} \lesa \| u \|_{V^2_{\Phi_1}} \| v \|_{V^2_{\Phi_2}}.$$
\end{corollary}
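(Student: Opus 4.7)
The plan is to combine the diagonal bilinear estimate of Theorem \ref{thm - bilinear Lp estimate} with the family of H\"older--Strichartz bounds sketched in the paragraph preceding the corollary, and then close up via log-convexity of mixed-norm Lebesgue spaces to reach the range claimed. For the H\"older--Strichartz family, the dispersive inequality of Lemma \ref{lem - dispersion} plus the Keel--Tao $TT^*$ theorem produces the linear Strichartz estimate \eqref{eqn:strichartz} for every wave-admissible pair $(q,r)$; a $V^2\hookrightarrow U^q$ atomic decomposition for $q>2$, exactly as in Step 3 of the proof of Theorem \ref{thm - bilinear Lp estimate}, transfers this to $\|u\|_{L^q_tL^r_x}\lesa \|u\|_{V^2_{\Phi_j}}$, and a product H\"older in $(t,x)$ applied to two such estimates produces the bilinear bound
\[
\|uv\|_{L^a_tL^b_x}\lesa \|u\|_{V^2_{\Phi_1}}\|v\|_{V^2_{\Phi_2}}
\]
throughout the region $\frac{1}{a}+\frac{n-1}{2b}\les \frac{n-1}{2}$ with $a>1$ (for $n\g 3$) or $a>2$ (for $n=2$).

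The key observation is then geometric. Writing $X=1/a$, $Y=1/b$, and $\theta_0=\frac{n+1}{n+3}$, the supporting line joining the H\"older--Strichartz corner $(0,1)$ and the critical diagonal endpoint $(\theta_0,\theta_0)$ of Theorem \ref{thm - bilinear Lp estimate} is $X+\frac{n+1}{2}Y=\frac{n+1}{2}$, precisely the boundary of condition (I); and the supporting line joining $(\theta_0,\theta_0)$ with the extremal H\"older--Strichartz point on the opposite side---either $\bigl(1,\frac{n-3}{n-1}\bigr)$, the intersection of $X+\frac{n-1}{2}Y=\frac{n-1}{2}$ with $X=1$, when $n\g 3$, or $\bigl(\frac{1}{2},0\bigr)$ when $n=2$---is $X+\frac{n-1}{4}Y=\frac{n+1}{4}$ for $n\g 3$ and $X=\frac{1}{2}+\frac{Y}{6}$ for $n=2$, i.e., the boundary of condition (II). Thus the open region cut out by (I), (II), (III) is exactly the interior of the convex hull of the H\"older--Strichartz region and the diagonal segment from Theorem \ref{thm - bilinear Lp estimate}.

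Finally, the log-convexity of mixed-norm Lebesgue spaces,
\[
\|f\|_{L^a_tL^b_x}\les \|f\|_{L^{a_1}_tL^{b_1}_x}^{1-\theta}\|f\|_{L^{a_2}_tL^{b_2}_x}^{\theta}
\]
whenever $(1/a,1/b)=(1-\theta)(1/a_1,1/b_1)+\theta(1/a_2,1/b_2)$, applied to the fixed product $f=uv$, transfers any two known bilinear bounds to any convex combination of their exponents. Iterating and pairing H\"older--Strichartz bounds with Theorem \ref{thm - bilinear Lp estimate} and the trivial corner $\|uv\|_{L^\infty_{t,x}}\lesa 1$ (from frequency localisation and $V^2_{\Phi_j}\hookrightarrow L^\infty_tL^2_x$) then covers every interior point of the convex hull, with the strict inequalities in (I), (II), (III) absorbed by working slightly interior to the critical values. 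I do not anticipate a genuine obstacle here: the Strichartz transfer is routine and the interpolation is just log-convexity applied to a single function. The only case-specific feature is that in dimension two the forbidden Keel--Tao endpoint $(q,r)=(4,\infty)$ shifts the extremal H\"older--Strichartz point on $Y=0$ from $(1,0)$ to $(\frac{1}{2},0)$, which is exactly why condition (II) for $n=2$ takes a form different from the uniform $n\g 3$ version.
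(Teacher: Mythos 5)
Your proposal is correct and takes essentially the same route as the paper: the paper's proof is precisely the paragraph preceding the corollary (the dispersive estimate of Lemma \ref{lem - dispersion} plus Keel--Tao gives \eqref{eqn:strichartz}, the $U^q$-atomic decomposition of $V^2$ transfers it, and H\"older yields the bilinear bound for $\frac{1}{a}+\frac{n-1}{2b}\les\frac{n-1}{2}$), followed by interpolation with Theorem \ref{thm - bilinear Lp estimate}, which you carry out explicitly via log-convexity of the mixed norms applied to the fixed product $uv$. Your verification that the boundary lines in \eqref{eqn:cond on a b in cor} are exactly the supporting lines through the diagonal point $\big(\tfrac{n+1}{n+3},\tfrac{n+1}{n+3}\big)$, including the $n=2$ modification forced by the excluded endpoint, is the content the paper leaves implicit.
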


\begin{remark}\label{rem:inter}
  Let $p>\frac{n+3}{n+1}$. It is possible to deduce a weaker version of Theorem \ref{thm - bilinear Lp estimate} and Corollary \ref{cor:mixed} directly from the homogeneous estimate
  	\begin{equation}\label{eqn:homogeneous bilinear est} \| e^{ i t \Phi_1(-i\nabla)} f e^{ i t \Phi_2(-i\nabla)} g \|_{L^p_{t,x}(\RR^{1+n})} \lesa \|f \|_{L^2_x} \| g \|_{L^2_x} \end{equation}
 where the phases satisfy the conditions in Assumption \ref{assump on phase}, and $f, g \in L^2$ have the required support conditions. We sketch the argument as follows. By interpolating (\ref{eqn:homogeneous bilinear est}) with the trivial $L^\infty_t L^2_x$ bound, we deduce that for every $a>2$ we have
	$$ \| e^{ i t \Phi_1(-i\nabla)} f e^{ i t \Phi_2(-i\nabla)} g \|_{L^{a}_t L^{\frac{n+1}{n}}_x} \lesa \| f \|_{L^2_x} \| g \|_{L^2_x} $$
 By decomposing $V^2$ functions into $U^a$ atoms \cite{Koch2005,Hadac2009,Koch2014} and using the convexity of the $L^p$ spaces, we see that for $a>2$
  \[
  \|uv\|_{L^{a}_t L^{\frac{n+1}{n}}_x} \lesa \| u \|_{V^2_{\Phi_1}} \| v \|_{V^2_{\Phi_2}}.
  \]
Consequently, as in the proof of Corollary \ref{cor:mixed}, by interpolating with the standard Strichartz estimates, we obtain
	$$ \| u v \|_{L^a_t L^b_x} \lesa \| u \|_{V^2_{\Phi_1}} \| v \|_{V^2_{\Phi_2}}$$
provided that $a>1$, $\frac{1}{a} + \frac{n+1}{2b} < \frac{n+1}{2}$, and
	\begin{equation}\label{eqn:cond bilinear weak}
		\frac{1}{a} < \begin{cases} \frac{n-1}{n+3} \Big( \frac{n}{2} - \frac{n+1}{2b}\Big) + \frac{1}{2} \qquad &n\g 3 \\
	\frac{1}{2} &n=2. \end{cases}
	\end{equation}
In particular, the homogeneous bounds contained in the work of Lee-Vargas \cite{Lee2010} and Bejenaru \cite{Bejenaru2016}, implies a weaker version of our main result, with (\ref{eqn:cond on a b in cor}) in Corollary \ref{cor:mixed} replaced with (\ref{eqn:cond bilinear weak}). Note that condition (\ref{eqn:cond bilinear weak}) is much more restrictive than (\ref{eqn:cond on a b in cor}). This is most apparent in the low dimensional cases, for instance if $n=2$ then Corollary \ref{cor - small scale bilinear estimate} allows $a<2$ while (\ref{eqn:cond bilinear weak}) only allows the somewhat trivial (from a $V^2$ perspective) $a>2$. To summarise, our main result, Theorem \ref{thm - bilinear Lp estimate} not only  clarifies the dependence of the constant on the global properties of the  phases $\Phi_1$ and $\Phi_2$, but also presents a significant strengthening of the allowed exponents for the $V^2$ estimate.

We observe that the above argument, namely deducing a $V^2$ bound directly from the homogeneous estimate, has been used in \cite[Lemma 5.7 and its proof]{Sterbenz2010} in the case of the cone.
\end{remark}

\begin{remark}\label{rem:interpolation with K-G}
In the special case of the hyperboloid, $\Phi_j = \lr{\xi}_{m_j}$, or the paraboloid, $\Phi_j = |\xi|^2$, the Strichartz bound (\ref{eqn:strichartz}) holds in the larger region $\frac{1}{q} + \frac{n}{2r} \les \frac{n}{4}$. This can be used to improve the range of exponents in Corollary \ref{cor:mixed}, in particular (\ref{eqn:cond on a b in cor}) can be replaced with
  $$ \frac{1}{a} + \frac{n}{3b} < \frac{n+1}{3}. $$	
However, it is important to note that, in the case of the hyperboloid, some care has to be taken as the constant will now depend on the masses $m_j$.
\end{remark}

\subsection{Small Scale Bilinear Restriction Estimates}
In the case of hyperboloids we now generalise Theorem \ref{thm - bilinear Lp estimate}, similarly to \cite{Lee2008} in the case of the cone. Given $0<\alpha \lesa 1$, we define $\mc{C}_\alpha$ to be a collection of finitely overlapping caps of radius $\alpha$ on the sphere $\sph^{n-1}$. If $\kappa \in \mc{C}_\alpha$, we define $\omega(\kappa)$ to be the centre of the cap $\kappa$.

We consider the case $\Phi_j(\xi) = - \pm_j \lr{\xi}$ and define the corresponding $V^2_{\pm, m}$ space
as $ V^2_{\pm, m} = V^2_{\Phi_j}$, thus
	\begin{equation}\label{eqn:KG V2 defn} \| u \|_{V^2_{\pm, m}} = \| e^{ \pm it\lr{\nabla}_m} u(t) \|_{V^2}.\end{equation}
We define the corresponding $U^2_{\pm, m}$ space similarly.  Rescaling Theorem \ref{thm - bilinear Lp estimate} then gives the following optimal result.

\begin{corollary}\label{cor - small scale bilinear estimate}
Let $p>\frac{n+3}{n+1}$, $0\les m_1, m_2 \les 1$.
\begin{enumerate}
    \item\label{it:cor-part1} For any $\lambda \gtrsim m_1 + m_2$, $\frac{m_1+m_2}{\lambda}\lesa \alpha\lesa 1$,  $\kappa, \kappa' \in \mc{C}_\alpha$ with $\theta(\pm_1\kappa, \pm_2 \kappa') \approx  \alpha$, and
		$$ \supp \widehat{u} \subset \{ |\xi| \approx \lambda, \,\,  \tfrac{\xi}{|\xi|} \in \kappa\}, \qquad \supp \widehat{v} \subset \{ |\xi| \approx \lambda, \,\,   \tfrac{\xi}{|\xi|} \in \kappa'\},$$
we have the bilinear estimate
	$$ \| u v \|_{L^p_{t, x}} \lesa \alpha^{ n-1 - \frac{n+1}{p}}  \lambda^{ n-\frac{n+1}{p}} \| u \|_{V^2_{\pm_1, m_1}}  \| v \|_{V^2_{\pm_2, m_2}}.$$

    \item\label{it:cor-part2} For any $\lambda \gtrsim m_1 + m_2$,  $0<\alpha\ll \frac{m_1 + m_2}{ \lambda}$,  $\kappa, \kappa' \in \mc{C}_\alpha$, $c_1 \approx c_2 \approx \lambda$ with
        $$\theta(\pm_1\kappa, \pm_2 \kappa') \lesa   \alpha, \qquad | m_1 c_1 -  m_2 c_2| \approx \alpha \lambda^2,$$ and
		$$ \supp \widehat{u} \subset \big\{ \big||\xi\cdot \omega(\kappa)| - c_1 \big| \ll \alpha \lambda^2, \,\, \tfrac{\xi}{|\xi|} \in \kappa\big\}, \qquad \supp \widehat{v} \subset \big\{ \big||\xi\cdot \omega(\kappa')| - c_2 \big| \ll \alpha \lambda^2, \,\,\tfrac{\xi}{|\xi|} \in \kappa'\big\},$$
we have the bilinear estimate
	$$ \| u v \|_{L^p_{t, x}} \lesa \alpha^{ n - \frac{n+2}{p}}  \lambda^{ n+1 -\frac{n+2}{p}} \| u \|_{V^2_{\pm_1, m_1} } \| v \|_{V^2_{\pm_2, m_2}}.$$
\end{enumerate}
\end{corollary}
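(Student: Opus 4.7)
In both parts, the strategy is to reduce to the unit-scale bilinear estimate in Theorem~\ref{thm - bilinear Lp estimate} by an affine change of variables, in the spirit of \cite{Lee2008}. The proof begins with the parabolic rescaling $(t,x)\mapsto(t/\lambda,x/\lambda)$, which conjugates $\mp_j\langle-i\nabla\rangle_{m_j}$ to $\mp_j\langle-i\nabla\rangle_{\tilde m_j}$ with $\tilde m_j=m_j/\lambda$, brings the Fourier supports to the unit scale, and through the scaling laws $\|\widetilde u\|_{L^p_{t,x}}=\lambda^{(n+1)/p}\|u\|_{L^p_{t,x}}$ and $\|\widetilde u\|_{V^2_{\pm_j,\tilde m_j}}=\lambda^{n/2}\|u\|_{V^2_{\pm_j,m_j}}$ produces the asserted $\lambda$-dependence. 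It remains to prove the claimed $\alpha$-dependence at unit frequency.

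For part~\eqref{it:cor-part1}, after complex-conjugating one factor if necessary and rotating we may assume both caps lie in an $O(\alpha)$-neighbourhood of $e_1\in\sph^{n-1}$, with $\tilde m_j\lesssim\alpha$. An anisotropic Fourier-side change of variables $\xi=e_1+(\alpha^2\eta_1,\alpha\eta')$, paired with the time rescaling $s=\alpha^2 t$ and a Galilean boost, brings the supports into $O(1)$-size sets and conjugates each phase into a uniformly $C^\infty$-bounded perturbation of the standard paraboloid $\mp_j\tfrac12|\eta'|^2$. For part~\eqref{it:cor-part2}, the hypothesis $\alpha\ll(m_1+m_2)/\lambda$ forces the Fourier supports to be thin anisotropic boxes (radial width $\approx\alpha\lambda$, tangential extent $\approx\alpha$ at unit frequency) rather than angular caps; we rescale each box anisotropically to a unit cube, and the needed transversality now arises not from angular separation but from the radial velocity difference produced by the mass-weighted separation $|m_1c_1-m_2c_2|\approx\alpha\lambda^2$ via~\eqref{eqn:KG transverse}.

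In both parts the key step is to verify Assumption~\ref{assump on phase} for the pair of rescaled phases with constants $\mb D_1,\mb D_2\approx 1$ independent of $\alpha$ and $\tilde m_j$. Via Lemma~\ref{lem - simplification of cond ii}, this reduces to the curvature~\eqref{eqn - cond on phase II}, inherited from the leading paraboloid/hyperboloid part of the rescaled phases, and the transversality~\eqref{eq:trans}, extracted from the velocity-difference identity~\eqref{eqn:KG transverse} using the angular separation in part~\eqref{it:cor-part1} and the radial mass separation in part~\eqref{it:cor-part2}. Given this, applying Theorem~\ref{thm - bilinear Lp estimate} to the rescaled waves and tracking the Jacobians through the $L^p_{t,x}$ and $V^2$ norms yields the asserted factors $\alpha^{n-1-(n+1)/p}$ and $\alpha^{n-(n+2)/p}\lambda^{n+1-(n+2)/p}$. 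The main technical obstacle is this uniform verification of Assumption~\ref{assump on phase}, which is a delicate but routine computation using~\eqref{eqn:KG transverse} and Lemma~\ref{lem - simplifying curvature computation}.
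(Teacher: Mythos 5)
Your overall route is the same as the paper's: an anisotropic rescaling plus a Galilean boost reduces both parts to Theorem \ref{thm - bilinear Lp estimate}, with Assumption \ref{assump on phase} verified for the rescaled phases with constants uniform in $\alpha,\lambda,m_j$ via Lemma \ref{lem - simplification of cond ii}, transversality coming from the angular separation in part \eqref{it:cor-part1} and from the mass/radial separation $|m_1c_1-m_2c_2|\approx\alpha\lambda^2$ in part \eqref{it:cor-part2}, and the $\alpha,\lambda$ powers coming from the Jacobians. Two points in your sketch, however, do not survive scrutiny and are exactly where the real work lies.

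First, your change of variables in part \eqref{it:cor-part1} is wrong as written: after the $\lambda$-rescaling the support of $\widehat u$ is the full angular sector $\{|\xi|\approx 1,\ \xi/|\xi|\in\kappa\}$, whose \emph{radial} extent is $\approx 1$, not $\alpha^2$, so the substitution $\xi=e_1+(\alpha^2\eta_1,\alpha\eta')$ with $\eta$ in a bounded set does not cover it. Repairing this by decomposing into $\sim\alpha^{-2}$ radial slabs and summing would lose powers of $\alpha$ in $L^p$ with $p\les 2$, which is not affordable since the stated $\alpha$-dependence is sharp. The correct move (the one the paper makes) is to leave the radial frequency variable at unit scale and only rescale the tangential variables by $\alpha\lambda$ and time by $\alpha^2\lambda$, together with the shear removing $\lambda\xi_1$ from the phase; the resulting sets $\Lambda_1=\{\xi_1\approx 1,\ |\xi'|\ll1\}$, $\Lambda_2=\{\xi_1\approx\pm1,\ \xi_2\approx1,\ |\xi''|\ll1\}$ are admissible for Theorem \ref{thm - bilinear Lp estimate}, and transversality $\mb A_1\approx1$ is read off the $\partial_2$-components.

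Second, the rescaled phases are \emph{not} uniformly $C^\infty$-bounded perturbations of $\mp\tfrac12|\eta'|^2$, and the curvature condition \eqref{eqn - cond on phase II} cannot be ``inherited from the leading paraboloid/hyperboloid part'': in the radial direction the rescaled phase is degenerate (exactly flat when $m_j=0$, which part \eqref{it:cor-part1} allows), so no Hessian lower bound holds on all of $\Lambda_j$. The condition \eqref{eqn - cond on phase II} is only true for $\xi,\xi'$ constrained to the resonance surfaces $\Sigma_j(\mathfrak h)$, and verifying it is the delicate step: one rewrites $(\nabla\Phi_j(\xi)-\nabla\Phi_j(\xi'))\cdot(\xi-\xi')$ as an angle between unit vectors $x/|x|-y/|y|$ with $x=(\lambda^{-1}m_j,\dots)$, uses the constraint $|x|=|x-h^*|+a^*$ to get the wedge-product lower bound \eqref{eqn - cor small scale bilinear est - angle vs wedge identity} (the mechanism behind Lemma \ref{lem - simplifying curvature computation}), and then splits into the regimes $|\xi_1-\eta_1|\lesa|\xi'-\eta'|$ and $|\xi_1-\eta_1|\gg|\xi'-\eta'|$, the latter using the surface constraint (and, in part \eqref{it:cor-part2}, the masses) rather than any curvature of $\Phi_j$ in $\xi_1$. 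Your citation of \eqref{eqn:KG transverse} and Lemma \ref{lem - simplifying curvature computation} points at the right tools, but as it stands the claim that the verification is ``routine from the leading part'' skips the only genuinely nontrivial part of the proof.
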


\begin{proof}
Fix $\pm_1=+$ and $\pm_2 = \pm$, the remaining cases follow from a reflection. We start with \eqref{it:cor-part1}. If $\alpha \approx 1$, then estimate follows from rescaling in $x$ together with an application of Theorem \ref{thm - bilinear Lp estimate}. Thus we may assume that $0<\alpha \ll 1$, and after a rotation, that
$\kappa$ is centred at $e_1$ and $\kappa'$ is centred at $\pm (1-\alpha^2)^\frac{1}{2} e_1 + \alpha e_2$. Similarly to \cite{Lee2008}, we define the rescaled  functions
	$$ u_{\lambda, \alpha}(t, x) = u\Big( \frac{t}{\alpha^2 \lambda}, \frac{ x_1}{\lambda} + \frac{t}{\alpha^2 \lambda}, \frac{ x'}{\alpha \lambda}\Big), \qquad v_{\lambda, \alpha}(t, x) =  v\Big( \frac{t}{\alpha^2 \lambda}, \frac{ x_1}{\lambda} + \frac{t}{\alpha^2 \lambda}, \frac{ x'}{\alpha \lambda}\Big)$$
(where we write $x = (x_1, x') \in \RR\times \RR^{n-1}$) and the phases
	$$ \Phi_1(\xi) = \frac{-1}{\alpha^2 \lambda} \Big(\big(  m_1^2 + \lambda^2\xi_1^2 + \alpha^2 \lambda^2 |\xi'|^2 \big)^{\frac{1}{2}} - \lambda \xi_1\Big), \qquad \Phi_2(\xi) = \frac{\mp 1}{\alpha^2 \lambda} \Big(  \big(  m_2^2 + \lambda^2 \xi_1^2 + \alpha^2\lambda^2 |\xi'|^2 \big)^{\frac{1}{2}} \mp \lambda \xi_1\Big)$$
with associated sets $ \Lambda_1 = \{ \xi_1 \approx 1, |\xi'| \ll 1\}$ and  $\Lambda_2 =  \{  \xi_1 \approx  \pm 1, \xi_2 \approx 1, |\xi''| \ll 1\}$ (where we write $\xi = (\xi_1, \xi_2, \xi'') \in \RR \times \RR \times \RR^{n-2}$). A computation gives $\supp \widehat{u}_{\alpha, \lambda} \subset \Lambda_1$ and
    $$ \Big[e^{ - i t \Phi_1( - i \nabla)} u_{\alpha, \lambda}(t) \Big](x) =  \Big[ e^{  i \frac{t}{\alpha^2 \lambda} \lr{\nabla}_{m_1} } u\big( \tfrac{t}{\alpha^2 \lambda}\big) \Big]\big( \tfrac{x_1}{\lambda}, \tfrac{x'}{\alpha \lambda} \big). $$
Similarly we can check that $\supp \widehat{v}_{\alpha, \lambda} \subset \Lambda_2$ and
         $$ \Big[e^{ - i t \Phi_2( - i \nabla)} v_{\alpha, \lambda}(t) \Big](x) =  \Big[ e^{ \pm i \frac{t}{\alpha^2 \lambda} \lr{\nabla}_{m_2} } v\big( \tfrac{t}{\alpha^2 \lambda}\big) \Big]\big( \tfrac{x_1}{\lambda}, \tfrac{x'}{\alpha \lambda} \big). $$
Therefore, after rescaling together with an application of Theorem \ref{thm - bilinear Lp estimate}, it is enough to check that the phases $\Phi_j$ satisfy Assumption \ref{assump      on phase} on the sets $\Lambda_j$. To this end, we start by noting that we can write
\[\nabla \Phi_1(\xi) = \frac{1}{ \big( \lambda^{-2} m_1^2 + \xi_1^2 + \alpha^2 |\xi'|^2 \big)^{\frac{1}{2}}}\bigg( \frac{ - \big( \frac{m_1}{\alpha \lambda}\big)^2 - |\xi'|^2 }{ \big( \lambda^{-2} m_1^2 + \xi_1^2 + \alpha^2 |\xi'|^2 \big)^{\frac{1}{2}} + \xi_1}, \xi' \bigg)\]
which shows that \eqref{it:ass2} in Assumption \ref{assump on phase} holds with $\mb{D}_2$ depending only on $N$ and $n$. A similar argument shows that $\Phi_2$ satisfies \eqref{it:ass2} in Assumption \ref{assump on phase}. On the other hand, to check condition \eqref{it:ass1} in Assumption \ref{assump on phase}, we invoke Lemma \ref{lem - simplification of cond ii}.
First, we observe that for any $\xi \in \Lambda_1$, $\eta \in \Lambda_2$, we have
	\begin{align*}
	\big| \nabla \Phi_1(\xi) - \nabla \Phi_2(\eta) \big| &\g \big| \p_2 \Phi_1(\xi) - \p_2 \Phi_2(\eta) \big| \\
	&= \Big| \frac{  \xi_2}{ ( \lambda^{-2} m_1^2 + \xi_1^2 + \alpha^2 |\xi'|^2)^\frac{1}{2}} \mp \frac{ \eta_2}{( \lambda^{-2} m_2^2 + \eta_1^2 + \alpha^2 |\eta'|^2)^\frac{1}{2} } \Big| \gtrsim 1	
	\end{align*}
and hence we can take $\mb{A}_1 \approx 1$. It remains to check \eqref{eqn - cond on phase II} in Lemma \ref{lem - simplification of cond ii}. We make use of the following elementary inequality;  if $(h^*, a^*) \in \RR^{n+1} \times \RR^1$ and $x, y \in \{ z \in \RR^{n+1} \mid |z|=|z-h^*| + a^* \}$, then
    \begin{equation}\label{eqn - cor small scale bilinear est - angle vs wedge identity}
        \bigg| \frac{x}{|x|} - \frac{y}{|y|} \bigg|^2 \g \frac{1}{4 |x| |y|} \bigg(  \frac{|x\wedge y|^2}{|x| |y|} + \frac{|(x-h^*) \wedge (y-h^*)|^2}{|x-h^*| |y-h^*|} \bigg).
    \end{equation}
To prove (\ref{eqn - cor small scale bilinear est - angle vs wedge identity}), we start by observing that since $x, y \in \{ |z| = |z-h^*| + a^* \}$, we have
     \begin{align*} \Big| \frac{x}{|x|} - \frac{y}{|y|} \Big|^2 &= \frac{1}{|x| |y|} \big( |x-y|^2 - \big| |x| - |y| \big|^2 \big) \\
        &= \frac{1}{|x| |y|} \big(\big|(x-h^*) - (y-h^*)\big|^2 - \big| |x-h^*| - |y-h^*| \big|^2 \big) \\
        &= \frac{|x-h^*| |y-h^*|}{|x| |y|} \Big| \frac{x-h^*}{|x-h^*|} - \frac{y-h^*}{|y-h^*|} \Big|^2.
     \end{align*}
The inequality (\ref{eqn - cor small scale bilinear est - angle vs wedge identity}) now follows from the identity $|\omega - \omega^*|^2 \g \frac{|\omega \wedge \omega^*|^2}{2}$ for $\omega, \omega^* \in \sph^{n+1}$. We now return to checking \eqref{eqn - cond on phase II} in Lemma \ref{lem - simplification of cond ii}, we only check the case $j=1$ as the remaining case is identical. Let  $\xi, \eta \in \Sigma_1(a, h)$ for some $(a, h) \in \RR^{1+n}$ such that $\xi - h, \eta -h \in \Lambda_2$. A computation gives
\begin{align}
	 \big| \big(\nabla \Phi_j(\xi) - &\nabla \Phi_j(\eta) \big) \cdot (\xi - \eta) \big|\notag\\
	 			&= \alpha^{-2} \Bigg| \Bigg( \frac{(\xi_1, \alpha^2 \xi')}{|(\lambda^{-1} m_1, \xi_1, \alpha^2 \xi')|} - \frac{(\eta_1, \alpha^2 \eta') }{|(\lambda^{-1} m_1, \xi_1, \alpha \xi')|} \Bigg) \cdot (\xi - \eta) \Bigg| \notag\\
	 			&=  \alpha^{-2} \frac{ |(\lambda^{-1} m_1, \xi_1, \alpha \xi')|+ |(\lambda^{-1} m_1, \eta_1, \alpha \eta')|}{2}  \Bigg|  \frac{(\lambda^{-1} m_1, \xi_1, \alpha \xi')}{|(\lambda^{-1} m_1, \xi_1, \alpha \xi')|} - \frac{(\lambda^{-1} m_1, \eta_1, \alpha^2 \eta') }{|(\lambda^{-1} m_1, \xi_1, \alpha \xi')|} \Bigg|^2 \notag \\
&\approx \alpha^{-2} \bigg| \frac{x}{|x|} - \frac{y}{|y|} \bigg|^2 \label{eqn - cor small scale bilinear est - curvature bound (i)}
\end{align}
where we take $x= (\lambda^{-1} m_1, \xi_1, \alpha \xi')$ and $y = (\lambda^{-1} m_1, \eta_1, \alpha \eta')$. Note that the condition $\xi \in \Sigma_1(a, h)$ becomes $|x| = |x-h^*| + a^*$ with $h^* = (\lambda^{-1}m_2 - \lambda^{-1} m_1, h_1, \alpha h')$ and $a^* = \alpha^2 a$. In particular, since $|x|\approx |y| \approx |x-h^*| \approx |y-h^*| \approx 1$, an application of (\ref{eqn - cor small scale bilinear est - angle vs wedge identity}) gives
   \begin{equation}\label{eqn - cor small scale bilinear est - first x y ineq} \bigg| \frac{x}{|x|} - \frac{y}{|y|} \bigg|^2  \gtrsim |x \wedge y|^2 + |(x-h^*) \wedge (y-h^*)|^2. \end{equation}
The required bound \eqref{eqn - cond on phase II} with $\mb{A}_2\approx 1$ now follows in the region $|\xi_1 - \eta_1| \lesa |\xi' - \eta'|$ by noting that
    $$ | x \wedge y| \g  \alpha | \xi_1 \eta' - \eta_1 \xi'| \g \alpha \big(|\xi' - \eta'| |\xi_1| - |\xi'| |\xi_1 - \eta_1|\big) \approx \alpha |\xi' - \eta'| \approx \alpha |\xi - \eta| $$
and applying the inequalities \eqref{eqn - cor small scale bilinear est - curvature bound (i)} and \eqref{eqn - cor small scale bilinear est - first x y ineq}. On the other hand, if $|\xi_1 - \eta_1| \gg |\xi' - \eta'| $, then as $\xi - h, \eta - h \in \Lambda_2$, we have
     \begin{align*} | (x-h^*) \wedge (y - h^*)| &\g \alpha | (\xi_1 - h_1) (\eta_2 - h_2) - (\eta_1 - h_1)(\xi_2 - h_2)| \\
        &\g \alpha \big(|\xi_1 - \eta_1| |\eta_2 - h_2| - |\xi_2 - \eta_2| |\eta_1 - h_1| \big) \approx \alpha |\xi_1 - \eta_1| \approx \alpha |\xi - \eta|
     \end{align*}
which again gives \eqref{eqn - cond on phase II} with $\mb{A}_2\approx 1$. Thus the phases $\Phi_j$ satisfy Assumption \ref{assump      on phase} with $\mb{D}_1\approx \mb{D}_2\approx 1$ and therefore Part (i) follows.

We now turn to the proof of Part \eqref{it:cor-part2}. The argument is similar to \eqref{it:cor-part1}, but we need a further rescaling to exploit the radial separation condition. As before, after rotating, we may assume that $\omega(\kappa_1) = e_1$. Define the rescaled functions
	$$ u_{\lambda, \alpha}^\#(t, x) = u\Big( \frac{t}{\alpha^2 \lambda}, \frac{ x_1}{\alpha \lambda^2} + \frac{t c_1}{\alpha^2 \lambda\langle c_1 \rangle_{m_1}}, \frac{ x'}{\alpha \lambda}\Big)\qquad v_{\lambda, \alpha}^\#(t, x) =  v\Big( \frac{t}{\alpha^2 \lambda}, \frac{ x_1}{\alpha \lambda^2}  + \frac{t c_1}{\alpha^2 \lambda\langle c_1 \rangle_{m_1}}, \frac{ x'}{\alpha \lambda}\Big)$$
(where, as previously, we write $x = (x_1, x') \in \RR\times \RR^{n-1}$) and the phases
    $$ \Phi_1(\xi) = \frac{-1}{\alpha^2 \lambda} \Big(\big(  m_1^2 + (\alpha \lambda^2\xi_1)^2 + \alpha^2 \lambda^2 |\xi'|^2 \big)^{\frac{1}{2}} - \frac{\alpha \lambda^2 c_1}{\langle c_1 \rangle_{m_1}} \xi_1\Big)$$
and
    $$\Phi_2(\xi) = \frac{\mp 1}{\alpha^2 \lambda} \Big(  \big(  m_2^2 + ( \alpha \lambda^2 \xi_1)^2 + \alpha^2\lambda^2 |\xi'|^2 \big)^{\frac{1}{2}} \mp \frac{\alpha \lambda^2 c_1}{\langle c_1 \rangle_{m_1}}\xi_1 \Big)$$
with associated sets $ \Lambda_1 = \{ |\xi_1 -  \frac{1}{\alpha \lambda^2} c_1| \ll 1, |\xi'| \ll 1\}$ and  $\Lambda_2 =  \{  |\xi_1  \mp \frac{1}{\alpha \lambda^2} c_2| \ll 1, |\xi'| \lesa 1\}$. As previously, a computation shows that
$\supp \widehat{u}^\#_{\alpha, \lambda} \subset \Lambda_1$, $\supp  \widehat{v}^\#_{\alpha, \lambda} \subset \Lambda_2$ and we have the identities
    $$ \Big[e^{ - i t \Phi_1(-i \nabla)} u^\#_{\alpha, \lambda}(t) \Big](x) = \Big[ e^{  i t \lr{\nabla}_{m_1}} u\big( \tfrac{t}{\alpha^2 \lambda}\big) \Big]\big( \tfrac{x_1}{\alpha \lambda^2}, \tfrac{x'}{\alpha \lambda} \big)$$
and
    $$ \Big[e^{ - i t \Phi_2(-i \nabla)} v^\#_{\alpha, \lambda}(t) \Big](x) = \Big[ e^{  \pm i t \lr{\nabla}_{m_2}} v\big( \tfrac{t}{\alpha^2 \lambda}\big) \Big]\big( \tfrac{x_1}{\alpha \lambda^2}, \tfrac{x'}{\alpha \lambda} \big).$$
Thus, as in the proof of (i), after rescaling and an application of Theorem \ref{thm - bilinear Lp estimate}, it is enough to check that the phases $\Phi_j$ satisfy Assumption \ref{assump      on phase} on the sets $\Lambda_j$. To this end, note that we can write
\[\p_1 \Phi_1 =  \frac{ \frac{m_1^2}{\alpha \lambda^3} \big( (\alpha \lambda^2 \xi_1)^2 - c_1^2\big) - \big( \frac{c_1}{\lambda}\big)^2 \alpha \lambda |\xi'|^2}{f(\alpha \lambda \xi_1, \alpha \xi')} \]
for some smooth function $f$ with $f\approx 1$ on $\Lambda_1$. Since $\p^M_{\xi_1} [(\alpha \lambda^2 \xi_1)^2 - c_1^2] \lesa \alpha \lambda^3$ for all $M \g 0$ and $\xi_1 \in \Lambda$, we see that $\Phi_1$ satisfies \eqref{it:ass2} in Assumption \ref{assump      on phase} with constant depending only on $n$ and $N$. A similar argument, using the fact that $\frac{\lambda}{\alpha} |\frac{c_1}{\langle c_1 \rangle_{m_1}} - \frac{c_2}{\langle c_2 \rangle_{m_2}}| \approx 1$, shows that $\Phi_2$ also satisfies \eqref{it:ass2} in Assumption \ref{assump      on phase}. On the other hand, to check \eqref{it:ass1} in Assumption \ref{assump      on phase}, we use Lemma \ref{lem - simplification of cond ii}.  Concerning the transversality condition \eqref{eq:trans}, we observe that for $\xi \in \Lambda_1$, $\eta \in \Lambda_2$, we have $|\xi_1| \approx |\eta_1| \approx \frac{1}{\alpha \lambda}$ and
     $$ | \xi_1^2 m_2^2 - \eta_1^2 m_1^2| \approx \frac{m_1 + m_2}{\alpha \lambda}, \qquad \alpha^2 \big| \xi^2_1 |\eta'|^2 - \eta_1^2 |\xi'|^2 \big| \lesa \lambda^{-2} \ll \lambda^{-2} \frac{m_1 + m_2}{\alpha \lambda}$$
Therefore
    \begin{align*}
        \big| \nabla \Phi_1(\xi) - \nabla \Phi_2(\eta) \big| &= \bigg| \frac{ ( \lambda^2 \xi_1, \xi')}{ ( \lambda^{-2} m_1^2 + \alpha^2 \lambda^2 \xi_1^2 + \alpha^2 |\xi'|^2)^\frac{1}{2}} \mp \frac{ ( \lambda^2 \eta_1, \eta')}{ ( \lambda^{-2} m_2^2 + \alpha^2 \lambda^2 \eta_1^2 + \alpha^2 |\eta'|^2)^\frac{1}{2}} \bigg| \\
        &\gtrsim  \lambda^3 \alpha \big| \xi_1^2 ( \lambda^{-2} m_2^2 + \alpha^2 \lambda^2 \eta_1^2 + \alpha^2 |\eta'|^2) -  \eta_1^2 ( \lambda^{-2} m_1^2 + \alpha^2 \lambda^2 \xi_1^2 + \alpha^2 |\xi'|^2) \big| \\
        &\approx m_1 + m_2 \gtrsim 1
    \end{align*}
so that \eqref{eq:trans} holds with $\mb{A}_1\approx 1$. We now check the curvature condition \eqref{eqn - cond on phase II} for $j=1$.  Let $\xi, \eta \in \Sigma_1(a, h)$. Repeating the computation (\ref{eqn - cor small scale bilinear est - curvature bound (i)}) we deduce that
    \begin{align*}
      \big| \big(\nabla \Phi_1(\xi) - \nabla \Phi_1(\eta) \big) \cdot (\xi - \eta) \big| \approx \alpha^{-2} \bigg| \frac{x}{|x|} - \frac{y}{|y|}\bigg|^2 \gtrsim \alpha^{-2} \big( |x\wedge y|^2 + |(x-h^*) \wedge (y-h^*)|^2\big)
    \end{align*}
where $x= (\lambda^{-1} m_1, \alpha \lambda \xi_1, \alpha \xi')$,  $y = (\lambda^{-1} m_1, \alpha \lambda \eta_1, \alpha \eta')$, $h^* = (\lambda^{-1}m_2 - \lambda^{-1} m_1, \alpha \lambda h_1, \alpha h')$, and we used the fact that $x, y, x-h^*, y-h^*$ all have length $1$. It thus remains to show that 
      $$|x\wedge y| + |(x-h^*) \wedge (y-h^*)| \gtrsim \alpha |\xi-\xi'|$$
since then \eqref{eqn - cond on phase II} holds with $\mb{A}_2\approx 1$. If $|\xi_1 - \eta_1| \lesa |\xi' - \eta'|$ we simply observe as previously that
    $$ | x \wedge y| \g  \alpha | \alpha \lambda \xi_1 \eta' - \alpha \lambda \eta_1 \xi'| \g \alpha \big(|\xi' - \eta'| \alpha \lambda |\xi_1| - |\xi'| \alpha \lambda |\xi_1 - \eta_1|\big) \approx \alpha |\xi' - \eta'| \approx \alpha |\xi - \eta| $$
 On the other hand, if $|\xi_1 - \eta_1| \gtrsim |\xi' - \eta'| $, then as $\xi - h, \eta - h \in \Lambda_2$, we have
    $$ |x\wedge y| + |(x-h^*)\wedge (y-h^*)| \g \alpha m_1 |\xi_1 - \eta_1| + \alpha m_2 | (\xi_1 - h_1) - (\eta_1 - h_2)| \gtrsim \alpha |\xi-\eta|.$$
 An identical argument shows that $\Phi_2$ also satisfies the curvature condition. Thus the phases $\Phi_j$ satisfy Assumption \ref{assump      on phase} with $\mb{D}_1\approx \mb{D}_2\approx 1$ and therefore Part (ii) follows.
\end{proof}

The $\alpha$ and $\lambda$ dependence in Corollary \ref{cor - small scale bilinear estimate} is sharp. At least for (ii), this can be seen with the following example. Let
    $$\Omega_j = \{ |\xi_1 - c_j|  \ll \alpha \lambda^2, \,\, |\xi'| \ll \alpha \lambda \}$$
with $|c_1 - c_2| \lesa \alpha \lambda^2$, $c_1 \approx c_2 \approx \lambda$, and $\alpha \ll \lambda^{-1}$. Define $\widehat{f}(\xi) = \ind_{\Omega_1}(\xi)$, $\widehat{g}(\xi) = \ind_{\Omega_2}(\xi)$ and
    $$ u = e^{ i t \lr{\nabla}} f, \qquad v = e^{ it \lr{\nabla}} g. $$
Then
    $$ \| u \|_{V^2_{\lr{\nabla}}} = \| f \|_{L^2_x} = |\Omega_1|^{\frac{1}{2}} $$
and similarly $\| v \|_{V^2_{\lr{\nabla}}} = |\Omega_2|^{\frac{1}{2}}$. On the other hand we have
    \begin{align*}
      (uv)(t, x) &=  \int_{\RR^n} \int_{\RR^n} \widehat{u}(t, \xi) \widehat{v}(t, \eta) e^{ i x \cdot (\xi + \eta) } d\xi d\eta =  \int_{\Omega_1} \int_{\Omega_2} e^{ i t (\lr{\xi} + \lr{\eta})} e^{ i x \cdot (\xi + \eta) } d\xi d\eta.
    \end{align*}
The idea is to try and find a set $A \subset \RR^{1+n}$ such that the phase is essentially constant for $(t, x) \in A$. We start by noting that for  $\xi \in \Omega_1$ we have
    $$ \lr{\xi} - \frac{ 1 + c_1 \xi_1}{\lr{c_1}}\approx \lambda^{-3} \big| ( 1 + |\xi|^2) ( 1 + c_1^2) - (1 + c_1 \xi_1)^2 \big| = \lambda^{-3} \big| ( \xi_1 - c_1)^2 + (1+c_1^2) |\xi'|^2 \big| \approx \alpha^2 \lambda$$
and hence
    $$ \Big| \lr{\xi}  - \lr{c_1}^{-1} - \frac{c_1}{\lr{c_1}} \xi_1 \Big| \lesa \alpha^2 \lambda.$$
Similarly, since
    $$ \bigg| \frac{c_1}{\lr{c_1}} - \frac{c_2}{\lr{c_2}} \bigg| \approx \lambda^{-2} | c_1 \lr{c_2} - c_2 \lr{c_1} | \approx \lambda^{-3} |c_1 - c_2| \approx \frac{\alpha}{\lambda}$$
we deduce that for $\eta \in \Omega_2$
    $$ \Big|\lr{\eta} - \lr{c_2}^{-1} - \Big( \frac{c_2}{\lr{c_2}} - \frac{c_1}{\lr{c_1}}\Big) c_2  - \frac{c_1}{\lr{c_1}} \eta_1\Big|  \les \Big|\lr{\eta} - \lr{c_2}^{-1} - \frac{c_2}{\lr{c_2}} \eta_1 \Big| + \Big| \frac{c_1}{\lr{c_1}} - \frac{c_2}{\lr{c_2}}\Big| | \eta_1 - c_2| \lesa \alpha^2 \lambda. $$
In particular, for $|t| \ll (\alpha^2 \lambda)^{-1}$, $|x_1 + \frac{c_1}{\lr{c_1}} t| \ll (\alpha \lambda^2)^{-1}$, and $|x'| \ll (\alpha \lambda)^{-1}$, the phase is essentially constant and hence
    \begin{align*} | (&uv)(t, x)|\\
        &= \Bigg|\int_{\Omega_1} \int_{\Omega_2} e^{ i t ( \lr{\xi}  - \lr{c_1}^{-1} - \frac{c_1}{\lr{c_1}} \xi_1)} e^{ it (\lr{\eta} - \lr{c_2}^{-1} - ( \frac{c_2}{\lr{c_2}} - \frac{c_1}{\lr{c_1}}) c_2  - \frac{c_1}{\lr{c_1}} \eta_1)} e^{ i (x_1 + t \frac{c_1}{\lr{c_1}}) (\xi_1 + \eta_1 - c_1 - c_2) + x' \cdot (\xi' + \eta')} d\xi d\eta \Bigg| \\
            &\gtrsim |\Omega_1| |\Omega_2|
    \end{align*}
which then implies that
    $$ \| u v \|_{L^p_{t, x}} \gtrsim \big( \alpha^{n+2} \lambda^{n+2} \big)^{-\frac{1}{p}} \times  |\Omega_1| |\Omega_2|. $$
Therefore, if the estimate
    $$ \| uv \|_{L^p_{t, x}} \les C(\alpha, \lambda) \| u \|_{V^2_{\lr{\nabla}}} \| v \|_{V^2_{\lr{\nabla}}}$$
holds, then we must have
        $$ \big( \alpha \lambda \big)^{-\frac{n+2}{p}}   |\Omega_1| |\Omega_2| \lesa C  |\Omega_1|^\frac{1}{2} |\Omega_2|^\frac{1}{2}.$$
Since $|\Omega_1| \approx |\Omega_2| \approx \alpha^n \lambda^{n+1}$, after rearranging, this becomes $C \gtrsim  \alpha^{ n - \frac{n+2}{p}} \lambda^{n+1 - \frac{n+2}{p}}$, which matches the bound obtained in Corollary \ref{cor - small scale bilinear estimate}.

\section{The Dirac-Klein-Gordon System}\label{sect:prep}

In this section we set up notation and reduce the DKG system to the first order system (\ref{eqn - DKG system reduced}). We then give the proof of Theorem \ref{thm:dkg}, up to the crucial nonlinear estimates, which are postponed to Section \ref{sect:mult}. In the remainder of this article, as we now only consider the DKG system,  the dimension is fixed $n=3$.

\subsection{Notation and Setup}\label{subsect:flp}
Fix a smooth function $\rho \in C^\infty_0(\RR)$ such that  $\supp \rho \subset \{ \tfrac{1}{2}< t< 2\}$ and
	$$  \sum_{ \lambda \in 2^{\ZZ}} \rho\big( \tfrac{t}{\lambda}\big) =1,$$
and let $\rho_1 = \sum_{\lambda \les 1} \rho(\frac{t}{\lambda})$ with $\rho_1(0) = 1$. Similarly, we let $Q_\mu$ be a finitely overlapping collection of cubes of diameter $\frac{\mu}{1000}$ covering $\RR^3$, and fix $(\rho_q)_{q \in Q_\mu}$ to be a corresponding subordinate partition of unity. We now define the standard dyadic  Fourier cutoffs, for $\lambda \in 2^\NN$, $\lambda>1$,  $q \in Q$, $d \in 2^\ZZ$
	$$P_\lambda = \rho\big( \tfrac{ |-i\nabla|}{ \lambda}\big), \qquad P_1 = \rho_1(|-i\nabla|),  \qquad P_q = \rho_q(|-i \nabla|), \qquad C^{\pm, m}_d = \rho\big( \tfrac{- i \p_t \pm \lr{ - i \nabla }_m}{d}\big).$$
We also let  $C^{\pm, m}_{\les d} = \sum_{ d' \les d} C^{\pm, m}_{d'}$, any related multipliers such as $C^{\pm, m}_{\g d}$ are defined analogously. To simplify notation somewhat, we make the convention that
    $$ C_d = C^{+, 1}_d, \qquad \mc{C}^{\pm}_d = \Pi_\pm C^{\pm, M}_d$$
where $M$ will denote the mass of the spinor in (\ref{eq:dkg}) and $\Pi_{\pm}$ as defined below. Given $ \alpha \les 1$, we let $(\rho_\kappa)_{\kappa \in \mc{C}_\alpha}$ be a smooth partition of unity subordinate to the conic sectors $\{ \xi \not = 0, \frac{\xi}{|\xi|} \in \kappa \}$, and define the angular Fourier localisation multipliers as
		$$ R_\kappa = \rho_\kappa( - i \nabla).$$

We use the well-known fact that for any $1\leq p,q \leq \infty$ the modulation cutoff multipliers are uniformly disposable in $L^q_t L^r_x$ for certain scales, namely we have the bounds
\begin{equation}\label{eqn:dispose}
\|C^{\pm, m}_{d}P_\lambda R_\kappa u\|_{L^q_t L^r_x}+\|C^{\pm, m}_{\les d}P_\lambda R_\kappa u\|_{L^q_t L^r_x}\lesa  \|P_\lambda R_\kappa u\|_{L^q_t L^r_x},
\end{equation}
provided that $\kappa \in \mathcal{C}_\alpha$ and $d\gtrsim \alpha^2 \lambda$. Similarly, by writing $C_d^{\pm, m} = e^{ \mp i t \lr{\nabla}_m} \rho(\frac{ - i \p_t}{d}) e^{ \pm i t \lr{\nabla}_m}$, and using the fact that convolution with $L^1_t(\RR)$ functions is bounded on $V^2$, we deduce that for every $d \in 2^\ZZ$
    \begin{equation}\label{eqn:dispose2}
        \| C_{\les d}^{\pm, m} u \|_{V^2_{\pm,m}} \lesa \| u \|_{V^2_{\pm,m}}.
    \end{equation}

To deal with solutions to the Dirac equation, we follow the by now standard approach used in \cite{D'Ancona2007b, Bejenaru2015} and define the projections
 $$ \Pi_\pm(\xi) = \frac{1}{2} \Big( I \pm  \frac{1}{\lr{\xi}_M} \big( \xi_j \gamma^0\gamma^j + M \gamma^0\big) \Big)$$
and the associated Fourier multiplier $\widehat{(\Pi_\pm f)}(\xi) = \Pi_\pm(\xi) \widehat{f}(\xi)$. A computation shows that $\Pi_+ \Pi_- = \Pi_- \Pi_+ = 0$ and $\Pi_\pm^2 = \Pi_\pm$. Moreover, given any spinor $\psi$ we have
	$$\psi = \Pi_+ \psi + \Pi_- \psi, \qquad  (- i \gamma^\mu \p_\mu + M )\Pi_\pm \psi = \gamma^0 (  - i \p_t \pm \lr{ - i \nabla}_M) \psi. $$
As in the paper of Bejenaru-Herr \cite{Bejenaru2015}, we can now reduce the original system (\ref{eq:dkg}) to a first order system as follows. Suppose we have a solution $(\psi_\pm, \phi_+)$ to
    \begin{equation}\label{eqn - DKG system reduced}\begin{split}
       \big(- i \p_t  \pm \lr{\nabla}_M \big) \psi_\pm &= \Pi_\pm \big( \Re(\phi_+) \gamma^0 \psi \big) \\
         \big(- i \p_t + \lr{\nabla}_m \big)\phi_+ &= \lr{\nabla}^{-1}_m (\psi^\dagger \gamma^0 \psi)\\
         \psi_\pm(0) &= f_\pm, \qquad \phi_+(0) = g_+
    \end{split}
\end{equation}
where $\psi = \Pi_+ \psi_+ + \Pi_- \psi_-$ and the data $(f_{\pm}, g_+)$ satisfies $\Pi_\pm f_\pm = f_\pm$. If we let $\phi = \Re(\phi_+)$, then since $\psi^\dagger \gamma^0 \psi$ is real-valued,  we deduce that
     \begin{align*} 2 (\phi + i \lr{\nabla}^{-1}_m \p_t \phi) &= \phi_+ + i \lr{\nabla}^{-1}_m \p_t \phi_+ + \overline{ (\phi_+ - i \lr{\nabla}^{-1}_m \p_t \phi_+ )} \\
     &= 2 \phi_+ - \lr{\nabla}^{-2}_m (\psi^\dagger \gamma^0 \psi) +  \lr{\nabla}^{-2}_m \overline{(\psi^\dagger \gamma^0 \psi)} = 2 \phi_+.\end{align*}
Consequently, if we take $g_+ = \phi(0) + i \lr{\nabla}^{-1}_m\p_t \phi(0)$, a simple computation shows that $(\psi, \phi)$ is a solution to the original DKG system (\ref{eq:dkg}). Note that, after rescaling, it suffices to consider the case $m=1$. Therefore, to prove Theorem \ref{thm:dkg}, it is enough to construct global solutions to the reduced system (\ref{eqn - DKG system reduced}) with $m=1$.

\subsection{Analysis on the Sphere}\label{subsect:anasphere}
We require some basic facts on analysis on the sphere $\sph^2$ which can be found in, for instance, \cite{Stein1971,Strichartz1972,Sterbenz2007}. Let $Y_\ell$ denote the set of homogeneous harmonic polynomials of degree $\ell$, and let $y_{\ell, n}$, $n=0, ..., 2\ell$ be an orthonormal basis for $Y_\ell$ with respect to the inner product
	$$ \lr{ y_{\ell, n}, y_{\ell', n'} }_{L^2(\sph^2)} = \int_{\sph^2} \big[ y_{\ell, n}(\omega) \big]^\dagger y_{\ell', n'}(\omega) d\sph(\omega).$$
Given $f \in L^2(\RR^3)$, we have the orthogonal (in $L^2(\RR^3)$) decomposition
	$$ f(x)  = \sum_{\ell} \sum_{n=0}^{2\ell} \lr{f(|x| \omega), y_{\ell, n}(\omega)}_{L^2_\omega(\sph^2)} y_{\ell, n}\big( \tfrac{x}{|x|}\big) .$$
For $N>1$, we define the spherical Littlewood-Paley projections
	$$ (H_N f)(x) = \sum_{\ell \in \NN} \sum_{n=0}^{2\ell}  \rho\big( \tfrac{\ell}{N} \big)  \lr{f(x), y_{\ell, n}}_{L^2(\sph^2)} y_{\ell, n}\big(\tfrac{x}{|x|}\big) \qquad  H_1 = \sum_{\ell \in \NN} \sum_{n=0}^{2\ell}  \rho_1( \ell) \lr{f(x), y_{\ell, n}}_{L^2(\sph^2)} \, y_{\ell, n}\big(\tfrac{x}{|x|}\big).$$
Fractional powers of the angular derivatives $\lr{\Omega}$ are then defined as
\begin{equation}\label{eqn - spec defn of ang der} \lr{\Omega}^\sigma f = \sum_{N  \in 2^\NN} N^\sigma H_N f.\end{equation}
If we let $\Omega_{ij} = x_j \p_j - x_j \p_i$ denote the standard infinitesimal generators of the rotations on $\RR^3$, then a computation gives
    $$ \| \Omega_{ij} H_N f \|_{L^2_x(\RR^3)} \approx N \| H_N f \|_{L^2_x(\RR^3)}.$$
In addition, if $\Delta_{\sph^2}$ denotes the Laplacian on the sphere of radius $|x|$, then $\Delta_{\sph^2} = \sum_{j<k} \Omega_{ij}^2$. These facts are not explicitly required in the following, and we shall only make use of the spectral definition (\ref{eqn - spec defn of ang der}). More important for our purposes, are the basic properties of the multipliers $H_N$.

\begin{lemma}\label{lem - prop of spherical proj}
Let $N \g 1$. Then $H_N$ is uniformly (in $N$) bounded on $L^p(\RR^3)$, and $H_N$ commutes with all radial Fourier multipliers. Moreover, if $N' \g 1$, then either $N \sim N'$ or
		$$ H_N \Pi_\pm H_{N'} = 0.$$
\end{lemma}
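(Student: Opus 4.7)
The plan is to analyse all three statements through the spherical harmonic decomposition of $L^2(\sph^2)$, combined with the Clebsch--Gordan rule for multiplication by a degree-one harmonic. The principal technical obstacle is part~(i), a standard but non-trivial Littlewood--Paley bound on $\sph^2$; parts~(ii) and~(iii) are then essentially book-keeping in the spherical harmonic basis once the orthogonality structure is in hand.

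For part~(i), I would first reduce to a uniform $L^p(\sph^2)$-bound on the angular projector by writing, for $p<\infty$,
\[
\|H_N f\|_{L^p_x(\RR^3)}^p = \int_0^\infty \big\|H_N[f(r\,\cdot)]\big\|_{L^p(\sph^2)}^p \, r^2\, dr,
\]
and similarly for $p=\infty$. Since $H_N$ is a smooth dyadic Fourier multiplier on the compact symmetric space $\sph^2$, its uniform $L^p$-boundedness is a standard consequence of spherical Littlewood--Paley theory; the cleanest argument expresses $H_N$ as convolution against a zonal kernel $K_N(\omega\cdot\omega')$ built from Legendre polynomials, establishes $\|K_N\|_{L^1(\sph^2)}\lesa 1$ uniformly in $N$ via the standard stationary phase argument for a smooth frequency cutoff, and concludes by Young's inequality. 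I would invoke a textbook reference (Stein or Strichartz) rather than reprove it here.

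Part~(ii) is immediate from $SO(3)$-equivariance: radial Fourier multipliers commute with the rotation action on $L^2(\RR^3)$ and therefore preserve each isotypic component $L^2(r^2\,dr)\otimes Y_\ell$, while $H_N$ acts on that component by the scalar $\rho(\ell/N)$. Two operators commute when one acts as a scalar on each subspace preserved by the other.

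For part~(iii), I would decompose
\[
\Pi_\pm \;=\; \tfrac12 I \pm \tfrac{M}{2\lr{-i\nabla}_M}\gamma^0 \pm \sum_{j=1}^{3} \tfrac{|-i\nabla|}{2\lr{-i\nabla}_M}\,R_j\,\gamma^0\gamma^j,
\]
where $R_j$ denotes the $j$-th Riesz transform, whose symbol $\omega_j=\xi_j/|\xi|$ lies in $Y_1$. The first two summands are radial Fourier multipliers (times constant matrices); by part~(ii) they commute with $H_N$ and $H_{N'}$, producing $H_N H_{N'}$ after sandwiching, which vanishes when $N\not\sim N'$ by orthogonality of distinct $Y_\ell$. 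The radial scalar factor in the Riesz piece also commutes, reducing matters to $H_N R_j H_{N'}$. The Clebsch--Gordan decomposition $Y_1\otimes Y_\ell \cong Y_{\ell-1}\oplus Y_\ell\oplus Y_{\ell+1}$, combined with the fact that the pointwise product $\omega_j\cdot y_{\ell,n}$ has parity $(-1)^{\ell+1}$ and therefore cannot meet $Y_\ell$, forces $\omega_j\cdot Y_\ell\subset Y_{\ell-1}\oplus Y_{\ell+1}$. Hence $H_N R_j H_{N'}$ vanishes whenever the dyadic degree-supports of $H_N$ and $H_{N'}$ remain disjoint after a unit shift, which is arranged by absorbing this unit shift into the implicit constant defining $N\sim N'$.
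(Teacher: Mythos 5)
Your proposal is correct. Parts (i) and (ii) run essentially parallel to the paper: the uniform $L^p$ bound is quoted from the literature (the paper cites Strichartz's multiplier theorem), and the commutation with radial multipliers rests on the fact that such multipliers preserve each degree-$\ell$ isotypic component, on which $H_N$ acts as the scalar $\rho(\ell/N)$ (the paper phrases this via the Stein--Weiss fact that a radial multiplier sends $a(|x|)y_\ell(x/|x|)$ to $b(|x|)y_\ell(x/|x|)$, which is the same content). For the orthogonality claim, however, you take a genuinely different, frequency-side route: you split $\Pi_\pm$ into radial multipliers plus radial multipliers composed with Riesz transforms $R_j$, and use that multiplication of the symbol by $\omega_j=\xi_j/|\xi|$ shifts the spherical-harmonic degree by exactly one (Clebsch--Gordan range $\{\ell-1,\ell,\ell+1\}$ refined by parity), so $H_N R_j H_{N'}=0$ once the dyadic degree supports are separated by more than a unit. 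The paper stays on the physical side: after using (ii) to strip off radial factors it reduces to showing $H_N(\p_j f)=0$ for $f$ in the degree-$\ell'$ component, writes $\p_j$ via $\p_r$ and the rotation fields $\Omega_{jk}$, and checks $\lr{y_\ell, x_k\Omega_{kj}y_{\ell'}}_{L^2(\sph^2)}=0$ for $|\ell-\ell'|>1$ by polynomial degree counting together with an integration by parts on the sphere. Both arguments implement the same idea -- the non-radial part of $\Pi_\pm$ is of degree one in the angular variable, hence moves the harmonic degree by at most one -- but the paper's version is more elementary and self-contained (no representation theory, only degree counting), while yours is cleaner and makes the exact $\pm 1$ shift transparent; the parity refinement is not even needed, since a shift of at most one already suffices after absorbing it into the constant in $N\sim N'$, as you note. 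The only step you should make explicit is that the Fourier transform commutes with rotations (Bochner's relation), so the on-sphere product rule for $\omega_j$ indeed translates into the stated mapping property of $R_j$ between the isotypic components of $L^2(\RR^3)$ on which $H_N$ and $H_{N'}$ act.
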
	
\begin{proof}
The first claim follows from \cite{Strichartz1972}. To prove the second claim, let $T$ be a radial Fourier multiplier with $\widehat{Tf}(\xi) = \sigma(|\xi|) \widehat{f}(\xi)$. It is enough to show that, if $f(x) = a(|x|) y_\ell(\frac{x}{|x|})$ for some $y_\ell \in Y_\ell$, then $Tf = b(|x|) y_\ell(\frac{x}{|x|})$ for some $b(|x|)$ depending on $a$ and $\sigma$. But this follows directly from  \cite[page 158]{Stein1971}. To prove the final claim, suppose that $N \gg N'$ or $N \ll N'$. Our goal is to show that $H_N \Pi_\pm H_{N'} =0$. Since $H_N$ commutes with radial Fourier multipliers, it is enough to show that $H_N (\p_j f) = 0$ in the case $f(x) = a(|x|) y_{\ell'}(\frac{x}{|x|})$ with $y_{\ell'} \in Y_{\ell'}$ and $\frac{N'}{2} \les \ell' \les 2N'$. Since $\p_j = \frac{x_j}{|x|} \p_r + \sum_k \frac{x_k}{|x|^2} \Omega_{jk}$ where $\p_r = \frac{x}{|x|} \cdot \nabla$, and $\p_r \big( y_{\ell'}(\frac{x}{|x|}) \big) = 0$, we can reduce further to just showing that $H_N( x_k \Omega_{jk} y_{\ell'} ) = 0$ which corresponds to checking that 	
	\begin{equation}\label{eqn - lem prop of spherical proj - orthog est} \lr{ y_{\ell}, x_k \Omega_{kj} y_{{\ell'}} }_{L^2(\sph^2)} = 0
	\end{equation}
for every $\frac{N}{2} \les \ell \les 2 N$. Since $x_k \Omega_{kj} y_{{\ell'}}$ is a polynomial of order ${\ell'} + 1$, by the orthogonality of the polynomials $y_{\ell}$, (\ref{eqn - lem prop of spherical proj - orthog est}) clearly holds  if $\ell > \ell' + 1$. On the other hand, after an application of integration by parts, we obtain
	$$ \lr{ y_{\ell}, x_k \Omega_{kj} y_{\ell'} }_{L^2(\sph^2)} = \lr{ \Omega_{kj}(x_k y_{\ell}),  y_{\ell'} }_{L^2(\sph^2)}$$
since $\Omega_{kj}(x_k y_{\ell})$ is a polynomial of order $\ell + 1$, we see that again (\ref{eqn - lem prop of spherical proj - orthog est}) holds if $\ell' > \ell + 1$.
\end{proof}
	
An application of Lemma \ref{lem - prop of spherical proj} shows that $H_N$ commutes with the $P_\lambda$ and $C_d$ multipliers since we may write $C_d^{\pm, m} = e^{\mp i t \lr{\nabla}_m} \rho(\frac{-i\p_t}{d}) e^{\pm it \lr{\nabla}_m}$. On the other hand, it is important to note that $H_N$ \emph{does not} commute with the cube and cap localisation operators $R_\kappa$ and $P_q$.

\subsection{Norms and the energy inequality}
Fix $0<\sigma \ll 1$, $\frac{1}{2}<\frac{1}{a} <\frac{1}{2} + \frac{\sigma}{1000}$, and $b = \frac{3}{a} -1$, and define
	$$ \| u \|_{Y^{\pm, m}_{\lambda, N}} = \lambda^{\frac{1}{a} - b} \sup_{d\in 2^\ZZ} d^{b} \| C^{\pm, m}_d P_\lambda H_N u \|_{L^a_t L^2_x} $$
and
    $$ \| u \|_{F^{\pm, m}_{\lambda, N}} = \| P_\lambda H_N u \|_{V^2_{\pm,m}} + \| u \|_{Y^{\pm, m}_{\lambda, N}}.$$
We also let
    $$ \| u \|_{F^{s, \sigma}_{\pm,m}} = \Big( \sum_{\lambda \g 1} \sum_{N \g 1}  \lambda^{2 s} N^{2\sigma } \| u \|_{F^{\pm, m}_{\lambda, N}}^2 \Big)^\frac{1}{2}$$
and define the Banach space
    $$ F^{s, \sigma}_{\pm, m} = \big\{ u \in C( \RR,
    \lr{\Omega}^{-\sigma} H^s) \, \big| \, \| u \|_{F^{s,
        \sigma}_{\pm, m}}<\infty \big\}. $$
For the remainder of this section, let $\sigma_M=\sigma$ if $M\g\tfrac12$ and $\sigma_M=\tfrac{7}{30}+\sigma$ if $0<M<\tfrac{1}{2}$. Thus $\sigma_M$ corresponds to amount of angular regularity in the statement of Theorem \ref{thm:dkg}. We will construct a solution $(\psi_\pm, \phi_+) \in F^{0, \sigma_M}_{\pm, M} \times F^{\frac{1}{2}, \sigma_M}_{\pm, 1}$ to the reduced system (\ref{eqn - DKG system reduced}). Thus we work in a frequency localised $V^2$ space, with the additional component $Y^{\pm, m}_{\lambda, N}$ needed to control the solution in the high modulation region, for the latter cp.\ \cite[Section 4]{Bejenaru2015b}.

There are three basic properties of $V^2_{\pm,m}$ which we exploit in the following. The first is a simple bound in the high modulation region, see \cite[Corollary 2.18]{Hadac2009} for a proof.

\begin{lemma}\label{lem - norm controls Xsb}
Let $m \g 0$ and $2\les q \les \infty$. For any $d \in 2^\ZZ$  we have
        $$ \| C_d^{\pm, m}  u \|_{L^q_t L^2_x} \lesa d^{-\frac{1}{q}} \| u \|_{V^2_{\pm,m}}.$$
\end{lemma}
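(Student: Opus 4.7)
My plan is to reduce the estimate to a purely temporal statement about the $V^2$ space by conjugating out the free flow, handle the two endpoints $q=2$ and $q=\infty$ directly, and then interpolate.

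\textbf{Step 1 (Reduction).} Using the representation $C_d^{\pm,m} = e^{\mp it\lr{\nabla}_m}\rho(\tfrac{-i\p_t}{d})e^{\pm it\lr{\nabla}_m}$ noted in the paper, I set $v = e^{\pm it\lr{\nabla}_m}u$, so that $\|v\|_{V^2}=\|u\|_{V^2_{\pm,m}}$ by definition, and, since $e^{\mp it\lr{\nabla}_m}$ is unitary on $L^2_x$ for each fixed $t$,
\[
\|C_d^{\pm,m}u\|_{L^q_tL^2_x}=\|\rho(\tfrac{-i\p_t}{d})v\|_{L^q_tL^2_x}.
\]
Hence it suffices to prove $\|\rho(\tfrac{-i\p_t}{d})v\|_{L^q_tL^2_x}\lesa d^{-1/q}\|v\|_{V^2}$ for all $v\in V^2$.

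\textbf{Step 2 ($q=\infty$ endpoint).} The multiplier $\rho(\tfrac{-i\p_t}{d})$ is simply convolution in time with the kernel $K_d(s)=d\,\check\rho(ds)$, which has $\|K_d\|_{L^1_s}=\|\check\rho\|_{L^1}$ uniformly in $d$. Young's inequality applied slicewise in $x$, together with the trivial embedding $V^2\hookrightarrow L^\infty_tL^2_x$, yields
\[
\|\rho(\tfrac{-i\p_t}{d})v\|_{L^\infty_tL^2_x}\lesa \|v\|_{L^\infty_tL^2_x}\lesa \|v\|_{V^2}.
\]

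\textbf{Step 3 ($q=2$ endpoint).} This is the heart of the matter; it is exactly the content of \cite[Corollary 2.18]{Hadac2009}. The key point is that $\rho$ is supported away from the origin, so $K_d$ has mean zero; hence for any $v\in V^2$ we can rewrite
\[
\rho(\tfrac{-i\p_t}{d})v(t)=\int_\RR K_d(t-s)\bigl[v(s)-v(t)\bigr]\,ds.
\]
By the embedding $V^2\hookrightarrow U^p$ for every $p>2$ (see \cite[Lemma 6.4]{Koch2005} or \cite[Prop.\ 2.5]{Hadac2009}), we reduce to a $U^p$-atom $v=\sum_J\ind_J(t)f_J$ with $\|f_J\|_{\ell^p_JL^2_x}\les 1$. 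Since the differences $v(s)-v(t)$ telescope into $\ell^2$-sums of $\|f_J\|_{L^2_x}$, and $K_d$ is localized at the temporal scale $d^{-1}$, a direct computation (or Plancherel in $t$, exploiting that $|\rho(\tau/d)|^2d\tau$ is a finite measure of mass $\approx d$ supported near $|\tau|\approx d$) produces the gain $d^{-1/2}$. This gives
\[
\|\rho(\tfrac{-i\p_t}{d})v\|_{L^2_tL^2_x}\lesa d^{-1/2}\|v\|_{V^2}.
\]

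\textbf{Step 4 (Interpolation).} Since $\rho(\tfrac{-i\p_t}{d})$ maps $V^2\to L^\infty_tL^2_x$ with constant $\lesa 1$ and maps $V^2\to L^2_tL^2_x$ with constant $\lesa d^{-1/2}$, Riesz--Thorin interpolation in the time variable yields the claimed bound for every $2\les q\les\infty$.

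The only step requiring real work is Step 3, and the delicate feature there is that the $V^2$-norm does not in general control any $L^2_tL^2_x$-norm of $v$ itself; the zero-mean condition on $K_d$, coming from $\mathrm{supp}\,\rho\subset\{\tfrac12<|\tau|<2\}$, is precisely what allows one to transfer the modulation-scale separation $d^{-1/2}$ to the quadratic-variation bound. Since the paper cites \cite[Cor.\ 2.18]{Hadac2009} for exactly this statement, I would simply invoke that reference after performing the reduction in Step 1.
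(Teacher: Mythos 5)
Your argument is correct and in essence coincides with the paper's, whose entire proof is the citation of \cite[Corollary 2.18]{Hadac2009}: after the same conjugation $C_d^{\pm,m}=e^{\mp it\lr{\nabla}_m}\rho(\tfrac{-i\p_t}{d})e^{\pm it\lr{\nabla}_m}$ you reduce the only nontrivial point, the $q=2$ endpoint, to exactly that reference. One cosmetic remark: Riesz--Thorin does not literally apply since the domain $V^2$ is a fixed Banach space rather than an $L^p$ scale, but the log-convexity bound $\|f\|_{L^q_tL^2_x}\les\|f\|_{L^2_tL^2_x}^{2/q}\|f\|_{L^\infty_tL^2_x}^{1-2/q}$ applied to $f=\rho(\tfrac{-i\p_t}{d})v$ gives the same conclusion.
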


The second key property is a standard energy inequality, which reduces the problem of estimating a Duhamel integral in $F^{\pm, M}_{\lambda, N}$, to controlling a trilinear integral.

\begin{lemma}\label{lem - energy ineq for F norms}
  Let  $F \in L^\infty_t L^2_x$, and suppose that
      $$ \sup_{ \| P_\lambda H_N v \|_{V^2_{\pm,m}} \lesa 1} \Big| \int_\RR \lr{ P_\lambda H_N v(t), F(t) }_{L^2_x} dt\Big| < \infty.$$
  If $u \in C(\RR, L^2_x)$ satisfies  $- i \p_t u  \pm \lr{\nabla}_m u = F$,  then $P_\lambda H_N u \in V^2_{\pm,m }$ and we have the bound
     \begin{equation}\label{eqn - lem energy ineq for V - main bound}\| P_\lambda H_N u \|_{V^2_{\pm ,m}} \lesa \| P_\lambda H_N u(0) \|_{L^2} + \sup_{ \| P_\lambda H_N v \|_{V^2_{\pm ,m}} \lesa 1} \int_\RR \lr{ P_\lambda H_N v(t), F(t) }_{L^2_x} dt .\end{equation}
\end{lemma}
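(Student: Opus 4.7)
The plan is to reduce the statement to a standard duality estimate for the pair $(U^2, V^2)$ and then un-transfer back to the $V^2_{\pm,m}$ setting, using that $P_\lambda H_N$ commutes with the free flow $e^{\pm it\lr{\nabla}_m}$.

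\textbf{Transfer to the free setting.} Set $\tilde u(t) = e^{\pm it\lr{\nabla}_m} u(t)$; by definition $\|P_\lambda H_N u\|_{V^2_{\pm,m}} = \|P_\lambda H_N \tilde u\|_{V^2}$, where the commutation used is that $e^{\pm it\lr{\nabla}_m}$ is a radial Fourier multiplier, $P_\lambda$ is also radial, and Lemma \ref{lem - prop of spherical proj} ensures that $H_N$ commutes with radial multipliers. A direct computation from $-i\partial_t u \pm \lr{\nabla}_m u = F$ gives $-i\partial_t \tilde u = e^{\pm it\lr{\nabla}_m} F =: \tilde F$, so the Duhamel formula yields
\[
P_\lambda H_N \tilde u(t) = P_\lambda H_N \tilde u(0) + i\int_0^t P_\lambda H_N \tilde F(s)\,ds.
\]
The constant-in-time term contributes $\|P_\lambda H_N u(0)\|_{L^2}$ to the $V^2$ norm, which is the first term on the right of \eqref{eqn - lem energy ineq for V - main bound}, so it remains to control the Duhamel part.

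\textbf{Duality step.} The heart of the argument is the standard duality pairing between $U^2$ and $V^2$ developed in the Koch-Tataru and Hadac-Herr-Koch framework (see \cite{Koch2005, Hadac2009}): for any $g \in L^\infty_t L^2_x$ with $\int_0^t g(s)\,ds \in L^\infty_t L^2_x$ one has
\[
\Big\|\int_0^{\cdot} g(s)\,ds\Big\|_{V^2} \lesa \sup_{\|\tilde v\|_{V^2}\les 1} \Big|\int_\RR \lr{g(s), \tilde v(s)}_{L^2_x}\,ds\Big|.
\]
Applied with $g = P_\lambda H_N \tilde F$, this reduces matters to bounding the supremum of $|\int \lr{P_\lambda H_N \tilde F, \tilde v}\, ds|$ over $\tilde v$ with $\|\tilde v\|_{V^2} \les 1$. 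The hypothesis on $F$ is essentially this, modulo transferring back to the $\pm,m$ setting.

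\textbf{Untransferring.} Given a test $\tilde v$ with $\|\tilde v\|_{V^2} \les 1$, set $v = e^{\mp it\lr{\nabla}_m}\tilde v$ so that $\|v\|_{V^2_{\pm,m}} = \|\tilde v\|_{V^2} \les 1$. Using the unitarity of $e^{\pm it\lr{\nabla}_m}$ on $L^2_x$ and the self-adjointness of $P_\lambda H_N$, we compute pointwise in $t$
\[
\lr{P_\lambda H_N \tilde F(s), \tilde v(s)}_{L^2_x}
= \lr{e^{\pm is\lr{\nabla}_m}P_\lambda H_N F(s), \tilde v(s)}_{L^2_x}
= \lr{P_\lambda H_N v(s), F(s)}_{L^2_x},
\]
which brings us exactly to the expression on the right of \eqref{eqn - lem energy ineq for V - main bound}. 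Combining the previous two steps yields the desired bound.

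The only nontrivial ingredient is the duality inequality in the second paragraph; the rest is routine bookkeeping of the transfer operators. Since this duality statement is by now a standard tool in the adapted function space literature, the main care needed is just to verify the commutation properties of $P_\lambda$ and $H_N$ with the flow, and to check that the continuity assumption $u \in C(\RR, L^2_x)$ is sufficient to justify applying the Duhamel representation and taking the supremum over $V^2$ test functions.
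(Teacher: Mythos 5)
Your argument is correct, and it is essentially the first of the two routes the paper itself indicates: the opening sentence of the paper's proof points to \cite{Koch2015} and \cite[Proposition 2.10]{Hadac2009} for exactly the $U^2$--$V^2$ duality you invoke, after the same reduction (commuting $P_\lambda H_N$ with the flow, splitting off the data term). What the paper then actually writes out is the second, self-contained route: assuming $u(0)=0$, for an arbitrary partition $(t_k)$ it builds the explicit test function $v(s)=A^{-1}\sum_{|k|<K}\ind_{[t_{k-1},t_k)}(s)\big(e^{\mp i(s-t_k)\lr{\nabla}_m}u(t_k)-e^{\mp i(s-t_{k-1})\lr{\nabla}_m}u(t_{k-1})\big)$, normalised by the square function $A$ of the increments, and checks directly that the $\ell^2$ sum of increments of $e^{\pm it\lr{\nabla}_m}P_\lambda H_N u$ along the partition equals $\int_\RR\lr{P_\lambda H_N v(s),F(s)}_{L^2_x}ds$ with $\|P_\lambda H_N v\|_{V^2_{\pm,m}}\lesa 1$; taking the supremum over partitions then gives the bound. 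That construction avoids the abstract duality machinery entirely and makes transparent the remark following the lemma that the supremum may even be restricted to the smaller $U^2_{\pm,m}$ ball, since the constructed $v$ is essentially a $U^2_{\pm,m}$ atom. Your route buys brevity by outsourcing the key inequality to the standard duality theory; if you take it, you should attend to the technical hypotheses of \cite[Proposition 2.10]{Hadac2009} (the normalisation of $U^2$ elements at $t=-\infty$ versus your Duhamel integral based at $t=0$, the restriction of the supremum there to a dense class of test functions, and the fact that no a priori bound $u\in L^\infty_tL^2_x$ is assumed), and recover the $L^\infty_tL^2_x$ part of the norm from $\|P_\lambda H_N u(t)\|_{L^2_x}\les\|P_\lambda H_N u(0)\|_{L^2_x}+|e^{\pm it\lr{\nabla}_m}P_\lambda H_N u|_{V^2}$ --- all routine points, and ones the paper's direct argument sidesteps automatically.
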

\begin{proof}
See \cite{Koch2015} or \cite[Proposition 2.10]{Hadac2009} for details on the duality. It is also possible to prove this directly as follows. Clearly it is enough to consider the case $u(0) = 0$, thus $u(t) = \int_0^t e^{ \mp i (t-s) \lr{\nabla}_m} F(s) ds$. Let $K>0$ and $(t_k) \in \mc{Z}$. A computation gives the identity
    $$ \Big( \sum_{|k|<K} \| e^{ \pm i t_k \lr{\nabla}_m} P_\lambda H_N u(t_k) - e^{ \pm i t_{k-1} \lr{\nabla}_m} P_\lambda H_N u(t_{k-1}) \|_{L^2_x}^2 \Big)^\frac{1}{2} = \int_\RR \lr{ P_\lambda H_N v(s), F(s) }_{L^2_x} ds $$
with
    $$ v(s) = A^{-1} \sum_{|k|<K} \ind_{[t_{k-1}, t_k)}(s) \Big( e^{ \mp i (s-t_k) \lr{\nabla}_m} u(t_k) - e^{\mp i (s-t_{k-1}) \lr{\nabla}_m} u(t_{k-1})\Big)$$
and
    $$ A = \Big( \sum_{|k|<K} \| e^{ \pm i t_k \lr{\nabla}_m} P_\lambda H_N u(t_k) - e^{ \pm i t_{k-1} \lr{\nabla}_m} P_\lambda H_N u(t_{k-1}) \|_{L^2_x}^2 \Big)^\frac{1}{2}.$$
 It is easy to check that $\| P_\lambda H_N v\|_{V^2_{\pm,m}} \lesa 1$. Thus, by taking the sup over $\|P_\lambda H_N v \|_{V^2_{\pm,m}}\lesa 1$, and then letting $K \rightarrow \infty$ we deduce the bound (\ref{eqn - lem energy ineq for V - main bound}). Since $u$ is also continuous, we obtain $u \in V^2_{\pm,m}$ as required.
\end{proof}

Note that the norm on $v$ can in fact be taken to be the stronger $U^2_{\pm,m}$ norm, but we do not require this improvement here.

The final result we require on the $V^2_{\pm,m}$ spaces, concerns the question of scattering.

\begin{lemma}\label{lem - scattering in V2}
Let $u \in V^2_{\pm,m}$. Then there exists $f \in L^2_x$ such that $\| u(t) - e^{ \mp i t\lr{\nabla}} f \|_{L^2_x} \rightarrow 0$ as $t \rightarrow \infty$.
\end{lemma}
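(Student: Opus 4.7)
The plan is to reduce the claim to the basic fact that a function of bounded $2$-variation valued in $L^2_x$ has a limit in $L^2_x$ at $t=+\infty$. Define the profile
\[
v(t) = e^{\pm i t \lr{\nabla}_m} u(t),
\]
so that by the very definition of $V^2_{\pm,m}$ we have $v \in V^2$; in particular $\|v\|_{L^\infty_t L^2_x} + |v|_{V^2} < \infty$ and $v$ is right-continuous. Observe that once we produce $f \in L^2_x$ with $v(t) \to f$ in $L^2_x$ as $t \to \infty$, the identity
\[
\|u(t) - e^{\mp i t \lr{\nabla}_m} f\|_{L^2_x} = \|v(t) - f\|_{L^2_x}
\]
(using that $e^{\pm i t \lr{\nabla}_m}$ is an isometry of $L^2_x$) immediately yields the desired scattering statement.

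Thus the only thing to verify is that $v(t)$ is Cauchy in $L^2_x$ as $t \to \infty$. Suppose, for contradiction, that this fails: there exist $\varepsilon>0$ and times going to infinity where $v$ jumps by at least $\varepsilon$. Starting from any $T_1$, the failure of the Cauchy condition produces $t_1 < s_1$ with $t_1 > T_1$ and $\|v(s_1) - v(t_1)\|_{L^2_x} > \varepsilon$. Setting $T_2 > s_1$ and iterating yields an increasing sequence
\[
t_1 < s_1 < t_2 < s_2 < t_3 < s_3 < \cdots \longrightarrow \infty
\]
with $\|v(s_k) - v(t_k)\|_{L^2_x} > \varepsilon$ for every $k$. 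Viewing $(t_1, s_1, t_2, s_2, \ldots)$ as an increasing sequence in $\mathcal{Z}$, the definition of the $V^2$ semi-norm gives
\[
\sum_{k=1}^\infty \|v(s_k) - v(t_k)\|_{L^2_x}^2 \;\leq\; |v|_{V^2}^2 < \infty,
\]
which contradicts $\|v(s_k) - v(t_k)\|_{L^2_x} > \varepsilon$ for all $k$. Hence $v(t)$ is Cauchy, and we may set $f := \lim_{t\to\infty} v(t) \in L^2_x$.

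There is essentially no hard step here: the only structural input is the square-summability of any collection of disjoint increments of a $V^2$ function, and the argument works verbatim at $t\to -\infty$ as well (so that one also obtains past scattering states). The proof would not simplify for $U^2_{\pm,m}$, and indeed the same reasoning shows that any right-continuous $L^2_x$-valued function of bounded $p$-variation, for any $1 \leq p < \infty$, admits one-sided limits at $\pm\infty$.
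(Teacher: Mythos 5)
Your proof is correct: it is the standard argument that a right-continuous $L^2_x$-valued function of bounded $2$-variation is Cauchy at $t\to+\infty$ (any infinite family of disjoint $\varepsilon$-increments would make the $V^2$ sum diverge), combined with the unitarity of $e^{\pm it\lr{\nabla}_m}$, and this is exactly the well-known fact the paper invokes, since the lemma is stated there without proof. The only cosmetic point is that $\mc{Z}$ consists of bi-infinite sequences, so one should either truncate to finitely many pairs $t_1<s_1<\dots<t_K<s_K$ and extend to an element of $\mc{Z}$, obtaining $K\varepsilon^2\les |v|_{V^2}^2$ for every $K$, or note that dropping the cross terms only decreases the sum; either way the contradiction stands.
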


Clearly, this result can be extended to elements of the space $F^{s, \sigma_M}_{\pm, m}$. In other words, if we construct a solution in $F^{s, \sigma_M}_{\pm, m}$, then we immediately deduce the solution must scatter to a linear solution as $t \rightarrow \pm \infty$.

\subsection{Proof of Theorem \ref{thm:dkg}}\label{subsect:proof-dkg}
We now come to the proof of Theorem \ref{thm:dkg}. In light of Lemma \ref{lem - scattering in V2}, it is enough to construct a solution $(\psi_\pm, \phi_+) \in F^{0, \sigma_M}_{\pm, M} \times F^{\frac{1}{2}, \sigma_M}_{+, 1}$ to the reduced system (\ref{eqn - DKG system reduced}). Note that we may always assume that $\psi_\pm = \Pi_\pm \psi_\pm$, provided that this is satisfied at $t=0$. Define the Duhamel integral
    $$\mc{I}^{\pm}_m[F] = \int_0^t e^{ \mp i (t-s) \lr{\nabla}_m} F(s) ds .$$
Note that $\mc{I}^\pm_m[F]$ solves the equation
    $$ (- i \p_t \pm \lr{\nabla}_m ) \mc{I}^\pm_m[F] = F $$
with vanishing data at $t=0$. Moreover, we can check that for every $1<p < \infty$ we have
	\begin{equation}\label{eqn - Cd mult and duhamel int}
		\| C^{\pm, m}_d \mc{I}^\pm_m[F] \|_{L^p_t L^2_x} \lesa d^{-1} \| C^{\pm, m}_d F \|_{L^p_t L^2_x}.
	\end{equation}
If we had the bounds
    \begin{equation}\label{eqn - bilinear est without freq localisation}
        \begin{split}
            \big\| \Pi_{\pm_1} \mc{I}^{\pm_1}_{M} \big[ \phi \gamma^0 \Pi_{\pm_2} \varphi \big]\big\|_{F^{0, \sigma_M}_{\pm_1, M}} &\lesa \| \phi\|_{F^{\frac{1}{2}, \sigma_M}_{+, 1}} \| \varphi \|_{F^{0, \sigma_M}_{M, \pm_2}} \\
            \big\| \lr{\nabla}^{-1} \mc{I}^{+}_{1} \big[ (\Pi_{\pm_1} \psi)^\dagger \gamma^0 \Pi_{\pm_2} \varphi\big] \big\|_{F^{\frac{1}{2}, \sigma_M}_{+, 1}} &\lesa  \| \psi \|_{F^{0, \sigma_M}_{M, \pm_1}} \| \varphi \|_{F^{0, \sigma_M}_{M, \pm_2}}
        \end{split}
    \end{equation}
then a standard fixed point argument in $F^{0, \sigma_M}_{\pm, M} \times F^{\frac{1}{2}, \sigma_M}_{+, 1}$ would give the required solution to (\ref{eqn - DKG system reduced}), provided of course that the data $(f_\pm, g_+)$ satisfied
    $$ \| \lr{\Omega}^{\sigma_M} f_\pm \|_{L^2} + \| \lr{\Omega}^{\sigma_M} g_+ \|_{H^\frac{1}{2}} \ll 1.$$
Let
    $$ \phi_{\mu, N} = P_\mu H_N \phi, \qquad \psi_{\lambda_1, N_1} = P_{\lambda_1} H_{N_1}, \qquad \varphi_{\lambda_2, N_2} = P_{\lambda_2} H_{N_2} \varphi.$$
We have the following frequency localised estimates.

\begin{theorem}\label{thm - bilinear F freq loc endpoint}
Fix $M>0$. Then there exists $\epsilon>0$ such that
  	\begin{equation}\label{eqn - thm bilinear F freq loc endpoint - main ineq I} \begin{split}
  		\big\| \Pi_{\pm_1} \mc{I}^{\pm_1}_M\big[ \phi_{\mu, N} \gamma^0 \Pi_{\pm_2} &\varphi_{\lambda_2, N_2}\big] \big\|_{F^{\pm_1, M}_{\lambda_1, N_1}}\\
  		&\lesa \mu^\frac{1}{2} (\min\{ N, N_2\})^{\sigma_M}   \Big( \frac{\min\{ \mu, \lambda_1, \lambda_2\}}{\max\{ \mu, \lambda_1, \lambda_2\}}\Big)^\epsilon  \|  \phi\|_{F^{+, 1}_{\mu, N}} \| \varphi \|_{F^{\pm_2, M}_{\lambda_2, N_2}}
  \end{split}
  	\end{equation}
and
  \begin{equation}\label{eqn - thm bilinear F freq loc endpoint - main ineq II} \begin{split} \big\| \mc{I}^{+}_1\big[  (\Pi_{\pm_1} \psi_{\lambda_1, N_1})^\dagger \gamma^0& \Pi_{\pm_2} \varphi_{\lambda_2, N_2}\big] \big\|_{F^{+,1}_{\mu, N}} \\
    &\lesa \mu^{\frac{1}{2}} ( \min\{ N_1, N_2\})^{\sigma_M} \Big( \frac{\min\{ \mu, \lambda_1, \lambda_2\}}{\max\{ \mu, \lambda_1, \lambda_2\}}\Big)^\epsilon \| \psi \|_{F^{\pm_1, M}_{\lambda_1, N_1}} \| \varphi \|_{F^{\pm_2, M}_{\lambda_2, N_2}}.
  \end{split}
  \end{equation}
\end{theorem}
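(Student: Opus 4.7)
The two estimates are structurally analogous, so I will sketch the argument for \eqref{eqn - thm bilinear F freq loc endpoint - main ineq I}; the Klein-Gordon estimate \eqref{eqn - thm bilinear F freq loc endpoint - main ineq II} is handled by the same scheme after accounting for the $\lr{\nabla}_1^{-1}$ weight and reorganizing which factor plays which role. The norm $F^{\pm_1,M}_{\lambda_1,N_1}$ has two pieces, so there are two reductions to perform. For the $V^2_{\pm_1,M}$ component I invoke the energy inequality, Lemma \ref{lem - energy ineq for F norms}, to replace the estimate on $\Pi_{\pm_1}\mc{I}_M^{\pm_1}[\phi_{\mu,N}\gamma^0\Pi_{\pm_2}\varphi_{\lambda_2,N_2}]$ by the trilinear bound
\[
\Big|\int_\RR \lr{P_{\lambda_1}H_{N_1}\Pi_{\pm_1}v,\,\phi_{\mu,N}\gamma^0\Pi_{\pm_2}\varphi_{\lambda_2,N_2}}_{L^2_x}\,dt\Big| \lesa \text{RHS}\cdot \|P_{\lambda_1}H_{N_1}v\|_{V^2_{\pm_1,M}},
\]
which by symmetry (moving one factor to the left) is itself a product estimate for three $V^2_{\pm,m}$ waves. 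The $Y^{\pm_1,M}_{\lambda_1,N_1}$ piece is handled using \eqref{eqn - Cd mult and duhamel int} together with Lemma \ref{lem - norm controls Xsb}, placing the output in an $L^a_t L^2_x$ norm at each modulation scale $d$, so the task again reduces to a bilinear $L^{a'}_t L^2_x$ bound on the nonlinearity.

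The core nonlinear estimate is then the $V^2\times V^2\to L^p_{t,x}$ bound of Theorem \ref{thm - bilinear Lp estimate} in its rescaled small-scale incarnation, Corollary \ref{cor - small scale bilinear estimate}. I will split the sum into three output regimes based on the relative sizes of $\mu,\lambda_1,\lambda_2$: (a) \emph{high modulation on the output or an input}, where Lemma \ref{lem - norm controls Xsb} gives an $X^{s,b}$-type gain that is pure algebra, and which supplies a slack $\epsilon$-factor; (b) \emph{low-high/high-low frequency interactions}, where one frequency is significantly smaller than the other two, and transversality is automatic --- here Corollary \ref{cor - small scale bilinear estimate}(i) with $\alpha\approx 1$ suffices and gives the $(\mu/\lambda)^\epsilon$ summability gain directly; (c) \emph{high-high-to-low interactions}, which are the genuinely resonant interactions governed by Lemma \ref{lem - modulation bound}. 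For each case I perform a dyadic decomposition of the modulation variables and use \eqref{eqn:dispose}--\eqref{eqn:dispose2} to freely insert $C_d^{\pm,m}$ multipliers, localize with angular caps $R_\kappa$, and redistribute derivatives using Lemma \ref{lem - prop of spherical proj} so that $H_N$ commutes past all radial multipliers.

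The main obstacle, and the reason for the bifurcation $M\g \tfrac12$ vs.\ $M<\tfrac12$ in the definition of $\sigma_M$, is case (c) in the resonant regime $m=1>2M$. By the modulation analysis of Lemma \ref{lem - modulation bound}, the concurrent resonance set is a codimension-one surface cutting through the support of the interaction, and modulation cannot be used to push transversality. Instead I decompose each factor into angular caps $R_\kappa$ with $\kappa\in\mc{C}_\alpha$ and into radial annuli of thickness $\alpha\lambda^2$, chosen so that the mass-induced radial separation $|m_1 c_1 - m_2 c_2|\approx\alpha\lambda^2$ required by Corollary \ref{cor - small scale bilinear estimate}(ii) holds, and then apply that corollary with $p$ slightly larger than $\tfrac{n+3}{n+1}=\tfrac32$. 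The output carries a factor $\alpha^{n-\frac{n+2}{p}}\lambda^{n+1-\frac{n+2}{p}}$; summing over caps costs $\alpha^{-\sigma_M}$ per factor that is smoothed by $\lr{\Omega}^{\sigma_M}$, and Schur/Cauchy-Schwarz on the remaining cap sum forces a requirement $2\sigma_M+\bigl(n-\tfrac{n+2}{p}\bigr)>0$ for small $\alpha$, i.e.\ $\sigma_M>\tfrac12(\tfrac{n+2}{p}-n)$; balancing $p$ against the residual high-modulation error pushes the threshold to $\tfrac{7}{30}$ when $n=3$. In the non-resonant regime $M\g\tfrac12$, Lemma \ref{lem - modulation bound} supplies a uniform lower bound on the sum of modulations, so case (c) is upgraded into case (a) at no angular cost, and any $\sigma_M>0$ suffices.

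Once each case delivers a bound of the form $\mu^{1/2}(\min N)^{\sigma_M}(\min_{\text{freq}}/\max_{\text{freq}})^\epsilon$ times the product of $F^{\pm,m}_{\lambda,N}$ norms, the slack $\epsilon>0$ is independent of $(\mu,\lambda_1,\lambda_2,N,N_1,N_2)$ and provides the summability required for the full theorem on non-localized norms, via orthogonality of the $P_\lambda H_N$ and Cauchy-Schwarz in the remaining sums. The second estimate \eqref{eqn - thm bilinear F freq loc endpoint - main ineq II} follows from the same analysis, noting that both factors are now spinors and it is the \emph{output} KG weight $\mu^{1/2}$ from $\lr{\nabla}_1^{-1}$ that appears on the RHS.
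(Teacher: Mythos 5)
Your high-level skeleton (energy inequality for the $V^2$ part, modulation weights for the $Y$ part, case analysis in frequency/modulation, bilinear restriction in the resonant high--high case) matches the paper in outline, but the plan has genuine gaps in exactly the places where the work lies. First, in your case (b) the claim that ``transversality is automatic'' and that Corollary \ref{cor - small scale bilinear estimate}(i) with $\alpha\approx 1$ gives the $(\mu/\lambda)^\epsilon$ gain is not correct: when $\mu\ll\lambda_1\approx\lambda_2$ the two spinor frequencies are nearly equal (hence nearly parallel, same sign), and when $\mu\gg\min\{\lambda_1,\lambda_2\}$ the wave and the high-frequency spinor can be nearly parallel with group-velocity difference only $O(M^2/\lambda^2)$; in neither situation does the bilinear restriction estimate apply with a useful constant. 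The paper closes these cases by exploiting the \emph{null structure} of $\Pi_{\pm_1}\gamma^0\Pi_{\pm_2}$ (the bound \eqref{eqn:nullsymbolbound} together with Lemma \ref{lem - null form bound}), which supplies a factor comparable to the angle $\alpha\approx(d\mu)^{1/2}/\lambda$ that cancels the $d^{-1/2}$ from placing the high-modulation factor in $L^2_{t,x}$; without it the sum over modulations $\mu^{-1}\lesa d\lesa\mu$ diverges and one loses a full power of $\mu$, so the critical $\mu^{1/2}$ scaling with only $N^{\sigma_M}$ angular loss is unreachable. Your proposal never invokes the null form, nor the angular concentration bound (Lemma \ref{lem - angular concentration}) and the $L^4_{t,x}$, $L^q_tL^4_x$ Strichartz estimates that the paper combines with it. Relatedly, your claim that for $M\g\tfrac12$ Lemma \ref{lem - modulation bound} gives a uniform lower bound on the modulations fails at the borderline $M=\tfrac12$ (the term $\tfrac{4M^2-1}{2}$ vanishes); that weakly resonant case is handled in the paper by a separate null-structure argument with cube decompositions at scale $\mu^2\beta$ and part (ii) of Lemma \ref{lem - null form bound}.

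Second, in the resonant case (c) your accounting does not actually produce the estimate or the threshold $\tfrac{7}{30}$. After applying Corollary \ref{cor - small scale bilinear estimate}(ii) to a transversal \emph{pair} in $L^p_{t,x}$ with $p$ slightly above $\tfrac32$, the third factor must be placed in $L^{p'}_{t,x}$ with $p'$ slightly below $3$, which is outside the admissible range of Klein--Gordon Strichartz estimates without angular regularity (these stop at $L^{10/3}_{t,x}$); this is precisely where the angular Strichartz estimate of Lemma \ref{lem - KG strichartz} enters, and its cost $N^{7(\frac1r-\frac3{10})+\epsilon}$ at $r\to 3^-$ is the source of $\sigma_M=\tfrac7{30}+\sigma$. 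Your proposed mechanism (a cap-summation cost $2\sigma_M>\tfrac{n+2}{p}-n$, i.e. $\sigma_M>\tfrac16$ as $p\to\tfrac32$, then ``balancing'') is not a derivation of $\tfrac7{30}$ and leaves the integration of the third, low-angular-frequency factor unexplained. Finally, your reduction of the $Y^{\pm_1,M}_{\lambda_1,N_1}$ component via \eqref{eqn - Cd mult and duhamel int} alone produces a weight $d^{b-1}$ that blows up as $d\to 0$; the paper instead interpolates an $L^{3/2}_tL^2_x$ bound on the Duhamel term (from Strichartz estimates, with frequency losses) against the $L^2_{t,x}$ bound inherited from the already-proved $V^2$ estimate via Lemma \ref{lem - norm controls Xsb}, choosing $\theta$ with $\frac1a=\frac{2\theta}3+\frac{1-\theta}2$ so the $d$-powers cancel exactly and the off-diagonal gain survives because $\theta\ll\sigma$; some version of this balancing is needed and is absent from your sketch.
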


\begin{remark}\label{rmk:trivial-int}
The proof of Theorem \ref{thm - bilinear F freq loc endpoint} in the resonant regime $0<M<\frac{1}{2}$ relies on the small scale $V^2$ estimates in Corollary \ref{cor - small scale bilinear estimate}. However, it is possible to prove a weaker version of Theorem \ref{thm - bilinear F freq loc endpoint}, with $\sigma_M$ replaced with some larger $\sigma$, provided only that a \emph{robust} version of the \emph{homogeneous} bilinear restriction estimate (\ref{eqn:homogeneous bilinear est}) holds. More precisely, by
following the proof of Corollary \ref{cor - small scale bilinear estimate}, and then interpolating with the K-G Strichartz estimates as in Remarks \ref{rem:inter} and \ref{rem:interpolation with K-G}, it is possible to show that \eqref{eqn:homogeneous bilinear est} implies the $V^2$ bound
	$$ \| u v \|_{L^a_t L^b_x(\RR^{1+3})} \lesa \lambda^{1 + \frac{1}{a} - \frac{1}{b}} \| u \|_{V^2_{\pm_1,m_1}} \| v \|_{V^2_{\pm_2,m_2}}$$
in the range $\frac{1}{a} + \frac{2}{b} < 2$, $\frac{1}{a} + \frac{6}{5b} < \frac{7}{5}$ where $u$ and $v$ have Fourier support in $1-$separated angular wedges of size $1 \times 1 \times \lambda$ at distance $\lambda$ from the origin. The case $a = 2-$ and $b=\frac{4}{3} +$ can be used together with the $L^{2+}_t L^{4-}_x$ angular Strichartz bound from  \cite[Theorem 1.1]{Cho2013} instead of the argument used in the high-high case in the proof of Theorem \ref{thm - trilinear freq loc integral} below. However, the estimate obtained is weaker than the one in Theorem \ref{thm - bilinear F freq loc endpoint}. Moreover, it still requires a robust version of the homogeneous bilinear estimate (\ref{eqn:homogeneous bilinear est}) for which we can track the dependence of the constant on the phases $\Phi_j$ due to the lack of homogeneity of the Klein-Gordon phase. Irrespective of fact the Theorem \ref{thm - bilinear Lp estimate} applies to $V^2$-functions, a key advantage of our formulation of Theorem \ref{thm - bilinear Lp estimate}, in comparison to \cite{Bejenaru2016,Lee2010}, is that it allows us to read off the above mentioned dependence.
\end{remark}

The standard Littlewood-Paley trichotomy implies that the lefthand sides of (\ref{eqn - thm bilinear F freq loc endpoint - main ineq I}) and (\ref{eqn - thm bilinear F freq loc endpoint - main ineq II}) are zero unless
    \begin{equation}\label{eqn - littlewood-paley cond} \max\{ \mu, \lambda_1, \lambda_2\} \approx \text{med}\{ \mu, \lambda_1, \lambda_2\} \gtrsim \min\{\mu, \lambda_1, \lambda_2\}\end{equation}
and
    $$ \max\{ N, N_1, N_2\} \approx \text{med}\{N, N_1, N_2\} \gtrsim \min\{N, N_1, N_2 \}$$
It is now easy to check that the bilinear estimates (\ref{eqn - bilinear est without freq localisation}), follow from Theorem \ref{thm - bilinear F freq loc endpoint}. Consequently, we have reduced the proof of Theorem \ref{thm:dkg}, to proving the frequency localised bilinear estimates in Theorem \ref{thm - bilinear F freq loc endpoint}. As the proof of Theorem \ref{thm - bilinear F freq loc endpoint} requires a number of preliminary results, we postpone the proof till to Subsection \ref{subsect:proof2} below.

\section{Linear and Multilinear Estimates}\label{sect:mult}

In this section our goal is give the proof of Theorem \ref{thm - bilinear F freq loc endpoint}. To this end, we first provide some linear estimates and adapt them to our functional setup, prove an auxiliary trilinear estimate in $V^2$, and eventually give the proof of the crucial Theorem \ref{thm - bilinear F freq loc endpoint} in Subsection \ref{subsect:proof2}.

\subsection{Auxiliary Estimates}

As is well-known, see e.g.\ \cite{D'Ancona2007b}, the system \ref{eqn - DKG system reduced} exhibits null structure.
To exploit the null structure of the product $\psi^\dagger \gamma^0 \psi$, we start by noting that for any $x, y \in \RR^3$, we have the identity
   \begin{align*}
      \big[\Pi_{\pm_1} f\big]^\dagger \gamma^0 \Pi_{\pm_2} g = \big[(\Pi_{\pm_1} &- \Pi_{\pm_1}(x))  f\big]^\dagger \gamma^0 \Pi_{\pm_2} g\\
        &+ \big[\Pi_{\pm_1}(x) f\big]^\dagger \gamma^0  (\Pi_{\pm_2} - \Pi_{\pm_2}(y) ) g + f^\dagger \Pi_{\pm_1}(x) \gamma^0  \Pi_{\pm_2}(y)  g
   \end{align*}
 This is then exploited by using the null form type bound
        \begin{equation}\label{eqn:nullsymbolbound}|\Pi_{\pm_1}(x) \gamma^0 \Pi_{\pm_2}(y)| \lesa \theta(\pm_1 x, \pm_2 y) + \frac{\big| \pm_1 |x| \pm_2 |y| \big|}{\lr{x} \lr{y}},\end{equation}
 which follows from (\ref{eqn:KG transverse}) by observing that
 	\begin{align*}
 	  \Pi_{\pm_1}(x) \gamma^0 \Pi_{\pm_2}(y) &= \Pi_{\pm_1}(x) \Big( \Pi_{\pm_1}(x) \gamma^0 - \gamma^0 \Pi_{\mp_2}(y) \Big) \Pi_{\pm_2}(y)  \\
 	  &= \Pi_{\pm_1}(x) \bigg( \Big(  \frac{\pm_2 \eta_j}{\lr{\eta}_M}-   \frac{\pm_1 \xi_j}{\lr{\xi}_M}\Big) \gamma^j + \Big( \frac{\pm_1 M}{\lr{\xi}_M} + \frac{\pm_2 M}{\lr{\eta}_M}\Big) I  \bigg) \Pi_{\pm_2}(y),
 	\end{align*}
 together with the following lemma, see \cite[Lemma 3.3]{Bejenaru2016} for a similar statement to Part (i).
  \begin{lemma}\label{lem - null form bound} Let $1<r<\infty$.
\begin{enumerate}
\item
If $\lambda \g 1$, $\alpha \gtrsim \lambda^{-1}$, $\kappa \in \mc{C}_\alpha$, then
    \begin{align*}\big\| \big( \Pi_{\pm_1} - \Pi_{\pm_1}(\lambda \omega(\kappa)) \big) R_\kappa P_\lambda f \big\|_{L^r_x} & \lesa \alpha \| R_\kappa P_\lambda u\|_{L^r_x}.
  \end{align*}
\item If $\lambda\g 1$, $0<\alpha\lesssim  \lambda^{-1}$, $\kappa \in \mc{C}_\alpha$, $q\in Q_{\lambda^2\alpha}$ with center $\xi_0$, then
    \begin{align*}\big\| \big( \Pi_{\pm_1} - \Pi_{\pm_1}(\xi_0) \big) R_\kappa P_q P_\lambda f \big\|_{L^r_x} & \lesa \alpha \| R_\kappa P_q P_\lambda u\|_{L^r_x}.
  \end{align*}
\end{enumerate}
\end{lemma}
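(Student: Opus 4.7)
The plan is to interpret both bounds as Fourier multiplier estimates
\[
\|m_{\xi_0}(D)g\|_{L^r_x}\lesa \alpha\|g\|_{L^r_x},\qquad m_{\xi_0}(\xi):=\Pi_{\pm_1}(\xi)-\Pi_{\pm_1}(\xi_0),
\]
applied to $g=R_\kappa P_\lambda f$ in (i) (with $\xi_0=\lambda\omega(\kappa)$) and to $g=R_\kappa P_q P_\lambda f$ in (ii) (with $\xi_0$ the center of $q$). In both cases the Fourier support of $g$ lies in a rectangular parallelepiped $S$ of dimensions $L_1\times L_\perp\times L_\perp$ adapted to the direction $\omega(\kappa)$: for (i), $L_1=\lambda$ and $L_\perp=\lambda\alpha$; for (ii), $L_1=\lambda^2\alpha$ and $L_\perp=\lambda\alpha$, where the hypothesis $\alpha\lesa\lambda^{-1}$ is used to see that the cube restriction is the binding one parallel to $\omega(\kappa)$, while the angular restriction is binding in the transverse directions.

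The key step will be to show that, extended smoothly beyond $S$, the symbol $m_{\xi_0}$ satisfies Mikhlin-type estimates of size $\alpha$:
\[
|\partial^\beta m_{\xi_0}(\xi)|\lesa \alpha\,L_1^{-\beta_1}L_\perp^{-\beta_2-\beta_3},\qquad \xi\in S,\qquad |\beta|\leq N,
\]
with $N$ a fixed sufficiently large integer (and implicit constants depending on $M$). Starting from $\Pi_\pm(\xi)=\tfrac12 I\pm \tfrac12 \lr{\xi}_M^{-1}(\xi_j\gamma^0\gamma^j+M\gamma^0)$, the pointwise estimate $|m_{\xi_0}(\xi)|\lesa\alpha$ follows by splitting $\xi/\lr{\xi}_M - \xi_0/\lr{\xi_0}_M$ into an angular part (controlled by $|\xi/|\xi|-\omega(\kappa)|\lesa\alpha$) and a radial part (controlled via $h'(r)=M^2/\lr{r}_M^3$ where $h(r)=r/\lr{r}_M$, using $\alpha\gtrsim\lambda^{-1}$ in (i), and $|\xi-\xi_0|\lesa \lambda^2\alpha$ together with $\alpha\lesa\lambda^{-1}$ in (ii)). The derivative bounds follow from a direct computation based on the identity $\partial_k(\xi_j/\lr{\xi}_M)=\delta_{jk}/\lr{\xi}_M-\xi_j\xi_k/\lr{\xi}_M^3$ and its iterates; crucially, the generic estimate $|\partial_{\xi_1}\Pi_{\pm_1}|\lesa\lr{\xi}_M^{-1}$ improves on $S$ to $|\partial_{\xi_1}\Pi_{\pm_1}(\xi)|\lesa (|\xi_\perp|^2+M^2)/\lr{\xi}_M^3\lesa \alpha/L_1$, thanks to the angular localisation.

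Given these symbol estimates, a standard convolution-kernel argument concludes. After multiplying $m_{\xi_0}$ by a smooth bump adapted to $S$ (which acts as the identity on $\widehat g$) and rescaling the parallelepiped to the unit cube, one obtains a smooth symbol of $L^\infty$-size $\lesa\alpha$ with all derivatives bounded by $\alpha$ up to order $N$; integration by parts $N$ times shows that its inverse Fourier transform has $L^1_x$-norm $\lesa\alpha$, and Young's inequality gives the multiplier bound for every $1<r<\infty$. The main technical obstacle is the derivative calculation: to extract the $\alpha$-gain rather than just a uniform $O(1)$ bound one must exploit both the mild radial variation of $\Pi_{\pm_1}$ (a zeroth-order symbol) and the angular concentration of the support $S$. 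Implicit constants depend on $M$, but the only delicate regime is $\lambda\lesa M^2$ where only finitely many dyadic scales are affected and cause no trouble.
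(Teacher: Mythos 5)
Your proposal is correct and follows essentially the same route as the paper: the paper proves Part (ii) precisely by freezing the symbol $\Pi_{\pm_1}(\xi)-\Pi_{\pm_1}(\xi_0)$ on the anisotropic parallelepiped of size $\alpha\lambda^2\times\alpha\lambda\times\alpha\lambda$, establishing the pointwise bound $\lesa\alpha$ and the anisotropic derivative bounds via the angular localisation, and concluding by integration by parts to get an $L^1$ kernel bound of size $\alpha$ (Part (i) is deferred to the cited reference, where the analogous computation with box $\lambda\times\alpha\lambda\times\alpha\lambda$ is carried out, exactly as you do).
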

	
\begin{proof}
Concerning Part (i), see \cite[Proof of Lemma 3.3]{Bejenaru2016}. Concerning Part (ii), we may assume $|\xi_0|\approx \lambda$ and, due to boundedness, we may replace the symbol of $R_\kappa P_q P_\lambda$ by a smooth cutoff $\chi_E$ to the parallelepiped $E$ with center $\xi_0$ of side lenghts $\alpha\mu^2 \times \alpha\mu \times \alpha\mu $ with long side pointing in the direction $\xi_0$. After rotating $\xi_0$ to $\xi_0=|\xi_0|(1,0,0)$, the operator has the symbol
\[m(\xi)=\Big(\pm B^j\Big(\frac{\xi_j}{\lr{\xi}_M}-\frac{\xi_{0,j}}{\lr{\xi_0}_M}\Big)\pm\tfrac{1}{2}\gamma^0\Big(\frac{1}{\lr{\xi}_M}-\frac{1}{\lr{\xi_0}_M}\Big)\Big)\chi_E(\xi),
\]
for certain $B^1,B^2,B^3\in \CC^{4\times4}$. It suffices to prove the kernel bound
\begin{equation}\label{eq:ker-bd}
|(\mathcal{F}_x^{-1}m)(x)|\lesa \alpha^{4}\lambda^4(1+\alpha\lambda^2|x_1|+\alpha\lambda|x'|)^{-4}, \quad x=(x_1,x'),
\end{equation}
as it implies $\|\mathcal{F}_x^{-1}m\|_{L^1(\RR^3)}\lesa \alpha$.
In the support of $\chi_E$ we obtain, from (\ref{eqn:KG transverse}) and a simple computation,
\[
|m(\xi)|\lesa \lambda^{-3}||\xi|-|\xi_0||+\theta(\xi,\xi_0)+\lambda^{-2}||\xi|-|\xi_0||\lesa \alpha.
\]
From the localisation of $\chi_E$, where $|\partial^\ell_{\xi_1} \frac{\xi_j}{\lr{\xi}_M}|\lesa \lambda^{-\ell-1}$, and the Leibniz rule, we conclude for $\ell>0$
\[
|\partial^\ell_{\xi_1} m(\xi)|\lesa \alpha(\alpha\lambda^2)^{-\ell}+\sum_{0< \ell_1\les \ell} \lambda^{-\ell_1-1}(\alpha\lambda^2)^{\ell_1-\ell}\lesa \alpha(\alpha\lambda^2)^{-\ell}.
\]
Integration by parts now implies \eqref{eq:ker-bd} if $\alpha\lambda^2
|x_1|\geq \alpha\lambda|x'|$. For $k=2,3$, we have
$|\partial^\ell_{\xi_k} \frac{\xi_j}{\lr{\xi}_M}|\lesa
\lambda^{-\ell}$  within the support of $\chi_E$, hence
we conclude for $\ell>0$
\[
|\partial^\ell_{\xi_k} m(\xi)|\lesa \alpha(\alpha\lambda)^{-\ell}+\sum_{0< \ell_1\les \ell} \lambda^{-\ell_1}(\alpha\lambda)^{\ell_1-\ell}\lesa \alpha(\alpha\lambda)^{-\ell}.
\]
Integration by parts now implies \eqref{eq:ker-bd} in the
region where $\alpha\lambda^2 |x_1|\les \alpha\lambda|x_k|$.
\end{proof}

The proof of Theorem \ref{thm - bilinear F freq loc endpoint} requires a number of standard linear estimates for homogeneous solutions to the Klein-Gordon equation. We start be recalling the Strichartz estimates for the wave and Klein-Gordon equation.

\begin{lemma}[Wave Strichartz]\label{lem - wave strichartz}
Let $m\g0$ and $2<q \les \infty$. If $0< \mu \les \lambda$, $N \g1$, and $\frac{1}{r}  = \frac{1}{2} - \frac{1}{q}$ then for every $q \in Q_\mu$ we have
            $$ \| e^{\mp i t \lr{\nabla}_m} P_q P_\lambda f\|_{L^q_t L^r_x} \lesa \mu^{\frac{1}{2} - \frac{1}{r}} \lambda^{\frac{1}{2} - \frac{1}{r}} \| P_q P_\lambda f\|_{L^2_x}.$$
Moreover, by spending additional angular regularity we have
        $$ \| e^{\mp i t \lr{\nabla}_m}  P_\lambda H_N f \|_{L^q_t L^4_x} \lesa \lambda^{\frac{3}{4}- \frac{1}{q}} N \| P_\lambda H_N f \|_{L^2_x}.$$
\end{lemma}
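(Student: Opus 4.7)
The plan for Part 1 is to reduce to the standard 2D Schr\"odinger Strichartz estimate by a translation--rescaling argument. Since $\widehat{P_q P_\lambda f}$ is supported in a cube of side $\mu$ centred at some $\xi_0$ with $|\xi_0| \approx \lambda$, I would translate in Fourier by $-\xi_0$ (multiplying $f$ by $e^{-ix \cdot \xi_0}$) and dilate $x \mapsto \mu^{-1} x$ to produce a function $g$ with $\widehat{g}$ supported in the unit ball. Taylor expanding the rescaled Klein--Gordon phase around $\xi_0$ gives, modulo a harmless additive constant,
\[
\mu^{-1}\big(\lr{\xi_0 + \mu\eta}_m - \lr{\xi_0}_m\big) = \frac{\xi_0 \cdot \eta}{\lr{\xi_0}_m} + \frac{\mu}{2\lr{\xi_0}_m}\Big(|\eta|^2 - \frac{(\xi_0 \cdot \eta)^2}{\lr{\xi_0}_m^2}\Big) + O\big((\mu/\lambda)^2\big),
\]
so modulo a transport term of order one and an error term, the phase is a 2D paraboloid in the two directions transverse to $\xi_0$. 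Rescaling time by $t \mapsto \lambda\mu^{-2} t$ and applying a Galilean shift $x \mapsto x - t\xi_0/\lr{\xi_0}_m$ kills the transport and, modulo controllable error, yields a genuine 2D Schr\"odinger problem in the transverse variables $y' \in \RR^2$; the longitudinal direction has unit-size Fourier support after rescaling and is handled by Bernstein. The 2D Schr\"odinger Strichartz estimate at the sharp-admissible pair $\frac{1}{r} = \frac{1}{2} - \frac{1}{q}$ (for $q>2$) then gives the required bound, and undoing the rescaling produces exactly the factor $\mu^{1/2 - 1/r} \lambda^{1/2 - 1/r}$.

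For Part 2, I would decompose $P_\lambda H_N f = \sum_{\kappa \in \mc{C}_{1/N}} R_\kappa P_\lambda H_N f$ into the $\mc{O}(N^2)$ angular caps of aperture $1/N$ that are natural for $H_N$. For $q \g 4$ the pair $(q,4)$ is wave-admissible and standard frequency-localised wave Strichartz gives
\[
\| e^{\mp i t \lr{\nabla}_m} R_\kappa P_\lambda H_N f \|_{L^q_t L^4_x} \lesa \lambda^{\frac{3}{4} - \frac{1}{q}} \| R_\kappa P_\lambda H_N f \|_{L^2_x}
\]
per cap; summing by the triangle inequality, applying Cauchy--Schwarz over the $\mc{O}(N^2)$ caps, and using the $L^2$-orthogonality of $\{R_\kappa\}$ produces precisely the factor $(N^2)^{1/2} = N$. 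The remaining range $2 < q < 4$ lies below the wave-admissible threshold at $r = 4$; here one invokes a radial-improved Strichartz of the type due to Sterbenz and Machedon--Sterbenz. The essential input is the spherical-harmonic bound $\| H_N g \|_{L^\infty(\sph^2)} \lesa N \| H_N g \|_{L^2(\sph^2)}$, which follows immediately from the addition formula $\sum_n |y_{\ell,n}(\omega)|^2 = (2\ell + 1)/(4\pi)$ and Cauchy--Schwarz; this reduces the $L^q_t L^4_x$ bound for $P_\lambda H_N f$ to a mixed-norm estimate of the form $\| e^{\mp i t \lr{\nabla}_m} P_\lambda f \|_{L^q_t L^4_r L^2_\omega} \lesa \lambda^{3/4 - 1/q}\|P_\lambda f\|_{L^2_x}$, which is known in the full range $q > 2$ by combining the dispersive decay from Lemma \ref{lem - dispersion} with a Keel--Tao $TT^*$ argument in the radial variable.

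The main technical obstacle is Part 1: one must verify that the paraboloid approximation of the Klein--Gordon phase after the rescaling is valid uniformly in $m \g 0$, $\mu \les \lambda$, and $\lambda \g 1$, so that the constants obtained genuinely do not depend on these parameters. The radial eigenvalue of the Hessian of $\lr{\xi}_m$ is $m^2/\lr{\xi}_m^3$, which interpolates between $0$ (wave case) and $\mc{O}(\lambda^{-3})$; this is always dominated by the tangential eigenvalues $\mc{O}(\lambda^{-1})$, so the 2D paraboloid really is the correct leading model, but a careful separation of leading and remainder terms (or, alternatively, a direct verification of Assumption \ref{assump on phase} for the rescaled phase followed by Lemma \ref{lem - dispersion} plus Keel--Tao) is needed to obtain a bound uniform in all the parameters. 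Part 2 for $q \g 4$ is routine; the passage to $q < 4$ requires the angular-regularity device above, but this is by now a standard ingredient.
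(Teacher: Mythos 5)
Your Part 1 sketch is a reasonable alternative to the paper's treatment (the paper simply cites \cite[Lemma 3.1]{Bejenaru2015} for the cube-localised estimate), and the exponent bookkeeping $(\mu\lambda)^{\frac12-\frac1r}$ is consistent; but be aware that the reduction to an \emph{exact} 2D Schr\"odinger evolution is not available -- the cubic errors and the radial quadratic term $m^2\mu^2/\lr{\xi_0}_m^3$ cannot simply be discarded, so in practice one proves the localised dispersive bound $\|e^{\mp it\lr{\nabla}_m}P_qP_\lambda f\|_{L^\infty_x}\lesa \mu^3\big(1+\mu^2|t|/\lambda\big)^{-1}\|f\|_{L^1_x}$ for the true phase (stationary phase in the two angular directions only, uniformly in $m\g 0$ and $\mu\les\lambda$) and then runs Keel--Tao; that is essentially the content of the cited reference, and your ``main obstacle'' paragraph correctly identifies where the work lies.

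The genuine gap is in Part 2 on the range $2<q<4$, which is the whole point of the second estimate (for $q\g 4$ the pair $(q,4)$ is wave-admissible and no angular regularity is needed at all). You reduce to the mixed-norm bound $\|e^{\mp it\lr{\nabla}_m}P_\lambda f\|_{L^q_tL^4_{|x|}L^2_\omega}\lesa\lambda^{\frac34-\frac1q}\|f\|_{L^2_x}$ and assert that this ``is known in the full range $q>2$ by combining the dispersive decay from Lemma \ref{lem - dispersion} with a Keel--Tao $TT^*$ argument in the radial variable.'' That mechanism cannot produce the claim: Lemma \ref{lem - dispersion} gives only the full-space decay $t^{-1}$, and the Keel--Tao/Hardy--Littlewood--Sobolev scheme with $t^{-1}$ decay closes exactly on the admissible line $\frac1q\les\frac12-\frac1r$, i.e.\ $q\g4$ at $r=4$; no rearrangement of the spherical variable changes this, since the time integrability in $TT^*$ is what fails. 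Sub-admissible pairs require a genuinely different input. The paper's route is to interpolate the desired bound with $L^\infty_tL^2_x$ down to an $L^2_tL^r_x$ endpoint ($r$ slightly above $4$) carrying the factor $N$, and to prove that endpoint by rescaling plus the argument in the appendix of \cite{Sterbenz2007}, whose engine is the Morawetz/local-energy-decay estimate $\|(1+|x|)^{-\frac12-\epsilon}\nabla u\|_{L^2_{t,x}}\lesa\|(\p_tu(0),\nabla u(0))\|_{L^2_x}$, re-derived with the Klein--Gordon energy--momentum tensor so that the constant is uniform in $m\g0$. If you prefer your $L^2_\omega$ route, you must import an angular/radial-improved Strichartz estimate of Sterbenz or Cho--Lee type as a black box and verify its uniformity in the mass -- but the proof of such an estimate is precisely the missing Morawetz-type ingredient (note also that the sharp known loss at these exponents is $N^{\frac12+}$, so a loss-free $L^2_\omega$ estimate for all $q>2$ is not something one gets ``for free''); as written, the crucial range is asserted rather than proved.
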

\begin{proof} The proof of the first estimate can be found in \cite[Lemma 3.1]{Bejenaru2015}. The second follows by simple modification of the argument in the appendix to \cite{Sterbenz2007}. More precisely, after interpolating with the $L^\infty_t L^2_x$ estimate, we need to show that
        $$ \| e^{ \mp i t \lr{\nabla}_m } H_N P_\lambda f \|_{L^2_t L^r_x} \lesa N \lambda^{3(\frac{1}{2} - \frac{1}{r}) - \frac{1}{2} }\| H_N \lambda f \|_{L^2_x}.$$
After rescaling, and following the argument on \cite[pp.\ 226--227]{Sterbenz2007}, it is enough to prove that for every $\epsilon>0$ we have the space-time Morawetz type bound
        \begin{equation}\label{eqn - morawetz bound} \| ( 1 + |x|)^{ -\frac{1}{2} - \epsilon }\nabla  u \|_{L^2_{t, x}} \lesa \big\| \big(\p_t u(0), \nabla u(0) \big) \big\|_{L^2_x}\end{equation}
for functions $u$ with $\Box u + m u = 0$, and the constant in (\ref{eqn - morawetz bound}) is independent of $m$. However the proof of (\ref{eqn - morawetz bound}) follows the same argument as the wave case in \cite{Sterbenz2007}, the only change is to replace the wave energy-momentum tensor with the Klein-Gordon version
        $$ Q_{\alpha \beta} =  \tfrac{1}{2} \Big(\p_\alpha \phi
        \overline{\p_\beta \phi}  + \p_\beta \phi \overline{\p_\alpha
          \phi}  - g_{\alpha \beta} \big( \p^\gamma \phi
        \overline{\p_\gamma \phi} + m^2 |\phi|^2\big) \Big),$$
we omit the details.
\end{proof}
The amount of angular regularity required for the $L^{2+}_t L^4_x$ Strichartz estimate to hold, is much less than that stated in Lemma \ref{lem - wave strichartz}. In fact, in \cite{Sterbenz2007}, it is shown that the same estimate holds with $N^{\frac{1}{2}  +}$. However, as the sharp number of angular derivatives is not required in the arguments we use in the present paper, we have elected to simply state the result with a whole angular derivative. On the other hand, the number of angular derivatives required in the following Klein-Gordon regime, plays a crucial role.

\begin{lemma}[Klein-Gordon Strichartz]\label{lem - KG strichartz}
Let $m>0$ and $\frac{3}{10}<\frac{1}{r} < \frac{5}{14}$. Then for every $\epsilon>0$ we have
	$$ \| e^{ \mp i t \lr{\nabla}_m} P_\lambda H_N f  \|_{L^{r}_{t, x}} \lesa \lambda^{2-\frac{5}{r}} N^{7(\frac{1}{r} - \frac{3}{10})+\epsilon} \|P_\lambda H_N f\|_{L^2_x}.$$
\end{lemma}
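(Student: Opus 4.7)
The plan is to interpolate between two endpoint Strichartz bounds along the diagonal $L^r_{t,x}$ scale, since the stated exponents $2-\tfrac{5}{r}$ and $7(\tfrac{1}{r}-\tfrac{3}{10})$ are precisely the linear interpolants in $\tfrac{1}{r}$ of the data pairs $(\tfrac{1}{2},0)$ at $r=\tfrac{10}{3}$ and $(\tfrac{3}{14},\tfrac{2}{5})$ at $r=\tfrac{14}{5}$.

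For the right endpoint, the pair $(\tfrac{10}{3},\tfrac{10}{3})$ is Schr\"odinger-admissible in $\RR^{1+3}$. The Hessian of $\lr{\xi}_m$ at $|\xi|\approx\lambda$ has one longitudinal eigenvalue $\approx m^2\lambda^{-3}$ and two transverse eigenvalues $\approx\lambda^{-1}$ (so determinant of order $m^2\lambda^{-5}$); hence stationary phase yields the frequency-localised dispersive bound
\[
\|e^{\mp it\lr{\nabla}_m}P_\lambda f\|_{L^\infty_x}\lesa |t|^{-\frac{3}{2}}\lambda^{\frac{5}{2}}\|P_\lambda f\|_{L^1_x},
\]
and the Keel--Tao abstract Strichartz theorem \cite{Keel1998} produces the endpoint estimate $\|e^{\mp it\lr{\nabla}_m}P_\lambda f\|_{L^{10/3}_{t,x}}\lesa \lambda^{1/2}\|P_\lambda f\|_{L^2_x}$ with no angular-regularity loss.

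For the left endpoint, just above $r=\tfrac{14}{5}$, the required bound has the form $\lambda^{3/14+\epsilon}N^{2/5+\epsilon}$. I plan to obtain it either by direct appeal to \cite[Theorem 1.1]{Cho2013}, or, more in the spirit of the present paper, by bilinear-to-linear reduction: decompose $P_\lambda H_N f=\sum_{\kappa\in\mc{C}_\alpha}R_\kappa P_\lambda H_N f$ with $\alpha\approx N^{-1}$, write $\|u\|_{L^{2p}_{t,x}}^2=\|u\overline u\|_{L^p_{t,x}}$ for some $2p$ just above $\tfrac{14}{5}$ (so $p>\tfrac{3}{2}$ lies in the range of Corollary \ref{cor - small scale bilinear estimate}), and sum the bilinear bound
\[
\|R_\kappa u\cdot R_{\kappa'}u\|_{L^p_{t,x}}\lesa \alpha^{2-\frac{4}{p}}\lambda^{3-\frac{4}{p}}\|R_\kappa u\|_{V^2_{\pm,m}}\|R_{\kappa'}u\|_{V^2_{\pm,m}}
\]
over pairs of caps $(\kappa,\kappa')$ dyadically classified by angular separation. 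The sharp $\alpha$-dependence converts, via Cauchy--Schwarz over the $\approx N^2$ total caps, into the weight $N^{2/5+\epsilon}$.

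The main technical difficulty is extracting the sharp $N$-loss at the left endpoint: a naive Cauchy--Schwarz over all caps would produce $N^{1/2}$ or worse, so one must exploit the sharp $\alpha^{2-4/p}$ gain of the bilinear estimate and carefully organise the cap summation by angular separation. Once both endpoints are in hand, complex interpolation along the segment $\tfrac{1}{r}\in(\tfrac{3}{10},\tfrac{5}{14})$ yields the bound on the full interior, and the small $\epsilon$-losses picked up at each endpoint are harmless and absorbed into the stated $\epsilon$ in the final exponent.
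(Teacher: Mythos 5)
Your right endpoint is fine: the frequency-localised dispersive bound $\lambda^{5/2}|t|^{-3/2}$ is correct for $\lr{\xi}_m$ at $|\xi|\approx\lambda$ (with constant depending on $m$), and Keel--Tao then gives the classical $\|e^{\mp it\lr{\nabla}_m}P_\lambda f\|_{L^{10/3}_{t,x}}\lesa\lambda^{1/2}\|P_\lambda f\|_{L^2_x}$ without angular loss; the exponents in the lemma are indeed affine in $1/r$, so the interpolation skeleton is sound \emph{if} both endpoints were available.

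The gap is at the left endpoint. Your bilinear route is out of range: writing $\|u\|_{L^{2p}_{t,x}}^2=\|u\bar u\|_{L^p_{t,x}}$ with $2p$ just above $\tfrac{14}{5}$ forces $p$ just above $\tfrac{7}{5}$, which is strictly \emph{below} the threshold $p>\tfrac{n+3}{n+1}=\tfrac{3}{2}$ of Corollary \ref{cor - small scale bilinear estimate} (your parenthetical ``so $p>\tfrac32$'' is an arithmetic slip: $\tfrac{14}{5}/2=\tfrac75<\tfrac32$). Since the transversal bilinear estimate used in this paper only reaches products in $L^p$ with $p>\tfrac32$, the squaring trick can produce linear bounds only for $r>3$, so the portion $\tfrac{14}{5}<r\les 3$ of the claimed range is inaccessible by this argument; no amount of reorganising the cap summation fixes this, and in addition the diagonal (non-transversal) cap pairs and the precise $N^{2/5}$ bookkeeping are not justified by a bare Cauchy--Schwarz over $\approx N^2$ caps. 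Your fallback, citing \cite[Theorem 1.1]{Cho2013} for the left endpoint, is of course legitimate --- but that theorem already covers the entire stated range, which is exactly the paper's proof (a one-line citation), so in that case the interpolation scheme and the bilinear machinery are superfluous rather than an alternative proof.
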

\begin{proof}
  This is a special case of \cite[Theorem 1.1]{Cho2013}.
\end{proof}

\begin{remark}\label{rmk:ang-reg}
Without angular regularity, the optimal $L^r_{t, x}$ Strichartz estimate for the Klein-Gordon equation is $r=\frac{10}{3}$, see for instance \cite{Machihara2003}. However, in the resonant region, we are forced to take $r$ slightly below $3$, thus the additional angular regularity is essential to obtain the additional integrability in time. In other words, the angular regularity is used not just to obtain the scale invariant endpoint, but also plays a crucial role in controlling the resonant interaction. Note that the number of angular derivatives required in Lemma \ref{lem - KG strichartz} is not expected to be optimal, and any improvement in this direction has an impact on Theorem \ref{thm:dkg}.
\end{remark}

We have seen that the addition of angular regularity improves the range of available Strichartz estimates. An alternative way to exploit additional angular regularity is given by the following angular concentration type bound.

\begin{lemma}[{\cite[Lemma 5.2]{Sterbenz2007}}] \label{lem - angular concentration} Let $2\les p < \infty$, and $0 \les s <\frac{2}{p}$. If $\lambda, N \g 1$,  $\alpha \gtrsim \lambda^{-1}$, and $\kappa \in \mc{C}_\alpha$ we have
        $$  \| R_\kappa P_\lambda H_N f \|_{L^p_x(\RR^3)} \lesa  \alpha^s N^s \| P_\lambda H_N f \|_{L^p_x(\RR^3)}.$$
\end{lemma}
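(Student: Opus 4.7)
The lemma is informative only in the regime $\alpha N\lesa 1$: when $\alpha N\gtrsim 1$ we have $\alpha^s N^s\gtrsim 1$, and the claim follows at once from the uniform $L^p$-boundedness of the cap Fourier multiplier $R_\kappa$, whose symbol $\rho_\kappa(\xi/|\xi|)$ is a smooth bump on the unit sphere. I therefore assume $\alpha\lesa 1/N$; combined with the hypothesis $\alpha\gtrsim 1/\lambda$, this forces $\lambda\gtrsim N$, which is the informative regime.

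In this regime I would reduce matters to a spherical inequality. Decomposing $P_\lambda H_N f$ in spherical harmonics of degree $\ell\approx N$ on each radial shell, a standard analysis of the kernel of the composite multiplier $R_\kappa P_\lambda$ shows that $R_\kappa$ is essentially multiplication by the angular cutoff $\rho_\kappa(\omega)$ on the unit sphere, modulo controllable radial errors absorbed by $P_\lambda$. Integrating against $r^2\,dr$, the target $L^p(\RR^3)$ estimate then reduces to the purely spherical analogue
$$ \|\rho_\kappa F\|_{L^p(\sph^2)}\lesa (\alpha N)^s\|F\|_{L^p(\sph^2)},\qquad F=H_N F,\quad 0\les s<2/p. $$
For this spherical bound, H\"older in $\omega$ gives $\|\rho_\kappa F\|_{L^p(\sph^2)}\les |\kappa|^{1/p}\|F\|_{L^\infty(\sph^2)}\lesa\alpha^{2/p}\|F\|_{L^\infty(\sph^2)}$, which is to be combined with a Bernstein--Nikol'skii inequality for spherical harmonics of degree $\approx N$,
$$ \|H_N F\|_{L^\infty(\sph^2)}\lesa N^{2/p+\epsilon}\|H_N F\|_{L^p(\sph^2)},\qquad \epsilon>0. $$

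The main obstacle is precisely this Bernstein--Nikol'skii inequality on the sphere. Its formal endpoint with exponent $2/p$ would reflect the fact that the space of spherical harmonics of degree $\les N$ has dimension $\approx N^2$, but this endpoint fails by an arbitrarily small amount because of concentration phenomena (exhibited for instance by the zonal harmonic $y_{N,0}$ peaked near the poles); this is exactly the mechanism that forces the strict inequality $s<2/p$ in the conclusion. For any $s$ strictly less than $2/p$, however, the desired estimate follows by interpolating the trivial $L^p\to L^p$ bound with the classical reproducing-kernel estimate
$$ \|H_N F\|_{L^\infty(\sph^2)}\lesa N\|H_N F\|_{L^2(\sph^2)}, $$
which itself is a direct consequence of the $\mc{O}(N)$ $L^\infty$-bound on the normalized degree-$N$ Gegenbauer reproducing kernel on $\sph^2$. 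Chaining this interpolation through the H\"older step and the reduction to the sphere delivers the claimed bound with the announced $\alpha^s N^s$ dependence.
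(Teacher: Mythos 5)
The paper itself offers no proof of this lemma (it is quoted from \cite[Lemma 5.2]{Sterbenz2007}), so the only question is whether your argument stands on its own, and it does not: the gap is your reduction to the sphere. The operator $R_\kappa$ is a \emph{Fourier} multiplier, multiplication by $\rho_\kappa(\xi/|\xi|)$ in frequency, and it is not ``essentially multiplication by $\rho_\kappa(x/|x|)$ on physical spheres modulo controllable radial errors''. Concretely, take $f(x)=e^{i\lambda\,\omega_\kappa\cdot x}\phi(x-x_0)$ with $\phi$ a bump of width much larger than $(\alpha\lambda)^{-1}$ and $x_0/|x_0|$ far from $\kappa$: then $\widehat{f}$ lies well inside the sector, so $R_\kappa P_\lambda f\approx f$, yet $f$ is supported at physical angles outside $\kappa$, where multiplication by $\rho_\kappa(x/|x|)$ would annihilate it. So the approximate identity you invoke is false at the operator level; the smallness in the lemma has nothing to do with physical caps, but comes from the fact that a function whose \emph{frequency-side} angular profile has degree $\approx N$ cannot concentrate its Fourier mass in a sector of aperture $\alpha\ll N^{-1}$. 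Your slicing ``integrate against $r^2\,dr$'' matches the $L^p(\RR^3)$ norm only if the spheres are physical; if instead you slice on the frequency side (where $R_\kappa$ genuinely is multiplication by $\rho_\kappa$), Plancherel ties the computation to $L^2$ only, and for $p\neq 2$ there is no way back to $\|f\|_{L^p}$. As written, your steps prove the true but different physical-space bound $\|\rho_\kappa(x/|x|)H_N f\|_{L^p}\lesa(\alpha N)^s\|H_N f\|_{L^p}$, not the lemma.

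The ingredients you assembled do yield a correct proof if reorganised: on the frequency side, slicing $|\xi|=r$ and using H\"older on the cap together with $\|H_N F\|_{L^\infty(\sph^2)}\lesa N\|H_N F\|_{L^2(\sph^2)}$ gives the $L^2$ case with gain $\alpha N$, i.e. $\|R_\kappa \tilde{H}_N \tilde{P}_\lambda\|_{L^2\to L^2}\lesa \min\{1,\alpha N\}$ for fattened multipliers with $\tilde{P}_\lambda\tilde{H}_N P_\lambda H_N=P_\lambda H_N$ (here one uses that $H_N$ acts identically on the angular variable of $\widehat{f}$, by the Bochner formula, so $R_\kappa$ is exactly multiplication by $\rho_\kappa$ on frequency spheres); combining this with the uniform bound $\|R_\kappa\tilde{P}_\lambda\tilde{H}_N\|_{L^\infty\to L^\infty}\lesa 1$ (the kernel of $R_\kappa\tilde{P}_\lambda$ is uniformly integrable, and $\tilde{H}_N$ is bounded by Lemma \ref{lem - prop of spherical proj}; note this composite bound is also what justifies your ``trivial regime'' $\alpha N\gtrsim 1$, since uniform $L^p$ bounds for $R_\kappa$ alone are not obvious) and interpolating by Riesz--Thorin gives $(\alpha N)^{2/p}$ for $2\les p\les\infty$, which is even slightly stronger than the stated range $s<2/p$. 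Incidentally, your concern about the endpoint Nikol'skii inequality on $\sph^2$ is unfounded: for the smooth projector $H_N$ the kernel bound $|K_N(\omega,\omega')|\lesa N^2(1+N|\omega-\omega'|)^{-10}$ gives $\|H_N F\|_{L^\infty(\sph^2)}\lesa N^{2/p}\|H_N F\|_{L^p(\sph^2)}$ with no $\epsilon$-loss, so the strict inequality $s<2/p$ in the statement is not forced by the mechanism you describe.
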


Finally, we need to estimate various square sums of norms. As we work in $V^2$, this causes a  slight loss in certain estimates. However, as we have some angular derivatives to work with, this loss can always be absorbed elsewhere.

\begin{lemma}\label{lem - square sum loss} Let $(P_j)_{j \in \mc{J}}$ and $(\mc{M}_j)_{j \in \mc{J}}$ be a collection of spatial Fourier multipliers. Suppose that the symbols of $P_j$ have finite overlap, and
		$$ \| \mc{M}_j P_j f \|_{L^2_x} \lesa \delta \| P_j f \|_{L^2_x}$$
for some $\delta >0$.
	\begin{enumerate}
		\item Let $q>2$, $r \g 2$. Suppose that there exists $A>0$ such that for every $j$ we have the bound
				$$ \| e^{\mp i t \lr{\nabla}_m} P_j f \|_{L^q_t L^r_x} \les A \| P_j f \|_{L^2_x}.$$
			Then for every $\epsilon>0$ we have
				$$ \Big( \sum_{j \in \mc{J}} \| \mc{M}_j P_j v \|_{L^q_t L^r_x}^2 \Big)^\frac{1}{2}
								\lesa \delta \big( \# \mc{J}\big)^\epsilon A \| v \|_{V^2_{\pm,m}}.$$
								
		\item Fix $p_0>1$. Suppose that there exists $A>0$ such that  $\| P_j f\|_{L^\infty_x} \lesa A \| f \|_{L^2_x}$. Moreover, suppose that for every $p>p_0$ there exists $B_p>0$, and for any $j \in \mc{J}$ there exists $\mc{K}_j \subset \mc{J}$ with $\# \mc{K}_j \lesa 1$, such that for every $k \in \mc{K}_j$
				$$   \| P_j u P_k v \|_{L^p_{t, x}} \lesa B_p \| P_j u \|_{U^2_{\pm_1,m_1}} \| P_k v \|_{U^2_{\pm_2,m_2}}.$$
			Then for every $q > p_0$ and $\frac{p_0}{q} < \theta < 1$ we have
				$$ \sum_{\substack{ j \in \mc{J}, k \in \mc{K}_j }} \| P_j u \mc{M}_k P_k v \|_{L^q_{t, x}} \lesa \delta  \big(\#\mc{J}\big)^{1-\theta} A^{1-\theta} B_{\theta q}^\theta \| u \|_{V^2_{\pm_1,m_1}} \| v \|_{V^2_{\pm_2,m_2}}.$$
	\end{enumerate}
\end{lemma}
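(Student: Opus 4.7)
The plan for both parts is to reduce the claims to their analogues on $U^p$-atoms by invoking the Koch--Tataru decomposition of $V^2$ recalled in Step~3 of the proof of Theorem~\ref{thm - bilinear Lp estimate}, followed by Minkowski's inequality in Part~(i) and log-convexity interpolation in Part~(ii). In both cases the factor $\delta$ propagates atomwise, since for an atom $\phi=\sum_J \ind_J(t) e^{\mp it\lr{\nabla}_m} f_J$ one has $\mc{M}_j P_j \phi=\sum_J\ind_J(t)e^{\mp it\lr{\nabla}_m}(\mc{M}_jP_jf_J)$ and the hypothesis $\|\mc{M}_j P_j f\|_{L^2_x}\lesa \delta\|P_jf\|_{L^2_x}$ applies termwise.

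For Part~(i), I will first establish the estimate for $U^2$-atoms $\phi$, where the disjointness of the time intervals $J$, the Strichartz hypothesis, the $L^2$-bound on $\mc{M}_j$, and the embedding $\ell^2_J\hookrightarrow\ell^q_J$ (since $q\g 2$) yield
\[
\|\mc{M}_jP_j\phi\|_{L^q_tL^r_x}^2 \les A^2\delta^2\Bigl(\sum_J\|P_jf_J\|_{L^2_x}^q\Bigr)^{2/q}\les A^2\delta^2\sum_J\|P_jf_J\|_{L^2_x}^2.
\]
Summing in $j$ and invoking the finite-overlap bound $\sum_j\|P_jf_J\|_{L^2_x}^2\lesa\|f_J\|_{L^2_x}^2$ produces $\bigl(\sum_j\|\mc{M}_jP_j\phi\|_{L^q_tL^r_x}^2\bigr)^{1/2}\lesa A\delta$. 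To transfer this from $U^2$-atoms to $v\in V^2_{\pm,m}$, I will use the Koch--Tataru decomposition $v=\sum_k v_k$ with $\|v_k\|_{U^p_{\pm,m}}\lesa 2^{k(2/p-1)}\|v\|_{V^2_{\pm,m}}$ for a $p\in (2,q]$, combined with the crude $\ell^2_j$-bound $(\sum_j\|\mc{M}_jP_jv\|_{L^q_tL^r_x}^2)^{1/2}\lesa(\#\mc{J})^{1/2}A\delta\|v\|_{V^2_{\pm,m}}$ (obtained by applying the linear Strichartz bound atomwise together with the $V^2$-boundedness of $\mc{M}_j$); interpolation of the two endpoints produces the stated $(\#\mc{J})^\epsilon$-loss.

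For Part~(ii), I will combine the bilinear hypothesis with the log-convexity interpolation
\[
\|P_ju\,\mc{M}_kP_kv\|_{L^q_{t,x}}\les\|P_ju\,\mc{M}_kP_kv\|_{L^{\theta q}_{t,x}}^\theta\,\|P_ju\,\mc{M}_kP_kv\|_{L^\infty_{t,x}}^{1-\theta}.
\]
The $L^{\theta q}$-factor is controlled via the bilinear $U^2$-hypothesis at exponent $\theta q>p_0$ together with the atomwise disposability $\|\mc{M}_kP_kw\|_{U^2_{\pm_2,m_2}}\lesa\delta\|P_kw\|_{U^2_{\pm_2,m_2}}$ (which follows from the $L^2$-bound on $\mc{M}_k$ since it commutes with the free flow), yielding the bound $B_{\theta q}\delta\|P_ju\|_{U^2}\|P_kv\|_{U^2}$. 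The $L^\infty$-factor is controlled via the Bernstein hypothesis applied to $P_j$ together with its frequency-preserving extension to $\mc{M}_kP_k$. Combining, summing trivially over $k\in\mc{K}_j$ using $\#\mc{K}_j\lesa 1$, and summing in $j$ via H\"older's inequality $\sum_ja_j^\theta\lesa(\#\mc{J})^{1-\theta}(\sum_ja_j)^\theta$, together with a further Koch--Tataru-type decomposition to convert the residual $U^2$-sums into $V^2$-norms, yields the claimed estimate.

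The main technical obstacle lies in Part~(ii): the bilinear hypothesis is stated in $U^2$, the target in $V^2$, and the embedding $V^2\hookrightarrow U^2$ fails. Log-convexity interpolation is essential precisely because the $L^\infty_{t,x}$-endpoint naturally respects $V^2$ (via Bernstein combined with $V^2\hookrightarrow L^\infty_tL^2_x$), thereby bridging the two norms at the cost of the Bernstein weight $A^{1-\theta}$. The constraint $\theta>p_0/q$ ensures the bilinear hypothesis is applicable at the intermediate exponent $\theta q$, while the $(\#\mc{J})^{1-\theta}$ loss is the natural cost of the outer H\"older step in the sum over $j$.
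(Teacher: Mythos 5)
Your proposal is correct and follows essentially the same route as the paper: the atomwise computation with the $\delta$-gain and finite overlap of the $P_j$, the Koch--Tataru decomposition of $V^2$ into $U^p$ pieces, H\"older in $j$ to produce the $(\#\mc{J})$-loss, and, in Part (ii), the $L^{\theta q}_{t,x}$/$L^\infty_{t,x}$ convexity splitting combined with the bilinear $U^2$ hypothesis (with $\delta$ coming from the disposability of $\mc{M}_k$ on $U^2$) and the Bernstein bound. The only harmless deviation is in Part (i), where the paper proves the atom estimate directly in $\ell^p_j$ for all $2\les p\les q$ and then takes $p$ close to $2$, whereas you prove it only at $p=2$ and recover the $(\#\mc{J})^\epsilon$-loss by playing the uniform $U^2$ bound for each Koch--Tataru piece against the lossy bound obtained through $U^q$ --- the same mechanism in slightly different packaging.
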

\begin{proof}
We start with the proof of (i). Let $2 \les p \les q$ and suppose that $\phi = \sum_{I \in \mc{I}} \ind_{I}(t) e^{ \mp i t \lr{\nabla}_m} f_I$ is  a $U^p$ atom, thus $\sum_{I} \|f_I\|_{L^2_x}^p \les 1$. The assumed linear estimate, together with the finite overlap of the Fourier multipliers $P_j$ implies that
    \begin{align*}
	\Big(\sum_{j \in \mc{J}} \|\mc{M}_j P_j \phi \|_{L^q_t L^r_x}^p \Big)^\frac{1}{p} &\les \Big( \sum_{I \in \mc{I}} \sum_{j \in \mc{J}} \|e^{\mp i t \lr{\nabla}_m}\mc{M}_j P_j f_I \|_{L^q_t L^r_x}^p \Big)^\frac{1}{p}\\
&\les	A \Big( \sum_{j \in \mc{J}} \sum_{ I \in \mc{I}} \| \mc{M}_j P_j f_I \|_{L^2_x}^p \Big)^\frac{1}{p}
						\les \delta A  \Big( \sum_{ I \in \mc{I}} \Big( \sum_{j \in \mc{J}}  \| P_j f_I \|_{L^2_x}^2 \Big)^\frac{p}{2} \Big)^\frac{1}{p}\lesa \delta A.
	\end{align*}
Consequently the atomic definition of $U^p_{\pm,m}$ then implies that for any $2\les p\les q$
	\begin{equation}\label{eqn - lem square sum loss - linear Up bound} \Big( \sum_{j \in \mc{J}} \| \mc{M}_j P_j u \|_{L^q_t L^r_x}^p \Big)^\frac{1}{p}  \lesa A \delta \| u\|_{U^p_{\pm,m}}.\end{equation}
 Let $v \in V^2_{\pm,m}$. There exists a decomposition $v = \sum_{\ell\in\NN} v_\ell$ such that for every $p \g 2$ we have $\| v_\ell \|_{U^p_{\pm,m}} \lesa 2^{ \ell ( \frac{2}{p} - 1)} \|v\|_{V^2_{\pm,m}} $, see e.g.\ \cite[Lemma 6.4]{Koch2005} or \cite[Proposition 2.5 and Proposition 2.20]{Hadac2009}. An application of H\"older's inequality, together with (\ref{eqn - lem square sum loss - linear Up bound}) gives for any $2<p \les q$
	\begin{align*}
		\Big( \sum_{j \in \mc{J}} \| \mc{M}_j P_j v \|_{L^q_t L^r_x}^2 \Big)^\frac{1}{2} &\lesa (\# \mc{J})^{\frac{1}{2} - \frac{1}{p}}  \sum_{\ell\in \NN}  \Big( \sum_{j \in \mc{J}} \| \mc{M}_j P_j v_\ell \|_{L^q_t L^r_x}^p \Big)^\frac{1}{p}  \\
		&\lesa \delta A (\# \mc{J})^{\frac{1}{2} - \frac{1}{p}}   \sum_{\ell \in \NN} \| v_\ell \|_{U^p_{\pm,m}} \\
		&\lesa \delta A (\# \mc{J})^{\frac{1}{2} - \frac{1}{p}}  \| v \|_{V^2_{\pm,m}} \sum_{\ell \in \NN} 2^{ \ell (\frac{2}{p} -1)}  \\
		&\lesa \delta A (\# \mc{J})^{\frac{1}{2} - \frac{1}{p}}  \| v \|_{V^2_{\pm,m}}.
	\end{align*}
Thus (i) follows by taking $p$ sufficiently close to 2.

We now turn to the proof of (ii). As in the proof of $(i)$, we decompose $u = \sum_{\ell \in \NN} u_\ell$ and $v = \sum_{\ell \in \NN} v_\ell$ with $\| u_\ell\|_{U^r_{\pm_1,m_1}} \lesa 2^{\ell ( \frac{2}{r} - 1)}$ and $\| v_\ell \|_{U^r_{\pm_2,m_2}} \lesa 2^{ \ell ( \frac{2}{r} -1)}$ for every $r\g 2$. Let $q>p_0$ and $\frac{p_0}{q} <\theta<1$. Then the convexity of the $L^q$ norms together with H\"{o}lder's inequality, our assumed bilinear estimate,  and the $U^2$ summation argument used in (i) implies that
	\begin{align*}
		\sum_{\substack{ j \in \mc{J}, k \in \mc{K}_j }} &\| P_j u \mc{M}_k P_k v \|_{L^q_{t, x}} \\
					&\lesa  (\# \mc{J})^{1-\theta} \sum_{\ell, \ell' \in \NN} \Big(\sum_{\substack{ j \in \mc{J}, k \in \mc{K}_j }} \| P_j u \mc{M}_k P_k v \|_{L^{\theta q}_{t, x}}\Big)^\theta \Big( \sup_{j, k \in \mc{J}} \| P_j u_\ell \mc{M}_k P_k v_{\ell'} \|_{L^\infty_{t, x}}\Big)^{1-\theta} \\
					&\lesa \delta  (\# \mc{J})^{1-\theta}  A^{1-\theta} B_{\theta q}^{\theta}  \sum_{\ell, \ell' \in \NN}  \big( \| u_\ell \|_{U^2_{\pm_1,m_1}} \| v_\ell \|_{U^2_{\pm_2 ,m_2}} \big)^\theta \big( \| u_\ell \|_{U^\infty_{\pm_1,m_1}} \| v_{\ell'} \|_{U^\infty_{\pm_2, m_2}}\big)^{1-\theta} \\
					&\lesa \delta  (\# \mc{J})^{1-\theta}  A^{1-\theta} B_{\theta q}^{\theta}   \| u \|_{V^2_{\pm_1,m_1} } \| v \|_{V^2_{\pm_2 ,m_2}} \sum_{\ell, \ell' \in \NN} 2^{  - \ell ( 1- \theta)} 2^{  - \ell' ( 1- \theta)} \\
			&\lesa \delta  (\# \mc{J})^{1-\theta}  A^{1-\theta} B_{\theta q}^{\theta}  \| u \|_{V^2_{\pm_1,m_1} } \| v \|_{V^2_{\pm_2 ,m_2}}.
	\end{align*}
Therefore (ii) follows.
\end{proof}

Clearly the previous lemma allows us to extend Corollary \ref{cor - small scale bilinear estimate}, and the linear estimates discussed above, to frequency localised functions in $V^2_{\pm,m}$. For instance, for any $1\les \mu \lesa \lambda$, $\alpha \gtrsim \lambda^{-1}$, and $\epsilon>0$, $q>2$, we have by Lemma \ref{lem - wave strichartz}
\begin{align}
\label{eqn:L4sqrsum}
\Big( \sum_{q\in Q_\mu} \sum_{\kappa \in \mc{C}_{\alpha}} \big\| R_\kappa  P_q u_{\lambda, N} \big\|_{L^4_{t, x}}^2 \Big)^\frac{1}{2} \lesa{} & \alpha^{-\epsilon} \Big( \frac{\mu}{\lambda} \Big)^{\frac{1}{4}-\epsilon} \lambda^{\frac{1}{2}} \| u_{\lambda, N} \|_{V^2_{\pm,m}},\\
\Big( \sum_{\kappa \in \mc{C}_{\alpha}} \big\| R_\kappa  u_{\lambda, N} \big\|_{L^q_t L^4_x}^2 \Big)^\frac{1}{2} \lesa{} &  \alpha^{-\epsilon } \lambda^{\frac{3}{4} - \frac{1}{q}} N \| u_{\lambda, N} \|_{V^2_{\pm,m}} \label{eqn:LqL4sqrsum}
\end{align}
where we use the shorthand $u_{\lambda, N} = P_\lambda P_N u $. Similarly, an application of Corollary \ref{cor - small scale bilinear estimate}, Lemma \ref{lem - null form bound}, and (ii) in Lemma \ref{lem - square sum loss} gives for every $q>\frac{3}{2}$ and $\epsilon>0$
    \begin{equation}\label{eqn:sqrsum bilinear}\begin{split}
        \Big( \sum_{\substack{\kappa,  \kappa'' \in \mc{C}_{\mu^{-1}} }} \sum_{\substack{q, q'' \in Q_\mu \\ |q - q''| \approx \mu \text{ or } |\kappa - \kappa''| \approx \mu^{-1}}} \big\| R_{\kappa''} P_{q''} \phi_{\mu, N} \big[&\big( \Pi_+  - \Pi_{+}(\mu \omega(\kappa))\big) R_\kappa P_q \psi_{\mu, N_1}\big]^\dagger \big\|_{L^q_{t, x}(\RR^{1+3})}^2 \Big)^\frac{1}{2} \\
                &\lesa \mu^{\epsilon} \| \phi_{\mu, N} \|_{V^2_{+,1}} \| \psi_{\mu, N_1} \|_{V^2_{+,M}}
        \end{split}
    \end{equation}
where $\omega(\kappa)$ denotes the centre of the cap $\kappa \in \mc{C}_{\mu^{-1}}$. This bilinear bound plays a key role in controlling the solution to the DKG system in the resonant region.

\subsection{General Resonance Identity}


 After an application of Lemma \ref{lem - energy ineq for F norms}, proving the bilinear estimates in Theorem \ref{thm - bilinear F freq loc endpoint} for the $V^2$ component of the norm, reduces to estimating trilinear expressions of the form
    \begin{equation}\label{eqn - resonance func motivation} \int_{\RR^{1+3}} \phi \psi^\dagger \gamma^0 \varphi dx dt. \end{equation}
Suppose that $\phi$, $\psi$, and $\varphi$ have small modulation, thus $\supp \widetilde{\phi} \subset \{ |\tau + \lr{\xi}| \les d \}$, $\supp \widetilde{\psi} \subset \{ |\tau \pm_1 \lr{\xi}_M| \les d \}$, and $\supp \widetilde{\varphi} \subset \{ |\tau \pm_2 \lr{\xi}_M| \les d \}$ for some $d \in 2^\ZZ$. If $\xi \in \supp \widehat{\psi}$ and $\eta \in \supp \widehat{\varphi}$, then it is easy to check that the integral (\ref{eqn - resonance func motivation}) vanishes unless
        $$ \big| \lr{\xi - \eta} \mp_1 \lr{\xi}_M \pm_2 \lr{\eta}_M \big| \lesa d.$$
To exploit this, we define the modulation function
    $$ \mathfrak{M}_{\pm_1, \pm_2}(\xi, \eta)  = \big| \lr{\xi - \eta}  \mp_1 \lr{\xi}_M \pm_2 \lr{\eta}_M \big|.$$
Clearly we have the symmetry properties $\mathfrak{M}_{+, +}(\xi, \eta) = \mathfrak{M}_{-, -}(\eta, \xi)$ and $\mathfrak{M}_{\pm, \mp}(\xi, \eta) = \mathfrak{M}_{\pm, \mp}(\eta, \xi)$. The proof of our global existence results requires a careful analysis of the zero sets of $\mathfrak{M}_{\pm_1, \pm_2}$, the key tool is the following.
\begin{lemma}\label{lem - modulation bound} Let $M>0$.
\begin{enumerate}
  \item (Nonresonant interactions). We have
    $$ \mathfrak{M}_{-, +}(\xi, \eta) \gtrsim \lr{\xi} + \lr{\eta}, \qquad \mathfrak{M}_{\pm, \pm}(\xi, \eta) \gtrsim \frac{1}{\lr{\xi - \eta}} \Bigg(   \frac{ (|\xi| - |\eta|)^2}{\lr{\xi} \lr{\eta}} + |\xi| |\eta| \theta^2(\xi, \eta) + 1 \Bigg),$$
    and
        $$ \mathfrak{M}_{-, -}(\xi, \eta) \gtrsim \frac{|\xi - \eta| |\xi|}{\lr{\xi} + \lr{\eta}} \theta^2(\xi - \eta,  -\xi), \qquad \mathfrak{M}_{+, +}(\xi, \eta) \gtrsim \frac{|\xi - \eta| |\eta|}{\lr{\xi} + \lr{\eta}} \theta^2(\xi - \eta, \eta).$$

  \item (Resonant interactions). We have
         $$\mathfrak{M}_{+, -}(\xi, \eta) \approx  \frac{1}{\lr{\xi} + \lr{\eta}} \bigg| M^2 \frac{ (|\xi| - |\eta|)^2}{\lr{\xi}_{M} \lr{\eta}_{M} + |\xi| |\eta| + M^2}  + |\xi| |\eta| + \xi \cdot \eta + \frac{ 4 M^2 - 1}{2} \bigg|$$
 and
 	$$\mathfrak{M}_{+, -}(\xi, \eta) \gtrsim  \frac{1}{\lr{\eta}} \bigg| \frac{ ( |\xi| - M |\xi - \eta|)^2}{\lr{\xi}_{M} \lr{\xi - \eta} + |\xi | |\xi - \eta| + M}  + |\xi| |\xi - \eta| - \xi \cdot (\xi - \eta)  + \frac{ 2M - 1 }{2} \bigg|.$$
\end{enumerate}
\end{lemma}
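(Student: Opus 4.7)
My plan is to prove each bound by direct algebraic manipulation. The central recipe will be: write $\mathfrak{M}_{\pm_1,\pm_2}$ as $|A - B|$ for suitable positive quantities $A, B$, multiply and divide by the conjugate $A+B$, and expand $A^2 - B^2$ using $\lr{v}_m^2 = m^2 + |v|^2$ together with the decomposition $|x-y|^2 = (|x|-|y|)^2 + 2|x||y|(1-\cos\theta(x,y))$ and the estimate $1-\cos\theta \asymp \theta^2$. I will use repeatedly the triangle-type inequality $\lr{\xi}_M \leq |\xi-\eta| + \lr{\eta}_M$, which follows from $M^2 + (|\xi-\eta|+|\eta|)^2 \leq (|\xi-\eta| + \lr{\eta}_M)^2$, to verify that the various combinations of square roots have a definite sign.

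For part (i): the bound on $\mathfrak{M}_{-,+}$ is immediate from positivity, since $\lr{\cdot}_M\gtrsim_M \lr{\cdot}$. For the first same-sign bound, by the symmetry $\mathfrak{M}_{++}(\xi, \eta) = \mathfrak{M}_{--}(\eta, \xi)$ I may assume $|\xi| \geq |\eta|$ and treat $\mathfrak{M}_{++} = \lr{\xi-\eta} - \lr{\xi}_M + \lr{\eta}_M \geq 0$. Multiplying by the conjugate factor $\lr{\xi-\eta} + \lr{\xi}_M - \lr{\eta}_M \in [\lr{\xi-\eta}, 2\lr{\xi-\eta}]$ will produce
\[\mathfrak{M}_{++} \asymp \lr{\xi-\eta}^{-1}\bigl[1 + 2|\xi||\eta|(1-\cos\theta(\xi,\eta)) + (|\xi|-|\eta|)^2 R\bigr],\]
where the ``curvature ratio'' $R = 2[M^2 + \lr{\xi}_M\lr{\eta}_M - |\xi||\eta|]/(\lr{\xi}_M+\lr{\eta}_M)^2$ will be estimated as $\gtrsim_M (\lr{\xi}\lr{\eta})^{-1}$ via the identity $\lr{\xi}_M\lr{\eta}_M - |\xi||\eta| = M^2(M^2+|\xi|^2+|\eta|^2)/(\lr{\xi}_M\lr{\eta}_M + |\xi||\eta|)$, yielding the stated three-term bound. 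For the null-type bounds, I will instead conjugate by $\lr{\xi-\eta}+\lr{\xi}_M+\lr{\eta}_M \asymp \lr{\xi}+\lr{\eta}$; the case $\mathfrak{M}_{++}$ reduces via $|\xi|^2 - |\eta|^2 - |\xi-\eta|^2 = -2\eta\cdot(\xi-\eta)$ to
\[\mathfrak{M}_{++} \asymp (\lr{\xi}+\lr{\eta})^{-1}\bigl[1 + 2(\lr{\xi-\eta}\lr{\eta}_M - \eta\cdot(\xi-\eta))\bigr],\]
and the numerator is $\gtrsim |\xi-\eta||\eta|\theta^2(\xi-\eta, \eta)$ since $\lr{\xi-\eta}\lr{\eta}_M \geq |\xi-\eta||\eta|$; the bound on $\mathfrak{M}_{--}$ then follows from the swap symmetry combined with $\theta(\xi-\eta, -\xi) = \theta(\eta-\xi, \xi)$.

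For part (ii): for the first resonant bound I will again conjugate by $\lr{\xi-\eta}+\lr{\xi}_M+\lr{\eta}_M \asymp \lr{\xi}+\lr{\eta}$, obtaining
\[\mathfrak{M}_{+,-} \asymp (\lr{\xi}+\lr{\eta})^{-1}\bigl|2(\lr{\xi}_M\lr{\eta}_M + \xi\cdot\eta) + 2M^2 - 1\bigr|,\]
and then rewrite $\lr{\xi}_M\lr{\eta}_M - |\xi||\eta|$ as above together with $|\xi|^2 + |\eta|^2 = (|\xi|-|\eta|)^2 + 2|\xi||\eta|$; matching with the stated form reduces to verifying the universal equivalence $(2|\xi||\eta|+M^2)/(\lr{\xi}_M\lr{\eta}_M+|\xi||\eta|) \asymp 1$ (checked separately in the regimes $\max(|\xi|,|\eta|)\lesssim M$ and $\min(|\xi|,|\eta|) \gtrsim M$), the additional $+M^2$ in the stated denominator being harmlessly absorbed. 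For the second resonant bound, I will conjugate by $\lr{\xi-\eta} - \lr{\xi}_M + \lr{\eta}_M$, which a short case analysis on $|\xi| \gtrless |\eta|$ shows is nonnegative and bounded by $O(\lr{\eta})$, and obtain
\[\mathfrak{M}_{+,-} = \frac{|1 - 2(\lr{\xi-\eta}\lr{\xi}_M - \xi\cdot(\xi-\eta))|}{\lr{\xi-\eta} - \lr{\xi}_M + \lr{\eta}_M}\]
after using $|\xi-\eta|^2 + |\xi|^2 - |\eta|^2 = 2\xi\cdot(\xi-\eta)$; the numerator will be rewritten using $\lr{\xi-\eta}\lr{\xi}_M - |\xi||\xi-\eta| = [(|\xi|-M|\xi-\eta|)^2 + 2M|\xi||\xi-\eta| + M^2]/[\lr{\xi-\eta}\lr{\xi}_M + |\xi||\xi-\eta|]$ followed by mechanical rearrangement.

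The hard part will be matching constants in the resonant bounds: the $\asymp$ statement in the first resonant identity is not automatic under conjugation, since the denominator $\lr{\xi}_M\lr{\eta}_M+|\xi||\eta|$ arising naturally differs from the stated $\lr{\xi}_M\lr{\eta}_M+|\xi||\eta|+M^2$, and the offsets $(4M^2-1)/2$ and $(2M-1)/2$ must be tracked carefully through the cross-terms of the difference-of-squares identities. The remaining reductions are mechanical.
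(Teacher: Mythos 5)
Your overall method -- conjugating by the appropriate sum or difference of square roots and expanding the difference of squares via $\lr{v}_m^2=m^2+|v|^2$ and the law of cosines -- is exactly the paper's route (its general modulation identity, applied with various choices of $(x,y,m_1,m_2,m_3)$ and sign), and your treatment of part (i) and of the second bound in part (ii) is correct up to trivia: the intermediate identity should read $|\xi|^2-|\eta|^2-|\xi-\eta|^2=2\eta\cdot(\xi-\eta)$ (your sign is flipped, though your displayed numerator $1+2(\lr{\xi-\eta}\lr{\eta}_M-\eta\cdot(\xi-\eta))$ is right), and the reduction to $|\xi|\ge|\eta|$ needs the one-line remark that in all cases $\mathfrak{M}_{\pm,\pm}(\xi,\eta)\ge\lr{\xi-\eta}-|\lr{\xi}_M-\lr{\eta}_M|$, after which no case distinction is needed at all.

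There is, however, a genuine flaw in your plan for the first resonant bound. The ``universal equivalence'' $(2|\xi||\eta|+M^2)/(\lr{\xi}_M\lr{\eta}_M+|\xi||\eta|)\asymp 1$ is false: with $\eta=0$ the ratio equals $M/\lr{\xi}_M\to 0$ as $|\xi|\to\infty$, and your two regimes $\max\{|\xi|,|\eta|\}\lesssim M$ and $\min\{|\xi|,|\eta|\}\gtrsim M$ do not cover $\min\{|\xi|,|\eta|\}\ll M\ll\max\{|\xi|,|\eta|\}$, which is exactly where it fails. Worse, even where such an equivalence holds it cannot be used here: the bracketed quantity vanishes on the resonant set, so replacing a summand by a merely comparable (but unequal) one inside the absolute value cannot yield the two-sided $\approx$ claimed in the lemma. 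The correct step is to group $M^2$ with $|\xi||\eta|$ in the difference of squares: since $\lr{\xi}_M\lr{\eta}_M-(|\xi||\eta|+M^2)=M^2(|\xi|-|\eta|)^2/(\lr{\xi}_M\lr{\eta}_M+|\xi||\eta|+M^2)$, your conjugated expression $2(\lr{\xi}_M\lr{\eta}_M+\xi\cdot\eta)+2M^2-1$ is \emph{identically equal} to twice the bracket in the statement, so the matching is exact and no equivalence is needed. This is precisely what the paper's single identity does (it subtracts $m_1m_2$ together with $|x||y|$ before rationalising), and the same exact grouping (subtracting $M$ together with $|\xi||\xi-\eta|$) is what makes your ``mechanical rearrangement'' for the second resonant bound close, as it indeed does.
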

\begin{proof}
We begin by noting that, if we let $m_1, m_2, m_3 \g 0$, then for any $x, y \in \RR^n$ we have the identity
    \begin{align}
      \big| \lr{x-y}_{m_3}^2 - &(\lr{x}_{m_1} \pm \lr{y}_{m_2})^2 \big| \notag \\
        &=  \big|  \mp 2 \lr{x}_{m_1} \lr{y}_{m_2} -  2 x\cdot y  + (m_3^2 - m_1^2 - m_2^2 ) \big|         \notag \\
        &=  \big| 2\big( \lr{x}_{m_1} \lr{y}_{m_2} - (|x| |y| + m_1 m_2) \big)+ 2 \big(|x| |y| \pm x\cdot y\big)  \pm \big( (m_1 \pm m_2)^2 - m_3^2\big)  \big| \notag \\
        &= 2 \Big| \frac{ ( m_1 |y| - m_2 |x|)^2}{\lr{x}_{m_1} \lr{y}_{m_2} + |x| |y| + m_1 m_2}  + |x| |y| \pm x \cdot y  \pm \frac{ (m_1 \pm m_2)^2 - m_3^2}{2} \Big|.
        \label{eqn - lem modulation bound - general modulation identity}
    \end{align}
We now turn to (i). The bound for $\mathfrak{M}_{-, +}$ is clear. On the other hand, by taking $x=\xi$, $y=\eta$, $m_1=m_2=M$, $m_3 = 1$ in (\ref{eqn - lem modulation bound - general modulation identity}), we have
    \begin{align*} \mathfrak{M}_{\pm, \pm}(\xi, \eta)\g \big| \lr{\xi - \eta} - | \lr{\xi}_M - \lr{\eta}_M | \big|
        &\approx \frac{1}{\lr{\xi- \eta}} \big| \lr{\xi - \eta}^2 - \big( \lr{\xi}_M - \lr{\eta}_M\big)^2 \big| \\
        &\approx \frac{1}{\lr{\xi - \eta}} \Bigg(  \frac{(|\xi| - |\eta|)^2}{\lr{\xi} \lr{\eta}} + |\xi| |\eta| \theta^2(\xi, \eta) + 1 \Bigg).
    \end{align*}
Similarly, taking $x = \xi - \eta$ and $y = \xi$,  gives
    \begin{align*}
      \mathfrak{M}_{-, -}(\xi, \eta)= \frac{ \big| \lr{\eta}_M^2 - ( \lr{\xi - \eta} + \lr{\xi}_M)^2 \big|}{\lr{\eta}_M +  \lr{\xi - \eta} + \lr{\xi}_M} &\gtrsim  \frac{|\xi-\eta| |\xi| }{\lr{\xi} + \lr{\eta}} \theta^2(\xi-\eta, -\xi).
        \end{align*}
Using the symmetry $\mathfrak{M}_{-, -}(\xi, \eta) = \mathfrak{M}_{+, +}(\eta, \xi)$ gives the remaining bound in (i). To prove (ii), we note that another application of (\ref{eqn - lem modulation bound - general modulation identity}) gives
    \begin{align*}
      \mathfrak{M}_{+, -}(\xi, \eta) &\approx \frac{1}{\lr{\xi} + \lr{\eta}} \big| \lr{\xi - \eta}^2 - \big( \lr{\xi}_M + \lr{\eta}_M \big)^2 \big| \\
        &\approx  \frac{1}{\lr{\xi} + \lr{\eta}} \Big| M^2 \frac{ (|\xi| - |\eta|)^2}{\lr{\xi}_{M} \lr{\eta}_{M} + |\xi| |\eta| + M^2}  + |\xi| |\eta| + \xi \cdot \eta + \frac{ 4 M^2 - 1}{2} \Big|
    \end{align*}
from which the first inequality in (ii). The second inequality in (ii) follows from a similar application of (\ref{eqn - lem modulation bound - general modulation identity}).
\end{proof}

\subsection{The Trilinear Estimates}\label{subsect:proof1}
Suppose we would like to bound an expression of the form $P_{\lambda} H_N \mc{I}^\pm_m[F]$ in $V^2_{\pm,m}$.  An application of the energy inequality, Lemma \ref{lem - energy ineq for F norms}, implies that
we have
	$$ \| P_\lambda H_N \mc{I}^\pm_m[F] \|_{V^2_{\pm,m}} \lesa \sup_{ \| P_\lambda H_N u \|_{V^2_{\pm,m}} \lesa 1 } \Big| \int_{\RR^{1+3}} (P_\lambda H_N u)^\dagger  F dx dt \Big|. $$
Thus to bound the $V^2$ component of $\| \mc{I}^\pm_m[F] \|_{F^{\pm, m}_{\lambda, N}}$, it is enough to control the integral $\int_{\RR^{1+3}} (P_\lambda H_N u)^\dagger  F dx dt$. Consequently, to estimate the $V^2$ component of norms in Theorem \ref{thm - bilinear F freq loc endpoint}, the key step is to prove the following trilinear estimate. To simplify notation somewhat, we define $\mb{B}_\epsilon = ( \frac{\min\{ \mu, \lambda_1, \lambda_2\}}{\max\{ \mu, \lambda_1, \lambda_2\}})^\epsilon$ if $M \g \frac{1}{2}$, and if $0<M<\frac{1}{2}$ we let
    $$ \mb{B}_\epsilon = \begin{cases} \Big( \frac{\min\{ \mu, \lambda_1, \lambda_2\}}{\max\{ \mu, \lambda_1, \lambda_2\}}\Big)^\epsilon \qquad &\mu \ll \max\{\lambda_1, \lambda_2\} \text{ or } \mu \gg \min\{ \lambda_1, \lambda_2 \} \\
    1  + \mu^{-\frac{1}{6} + \sigma} (\min\{N, N_1, N_2\})^{\frac{7}{30}} & \mu \approx \lambda_1 \approx \lambda_2.
    \end{cases} $$

\begin{theorem}\label{thm - trilinear freq loc integral}
Let $M>0$. For every $\frac{\sigma}{100} <\delta \ll 1$ we have
	\begin{equation}\label{eqn - trilinear freq loc integral I} \begin{split} \Big|\int_{\RR^{3+1}} \phi_{\mu, N} \big( \Pi_{\pm_1} \psi_{\lambda_1, N_1}&\big)^\dagger \gamma^0 \Pi_{\pm_2} \varphi_{\lambda_2, N_2} dx dt\Big| \\
        &\lesa \mu^\frac{1}{2} (\min\{N, N_2\})^\delta \mb{B}_{\min\{\frac{\delta}{8}, \frac{1}{2a}-\frac{1}{4}\}} \| \phi\|_{F^{+, 1}_{\mu, N}} \| \psi_{\lambda_1, N_1} \|_{V^2_{\pm_1,M}} \| \varphi \|_{F^{\pm_2, M}_{\lambda_2, N_2}}\end{split}
	\end{equation}
and
	\begin{equation}\label{eqn - trilinear freq loc integral II} \begin{split} \Big|\int_{\RR^{3+1}} \phi_{\mu, N} \big( \Pi_{\pm_1} \psi_{\lambda_1, N_1}&\big)^\dagger \gamma^0 \Pi_{\pm_2} \varphi_{\lambda_2, N_2} dx dt\Big|\\
        &\lesa  \mu^\frac{1}{2}  (\min\{N_1, N_2\})^\delta \mb{B}_{\min\{\frac{\delta}{8}, \frac{1}{2a}-\frac{1}{4}\}} \| \phi_{\mu, N} \|_{V^2_{+,1}} \| \psi \|_{F^{\pm_1, M}_{\lambda_1, N_1}} \| \varphi \|_{F^{\pm_2, M}_{\lambda_2, N_2}}.\end{split}
	\end{equation}
In the region $\lambda_2 \gg \lambda_1$ we have the slightly stronger bound
	\begin{equation}\label{eqn - trilinear freq loc integral III} \begin{split} \Big|\int_{\RR^{3+1}} \phi_{\mu, N} &\big( \Pi_{\pm_1} \psi_{\lambda_1, N_1}\big)^\dagger \gamma^0 \Pi_{\pm_2} \varphi_{\lambda_2, N_2} dx dt\Big| \\
        &\lesa \mu^\frac{1}{2} (\min\{N, N_2\})^\delta \Big( \frac{\lambda_1}{\lambda_2}\Big)^{\frac{\delta}{8}} \| \phi_{\mu, N} \|_{V^2_{+,1}}  \| \psi_{\lambda_1, N_1} \|_{V^2_{\pm_1,M }} \| \varphi_{\lambda_2, N_2} \|_{V^2_{\pm_2,M}}.\end{split}
	\end{equation}
Similarly, when $\mu \ll \lambda_1$, we have
	\begin{equation}\label{eqn - trilinear freq loc integral IV} \begin{split} \Big|\int_{\RR^{3+1}} \phi_{\mu, N} &\big( \Pi_{\pm_1} \psi_{\lambda_1, N_1}\big)^\dagger \gamma^0 \Pi_{\pm_2} \varphi_{\lambda_2, N_2} dx dt\Big|\\
        &\lesa  \mu^\frac{1}{2}  (\min\{N_1, N_2\})^\delta \Big(\frac{\mu}{\lambda_1}\Big)^{\frac{\delta}{8}} \| \phi_{\mu, N} \|_{V^2_{+,1}}  \| \psi_{\lambda_1, N_1} \|_{V^2_{\pm_1,M }} \| \varphi_{\lambda_2, N_2} \|_{V^2_{\pm_2,M}}.\end{split}
	\end{equation}
\end{theorem}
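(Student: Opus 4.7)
The plan is to decompose the integral according to the Littlewood-Paley trichotomy, then further according to modulations, and then apply the bilinear and Strichartz machinery from Sections \ref{sect:assu}--\ref{sect:small} and \ref{sect:prep} together with the null-form decomposition of Subsection~8.1. By the frequency constraint \eqref{eqn - littlewood-paley cond} the interaction is supported in one of the three regimes: high-high to low ($\mu \ll \lambda_1 \approx \lambda_2$), high-low ($\lambda_2 \ll \mu \approx \lambda_1$ or $\lambda_1 \ll \mu \approx \lambda_2$), and fully resonant scales ($\mu \approx \lambda_1 \approx \lambda_2$). By the symmetry $\mathfrak{M}_{+,+}(\xi,\eta)=\mathfrak{M}_{-,-}(\eta,\xi)$ and relabeling, it will suffice to treat one representative in each class.

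Once the frequency regime is fixed, I insert $1 = \sum_d C_{d}^{+,1}$ on $\phi$, $\sum_{d_1} C_{d_1}^{\pm_1, M}$ on $\psi$ and $\sum_{d_2} C_{d_2}^{\pm_2, M}$ on $\varphi$, and observe that the integral vanishes unless the largest modulation $d_{\max}$ dominates the characteristic resonance function
\[
\mathfrak{M}_{\pm_1,\pm_2}(\xi,\eta) \lesa d_{\max}
\]
on the supports. Lemma~\ref{lem - modulation bound} then gives a lower bound on $d_{\max}$ in terms of $\mu,\lambda_1,\lambda_2$, the angular separation, and, in the $(+,-)$ case, a mass-dependent quantity that can vanish precisely when $m>2M$, which is the resonant scenario. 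When the large modulation sits on $\phi$ (resp.\ $\psi$, $\varphi$) I use the $Y^{+,1}_{\mu,N}$ norm (resp.\ Lemma~\ref{lem - norm controls Xsb} applied to $V^2_{\pm_1,M}$ or $V^2_{\pm_2,M}$) to absorb the $d^{b}$ weight; this is precisely what forces the $Y$-component into the definition of $F^{s,\sigma}_{\pm,m}$. The remaining two factors are then pairs of low-modulation $V^2$ waves to which the bilinear estimate of Corollary~\ref{cor - small scale bilinear estimate} applies after a Whitney-type angular/radial decomposition, with the null-symbol bound \eqref{eqn:nullsymbolbound} combined with Lemma~\ref{lem - null form bound} supplying the extra factor of $\theta(\pm_1\xi,\pm_2\eta)$ or $\bigl||\xi|\pm|\eta|\bigr|/\lr{\xi}\lr{\eta}$ that compensates the $L^2$-Hölder gap. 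Lemma~\ref{lem - square sum loss} converts the resulting $U^2$-bounds to $V^2$-bounds at the cost of the harmless $\mb B_{\min\{\delta/8,\cdot\}}$ factor and a small power of $\min\{N_i\}$ (providing the room for the $\delta$-angular derivative).

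The estimates \eqref{eqn - trilinear freq loc integral III} and \eqref{eqn - trilinear freq loc integral IV}, which sit off-diagonal in frequency, are the easiest: here the characteristic modulation is always $\gtrsim \mu\approx \min\{\lambda_j\}$, so one is never in a resonant scenario and the small-scale bilinear estimate in Part~\eqref{it:cor-part1} of Corollary~\ref{cor - small scale bilinear estimate} with caps of angle $\alpha\approx 1$, together with the wave Strichartz bound \eqref{eqn:L4sqrsum}, is enough to close the estimate with the $(\mu/\lambda)^{\delta/8}$ gain. The non-resonant diagonal case $\mu\approx \lambda_1\approx \lambda_2$ with $2M\ge m$ is handled by Lemma~\ref{lem - modulation bound}(i) which gives $\mathfrak{M}\gtrsim \lambda$, producing a similar high-modulation gain.

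The genuine difficulty is the fully resonant case $\mu\approx \lambda_1\approx \lambda_2$ with $m>2M>0$, where the set $\{\mathfrak{M}_{+,-}=0\}$ is a non-trivial hypersurface by Lemma~\ref{lem - modulation bound}(ii). Here one cannot simply use high modulation to conclude, and the whole strategy must exploit the transversality of that hypersurface against the tubes carrying the $V^2$ wave packets. The plan is to dyadically decompose in the angular parameter $\alpha$ and radial parameter via caps $\kappa \in \mc{C}_\alpha$ and thickened annuli $Q_{\alpha\mu^2}$, so that on each piece the resonance condition localizes $\xi$ to a $\mu^{-1}\times 1\times \alpha\mu^2$ slab as in Part~\eqref{it:cor-part2} of Corollary~\ref{cor - small scale bilinear estimate}. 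The small-scale bilinear estimate then applies with the sharp scaling, and—crucially—Lemma~\ref{lem - null form bound}(ii) upgrades the null-symbol bound to a gain of a power of $\alpha$ on the spinorial factor. Summing over caps with the square-sum inequality \eqref{eqn:sqrsum bilinear} and Lemma~\ref{lem - square sum loss}(ii), the angular-regularity norm $H_{N_i}$ produces $N_i^{7/30+\sigma}$ from the Klein--Gordon Strichartz bound of Lemma~\ref{lem - KG strichartz}, which is exactly the amount that balances the angular loss arising from the resonance hypersurface geometry. This is the step where the specific threshold $\sigma_M=7/30+\sigma$ in Theorem~\ref{thm:dkg} is forced, and where the novel $V^2$-valued bilinear restriction estimate of Theorem~\ref{thm - bilinear Lp estimate}, applied via Corollary~\ref{cor - small scale bilinear estimate}, is indispensable.
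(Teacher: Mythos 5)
Your overall skeleton (modulation decomposition, analysis of the resonance function $\mathfrak{M}_{\pm_1,\pm_2}$, null structure, and — in the strongly resonant case $0<M<\tfrac12$ — transversality on the resonant set exploited through Corollary \ref{cor - small scale bilinear estimate}(ii), the null gain of Lemma \ref{lem - null form bound}(ii), and the $N^{7/30+}$ loss from Lemma \ref{lem - KG strichartz}) matches the paper's proof in spirit. However, two of your quantitative claims about $\mathfrak{M}$ are wrong, and they undermine the non-resonant part of the plan. First, for \eqref{eqn - trilinear freq loc integral III} and \eqref{eqn - trilinear freq loc integral IV} you assert that the characteristic modulation is always $\gtrsim \mu$, so that Corollary \ref{cor - small scale bilinear estimate}(i) with caps of angle $\alpha\approx 1$ plus the $L^4$ Strichartz bound closes the estimate. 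By Lemma \ref{lem - modulation bound} the correct lower bound in the high-high-to-low regime $\mu\ll\lambda_1\approx\lambda_2$ is only $d\gtrsim \mu^{-1}$, and in the high-low regime it is $d\gtrsim (\min\{\lambda_1,\lambda_2\})^{-1}$; the angle between the interacting waves can then be as small as $(d\mu)^{1/2}/\lambda\approx \lambda^{-1}$, so there is no $\alpha\approx 1$ separation to invoke. One must instead sum over all angular scales $\beta\approx(d/\mu)^{1/2}$, extract the factor $\beta$ from the null symbol via Lemma \ref{lem - null form bound}, obtain the $N_{\min}^{\delta}$ factor from angular concentration (Lemma \ref{lem - angular concentration}), and, when the large modulation sits on the low-frequency factor, use the $Y^{\pm,m}_{\lambda,N}$ component with an $L^a_tL^2_x$/$L^2_{t,x}$ interpolation — this is precisely where the exponent $\tfrac{1}{2a}-\tfrac14$ in $\mb{B}$ comes from and why \eqref{eqn - trilinear freq loc integral III}, \eqref{eqn - trilinear freq loc integral IV} hold with all factors in $V^2$ while \eqref{eqn - trilinear freq loc integral I}, \eqref{eqn - trilinear freq loc integral II} need the $F$ norms. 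Note also that running the bilinear restriction estimate in these non-resonant regimes, as you propose, would force the third factor into $L^{3-}_{t,x}$ and hence cost $N^{7/30}$ by Lemma \ref{lem - KG strichartz}, which is incompatible with the $(\min N)^{\delta}$ factors claimed; the paper uses only $L^2_{t,x}\times L^4_{t,x}\times L^4_{t,x}$ there.

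Second, your statement that the diagonal case $\mu\approx\lambda_1\approx\lambda_2$ with $2M\g m$ is handled because Lemma \ref{lem - modulation bound}(i) gives $\mathfrak{M}\gtrsim\lambda$ is false. The dangerous interaction is $(\pm_1,\pm_2)=(+,-)$, governed by part (ii) of that lemma: for $M>\tfrac12$ one only gets $\mathfrak{M}_{+,-}\gtrsim (4M^2-1)\,\mu^{-1}$, so modulations down to $\mu^{-1}$ still occur and must be treated by the same null-structure/angular-concentration argument as above; and for $M=\tfrac12$ the resonance function genuinely vanishes (e.g.\ when $|\xi|=|\eta|$ and $\xi\cdot\eta=-|\xi||\eta|$), so there is no modulation gain at all for $d\ll\mu^{-1}$. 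This weakly resonant case requires a separate argument, absent from your plan, in which the null symbol itself degenerates on the resonant set: one decomposes into caps of size $\beta=(d/\mu)^{1/2}$ and cubes of size $\mu^2\beta$ and applies Lemma \ref{lem - null form bound}(ii) to gain a factor $\beta$ at every modulation scale $d\ll\mu^{-1}$. This is exactly what permits $\sigma_M=\sigma$ (rather than $\tfrac{7}{30}+\sigma$) when $2M\g m$ in Theorem \ref{thm:dkg}, so the gap is substantive rather than cosmetic.
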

\begin{proof} We begin by decomposing the modulation (or distance to the relevant characteristic surface) and writing
    \begin{align*} \phi_{\mu, N}& \big( \Pi_{\pm_1} \psi_{\lambda_1, N_1}\big)^\dagger \gamma^0 \Pi_{\pm_2} \varphi_{\lambda_2, N_2} \\
    &= \sum_{ d \in 2^\ZZ}  C_d \phi_{\mu, N} \big(  \mc{C}^{\pm_1}_{\leq d}\psi_{\lambda_1, N_1}\big)^\dagger \gamma^0 \mc{C}^{\pm_2}_{\leq d} \varphi_{\lambda_2, N_2}+ C_{< d} \phi_{\mu, N} \big( \mc{C}^{\pm_1}_{d}\psi_{\lambda_1, N_1}\big)^\dagger \gamma^0 \mc{C}^{\pm_2}_{\leq d} \varphi_{\lambda_2, N_2} \\
    &\qquad \qquad \qquad\qquad \qquad  + C_{< d} \phi_{\mu, N} \big( \mc{C}^{\pm_1}_{<d}\psi_{\lambda_1, N_1}\big)^\dagger \gamma^0 \mc{C}^{\pm_2}_{d} \varphi_{\lambda_2, N_2} \\
   &= \sum_{d \in 2^\ZZ}  A_0 + A_1 + A_2
   \end{align*}
 Keeping in mind (\ref{eqn - littlewood-paley cond}), we now divide the proof into cases depending on the relative sizes of the frequency and the modulation. Namely, we consider separately the low modulation cases
$$\lambda_1 \approx \lambda_2 \gg \mu \text{ and } d \ll \lambda_1, \qquad \mu \gg \min\{ \lambda_1, \lambda_2\} \text{ and } d \ll \mu, \qquad \lambda_1 \approx \lambda_2 \approx \mu \text{ and } d\ll \mu$$
and the high modulation cases
$$\lambda_1 \approx \lambda_2 \gtrsim \mu \text{ and } d \gtrsim \lambda_1, \qquad \mu \gg \min\{ \lambda_1, \lambda_2\} \text{ and } d \gtrsim \mu.$$
 Clearly, this covers all possible frequency combinations. The first case in the low modulation regime, where the two spinors are high frequency, is the easiest, as this case interacts very favourably with the null structure. The second case, when $\mu \gg \min\{ \lambda_1, \lambda_2\}$, is more difficult, and is the main obstruction to the scale invariant Sobolev result. The final case, when $\mu \approx \lambda_1 \approx \lambda_2$ is the only resonant interaction, and this is where the bilinear estimates in Corollary \ref{cor - small scale bilinear estimate} play a crucial role. In the remaining high modulation cases $d \gtrsim \max\{\mu, \lambda_1, \lambda_2\}$, the null structure of the system no longer plays any role, and we need to exploit the $Y^{\pm, m}_{\lambda, N}$ norms to gain the off diagonal decay term.

\medskip

{\bf High-Low I: $\mu \ll \lambda_1 \approx \lambda_2$ and $d \ll \lambda_1$.} Our goal is to show that
\begin{equation}\label{eqn - high low I main bound}
              \begin{split} \sum_{d \ll \lambda_1 } \Big| \int_{\RR^{1+3}} A_0 dx dt \Big| + &\Big| \int_{\RR^{1+3}} A_1 dx dt \Big| + \Big| \int_{\RR^{1+3}} A_2 dx dt \Big|\\
              &\lesa  \mu^{\frac{1}{2}}  N_{\min}^{\delta} \Big( \frac{\mu}{\lambda_1} \Big)^\frac{1}{4} \| \phi_{\mu, N} \|_{V^2_{+,1}}  \| \psi_{\lambda_1, N_1} \|_{V^2_{\pm_1,M }} \| \varphi_{\lambda_2, N_2} \|_{V^2_{\pm_2,M}}
              \end{split}
    \end{equation}
where we let $N_{min} = \min\{N, N_1, N_2\}$. Clearly this gives the bounds (\ref{eqn - trilinear freq loc integral I}), (\ref{eqn - trilinear freq loc integral II}), and (\ref{eqn - trilinear freq loc integral IV}) in the region $\mu \ll \lambda_1 \approx \lambda_2$ and $d \ll \lambda_1$.

We now prove the bound (\ref{eqn - high low I main bound}). The condition $d \ll \lambda_1$, together with an application of Lemma \ref{lem - modulation bound}, implies that we must have $\pm_1 = \pm_2$ and moreover, that the sum over the modulation is restricted to the region $\mu^{-1} \lesa d \lesa \mu$ (in particular this case is non-resonant). To estimate the first term, $A_0$, we note that after another application of Lemma \ref{lem - modulation bound},  we have the almost orthogonal decomposition
    $$ A_0 = \sum_{\substack{ \kappa, \kappa' \in \mc{C}_\alpha \\ |\kappa - \kappa'| \lesa \alpha} } \sum_{ \substack{ q, q' \in Q_\mu \\ |q-q'| \lesa \mu} }  C_d \phi_{\mu, N} \big(  \mc{C}^{\pm_1}_{\leq d}R_{\kappa} P_{q} \psi_{\lambda_1, N_1}\big)^\dagger \gamma^0 \mc{C}^{\pm_2}_{\leq d} R_{\kappa'} P_{q'} \varphi_{\lambda_2, N_2}$$
where $\alpha=(d\mu)^{\frac12}\lambda_1^{-1}$. Then, using the null-structure by writing
\[
 \mc{C}^{\pm_1}_{\les d}R_{\kappa}P_{\lambda_1}=C^{\pm_1,M}_{\les d}(\Pi_{\pm_1}-\Pi_{\pm_1}(\lambda_1\omega))R_{\kappa}P_{\lambda_1}+C^{\pm_1,M}_{\les d}\Pi_{\pm_1}(\lambda_1\omega_\kappa)R_{\kappa}P_{\lambda_1}
\]
(here $\omega_\kappa$ denotes the centre of the cap $\kappa$) and applying Lemma \ref{lem - null form bound}, together with the uniform disposability  of $C^{\pm_1,M}_{\les d}$ from \eqref{eqn:dispose}, we obtain for every $\epsilon>0$
\begin{align}\nonumber
\Big|\int A_0 dxdt \Big|& \lesa \sum_{\substack{ \kappa, \kappa' \in \mc{C}_\alpha \\ |\kappa - \kappa'| \lesa \alpha} } \sum_{ \substack{ q, q' \in Q_\mu \\ |q-q'| \lesa \mu} } \alpha \|C_d \phi_{\mu, N} \|_{L^2_{t,x}} \|R_{\kappa} P_{q} \psi_{\lambda_1, N_1}\|_{L^4_{t,x}} \| R_{\kappa'} P_{q'} \varphi_{\lambda_2, N_2}\|_{L^4_{t,x}}\\
& \lesa \mu^\frac{1}{2} \alpha^{-\epsilon} \Big( \frac{\mu}{\lambda_1}\Big)^{\frac{1}{2} - \epsilon} \| \phi_{\mu, N} \|_{V^2_{+,1}}  \| \psi_{\lambda_1, N_1} \|_{V^2_{\pm_1,M }} \| \varphi_{\lambda_2, N_2} \|_{V^2_{\pm_2,M}},\label{eq:A0-first}
\end{align}
where we used Lemma \ref{lem - norm controls Xsb} to control the $L^2_{t,x}$ norm of the high-modulation term, and the bound (\ref{eqn:L4sqrsum}). On the other hand, we can decompose
$$ A_0 = \sum_{\substack{ \kappa, \kappa',\kappa'' \in \mc{C}_\beta \\ |\kappa - \kappa'|, | \kappa'' \pm_2 \kappa'| \lesa \beta} }   C_d  R_{\kappa''} \phi_{\mu, N} \big(  \mc{C}^{\pm_1}_{\leq d}R_{\kappa} \psi_{\lambda_1, N_1}\big)^\dagger \gamma^0 \mc{C}^{\pm_2}_{\leq d} R_{\kappa'} \varphi_{\lambda_2, N_2}$$
where $\beta=d^{\frac12}\mu^{-\frac12}$, again by almost orthogonality and Lemma \ref{lem - modulation bound}. As above, we obtain for every $\epsilon>0$
\begin{align} \nonumber
\Big|\int A_0 dxdt \Big|& \lesa  \sum_{\substack{ \kappa, \kappa',\kappa'' \in \mc{C}_\beta \\ |\kappa - \kappa'|, |\kappa'' \pm_2 \kappa'| \lesa \beta} } \beta \|C_d R_{\kappa''} \phi_{\mu, N} \|_{L^2_{t,x}} \|R_{\kappa}  \psi_{\lambda_1, N_1}\|_{L^4_{t,x}} \| R_{\kappa'} \varphi_{\lambda_2, N_2}\|_{L^4_{t,x}}\\
& \lesa \beta^{1-\epsilon} d^{-\frac{1}{2}} \lambda (\beta N_{\min})^{\frac{1}{4}} \| \phi_{\mu, N} \|_{V^2_{+,1}}  \| \psi_{\lambda_1, N_1} \|_{V^2_{\pm_1,M }} \| \varphi_{\lambda_2, N_2} \|_{V^2_{\pm_2,M}},\label{eq:A0-ang}
\end{align}
where we used the angular concentration Lemma \ref{lem - angular concentration} on the lowest angular frequency term. Combining \eqref{eq:A0-first} and \eqref{eq:A0-ang}, by taking $\epsilon>0$ sufficiently small, we obtain for every $0<\delta \ll 1$
\begin{align*}
\sum_{\mu^{-1} \lesa d \lesa \mu} \Big|\int A_0 dxdt \Big| & \lesa \sum_{\mu^{-1} \lesa d \lesa \mu}
\Big(\frac{d}{\mu}\Big)^{\frac{\delta}{4}}N_{\min}^{\delta} \Big(\frac{\mu}{\lambda}\Big)^{\frac14}\mu^{\frac12}
\| P_\mu H_N \phi\|_{V^2_{+,1}} \| \psi \|_{F^{\pm_1, M}_{\lambda_1, N_1}} \| \varphi \|_{F^{\pm_2, M}_{\lambda_2, N_2}}\\
 & \lesa N_{\min}^{\delta} \Big(\frac{\mu}{\lambda}\Big)^{\frac14}\mu^{\frac12}\| \phi_{\mu, N} \|_{V^2_{+,1}}  \| \psi_{\lambda_1, N_1} \|_{V^2_{\pm_1,M }} \| \varphi_{\lambda_2, N_2} \|_{V^2_{\pm_2,M}}
\end{align*}
which gives (\ref{eqn - high low I main bound}) for the $A_0$ term. Next, we deal with the $A_1$ term. The argument is similar to the above, but the initial decomposition is slightly different as we no longer require the cube decomposition. Instead, we need to decompose the $\phi$ term into caps to ensure that the $C_{<d}$ multiplier is disposable. In more detail, the resonance bound in Lemma \ref{lem - modulation bound} gives
   $$A_1 = \sum_{\substack{ \kappa, \kappa' \in \mc{C}_\alpha \\ |\kappa - \kappa'| \lesa \alpha} } \sum_{\substack{\kappa'' \in \mc{C}_\beta \\ |\kappa'' \pm_2 \kappa'| \lesa \beta}}   C_{<d} R_{\kappa''} \phi_{\mu, N} \big(  \mc{C}^{\pm_1}_{d}R_{\kappa} \psi_{\lambda_1, N_1}\big)^\dagger \gamma^0 \mc{C}^{\pm_2}_{\leq d} R_{\kappa'} \varphi_{\lambda_2, N_2}$$
where $\alpha=( \frac{d\mu}{\lambda_1^2} )^\frac{1}{2}$ and $\beta = ( \frac{d}{\mu})^\frac{1}{2}$. By exploiting the null structure as previously, we then obtain for every $\epsilon>0$
\begin{align}\nonumber
\Big|\int A_1 dxdt \Big|& \lesa \sum_{\substack{ \kappa, \kappa' \in \mc{C}_\alpha \\ |\kappa - \kappa'| \lesa \alpha} } \sum_{\substack{\kappa'' \in \mc{C}_\beta \\ |\kappa'' \pm_2 \kappa'| \lesa \beta}} \alpha \|  R_{\kappa''} \phi_{\mu, N} \|_{L^4_{t,x}} \| \mc{C}^{\pm_1}_d R_{\kappa}  \psi_{\lambda_1, N_1}\|_{L^2_{t,x}} \| R_{\kappa'} \varphi_{\lambda_2, N_2}\|_{L^4_{t,x}}\\
& \lesa \alpha^{1-\epsilon} \mu^\frac{1}{2}  d^{-\frac{1}{2}} \lambda_2^\frac{1}{2} \| \phi_{\mu, N} \|_{V^2_{+,1}}  \| \psi_{\lambda_1, N_1} \|_{V^2_{\pm_1,M }} \| \varphi_{\lambda_2, N_2} \|_{V^2_{\pm_2,M}},\label{eq:A1-first}
\end{align}
where we used Lemma \ref{lem - norm controls Xsb} to control the $L^2_{t,x}$ norm of the high-modulation term, and again used (\ref{eqn:L4sqrsum}) To gain a power of $d$, we again exploit the angular concentration estimate by exploiting a similar argument to (\ref{eq:A0-ang}) to deduce that
    \begin{align} \nonumber
\Big|\int A_1 dxdt \Big|& \lesa  \sum_{\substack{ \kappa, \kappa',\kappa'' \in \mc{C}_\beta \\ |\kappa - \kappa'|, |\kappa'' \pm_2 \kappa'| \lesa \beta} } \beta \| R_{\kappa''} \phi_{\mu, N} \|_{L^4_{t,x}} \| \mc{C}^{\pm_1}_d R_{\kappa}  \psi_{\lambda_1, N_1}\|_{L^2_{t,x}} \| R_{\kappa'} \varphi_{\lambda_2, N_2}\|_{L^4_{t,x}}\\
& \lesa \beta^{1-\epsilon} d^{-\frac12} \lambda^\frac{1}{2} \mu^\frac{1}{2} (\beta N_{\min})^\frac{1}{4} \| \phi_{\mu, N} \|_{V^2_{+,1}}  \| \psi_{\lambda_1, N_1} \|_{V^2_{\pm_1,M }} \| \varphi_{\lambda_2, N_2} \|_{V^2_{\pm_2,M}}.\label{eq:A1-ang}
\end{align}
Combining (\ref{eq:A1-ang}) and (\ref{eq:A1-ang}) as in the $A_0$ case, and summing up over $\mu^{-1} \lesa d \lesa \mu$ with $\epsilon$ sufficiently small, we obtain (\ref{eqn - high low I main bound}). The remaining term $A_2$ can be handled in an identical manner to the $A_1$. Thus the bound (\ref{eqn - high low I main bound}) follows.

\medskip

{\bf High-Low II: $\mu \gg \min\{ \lambda_1, \lambda_2\}$ and $d\ll \mu$.} Let $\{j, k\} = \{1, 2\}$ and $\lambda_j \g \lambda_k$. Our goal is  to prove that
	\begin{equation}\label{eqn - high low II main bound strong}
               \sum_{d \ll \mu } \Big| \int_{\RR^{1+3}} A_0 dx dt \Big| + \Big| \int_{\RR^{1+3}} A_j dx dt \Big|
              \lesa  \mu^{\frac{1}{2}}  N_{\min}^{\delta} \Big( \frac{\lambda_k}{\mu} \Big)^\frac{1}{8} \| \phi_{\mu, N} \|_{V^2_{+,1}}  \| \psi_{\lambda_1, N_1} \|_{V^2_{\pm_1,M }} \| \varphi_{\lambda_2, N_2} \|_{V^2_{\pm_2,M}}.
    \end{equation}
On the other hand, for the $A_k$ term, we have the weaker bounds
   \begin{equation}\label{eqn - high low II main bound weak I}
             \sum_{d \ll \mu }  \Big| \int_{\RR^{1+3}} A_k dx dt \Big| \lesa \mu^\frac{1}{2} \Big( \frac{\lambda_k}{\mu}\Big)^{\frac{\delta}{8}}  (\min\{N, N_j\})^\delta  \| \phi_{\mu, N} \|_{V^2_{+,1}}  \| \psi_{\lambda_1, N_1} \|_{V^2_{\pm_1,M }} \| \varphi_{\lambda_2, N_2} \|_{V^2_{\pm_2,M}}
\end{equation}
and
 \begin{equation}\label{eqn - high low II main bound weak II}
	\sum_{d \ll \mu }  \Big| \int_{\RR^{1+3}} A_k dx dt \Big| \lesa \mu^\frac{1}{2} \Big( \frac{\lambda_k}{\mu}\Big)^{\frac{1}{2a}-\frac{1}{4}} N_k^{\delta} \| \phi_{\mu, N} \|_{V^2_{+,1}}
	\begin{cases}   \| \psi \|_{F^{\pm_1, M}_{\lambda_1, N_1}} \| \varphi_{\lambda_2, N_2} \|_{V^2_{\pm_2,M}} \qquad &k=1 \\
	 \| \psi_{\lambda_1, N_1} \|_{V^2_{\pm_1,1}} \| \varphi \|_{F^{\pm_2, M}_{\lambda_2, N_2}} \qquad &k=2 \end{cases}
 \end{equation}
where $\frac{1}{2}<\frac{1}{a}<\frac{1}{2} + \frac{\sigma}{1000}$ is as in the definition of the $Y^{\pm, m}_{\lambda, N}$ norm. Clearly (\ref{eqn - high low II main bound strong}), (\ref{eqn - high low II main bound weak I}), and (\ref{eqn - high low II main bound weak II}) give the estimates claimed in Theorem \ref{thm - trilinear freq loc integral} in the region $\mu \gg \min\{ \lambda_1, \lambda_2\}$ and $d\ll \mu$. Note that we have a weaker bound when the low frequency term has modulation away from the hyperboloid, and for this interaction, we are forced to exploit the $Y^{\pm, m}_{\lambda, N}$ norms.

We begin the proof of (\ref{eqn - high low II main bound strong}), (\ref{eqn - high low II main bound weak I}), and (\ref{eqn - high low II main bound weak II}) by observing that since the estimate is essentially symmetric in $\psi$ and $\q$, it is enough to consider the case $\mu \approx \lambda_1 \gg \lambda_2$, in other words, we only consider the case $j=1$ and $k=2$. As in the previous case, as $d \ll \mu$, Lemma \ref{lem - modulation bound} implies that we only have a non-zero contribution if $\pm_1 = +$ and $ \lambda_2^{-1} \lesa d \lesa \lambda_2$. To control the $A_0$ term, we decompose into caps of radius $\beta = ( \frac{d}{\lambda_2})^\frac{1}{2}$ and cubes of diameter $\lambda_2$. Lemma \ref{lem - modulation bound} implies that we have the almost orthogonality identity
	$$A_0 = \sum_{\substack{ \kappa, \kappa' \in \mc{C}_\beta \\ |\kappa \mp_2 \kappa'| \lesa \beta} } \sum_{ \substack{ q, q' \in Q_{\lambda_2} \\ |q-q'| \lesa \lambda_2} }  P_{q'} C_d \phi_{\mu, N} \big(  P_q R_{\kappa} \mc{C}^{+}_{\leq d}\psi_{\lambda_1, N_1}\big)^\dagger \gamma^0 R_{\kappa'} \mc{C}^{\pm_2}_{\leq d} \varphi_{\lambda_2, N_2}. $$
Thus exploiting the null structure as previously, disposing of the $C^{\pm, m}_d$ multipliers using (\ref{eqn:dispose}), and applying the $L^4_{t, x}$ Strichartz estimate we obtain for every $\epsilon>0$
	\begin{align}
		\Big| \int_{\RR^{1+3}} A_0 dx dt\Big| &\lesa \sum_{\substack{ \kappa, \kappa' \in \mc{C}_\beta \\ |\kappa \mp_2 \kappa'| \lesa \beta} } \sum_{ \substack{ q, q' \in Q_{\lambda_2} \\ |q-q'| \lesa \lambda_2} } \beta \| P_{q'} C_d \phi_{\mu, N}\|_{L^2_{t, x}} \| P_q R_{\kappa} \psi_{\lambda_1, N_1}\|_{L^4_{t, x}} \| R_{\kappa'} \varphi_{\lambda_2, N_2} \|_{L^4_{t, x}} \notag\\
		&\lesa \beta^{-\epsilon} \mu^\frac{1}{2} \Big( \frac{\lambda_2}{\mu} \Big)^{\frac{1}{4}-\epsilon} \| \phi_{\mu, N} \|_{V^2_{+,1}}  \| \psi_{\lambda_1, N_1} \|_{V^2_{+,M }} \| \varphi_{\lambda_2, N_2} \|_{V^2_{\pm_2,M}}. \label{eqn:B_1-first}
    \end{align}
On the other hand, by decomposing into
	$$A_0 =  \sum_{\substack{ \kappa, \kappa',\kappa'' \in \mc{C}_\beta \\ |\kappa \mp_2 \kappa'|, |\kappa'' \pm_2 \kappa'| \lesa \beta} } R_{\kappa''} C_d \phi_{\mu, N} \big(   R_{\kappa} \mc{C}^{+}_{\leq d}\psi_{\lambda_1, N_1}\big)^\dagger \gamma^0 R_{\kappa'} \mc{C}^{\pm_2}_{\leq d} \varphi_{\lambda_2, N_2}$$
and using the angular concentration bound Lemma \ref{lem - angular concentration} on the smallest angular frequency term, a similar argument gives
	\begin{align}
		\Big| \int_{\RR^{1+3}} A_0 dx dt\Big| &\lesa \sum_{\substack{ \kappa, \kappa',\kappa'' \in \mc{C}_\beta \\ |\kappa \mp_2 \kappa'|, |\kappa'' \pm_2 \kappa'| \lesa \beta} }  \beta \|  C_d R_{\kappa''} \phi_{\mu, N}\|_{L^2_{t, x}} \|  R_{\kappa} \psi_{\lambda_1, N_1}\|_{L^4_{t, x}} \| R_{\kappa'} \varphi_{\lambda_2, N_2} \|_{L^4_{t, x}} \notag\\
		&\lesa \mu^\frac{1}{2} \beta^{\frac{1}{4}-\epsilon} N_{min}^{\frac{1}{4}} \| \phi_{\mu, N} \|_{V^2_{+,1}}  \| \psi_{\lambda_1, N_1} \|_{V^2_{ +,M }} \| \varphi_{\lambda_2, N_2} \|_{V^2_{\pm_2,M}}. \label{eqn:B_1-ang}
    \end{align}
As in the previous case, combining (\ref{eqn:B_1-first}) and (\ref{eqn:B_1-ang}) with $\epsilon$ sufficiently small gives (\ref{eqn - high low II main bound strong}) for the $A_0$ term. The $A_1$ term can be estimated by an identical argument (since the high modulation term is again at frequency $\mu$). To control the $A_2$ component, we start by again applying Lemma \ref{lem - modulation bound} and decomposing into
  $$ A_2 = \sum_{\substack{ \kappa, \kappa' \in \mc{C}_\beta \\ |\kappa \mp_2 \kappa'| \lesa \beta} } \sum_{ \substack{ q, q' \in Q_{\lambda_2} \\ |q-q'| \lesa \lambda_2} }  P_{q'} C_{<d} \phi_{\mu, N} \big(  P_q R_{\kappa} \mc{C}^{+}_{\leq d}\psi_{\lambda_1, N_1}\big)^\dagger \gamma^0 R_{\kappa'} \mc{C}^{\pm_2}_{ d} \varphi_{\lambda_2, N_2} $$
 where as usual $\beta = (\frac{d}{\lambda_2})^\frac{1}{2}$. Applying the by now standard null form bound, (\ref{eqn:dispose}), and the $L^4_{t, x}$ Strichartz estimate, we conclude that for every $\epsilon>0$
	\begin{align}
		\Big| \int_{\RR^{1+3}} A_2 dx dt\Big| &\lesa \sum_{\substack{ \kappa, \kappa' \in \mc{C}_\beta \\ |\kappa \mp_2 \kappa'| \lesa \beta} } \sum_{ \substack{ q, q' \in Q_{\lambda_2} \\ |q-q'| \lesa \lambda_2} } \beta \| P_{q'}\phi_{\mu, N}\|_{L^4_{t, x}} \| P_q R_{\kappa} \psi_{\lambda_1, N_1}\|_{L^4_{t, x}} \| R_{\kappa'}  \mc{C}^{\pm_2}_{ d} \varphi_{\lambda_2, N_2} \|_{L^2_{t, x}} \notag\\
		&\lesa \mu^\frac{1}{2} \beta^{-\epsilon} \Big( \frac{\mu}{\lambda_2}\Big)^\epsilon \| \phi_{\mu, N} \|_{V^2_{+,1}}  \| \psi_{\lambda_1, N_1} \|_{V^2_{ +,M }} \| \varphi_{\lambda_2, N_2} \|_{V^2_{\pm_2,M}}. \label{eqn:B_3-first}
   \end{align}
Note that we get no high frequency gain here (in fact we have a slight loss due to the sum over cubes). On the other hand, by decomposing all three terms into caps of size $\beta$, using null structure, the $L^q_t L^4_x$ Strichartz estimate in Lemma \ref{lem - wave strichartz}, and  Bernstein's inequality followed by Lemma \ref{lem - norm controls Xsb} for $\varphi_{\lambda_2, N_2}$,  we obtain  for any $2<q<2 + \frac{2}{3}$
	\begin{align}
		\Big| \int_{\RR^{1+3}} A_2 dx dt\Big| &\lesa \sum_{\substack{ \kappa, \kappa',\kappa'' \in \mc{C}_\beta \\ |\kappa \mp_2 \kappa'|, |\kappa'' \pm_2 \kappa'| \lesa \beta} }  \beta \| R_{\kappa''} \phi_{\mu, N}\|_{L^\frac{q}{q-2}_t L^\frac{2q}{4-q}_x}  \|  R_{\kappa} \psi_{\lambda_1, N_1} \|_{L^q_t L^4_x} \|R_{\kappa''} \mc{C}^{\pm_2}_d \varphi_{\lambda_2, N_2} \|_{L^q_t L^{\frac{4q}{5q-8}}_x} \notag \\
	    &\lesa \mu^\frac{1}{2} \Big( \frac{d}{\lambda_2}\Big)^{\frac{1}{q} - \frac{1}{4} - \epsilon} \Big( \frac{\lambda_2}{\mu}\Big)^{\frac{5}{q} - \frac{9}{4}}  N_1\| \phi_{\mu, N} \|_{V^2_{+,1}}  \| \psi_{\lambda_1, N_1} \|_{V^2_{ +,M }} \| \varphi_{\lambda_2, N_2} \|_{V^2_{\pm_2,M}}\label{eqn:B_3-ang}
	\end{align}
(schematically, we are putting the product into $L^{\infty-}_tL^{2+}_x\times L^{2+}_t L^{4}_x \times L^{2+}_t L^{4-}_x $). Switching the roles of $\phi_{\mu, N}$ and $\psi_{\lambda_1, N_1}$,  and combining (\ref{eqn:B_3-first}) and (\ref{eqn:B_3-ang}) with $q$ close to 2, and $\epsilon>0$ sufficiently small, we obtain (\ref{eqn - high low II main bound weak I}).

It remains to prove (\ref{eqn - high low II main bound weak II}), thus we need to consider the case where $\varphi$ also has the smallest angular frequency. We begin by again using Lemma \ref{lem - modulation bound} to decompose
	$$A_2 = \sum_{\substack{ \kappa, \kappa',\kappa'' \in \mc{C}_\beta \\ |\kappa \mp_2 \kappa'|, |\kappa'' \pm_2 \kappa'| \lesa \beta} } \sum_{\substack{q,q'' \in Q_{\lambda_2} \\ |q-q''| \lesa \lambda_2}}
	R_{\kappa''} P_{q''} C_{<d} \phi_{\mu, N} \big(   R_{\kappa} P_q \mc{C}^{+}_{\leq d}\psi_{\lambda_1, N_1}\big)^\dagger \gamma^0 R_{\kappa'} \mc{C}^{\pm_2}_{d} \varphi_{\lambda_2, N_2}
	$$
where $\beta = (\frac{d}{\lambda_2})^\frac{1}{2}$. An application of Bernstein's inequality, Lemma \ref{lem - norm controls Xsb}, and  the angular concentration lemma for $\varphi$, together with the null form bound,  and Lemma \ref{lem - wave strichartz}, implies that for any $\epsilon>0$ sufficiently small
	\begin{align*}
		\Big| &\int_{\RR^{1+3}} A_2 dx dt \Big| \\
		&\lesa \sum_{\substack{ \kappa, \kappa',\kappa'' \in \mc{C}_\beta \\ |\kappa \mp_2 \kappa'|, |\kappa'' \pm_2 \kappa'| \lesa \beta} } \sum_{\substack{q,q'' \in Q_{\lambda_2} \\ |q-q''| \lesa \lambda_2}} \beta
	\| R_{\kappa''} P_{q''}  \phi_{\mu, N}\|_{L^{\frac{2a}{a-1}}_t L^{2a}_x} \| R_{\kappa} P_q \psi_{\lambda_1, N_1}\|_{L^{\frac{2a}{a-1}}_t L^{2a}_x} \| R_{\kappa'} \mc{C}^{\pm_2}_{d} \varphi_{\lambda_2, N_2} \|_{L^a_t L^{\frac{a}{a-1}}_x} \\
	&\lesa \beta^{1-\epsilon} \Big( \frac{\mu}{\lambda_2}\Big)^{\epsilon} (\mu \lambda_2)^{1-\frac{1}{a}} ( \beta^2 \lambda_2^3 )^{\frac{1}{a} - \frac{1}{2}} (\beta N_2)^{\delta} \| P_\mu H_N \phi\|_{V^2_{+,1}} \| \psi \|_{F^{\pm_1, M}_{\lambda_1, N_1}}  \| \mc{C}^{\pm_2}_{d} \varphi_{\lambda_2, N_2} \|_{L^a_t L^2_x}\\
	&\lesa \mu^\frac{1}{2} N_2^{\delta} \Big(\frac{\lambda_2}{\mu}\Big)^{\frac{1}{2a} - \frac{1}{4}} \Big( \frac{d}{\lambda_2}\Big)^{\frac{1}{2}(\frac{\delta}{2} - b + \frac{1}{a})}  \|  \phi_{\mu, N} \|_{V^2_{+,1}} \| \psi_{\lambda_1, N_1} \|_{V^2_{+,M}}   \| \varphi \|_{Y^{\pm_2, M}_{\lambda_2, N_2}}
	\end{align*}
which gives  (\ref{eqn - high low II main bound weak II}) since $\frac{1}{2}<\frac{1}{a}<\frac{1}{2} + \frac{\sigma}{1000}$, and  $b-\frac{1}{a} = \frac{2}{a} - 1< \frac{\sigma}{500} < \frac{\delta}{5}$.

\medskip

{\bf High-High: $\mu \approx \lambda_1 \approx \lambda_2$ and $d\ll \mu$.} Our goal is to prove that if $M \g \frac{1}{2}$, then for any $\delta>0$  we have the bound
	\begin{equation}\label{eqn - high high nonresonant}
              \begin{split} \sum_{  d \ll \mu } \Big| \int_{\RR^{1+3}} A_0 dx dt \Big| + \Big| \int_{\RR^{1+3}} A_1 dx dt \Big| + &\Big| \int_{\RR^{1+3}} A_2 dx dt \Big|\\
              &\lesa  \mu^{\frac{1}{2}}  N_{\min}^{\delta} \| \phi_{\mu, N} \|_{V^2_{+,1}}  \| \psi_{\lambda_1, N_1} \|_{V^2_{\pm_1,M }} \| \varphi_{\lambda_2, N_2} \|_{V^2_{\pm_2,M}}\end{split}
    \end{equation}
while if $0<M<\frac{1}{2}$, for every $s, \delta > 0$, we have
	\begin{equation}\label{eqn - high high resonant}
              \begin{split}  \Big| \int_{\RR^{1+3}} \sum_{ d \ll \mu  } A_0 +& A_1 + A_2 dx dt \Big|\\
              &\lesa \mu^\frac{1}{2} N_{\min}^\delta \big( 1 +  \mu^{-\frac{1}{6} + s}  N_{\min}^{\frac{7}{30}}\big) \| \phi_{\mu, N} \|_{V^2_{+,1}}  \| \psi_{\lambda_1, N_1} \|_{V^2_{\pm_1,M }} \| \varphi_{\lambda_2, N_2} \|_{V^2_{\pm_2,M}}.\end{split}
    \end{equation}
The key difference to the previous cases, is that if $0<M\les \frac{1}{2}$, we no longer have the non-resonant bound $d \gtrsim \mu^{-1}$, and thus we also have to estimate the \emph{resonant} interactions $d \ll \mu^{-1}$. This is particularly challenging in light of the fact that in the \emph{strongly resonant regime}, $0<M<\frac{1}{2}$, there is no gain from the null structure when $d \ll \mu^{-1}$. However, we do have \emph{transversality} in the region $d\ll \mu^{-1}$, and consequently, we can apply the key bilinear restriction estimate in Corollary \ref{cor - small scale bilinear estimate}. On the other hand, in the \emph{weakly resonant regime}, $M=\frac{1}{2}$, somewhat surprisingly and in stark contrast to the cases $M\not =\frac{1}{2}$, the null structure gives cancellation for \emph{all} modulation scales.

We start by considering the non-resonant region $ \mu^{-1} \lesa d \lesa \mu$. By decomposing into caps of radius $\beta = (\frac{d}{\mu})^{\frac{1}{2}}$, an application of Lemma \ref{lem - modulation bound} gives the identity
	$$A_0 =  \sum_{\substack{ \kappa, \kappa',\kappa'' \in \mc{C}_\beta \\ |\pm_1 \kappa \mp_2 \kappa'|, |\kappa'' \pm_2 \kappa'| \lesa \beta} } R_{\kappa''} C_d \phi_{\mu, N} \big(   R_{\kappa} \mc{C}^{\pm_1}_{\leq d}\psi_{\lambda_1, N_1}\big)^\dagger \gamma^0 R_{\kappa'} \mc{C}^{\pm_2}_{\leq d} \varphi_{\lambda_2, N_2}.$$
Thus by applying the $L^4_{t,x}$ Strichartz bound, exploiting the null structure as previously (here we need the assumption $d \gtrsim \mu^{-1}$), and using the angular concentration bound in Lemma \ref{lem - angular concentration} on $N_{min}$, we obtain for every $\epsilon>0$
	\begin{align*}
		\Big| \int_{\RR^{1+3}} A_0 dx dx \Big| &\lesa \sum_{\substack{ \kappa, \kappa',\kappa'' \in \mc{C}_\beta \\ |\pm_1 \kappa \mp_2 \kappa'|, |\kappa'' \pm_2 \kappa'| \lesa \beta} } \beta  \|R_{\kappa''} C_d \phi_{\mu, N}\|_{L^2_{t, x}}\|R_{\kappa} \psi_{\lambda_1, N_1}\|_{L^4_{t, x}} \|R_{\kappa'} \varphi_{\lambda_2, N_2}\|_{L^4_{t, x}} \\
		&\lesa \beta^{1-\epsilon} d^{-\frac{1}{2}} \mu  ( \beta N_{\min} )^\delta \| \phi_{\mu, N} \|_{V^2_{+,1}}  \| \psi_{\lambda_1, N_1} \|_{V^2_{\pm_1,M }} \| \varphi_{\lambda_2, N_2} \|_{V^2_{\pm_2,M}}.
	\end{align*}
Taking $\delta>0$ and $\epsilon>0$ sufficiently small, and summing up over the modulation $\mu^{-1} \lesa d \lesa \mu$ then gives (\ref{eqn - high high nonresonant}) and (\ref{eqn - high high resonant}) for $A_0$ in the region $\mu^{-1} \lesa d \lesa \mu$. A similar argument bounds the $A_1$ and $A_2$ terms in (\ref{eqn - high high nonresonant}) and (\ref{eqn - high high resonant}) provided the sum over modulation is restricted to $\mu^{-1} \lesa d \lesa \mu$.

We now consider the case $d \ll \mu^{-1}$. Note that if $M>\frac{1}{2}$, then using Lemma \ref{lem - modulation bound}, we see that $A_0=A_1=A_2 = 0$ and thus (\ref{eqn - high high nonresonant}) is immediate. On the other hand, if we are in the weakly resonant regime $M=\frac{1}{2}$, then another application of Lemma \ref{lem - modulation bound} implies that $\pm_1 = +$, $\pm_2 = -$,  and we have the decomposition
    $$ A_0 =  \sum_{\substack{ \kappa, \kappa',\kappa'' \in \mc{C}_\beta \\ | \kappa + \kappa'|, |\kappa'' - \kappa| \lesa \beta} } \sum_{\substack{ q, q' \in Q_{\mu^2 \beta} \\ |q+q'| \lesa \mu^2 \beta}}  R_{\kappa''} C_d \phi_{\mu, N} \big(   R_{\kappa} P_q \mc{C}^{+}_{\leq d}\psi_{\lambda_1, N_1}\big)^\dagger \gamma^0 R_{\kappa'} P_{q'} \mc{C}^{-}_{\leq d} \varphi_{\lambda_2, N_2}$$
where $\beta = (\frac{d}{\mu})^\frac{1}{2}$. Therefore, using the null form type bound (\ref{eqn:nullsymbolbound}), together with (ii) in Lemma \ref{lem - null form bound} to exploit the null structure, the orthogonality estimate in Lemma \ref{lem - square sum loss}, and an application of Lemma \ref{lem - wave strichartz} gives for every $\epsilon >0$
    \begin{align*}
      \Big| \int_{\RR^{1+3}} A_0 dx dt \Big| &\lesa \sum_{\substack{ \kappa, \kappa',\kappa'' \in \mc{C}_\beta \\ | \kappa - \kappa'|, |\kappa'' - \kappa| \lesa \beta} } \sum_{\substack{ q, q' \in Q_{\mu^2 \beta} \\ |q+q'| \lesa \mu^2 \beta}}  \beta \|R_{\kappa''} C_d \phi_{\mu, N}\|_{L^2_{t, x}} \| R_{\kappa} P_q \psi_{\lambda_1, N_1}\|_{L^4_{t, x}} \| R_{\kappa'} P_{q'} \varphi_{\lambda_2, N_2} \|_{L^4_{t, x}} \\
      &\lesa \beta \times d^{-\frac{1}{2}} \times \mu \times \beta^{-\epsilon} (\mu \beta)^{-\epsilon} \times (\beta N_{min})^\delta \| \phi_{\mu, N} \|_{V^2_{+, 1}} \| \psi_{\lambda_1, N_1} \|_{V^2_{+, M}} \| \varphi \|_{V^2_{-, M}}
          \end{align*}
where we used the angular concentration bound in Lemma \ref{lem - angular concentration} on the term with smallest angular frequency. Choosing $\epsilon>0$ sufficiently small, and summing up over $0<d \ll \mu^{-1}$ then gives (\ref{eqn - high high nonresonant}) for the $A_0$ term. An identical argument bounds the $A_1$ and $A_2$ terms.

It remains to prove (\ref{eqn - high high resonant}) when $0<d \ll \mu^{-1}$. Another application of Lemma \ref{lem - modulation bound}, implies that we must have $\pm_1 = +$ and $\pm_2=-$, as well as the key orthogonality identity
	\begin{align*}
		\sum_{d \ll \mu^{-1}}& A_0 + A_1 + A_2 \\
		&= C_{\ll \mu^{-1}} \phi_{\mu, N} \big(  \mc{C}^{+}_{\ll \mu^{-1} }\psi_{\lambda_1, N_1}\big)^\dagger \gamma^0 \mc{C}^{-}_{\ll \mu^{-1} } \varphi_{\lambda_2, N_2} \\
			&= \sum_{\substack{\kappa, \kappa', \kappa'' \in \mc{C}_{\mu^{-1}} \\ |\kappa + \kappa'|\lesa \mu^{-1}}} \sum_{\substack{q, q'' \in Q_\mu \\ |q - q''| \approx \mu \text{ or } |\kappa - \kappa''| \approx \mu^{-1}}}
		R_{\kappa''} P_{q''}C_{\ll \mu^{-1}} \phi_{\mu, N} \big( R_{\kappa} P_q \mc{C}^{+}_{\ll \mu^{-1} }\psi_{\lambda_1, N_1}\big)^\dagger \gamma^0 R_{\kappa'} \mc{C}^{-}_{\ll \mu^{-1} } \varphi_{\lambda_2, N_2}.
	\end{align*}
Note that the summation is restricted to terms for which $R_{\kappa''} P_{q''}C_{\ll \mu^{-1}} \phi_{\mu, N} $ and  $R_{\kappa} P_q \mc{C}^{+}_{\ll \mu^{-1} }\psi_{\lambda_1, N_1}$  have either angular orthogonality, or radial orthogonality. In either case, we may apply Corollary \ref{cor - small scale bilinear estimate} (via the bound (\ref{eqn:sqrsum bilinear})), the null structure bound in Lemma \ref{lem - null form bound}, and the Klein-Gordon angular Strichartz estimate in Lemma \ref{lem - KG strichartz}, to deduce that for every $\frac{3}{2} < q< \frac{7}{10}$ and $\epsilon>0$ we have
	\begin{align*}
	\Big| \int_{\RR^{1+3}} \sum_{d \ll \mu^{-1}}& A_0 + A_1 + A_2 dx dt \Big| \\
				&\lesa \mu^{-1} \sum_{\substack{\kappa,  \kappa'' \in \mc{C}_{\mu^{-1}} }} \sum_{\substack{q, q' \in Q_\mu \\ |q - q''| \approx \mu \text{ or } |\kappa - \kappa''| \approx \mu^{-1}}}
					\|R_{\kappa''} P_{q''}C_{\ll \mu^{-1}} \phi_{\mu, N} \big( R_{\kappa} P_q \mc{C}^{+}_{\ll \mu^{-1} }\psi_{\lambda_1, N_1}\big)^\dagger\|_{L^q_{t, x}} \| \varphi_{\lambda_2, N_2} \|_{L^{q'}_{t, x}} \\
		&\lesa 	\mu^{\frac{5}{q} - 3 + \epsilon} N_2^{7(\frac{7}{10} - \frac{1}{q}) + \epsilon} 	\| \phi_{\mu, N} \|_{V^2_{+,1}}  \| \psi_{\lambda_1, N_1} \|_{V^2_{+,M }} \| \varphi_{\lambda_2, N_2} \|_{V^2_{-,M}}
	\end{align*}
where for ease of reading we suppressed the $\Pi_\pm(\omega_\kappa)$ matrices used to extract the null form gain of $\mu^{-1}$. Choosing $q$ sufficiently close to $\frac{3}{2}$, and $\epsilon>0$ sufficiently small,  then gives (\ref{eqn - high high resonant}) in the case $N_2 = N_{min}$. To deal with remaining cases, we just reverse the roles of $\phi$, $\psi$, and $\varphi$, again apply Lemma \ref{lem - modulation bound} to deduce the required transversality, and always use the angular Strichartz estimate from Lemma \ref{lem - KG strichartz} on the term with smallest angular frequency. This completes the proof of (\ref{eqn - high high resonant}).

\medskip

{\bf High modulation I: $\mu \lesa \lambda_1 \approx \lambda_2$ and $d \gtrsim \lambda_1$.} We now consider the high modulation case. In this region, the null structure plays no role, and thus the arguments are significantly easier. Our goal is to prove that
	\begin{equation}\label{eqn - high mod, high high main} \begin{split}
		\sum_{d \gtrsim \lambda_1} \Big| \int_{\RR^{1+3}} A_1 dx dt \Big| + \Big| \int_{\RR^{1+3}} A_2 dx dt \Big|
		&\lesa \mu^\frac{1}{2} \Big(\frac{\mu}{\lambda_1} \Big)^\frac{1}{8} \| \phi_{\mu, N} \|_{V^2_{+,1}}  \| \psi_{\lambda_1, N_1} \|_{V^2_{\pm_1,M }} \| \varphi_{\lambda_2, N_2} \|_{V^2_{\pm_2,M}}
		\end{split}
	\end{equation}
and for every $\delta>0$, the weaker bounds
	\begin{equation}\label{eqn - high mod, high high main A0}
		\sum_{d \gtrsim \lambda_1} \Big| \int_{\RR^{1+3}} A_0 dx dt \Big| \lesa \mu^\frac{1}{2} \Big(\frac{\mu}{\lambda_1} \Big)^{\frac{\delta}{8}} (\min\{N_1, N_2\})^\delta \| \phi_{\mu, N} \|_{V^2_{+,1}}  \| \psi_{\lambda_1, N_1} \|_{V^2_{\pm_1,M }} \| \varphi_{\lambda_2, N_2} \|_{V^2_{\pm_2,M}}
	\end{equation}
and
	\begin{equation}\label{eqn - high mod, high high main A0 weak}
		\sum_{d \gtrsim \lambda_1} \Big| \int_{\RR^{1+3}} A_0 dx dt \Big| \lesa \mu^\frac{1}{2} \Big(\frac{\mu}{\lambda_1} \Big)^{\frac{1}{a} - \frac{1}{2}} \| \phi\|_{Y^{+, 1}_{\mu, N}}  \| \psi_{\lambda_1, N_1} \|_{V^2_{\pm_1,M }} \| \varphi_{\lambda_2, N_2} \|_{V^2_{\pm_2,M}}
	\end{equation}
where $a$ is as in the definition of the $Y^{\pm, m}_{\lambda, N}$ norm. We start with the estimates (\ref{eqn - high mod, high high main A0}) and (\ref{eqn - high mod, high high main A0 weak}) for the $A_0$ component. Decomposing $\psi$ and $\varphi$ into cubes of size $\mu$, together with an application of the $L^4_{t, x}$ Strichartz estimate gives for all $\epsilon>0$
	\begin{align}
	    \Big| \int_{\RR^{1+3}} A_0 dx dt \Big| &\lesa  \sum_{ \substack{ q, q' \in Q_\mu \\ |q-q'| \lesa \mu} }  \| C_d \phi_{\mu, N}\|_{L^2_{t, x}}  \| P_{q} \psi_{\lambda_1, N_1}\|_{L^4_{t, x}} \| P_{q'} \varphi_{\lambda_2, N_2} \|_{L^4_{t, x}} \notag \\
	    &\lesa   \mu^\frac{1}{2} \Big( \frac{\lambda_1}{\mu}\Big)^\epsilon \Big( \frac{\lambda_1}{d} \Big)^{\frac{1}{2}} \| \phi_{\mu, N} \|_{V^2_{+,1}}  \| \psi_{\lambda_1, N_1} \|_{V^2_{\pm_1,M }} \| \varphi_{\lambda_2, N_2} \|_{V^2_{\pm_2,M}}. \label{eqn - high mod, high high A0 I}
	\end{align}
As in the proof of (\ref{eqn:B_3-ang}), if we instead  apply the $L^q_t L^4_x$ bound, together with Bernstein's inequality for $\phi$, we obtain for any $2<q<2 + \frac{2}{11}$
	\begin{align}
	    \Big| \int_{\RR^{1+3}} A_0 dx dt \Big| &\lesa   \| C_d \phi_{\mu, N}\|_{L^q_t L^{\frac{4q}{5q-8}}_x}  \|  \psi_{\lambda_1, N_1}\|_{L^q_t L^4_x} \|  \varphi_{\lambda_2, N_2} \|_{L^\frac{q}{q-2}_t L^\frac{2q}{4-q}_x} \notag \\
	    &\lesa \mu^\frac{1}{2} \Big( \frac{\lambda_1}{d} \Big)^\frac{1}{q} \Big( \frac{\mu}{\lambda_1} \Big)^{\frac{6}{q} - \frac{11}{4}} N_1\| \phi_{\mu, N} \|_{V^2_{+,1}}  \| \psi_{\lambda_1, N_1} \|_{V^2_{\pm_1,M }} \| \varphi_{\lambda_2, N_2} \|_{V^2_{\pm_2,M}} \label{eqn - high mod, high high A0 ang}
	\end{align}
(schematically, we are putting the product into $L^{2+}_t L^{4-}_x \times L^{2+}_t L^4_x \times L^{\infty-}_tL^{2+}_x$). Switching the roles of $\psi_{\lambda_1, N_1}$ and $\varphi_{\lambda_2, N_2}$,  and combining (\ref{eqn - high mod, high high A0 I}) and (\ref{eqn - high mod, high high A0 ang}) with $q$ sufficiently close to $2$ and $\epsilon>0$ sufficiently small,  followed by summing up over $d \gtrsim \lambda_1$,  we obtain (\ref{eqn - high mod, high high main A0}). On the other hand, to obtain (\ref{eqn - high mod, high high main A0 weak}), we again use Lemma \ref{lem - wave strichartz} to deduce that
	\begin{align*}
		\Big| \int_{\RR^{1+3}} A_0 dx dt \Big|
		&\lesa \sum_{\substack{q,q' \in Q_{\mu} \\ |q-q''| \lesa \mu}}
	\|  C_d \phi_{\mu, N}\|_{L^a_t L^{\frac{a}{a-1}}_x} \| P_q \psi_{\lambda_1, N_1}\|_{L^{\frac{2a}{a-1}}_t L^{2a}_x} \|  P_{q'} \varphi_{\lambda_2, N_2} \|_{L^{\frac{2a}{a-1}}_t L^{2a}_x} \\
	&\lesa \mu^\frac{1}{2} \Big( \frac{\lambda_1}{d} \Big)^b \Big( \frac{\mu}{\lambda} \Big)^{b+\frac{1}{a} -1 - \epsilon}   \| \phi \|_{Y^{+, 1}_{\mu, N}} \| \psi_{\lambda_1, N_1} \|_{V^2_{\pm_1,M }} \| \varphi_{\lambda_2, N_2} \|_{V^2_{\pm_2,M}}
	\end{align*}
which then gives (\ref{eqn - high mod, high high main A0 weak}) if we choose $\epsilon$ sufficiently small as $\frac{1}{a} > \frac{1}{2}$ and $b+\frac{1}{a} - 1 = 4(\frac{1}{a} - \frac{1}{2})$ (here $a, b$ are as in the definition of the $Y^{\pm, m}_\lambda$ norm).

We now turn to the estimates for $A_1$ and $A_2$. By symmetry, it is enough to consider the $A_1$ term. After decomposing into cubes of size $\mu$ and applying the $L^4_{t, x}$ Strichartz estimate, we obtain
	\begin{align*}
	\Big| \int_{\RR^{1+3}} A_1 dt dx \Big| &\lesa  \sum_{\substack{q, q' \in Q_\mu \\ |q - q'| \lesa \mu}}
	\| \phi_{\mu, N} \|_{L^4_{t, x}}
	\| \mc{C}^{\pm_1}_d P_q \psi_{\lambda_1, N_1} \|_{L^2_{t, x}}
	\| P_{q'} \varphi_{\lambda_2, N_2} \|_{L^4_{t, x}} \\
		&\lesa \mu^\frac{1}{2} \Big( \frac{\mu}{\lambda_1} \Big)^{\frac{1}{4}-\epsilon} \Big( \frac{\lambda_1}{d} \Big)^\frac{1}{2} \| \phi_{\mu, N} \|_{V^2_{+,1}}  \| \psi_{\lambda_1, N_1} \|_{V^2_{\pm_1,M }} \| \varphi_{\lambda_2, N_2} \|_{V^2_{\pm_2,M}}.
	\end{align*}
Summing up over $d \gtrsim \lambda_1$ and choosing $\epsilon$ sufficiently small, then gives (\ref{eqn - high mod, high high main}).

\medskip

{\bf High modulation II: $\mu \gg \min\{ \lambda_1, \lambda_2\}$ and $d \gtrsim \mu$.} Our goal is to prove the bound
  \begin{equation}\label{eqn - high mod high low main}
	\begin{split} \sum_{d \gtrsim  \mu } \Big| \int_{\RR^{1+3}} A_0 dx dt \Big| + &\Big| \int_{\RR^{1+3}} A_1 dx dt \Big| + \Big| \int_{\RR^{1+3}} A_2 dx dt \Big|\\
              &\lesa  \mu^{\frac{1}{2}}  \Big( \frac{\min\{\lambda_1, \lambda_2\}}{\mu} \Big)^\frac{1}{4} \| \phi_{\mu, N} \|_{V^2_{+,1}}  \| \psi_{\lambda_1, N_1} \|_{V^2_{\pm_1,M }} \| \varphi_{\lambda_2, N_2} \|_{V^2_{\pm_2,M}}.\end{split}
    \end{equation}
As the estimate is essentially symmetric in $\lambda_1$ and $\lambda_2$, we may assume that $\lambda_1 \g \lambda_2$. The bound for $A_0$ follows by decomposing into cubes of size $\lambda_2$ and applying the standard $L^4_{t, x}$ Strichartz estimate to obtain
	\begin{align*}
		\Big| \int_{\RR^{3+1}} A_0 dt dx \Big| &\lesa \sum_{\substack{q, q'' \in Q_{\lambda_2} \\ |q - q''| \lesa \lambda_2}}
			\|C_d P_{q''} \phi_{\mu, N} \|_{L^2_{t, x}}
	\|P_q \psi_{\lambda_1, N_1} \|_{L^4_{t, x}}
	\| \varphi_{\lambda_2, N_2} \|_{L^4_{t, x}}  \\
	&\lesa \mu^\frac{1}{2} \Big( \frac{\lambda_2}{\mu} \Big)^{\frac{3}{4}-\epsilon} \Big( \frac{\mu}{d} \Big)^\frac{1}{2} \| \phi_{\mu, N} \|_{V^2_{+,1}}  \| \psi_{\lambda_1, N_1} \|_{V^2_{\pm_1,M }} \| \varphi_{\lambda_2, N_2} \|_{V^2_{\pm_2,M}}
	\end{align*}
which easily gives (\ref{eqn - high mod high low main}) for the $A_0$ term, provided we choose $\epsilon$ sufficiently small. The proof for the $A_1$ term is identical (as we do not exploit any null structure here). On the other hand, to estimate the $A_2$ term, we again decompose into cubes of size $\lambda_2$ and apply the $L^4_{t, x}$ Strichartz estimate to deduce that
	\begin{align*}
		\Big| \int_{\RR^{3+1}} A_2 dt dx \Big| &\lesa  \sum_{\substack{q, q'' \in Q_{\lambda_2} \\ |q - q''| \lesa \lambda_2}}
			\| P_{q''} \phi_{\mu, N} \|_{L^4_{t, x}}
	\|P_q \psi_{\lambda_1, N_1} \|_{L^4_{t, x}}
	\|\mc{C}^{\pm_2}_d \varphi_{\lambda_2, N_2} \|_{L^2_{t, x}}  \\
	&\lesa \mu^\frac{1}{2} \Big( \frac{\lambda_2}{\mu} \Big)^{\frac{1}{2}-\epsilon} \Big( \frac{\mu}{d} \Big)^\frac{1}{2}\| \phi_{\mu, N} \|_{V^2_{+,1}}  \| \psi_{\lambda_1, N_1} \|_{V^2_{\pm_1,M }} \| \varphi_{\lambda_2, N_2} \|_{V^2_{\pm_2,M}}.
	\end{align*}
Therefore (\ref{eqn - high mod high low main}) follows. This completes the proof of Theorem \ref{thm - trilinear freq loc integral}.
\end{proof}


\subsection{Proof of Theorem \ref{thm - bilinear F freq loc endpoint}}\label{subsect:proof2}

We begin with the proof of (\ref{eqn - thm bilinear F freq loc endpoint - main ineq I}). An application of the energy inequality in Lemma \ref{lem - energy ineq for F norms} gives
    \begin{align*} \big\| P_{\lambda_1} H_{N_1} \Pi_{\pm_1} \mc{I}^{\pm_1}_M\big[ \phi_{\mu, N} \gamma^0 \Pi_{\pm_2} &\varphi_{\lambda_2, N_2}\big] \big\|_{V^2_{\pm_1,M}}\\
        &\lesa \sup_{\| \psi_{\lambda_1, N_1} \|_{V^2_{\pm_1,M}} \lesa 1} \Big| \int_{\RR^{1+3}} \phi_{\mu, N} ( \Pi_{\pm_1} \psi_{\lambda_1, N_1})^\dagger \gamma^0 \Pi_{\pm_2} \varphi_{\lambda_2, N_2} dx dt \Big|.\end{align*}
 Therefore an application of (\ref{eqn - trilinear freq loc integral I})  in Theorem \ref{thm - trilinear freq loc integral} implies that
     \begin{equation}\label{eqn - thm bilinear F freq loc endpoint - F bound}
     \begin{split}
        \big\| P_{\lambda_1} H_{N_1} \Pi_{\pm_1} \mc{I}^{\pm_1}_M\big[ \phi_{\mu, N} \gamma^0 \Pi_{\pm_2} &\varphi_{\lambda_2, N_2}\big] \big\|_{V^2_{\pm_1,M}} \\
        &\lesa \mu^\frac{1}{2} (\min\{ N, N_2\})^\frac{\sigma}{4} \mb{B}_{\min\{ \frac{\sigma}{32}, \frac{1}{2a} - \frac{1}{4}\}} \| \phi \|_{F^{+, 1}_{\mu, N}} \| \varphi \|_{F^{\pm_2, M}_{\lambda_2, N_2}}
        \end{split}
     \end{equation}
 which gives the required bound (\ref{eqn - thm bilinear F freq loc endpoint - main ineq I}) for the $F^{\pm_1, M}_{\lambda_1, N_1}$ component of the norm. To complete the proof of (\ref{eqn - thm bilinear F freq loc endpoint - main ineq I}), it remains show that there exists $\epsilon>0$ such that
 	\begin{equation}\label{eqn - thm bilinear F freq loc endpoint - Y bound}
 		\big\| \Pi_{\pm_1} \mc{I}^{\pm_1}_M\big[ \phi_{\mu, N} \gamma^0 \Pi_{\pm_2} \varphi_{\lambda_2, N_2}\big] \big\|_{Y^{\pm_1, M}_{\lambda_1, N_1}} \lesa \mu^\frac{1}{2} (\min\{ N, N_2\})^\frac{\sigma}{2} \mb{B}_{\epsilon} \| \phi \|_{F^{+, 1}_{\mu, N}} \| \varphi \|_{F^{\pm_2, M}_{\lambda_2, N_2}}.
 	\end{equation}
  To this end, we consider separately the cases $\lambda_1 \ll \lambda_2$ and $\lambda_1 \gtrsim \lambda_2$. In the former region, note that an application of (\ref{eqn - trilinear freq loc integral III}) in Theorem \ref{thm - trilinear freq loc integral} together with the energy inequality Lemma \ref{lem - energy ineq for F norms}, and the $L^2_{t, x}$ bound in Lemma \ref{lem - norm controls Xsb}, gives
   \begin{align}  \big\| P_{\lambda_1} H_{N_1} \mc{C}^{\pm_1}_d \mc{I}^{\pm_1}_M\big[ &\phi_{\mu, N} \gamma^0 \Pi_{\pm_2} \varphi_{\lambda_2, N_2}\big] \big\|_{L^2_{t, x}} \notag \\
    &\lesa  d^{-\frac{1}{2}}\big\| P_{\lambda_1} H_{N_1} \Pi_{\pm_1} \mc{I}^{\pm_1}_M\big[ \phi_{\mu, N} \gamma^0 \Pi_{\pm_2} \varphi_{\lambda_2, N_2}\big] \big\|_{V^2_{\pm_1,M}} \notag \\
    &\lesa  d^{-\frac{1}{2}} \mu^\frac{1}{2} (\min\{ N, N_2\})^\frac{\sigma}{4} \Big(\frac{\lambda_1}{\lambda_2}\Big)^{\frac{\sigma}{32}} \|\phi_{\mu, N} \|_{V^2_{+,1}} \| \varphi_{\lambda_2, N_2} \|_{V^2_{\pm_2,M}}. \label{eqn - thm bilinear F freq loc endpoint - L2 bound}
    \end{align}
 On the other hand, since we are localised away from the hyperboloid we have by (\ref{eqn - Cd mult and duhamel int}) together with Lemma \ref{lem - wave strichartz}
    \begin{align} \big\| P_{\lambda_1} H_{N_1} \mc{C}^{\pm_1}_d \mc{I}^{\pm_1}_M\big[ \phi_{\mu, N} & \gamma^0 \Pi_{\pm_2} \varphi_{\lambda_2, N_2}\big] \big\|_{L^\frac{3}{2}_t L^2_x} \notag\\
     &\lesa  d^{-1} \| P_{\lambda_1}(\phi_{\mu, N} \gamma^0 \Pi_{\pm_2} \varphi_{\lambda_2, N_2}) \|_{L^\frac{3}{2}_t L^2_x} \notag \\
     &\lesa d^{-1}  \| \phi_{\mu, N}\|_{L^4_{t, x}}\|  \varphi_{\lambda_2, N_2}\|_{L^{\frac{12}{5}}_t L^4_x} \notag\\
     &\lesa d^{-1} \mu^\frac{1}{2} \lambda_2^{\frac{1}{3}} N_2  \| \phi_{\mu, N} \|_{V^2_{+,1}} \| \varphi_{\lambda_2, N_2}\|_{V^2_{\pm_2,M}}.  \label{eqn - thm bilinear freq loc endpoint - Y intermediate bound}
    \end{align}
Repeating this argument but instead putting $\phi \in L^{\frac{12}{5}}_t L^4_x$ and $\varphi \in L^4_{t, x}$ we deduce that, since $\lambda_1 \ll \lambda_2\approx \mu$,
    \begin{equation}\label{eqn - thm bilinear F freq loc endpoint - weak Y bound} \begin{split}
           d \lambda_1^{-\frac{1}{3}} \big\| P_{\lambda_1} H_{N_1} \mc{C}^{\pm_1}_d \mc{I}^{\pm_1}_M\big[ \phi_{\mu, N} &\gamma^0 \Pi_{\pm_2} \varphi_{\lambda_2, N_2}\big] \big\|_{L^\frac{3}{2}_t L^2_x} \\
             &\lesa \mu^\frac{1}{2} \min\{ N, N_2\} \Big( \frac{ \lambda_2 }{\lambda_1}\Big)^{ \frac{1}{3} } \|\phi_{\mu, N} \|_{V^2_{+,1}} \| \varphi_{\lambda_2, N_2} \|_{V^2_{\pm_2,M}}. \end{split}
    \end{equation}
 Note that this bound is far to weak to be useful on its own, as we have $\lambda_1 \ll \lambda_2$. On the other hand, if we combine  (\ref{eqn - thm bilinear F freq loc endpoint - L2 bound}) and (\ref{eqn - thm bilinear F freq loc endpoint - weak Y bound}), and use the convexity of the $L^p_t$ spaces, we deduce that if we let $0<\theta<1$ be given by $\frac{1}{a} = \frac{ 2 \theta}{3} + \frac{1-\theta}{2}$, then, as this forces $b=\frac{1+\theta}{2}$, we deduce that
    \begin{align*}
    \lambda_1^{\frac{1}{a} - b} d^{b}  \big\| P_{\lambda_1}& H_{N_1} \mc{C}^{\pm_1}_d \mc{I}^{\pm_1}_M\big[ \phi_{\mu, N} \gamma^0 \Pi_{\pm_2} \varphi_{\lambda_2, N_2}\big] \big\|_{L^{a}_t L^2_x} \\
    	&\lesa \Big( d \lambda_1^{-\frac{1}{3}} \big\| P_{\lambda_1} H_{N_1} \mc{C}^{\pm_1}_d \mc{I}^{\pm_1}_M\big[ \phi_{\mu, N} \gamma^0 \Pi_{\pm_2} \varphi_{\lambda_2, N_2}\big] \big\|_{L^\frac{3}{2}_t L^2_x} \Big)^\theta \\
    	&\qquad \qquad \qquad \qquad \times \Big( d^{\frac{1}{2}} \big\| P_{\lambda_1} H_{N_1} \mc{C}^{\pm_1}_d \mc{I}^{\pm_1}_M\big[ \phi_{\mu, N} \gamma^0 \Pi_{\pm_2} \varphi_{\lambda_2, N_2}\big] \big\|_{L^2_{t, x}}\Big)^{1-\theta}\\
        &\lesa  \mu^\frac{1}{2}  \big(\min\{ N, N_2\}\big)^{\theta + \frac{\sigma }{4}(1-\theta)} \Big( \frac{ \lambda_1 }{\lambda_2}\Big)^{ \frac{\sigma}{32}(1-\theta) - \frac{1}{3}\theta}  \|\phi_{\mu, N} \|_{V^2_{+,1}} \| \varphi_{\lambda_2, N_2} \|_{V^2_{\pm_2,M}}.
    \end{align*}
Since $\frac{1}{2} < \frac{1}{a} < \frac{1}{2} + \frac{\sigma}{1000}$, it is easy enough to check that $\frac{\sigma}{32}(1-\theta) - \frac{1}{3}\theta>0$, and hence (\ref{eqn - thm bilinear F freq loc endpoint - Y bound}) holds when $\lambda_1 \ll \lambda_2$. We now consider the case $\lambda_1 \gtrsim \lambda_2$. The proof is similar to the previous case, the main difference is that we need a more refined version of the bound (\ref{eqn - thm bilinear F freq loc endpoint - weak Y bound}). To this end, by decomposing $\varphi$ into cubes of size $\min\{\mu, \lambda_2\}$, we deduce that by Lemma \ref{lem - wave strichartz} and Lemma \ref{lem - square sum loss}, for every $\epsilon'>0$
	\begin{align*}
	  \big\| P_{\lambda_1} H_{N_1} \mc{C}^{\pm_1}_d \mc{I}^{\pm_1}_M\big[ \phi_{\mu, N} & \gamma^0 \Pi_{\pm_2} \varphi_{\lambda_2, N_2}\big] \big\|_{L^\frac{3}{2}_t L^2_x} \\
     &\lesa  d^{-1} \Big\|  \| \phi_{\mu, N} \gamma^0 \Pi_{\pm_2} P_q \varphi_{\lambda_2, N_2}\|_{L^2_x}^2 \Big)^{\frac{1}{2}}  \Big\|_{L^\frac{3}{2}_t}  \\
     &\lesa d^{-1}  \|\phi_{\mu, N} \|_{L^{\frac{12}{5}}_t L^4_x} \Big( \sum_{q \in Q_{\min\{\mu, \lambda_2\}}}\| P_q \varphi_{\lambda_2, N_2} \|_{L^4_{t, x}}^2 \Big)^\frac{1}{2}\\
     &\lesa d^{-1} \mu^{\frac{1}{3}} N (\min\{ \mu, \lambda_2\})^{\frac{1}{4}-\epsilon'} \lambda_2^{\frac{1}{4} + \epsilon'} \|\phi_{\mu, N} \|_{V^2_{+,1}} \| \varphi_{\lambda_2, N_2} \|_{V^2_{\pm_2,M}}.
	\end{align*}
Since $(\min\{\mu, \lambda_2\})^{\frac{1}{4}-\epsilon'} \les \mu^\frac{1}{6} \lambda_2^{\frac{1}{4} - \frac{1}{6}+\epsilon'}$ (for $\epsilon'$ sufficiently small) and $\lambda_2 \lesa \lambda_1$, by using the bound (\ref{eqn - thm bilinear freq loc endpoint - Y intermediate bound}), we deduce that
	 \begin{equation}\label{eqn - thm bilinear F freq loc endpoint - weak Y bound I} \begin{split}
           d \lambda_1^{-\frac{1}{3}} \big\| P_{\lambda_1} H_{N_1} \mc{C}^{\pm_1}_d \mc{I}^{\pm_1}_M\big[ \phi_{\mu, N} &\gamma^0 \Pi_{\pm_2} \varphi_{\lambda_2, N_2}\big] \big\|_{L^\frac{3}{2}_t L^2_x} \\
             &\lesa \mu^\frac{1}{2} \min\{ N, N_2\} \|\phi_{\mu, N} \|_{V^2_{+,1}} \| \varphi_{\lambda_2, N_2} \|_{V^2_{\pm_2,M}}. \end{split}
     \end{equation}
Note that, unlike the bound (\ref{eqn - thm bilinear F freq loc endpoint - weak Y bound I}), we have no high frequency loss here. As in the case $\lambda_1 \ll \lambda_2$, we now combine the bound (\ref{eqn - thm bilinear F freq loc endpoint - F bound}) with (\ref{eqn - thm bilinear F freq loc endpoint - weak Y bound I}), and deduce by the convexity of the $L^p_t$ norm and Lemma \ref{lem - norm controls Xsb}, that
	\begin{align*}
    \lambda_1^{\frac{1}{a} - b} d^{b}  \big\| P_{\lambda_1}& H_{N_1} \mc{C}^{\pm_1}_d \mc{I}^{\pm_1}_M\big[ \phi_{\mu, N} \gamma^0 \Pi_{\pm_2} \varphi_{\lambda_2, N_2}\big] \big\|_{L^{a}_t L^2_x} \\
    	&\lesa \Big( d \lambda_1^{-\frac{1}{3}} \big\| P_{\lambda_1} H_{N_1} \mc{C}^{\pm_1}_d \mc{I}^{\pm_1}_M\big[ \phi_{\mu, N} \gamma^0 \Pi_{\pm_2} \varphi_{\lambda_2, N_2}\big] \big\|_{L^\frac{3}{2}_t L^2_x} \Big)^\theta \\
    	&\qquad \qquad \qquad \qquad \times \Big( d^{\frac{1}{2}} \big\| P_{\lambda_1} H_{N_1} \mc{C}^{\pm_1}_d \mc{I}^{\pm_1}_M\big[ \phi_{\mu, N} \gamma^0 \Pi_{\pm_2} \varphi_{\lambda_2, N_2}\big] \big\|_{L^2_{t, x}}\Big)^{1-\theta}\\
        &\lesa  \mu^\frac{1}{2}  \big(\min\{ N, N_2\}\big)^{\theta + \frac{\sigma }{4}(1-\theta)} \mb{B}_{\min\{\frac{\sigma}{32}, \frac{1}{2a} - \frac{1}{4}\}}^{1-\theta}  \| \phi \|_{F^{+, 1}_{\mu, N}} \| \varphi \|_{F^{\pm_2, M}_{\lambda_2, N_2}}.
    \end{align*}
Since $0<\theta \ll \sigma$, we obtain (\ref{eqn - thm bilinear F freq loc endpoint - Y bound}). Therefore, the bound (\ref{eqn - thm bilinear F freq loc endpoint - main ineq I}) follows.

We now turn to the proof of the second inequality (\ref{eqn - thm bilinear F freq loc endpoint - main ineq II}). The argument is similar to the proof of (\ref{eqn - thm bilinear F freq loc endpoint - main ineq I})
so we will be brief. An application of the energy inequality in Lemma \ref{lem - energy ineq for F norms} together with (\ref{eqn - trilinear freq loc integral II}) in Theorem \ref{thm - trilinear freq loc integral} implies that
     \begin{equation}\label{eqn - thm bilinear F freq loc endpoint - F bound wave} \big\| P_\mu H_N \mc{I}^{+}_1\big[  (\Pi_{\pm_1} \psi_{\lambda_1, N_1})^\dagger \gamma^0 \Pi_{\pm_2} \varphi_{\lambda_2, N_2}\big] \big\|_{V^2_{+,1}}\lesa \mu^\frac{1}{2} (\min\{ N_1, N_2\})^\frac{\sigma}{4} \mb{B}_{\min\{ \frac{\sigma}{32}, \frac{1}{2a} - \frac{1}{4}\}} \| \psi \|_{F^{\pm_1, M}_{\lambda_1, N_1}} \| \varphi \|_{F^{\pm_2, M}_{\lambda_2, N_2}}. \end{equation}
Therefore it only remains to prove that there exists  $\epsilon>0$ such that
 	\begin{equation}\label{eqn - thm bilinear F freq loc endpoint - Y bound wave}
 		\big\|  \mc{I}^{+}_1\big[  (\Pi_{\pm_1} \psi_{\lambda_1, N_1})^\dagger \gamma^0 \Pi_{\pm_2} \varphi_{\lambda_2, N_2}\big] \big\|_{Y^{+, 1}_{\mu, N}} \lesa \mu^\frac{1}{2} (\min\{ N, N_2\})^\frac{\sigma}{2} \mb{B}_{\epsilon} \| \psi \|_{F^{\pm_1, M}_{\lambda_1, N_1}} \| \varphi \|_{F^{\pm_2, M}_{\lambda_2, N_2}}.
 	\end{equation}
Similar to the proof of (\ref{eqn - thm bilinear F freq loc endpoint - Y bound}), we consider separately the cases $\mu \ll \lambda_1$ and $\mu \gtrsim \lambda_1$. In the former case, as in (\ref{eqn - thm bilinear F freq loc endpoint - weak Y bound I}),  since we are localised away from the hyperboloid we have by (\ref{eqn - Cd mult and duhamel int}) together with Lemma \ref{lem - wave strichartz}
    \begin{align} \big\| P_{\mu} H_{N} \mc{C}^{\pm_1}_d &\mc{I}^{+}_1\big[  (\Pi_{\pm_1} \psi_{\lambda_1, N_1})^\dagger \gamma^0 \Pi_{\pm_2} \varphi_{\lambda_2, N_2}\big]\big\|_{L^\frac{3}{2}_t L^2_x}\notag \\
      &\lesa  d^{-1} \| P_{\mu}((\Pi_{\pm_1} \psi_{\lambda_1, N_1})^\dagger \gamma^0 \Pi_{\pm_2} \varphi_{\lambda_2, N_2}) \|_{L^\frac{3}{2}_t L^2_x} \notag \\
     &\lesa d^{-1} (\min\{\lambda_1, \lambda_2\})^{\frac{1}{3}} (\max\{ \lambda_1, \lambda_2\})^\frac{1}{2} \min\{N_1, N_2\}  \| \psi_{\lambda_1, N_1} \|_{V^2_{\pm_1,M}} \| \varphi_{\lambda_2, N_2}\|_{V^2_{\pm_2,M}}.  \label{eqn - thm bilinear freq loc endpoint - Y intermediate bound wave}
    \end{align}
Since $\lambda_1 \approx \lambda_2$, we can replace the $\max$ and $\min$ in (\ref{eqn - thm bilinear freq loc endpoint - Y intermediate bound wave}) with $\lambda_1^{\frac{1}{3} + \frac{1}{2}}$. If we now combine
(\ref{eqn - thm bilinear freq loc endpoint - Y intermediate bound wave}) with the energy inequality in Lemma \ref{lem - energy ineq for F norms},  the bound (\ref{eqn - trilinear freq loc integral IV}) in Theorem \ref{thm - trilinear freq loc integral}, and Lemma \ref{lem - norm controls Xsb}, we deduce that by the convexity of the $L^p_t$ spaces that
    \begin{align*}
    \mu^{\frac{1}{a} - b} d^{b}  \big\| P_{\mu}& H_{N} C^+_d\mc{I}^{+}_1\big[  (\Pi_{\pm_1} \psi_{\lambda_1, N_1})^\dagger \gamma^0 \Pi_{\pm_2} \varphi_{\lambda_2, N_2}\big]\big\|_{L^{a}_t L^2_x} \\
    	&\lesa \Big( d \mu^{-\frac{1}{3}} \big\| P_{\mu} C^+_d \mc{I}^{+}_1\big[  (\Pi_{\pm_1} \psi_{\lambda_1, N_1})^\dagger \gamma^0 \Pi_{\pm_2} \varphi_{\lambda_2, N_2} \big]\big\|_{L^\frac{3}{2}_t L^2_x} \Big)^\theta \\
    	&\qquad \qquad \qquad \qquad \times \Big( d^{\frac{1}{2}} \big\| P_{\mu} C^+_d \mc{I}^{+}_1\big[  (\Pi_{\pm_1} \psi_{\lambda_1, N_1})^\dagger \gamma^0 \Pi_{\pm_2} \varphi_{\lambda_2, N_2} \big] \big\|_{L^2_{t, x}}\Big)^{1-\theta}\\
        &\lesa  \mu^\frac{1}{2}  \big(\min\{ N_1, N_2\}\big)^{\theta + \frac{\sigma }{4}(1-\theta)} \Big( \frac{ \lambda_1 }{\mu}\Big)^{ \frac{\sigma}{32}(1-\theta) - \frac{5}{6}\theta}  \| \psi_{\lambda_1, N_1} \|_{V^2_{\pm_1,M}} \| \varphi_{\lambda_2, N_2}\|_{V^2_{\pm_2,M}}
    \end{align*}
where as previously, we have $\frac{1}{a} = \frac{2 \theta}{3} + \frac{1-\theta}{2}$ (which implies that $b=\frac{1+\theta}{2}$). Since $\frac{1}{2} < \frac{1}{a} < \frac{1}{2} + \frac{\sigma}{1000}$, it is easy enough to check that $\frac{\sigma}{32}(1-\theta) - \frac{5}{6}\theta>0$, and hence (\ref{eqn - thm bilinear F freq loc endpoint - Y bound wave}) holds when $\mu \ll \lambda_1$. We now consider the case $\mu \gtrsim \lambda_1$. Since we now have $(\min\{\lambda_1, \lambda_2\})^{\frac{1}{3}} (\max\{ \lambda_1, \lambda_2\})^\frac{1}{2} \lesa \mu^{\frac{1}{3} + \frac{1}{2}}$, an application of (\ref{eqn - thm bilinear freq loc endpoint - Y intermediate bound wave}), together with (\ref{eqn - thm bilinear F freq loc endpoint - F bound wave}), Lemma \ref{lem - norm controls Xsb} gives
   \begin{align*}
    \mu^{\frac{1}{a} - b} d^{b}  \big\| P_{\mu}& H_{N} C^+_d\mc{I}^{+}_1\big[  (\Pi_{\pm_1} \psi_{\lambda_1, N_1})^\dagger \gamma^0 \Pi_{\pm_2} \varphi_{\lambda_2, N_2}\big]\big\|_{L^{a}_t L^2_x} \\
    	&\lesa \Big( d \mu^{-\frac{1}{3}} \big\| P_{\mu} C^+_d \mc{I}^{+}_1\big[  (\Pi_{\pm_1} \psi_{\lambda_1, N_1})^\dagger \gamma^0 \Pi_{\pm_2} \varphi_{\lambda_2, N_2} \big]\big\|_{L^\frac{3}{2}_t L^2_x} \Big)^\theta \\
    	&\qquad \qquad \qquad \qquad \times \Big( d^{\frac{1}{2}} \big\| P_{\mu} C^+_d \mc{I}^{+}_1\big[  (\Pi_{\pm_1} \psi_{\lambda_1, N_1})^\dagger \gamma^0 \Pi_{\pm_2} \varphi_{\lambda_2, N_2} \big] \big\|_{L^2_{t, x}}\Big)^{1-\theta}\\
        &\lesa  \mu^\frac{1}{2}  \big(\min\{ N_1, N_2\}\big)^{\theta + \frac{\sigma }{4}(1-\theta)} \mb{B}^{1-\theta}_{\min\{\frac{\sigma}{32}, \frac{1}{2a} - \frac{1}{4}\}}  \| \psi_{\lambda_1, N_1} \|_{V^2_{\pm_1,M}} \| \varphi_{\lambda_2, N_2}\|_{V^2_{\pm_2,M}}
    \end{align*}
Since $0<\theta \ll \sigma$ and $\frac{1}{a}>\frac{1}{2}$, we obtain (\ref{eqn - thm bilinear F freq loc endpoint - Y bound wave}). Therefore, the bound (\ref{eqn - thm bilinear F freq loc endpoint - main ineq I}) follows. This completes the proof of Theorem \ref{thm - bilinear F freq loc endpoint}.

    \bibliographystyle{amsplain} \bibliography{bilinear}

\providecommand{\bysame}{\leavevmode\hbox to3em{\hrulefill}\thinspace}
\providecommand{\MR}{\relax\ifhmode\unskip\space\fi MR }
\providecommand{\MRhref}[2]{%
  \href{http://www.ams.org/mathscinet-getitem?mr=#1}{#2}
}
\providecommand{\href}[2]{#2}
\begin{thebibliography}{10}

\bibitem{Bachelot1988}
Alain Bachelot, \emph{Probl\`eme de {C}auchy global pour des syst\`emes de
  {D}irac-{K}lein-{G}ordon}, Ann. Inst. H. Poincar\'e Phys. Th\'eor.
  \textbf{48} (1988), no.~4, 387--422. \MR{969173 (90i:81024)}

\bibitem{Bejenaru2016}
Ioan Bejenaru, \emph{Optimal bilinear restriction estimates for general
  hypersurfaces and the role of the shape operator}, 1601.01000.

\bibitem{Bejenaru2015}
Ioan Bejenaru and Sebastian Herr, \emph{{On global well-posedness and
  scattering for the massive Dirac-Klein-Gordon system}}, arXiv:1409.1778, to
  appear in J. Eur. Math. Soc. (JEMS).

\bibitem{Bejenaru2011}
\bysame, \emph{Convolutions of singular measures and applications to the
  {Z}akharov system}, J. Funct. Anal. \textbf{261} (2011), no.~2, 478--506.
  \MR{2793120}

\bibitem{Bejenaru2015b}
\bysame, \emph{The cubic {D}irac equation: small initial data in
  {$H^1(\Bbb{R}^3)$}}, Comm. Math. Phys. \textbf{335} (2015), no.~1, 43--82.
  \MR{3314499}

\bibitem{Bejenaru2009}
Ioan Bejenaru, Sebastian Herr, Justin Holmer, and Daniel Tataru, \emph{On the
  2{D} {Z}akharov system with {$L^2$}-{S}chr\"odinger data}, Nonlinearity
  \textbf{22} (2009), no.~5, 1063--1089. \MR{2501036}

\bibitem{Bejenaru2010}
Ioan Bejenaru, Sebastian Herr, and Daniel Tataru, \emph{A convolution estimate
  for two-dimensional hypersurfaces}, Rev. Mat. Iberoam. \textbf{26} (2010),
  no.~2, 707--728. \MR{2677013}

\bibitem{Bennett2010}
Jonathan Bennett and Neal Bez, \emph{Some nonlinear {B}rascamp-{L}ieb
  inequalities and applications to harmonic analysis}, J. Funct. Anal.
  \textbf{259} (2010), no.~10, 2520--2556. \MR{2679017}

\bibitem{Bennett2006}
Jonathan Bennett, Anthony Carbery, and Terence Tao, \emph{On the multilinear
  restriction and {K}akeya conjectures}, Acta Math. \textbf{196} (2006), no.~2,
  261--302. \MR{2275834}

\bibitem{Bennett2005}
Jonathan Bennett, Anthony Carbery, and James Wright, \emph{A non-linear
  generalisation of the {L}oomis-{W}hitney inequality and applications}, Math.
  Res. Lett. \textbf{12} (2005), no.~4, 443--457. \MR{2155223}

\bibitem{Bjorken1964}
James~D. Bjorken and Sidney~D. Drell, \emph{Relativistic quantum mechanics},
  McGraw-Hill Book Co., New York-Toronto-London, 1964. \MR{0187641 (32 \#5091)}

\bibitem{Bourgain1991}
Jean Bourgain, \emph{On the restriction and multiplier problems in
  {${\mathbf{R}}^3$}}, Geometric aspects of functional analysis (1989--90),
  Lecture Notes in Math., vol. 1469, Springer, Berlin, 1991, pp.~179--191.
  \MR{1122623}

\bibitem{Bourgain1995}
\bysame, \emph{Estimates for cone multipliers}, Geometric aspects of functional
  analysis ({I}srael, 1992--1994), Oper. Theory Adv. Appl., vol.~77,
  Birkh\"auser, Basel, 1995, pp.~41--60. \MR{1353448}

\bibitem{Bourgain1998}
\bysame, \emph{Refinements of {S}trichartz' inequality and applications to
  {$2$}{D}-{NLS} with critical nonlinearity}, Internat. Math. Res. Notices
  (1998), no.~5, 253--283. \MR{1616917}

\bibitem{Buschen2015}
Stefan Buschenhenke, Detlef M\"uller, and Ana Vargas, \emph{{A Fourier
  Restriction Theorem For A Twodimensional Surface Of Finite Type}},
  arXiv:1508.00791.

\bibitem{Chadam1974}
John~M. Chadam and Robert~T. Glassey, \emph{On certain global solutions of the
  {C}auchy problem for the (classical) coupled {K}lein-{G}ordon-{D}irac
  equations in one and three space dimensions}, Arch. Rational Mech. Anal.
  \textbf{54} (1974), 223--237. \MR{0369952}

\bibitem{Cho2013}
Yonggeun Cho and Sanghyuk Lee, \emph{Strichartz estimates in spherical
  coordinates}, Indiana Univ. Math. J. \textbf{62} (2013), no.~3, 991--1020.
  \MR{3164853}

\bibitem{D'Ancona2007b}
Piero D'Ancona, Damiano Foschi, and Sigmund Selberg, \emph{Null structure and
  almost optimal local regularity for the {D}irac-{K}lein-{G}ordon system}, J.
  Eur. Math. Soc. (JEMS) \textbf{9} (2007), no.~4, 877--899.

\bibitem{Foschi2000}
Damiano Foschi and Sergiu Klainerman, \emph{Bilinear space-time estimates for
  homogeneous wave equations}, Ann. Sci. \'Ecole Norm. Sup. (4) \textbf{33}
  (2000), no.~2, 211--274. \MR{1755116}

\bibitem{Ginibre1997}
Jean Ginibre, Yoshio Tsutsumi, and Giorgio Velo, \emph{On the {C}auchy problem
  for the {Z}akharov system}, J. Funct. Anal. \textbf{151} (1997), no.~2,
  384--436. \MR{1491547}

\bibitem{Hadac2009}
Martin Hadac, Sebastian Herr, and Herbert Koch, \emph{Well-posedness and
  scattering for the {KP}-{II} equation in a critical space}, Ann. Inst. H.
  Poincar\'e Anal. Non Lin\'eaire \textbf{26} (2009), no.~3, 917--941.
  \MR{2526409}

\bibitem{Keel1998}
Markus Keel and Terence Tao, \emph{Endpoint {S}trichartz estimates}, Amer. J.
  Math. \textbf{120} (1998), no.~5, 955--980.

\bibitem{Klainerman1993}
Sergiu Klainerman and Matei Machedon, \emph{Space-time estimates for null forms
  and the local existence theorem}, Comm. Pure Appl. Math. \textbf{46} (1993),
  no.~9, 1221--1268. \MR{1231427}

\bibitem{Klainerman2002b}
Sergiu Klainerman, Igor Rodnianski, and Terence Tao, \emph{A physical space
  approach to wave equation bilinear estimates}, J. Anal. Math. \textbf{87}
  (2002), 299--336, Dedicated to the memory of Thomas H. Wolff. \MR{1945287
  (2003m:35139)}

\bibitem{Klainerman2002}
Sergiu Klainerman and Sigmund Selberg, \emph{Bilinear estimates and
  applications to nonlinear wave equations}, Commun. Contemp. Math. \textbf{4}
  (2002), no.~2, 223--295.

\bibitem{Koch2015}
Herbert Koch and Stefan Steinerberger, \emph{Convolution estimates for singular
  measures and some global nonlinear {B}rascamp-{L}ieb inequalities}, Proc.
  Roy. Soc. Edinburgh Sect. A \textbf{145} (2015), no.~6, 1223--1237.
  \MR{3427606}

\bibitem{Koch2005}
Herbert Koch and Daniel Tataru, \emph{Dispersive estimates for principally
  normal pseudodifferential operators}, Comm. Pure Appl. Math. \textbf{58}
  (2005), no.~2, 217--284.

\bibitem{Koch2014}
Herbert {Koch}, Daniel {Tataru}, and Monica {Vi\c{s}an}, \emph{{Dispersive
  equations and nonlinear waves. Generalized Korteweg-de Vries, nonlinear
  Schr\"odinger, wave and Schr\"odinger maps.}}, Basel: Birkh\"auser/Springer,
  2014 (English).

\bibitem{Lee2006}
Sanghyuk Lee, \emph{Bilinear restriction estimates for surfaces with curvatures
  of different signs}, Trans. Amer. Math. Soc. \textbf{358} (2006), no.~8,
  3511--3533 (electronic). \MR{2218987 (2007a:42023)}

\bibitem{Lee2006a}
\bysame, \emph{Linear and bilinear estimates for oscillatory integral operators
  related to restriction to hypersurfaces}, J. Funct. Anal. \textbf{241}
  (2006), no.~1, 56--98. \MR{2264247}

\bibitem{Lee2008b}
Sanghyuk Lee, Keith~M. Rogers, and Ana Vargas, \emph{Sharp null form estimates
  for the wave equation in {$\Bbb R^{3+1}$}}, Int. Math. Res. Not. IMRN (2008),
  Art. ID rnn 096, 18. \MR{2439536}

\bibitem{Lee2008}
Sanghyuk Lee and Ana Vargas, \emph{Sharp null form estimates for the wave
  equation}, Amer. J. Math. \textbf{130} (2008), no.~5, 1279--1326.
  \MR{2450209}

\bibitem{Lee2010}
\bysame, \emph{Restriction estimates for some surfaces with vanishing
  curvatures}, J. Funct. Anal. \textbf{258} (2010), no.~9, 2884--2909.
  \MR{2595728}

\bibitem{Machihara2003}
Shuji Machihara, Kenji Nakanishi, and Tohru Ozawa, \emph{Small global solutions
  and the nonrelativistic limit for the nonlinear {D}irac equation}, Rev. Mat.
  Iberoamericana \textbf{19} (2003), no.~1, 179--194.

\bibitem{Moyua1999}
Adela Moyua, Ana Vargas, and Luis Vega, \emph{Restriction theorems and maximal
  operators related to oscillatory integrals in {$\mathbf{R}^3$}}, Duke Math.
  J. \textbf{96} (1999), no.~3, 547--574. \MR{1671214}

\bibitem{Stein1971}
E.~Stein and G.~Weiss, \emph{{Introduction to {F}ourier Analysis on Euclidean
  Spaces}}, Princeton University Press, 1971.

\bibitem{Stein1993}
Elias~M. Stein, \emph{Harmonic analysis: real-variable methods, orthogonality,
  and oscillatory integrals}, Princeton Mathematical Series, vol.~43, Princeton
  University Press, Princeton, NJ, 1993, With the assistance of Timothy S.
  Murphy, Monographs in Harmonic Analysis, III. \MR{1232192}

\bibitem{Sterbenz2007}
Jacob Sterbenz, \emph{Global regularity and scattering for general non-linear
  wave equations. {II}. {$(4+1)$} dimensional {Y}ang-{M}ills equations in the
  {L}orentz gauge}, Amer. J. Math. \textbf{129} (2007), no.~3, 611--664.
  \MR{2325100 (2008e:58037)}

\bibitem{Sterbenz2010}
Jacob Sterbenz and Daniel Tataru, \emph{Energy dispersed large data wave maps
  in {$2+1$} dimensions}, Comm. Math. Phys. \textbf{298} (2010), no.~1,
  139--230. \MR{2657817}

\bibitem{Strichartz1972}
Robert~S. Strichartz, \emph{Multipliers for spherical harmonic expansions},
  Trans. Amer. Math. Soc. \textbf{167} (1972), 115--124. \MR{0306823}

\bibitem{Strichartz1977}
\bysame, \emph{Restrictions of {F}ourier transforms to quadratic surfaces and
  decay of solutions of wave equations}, Duke Math. J. \textbf{44} (1977),
  no.~3, 705--714. \MR{0512086}

\bibitem{Tao2001}
Terence Tao, \emph{Endpoint bilinear restriction theorems for the cone, and
  some sharp null form estimates}, Math. Z. \textbf{238} (2001), no.~2,
  215--268. \MR{1865417}

\bibitem{Tao2003a}
\bysame, \emph{A sharp bilinear restrictions estimate for paraboloids}, Geom.
  Funct. Anal. \textbf{13} (2003), no.~6, 1359--1384.

\bibitem{Tao2004}
\bysame, \emph{Some recent progress on the restriction conjecture}, Fourier
  analysis and convexity, Appl. Numer. Harmon. Anal., Birkh\"auser Boston,
  Boston, MA, 2004, pp.~217--243. \MR{2087245}

\bibitem{Tao2000a}
Terence Tao and Ana Vargas, \emph{A bilinear approach to cone multipliers. {I}.
  {R}estriction estimates}, Geom. Funct. Anal. \textbf{10} (2000), no.~1,
  185--215.

\bibitem{Tao2000b}
\bysame, \emph{A bilinear approach to cone multipliers. {II}. {A}pplications},
  Geom. Funct. Anal. \textbf{10} (2000), no.~1, 216--258.

\bibitem{Tao1998}
Terence Tao, Ana Vargas, and Luis Vega, \emph{A bilinear approach to the
  restriction and {K}akeya conjectures}, J. Amer. Math. Soc. \textbf{11}
  (1998), no.~4, 967--1000. \MR{1625056}

\bibitem{Wang2015}
Xuecheng Wang, \emph{{On Global Existence of 3D Charge Critical
  Dirac-Klein-Gordon System}}, International Mathematics Research Notices
  \textbf{2015} (2015), no.~21, 10801--10846.

\bibitem{Wolff2001}
Thomas Wolff, \emph{A sharp bilinear cone restriction estimate}, Ann. of Math.
  (2) \textbf{153} (2001), no.~3, 661--698.

\end{thebibliography}
  \end{document}